\pdfoutput=1
\documentclass{amsart}

\usepackage{graphicx,mathrsfs,amssymb,amsmath}

\usepackage[accsupp]{axessibility}

\usepackage[pdfdisplaydoctitle=true,colorlinks=true,urlcolor=blue,citecolor=blue,linkcolor=blue,
pdfstartview=FitH,pdfpagemode=None,bookmarksnumbered=true]{hyperref}
\usepackage{cmap} 

\setlength{\oddsidemargin}{50pt}     
\setlength{\evensidemargin}{50pt}
\setlength{\parindent}{0pt}
\setlength{\parskip}{0.5ex}
\frenchspacing

\setcounter{tocdepth}{1}

\newtheorem{theorem}{Theorem}[section]
\newtheorem{lemma}[theorem]{Lemma}
\newtheorem{corollary}[theorem]{Corollary}
\newtheorem{proposition}[theorem]{Proposition}
\newtheorem{remark}[theorem]{Remark}
\newtheorem{definition}[theorem]{Definition}
\newtheorem{example}[theorem]{Example}

\numberwithin{equation}{section}


\newcommand{\cz}{{\mathbb C}}
\newcommand{\gz}{{\mathbb Z}}
\newcommand{\nz}{{\mathbb N}}
\newcommand{\rz}{{\mathbb R}}
\newcommand{\sz}{{\mathbb S}}

\newcommand{\bfA}{\mathbf{A}}
\newcommand{\bflambda}{\boldsymbol{\lambda}}

\newcommand{\bfS}{\mathbf{S}}
\newcommand{\wtbfL}{\mathbf{\widetilde{L}}}
\newcommand{\wtbfS}{\mathbf{\widetilde{S}}}

\newcommand{\calA}{\mathcal{A}}
\newcommand{\calK}{\mathcal{K}}

\newcommand{\scrC}{\mathscr{C}}
\newcommand{\scrL}{\mathscr{L}}
\newcommand{\scrS}{\mathscr{S}}

\newcommand{\dbar}{d\hspace*{-0.08em}\bar{}\hspace*{0.1em}}
\newcommand{\eps}{\varepsilon}
\newcommand{\forget}[1]{}
\newcommand{\lra}{\longrightarrow}
\newcommand{\rpbar}{\overline{\rz}_+}
\newcommand{\spk}[1]{\langle#1\rangle}
\newcommand{\trinorm}[1]{|\hspace*{-1pt}|\hspace*{-1pt}|#1|\hspace*{-1pt}|\hspace*{-1pt}|}
\newcommand{\wh}{\widehat}
\newcommand{\whsz}{\widehat{\mathbb S}}
\newcommand{\wt}{\widetilde}

\begin{document}
\title[$\psi$do with point-singularity in the covariable]{
Parametric pseudodifferential operators\\ with point-singularity in the covariable}

\author{J\"org Seiler}
\address{Dipartimento di Matematica, Universit\`{a} di Torino, Italy}
\email{joerg.seiler@unito.it}

\begin{abstract}
Starting out from a new description of a class of parameter-de\-pen\-dent 
pseudodifferential operators  with finite regularity number due to G. Grubb, 
we introduce a calculus of parameter-dependent, poly-homogeneous symbols whose homogeneous 
components have a particular type of point-singularity in the covariable-parameter space. 
Such symbols admit intrinsically a second kind of expansion which is closely related to 
the expansion in the Grubb-Seeley calculus and permits to 
recover the resolvent-trace expansion for elliptic pseudodifferential oerators originally 
proved by Grubb-Seeley. Another application is the invertibility of parameter-dependent operators 
of Toeplitz type, i.e., operators acting in subspaces determined by zero-order 
pseudodifferential idempotents. 

\vspace*{5mm}

\noindent
\textbf{Keywords:} Pseudodifferential operators; parameter-ellipticity; resolvent; trace expansion; operators of Toeplitz type
\end{abstract}

\vspace*{-20mm}

\maketitle

\tableofcontents

\section{Introduction} 

In the present paper we develop a calculus of parameter-dependent pseudodifferential 
operators ($\psi$do), both for operators in Euclidean space $\rz^n$ and for operators 
on sections of vector-bundles over closed Riemannian manifolds, which is closely related 
to Grubb's calculus of operators with finite regularity number \cite{Grub} and to the 
Grubb-Seeley calculus introcuced in \cite{GrSe}. The calculus allows to obtain the classical 
resolvent-trace expansion for elliptic $\psi$do due to \cite{GrSe} and a systematic treatment 
of  $\psi$do of Toeplitz type in the sense of \cite{Seil12,Seil15}. 

At the base of our calculus lies a ``geometric'' characterization of the above mentioned 
regularity number: consider a parameter-dependent $\psi$do $a(D;\mu)$ with symbol 
$a(\xi;\mu)$ depending, for simplicity, only on the covariable $\xi\in\rz^n$ and the 
parameter $\mu\in\rpbar$. The symbol $a$ belongs to the parameter-dependent 
\emph{poly-homogeneous H\"ormander class} $S^d$ if it admits an asymptotic expansion 
\begin{equation}\label{eq:introA}
 a(\xi;\mu)\sim\sum_{j=0}^{+\infty} a_j(\xi;\mu)
\end{equation}
with symbols $a_j\in S^{d-j}_{hom}$ that are positively homogeneous in $(\xi,\mu)$ of degree $d-j$. 
If $\sz^{n}_+=\{(\xi,\mu)\mid |\xi|^2+\mu^2=1\}$ denotes the unit semi-sphere, then 
\begin{equation}\label{eq:introB}
 a_j(\xi;\mu)=|(\xi,\mu)|^{d-j}\;\wh{a}_j\Big(\frac{(\xi,\mu)}{|(\xi,\mu)|}\Big),\qquad 
 \wh{a}_j\in\scrC^\infty(\sz^{n}_+).
\end{equation}
The operator is parameter-elliptic if the homogeneous principal symbol $a_0$ never vanishes, 
i.e., $\wh{a}_0$ does never vanish on the unit semi-sphere. In this case there exists a 
parametrix $b(\xi;\mu)\in S^{-d}$ such that $b(D;\mu)$ is the inverse of $a(D;\mu)$ 
for large $\mu$. 

In Section \ref{sec:04} we show that $a(D;\mu)$ is an operator of order $d$ and with 
\emph{regularity number} $\nu\in\rz$ in the sense of \cite{Grub} if $a$ admits a 
decomposition $a=\wt{a}+p$ with $p\in S^d$ and where $\wt{a}$ admits an expansion 
of the form \eqref{eq:introA}, with components satisfying \eqref{eq:introB} but with 
\emph{singular} functions $\wh{a}_j$: introducing polar coordinates $(r,\phi)$ on 
$\sz^{n}_+$, centered in the ``north-pole'' $(\xi,\mu)=(0,1)$, they belong to the 
\emph{weighted} space $r^{\nu-j}\scrC^\infty_B(\whsz^{n}_+)$, where 
$\whsz^n_+=\sz^{n}_+\setminus\{(0,1)\}$ and $\scrC^\infty_B$ means smooth functions 
which remain bounded on $\whsz^n_+$ after arbitrary applications of totally characteristic 
derivatives $r\partial_r$ and usual derivatives in $\phi$. 

This observation leads us to consider symbols $a=\wt{a}+p$ with $p\in S^d$ but where the homogeneous components of $\wt{a}$ originate from the weighted spaces $r^{\nu-j} \scrC^\infty_T(\whsz^{n}_+)$, $\nu\in\gz$, where $\scrC^\infty_T(\whsz^{n}_+)$ is the space of all functions on $\whsz^{n}_+$ that, in coordinates $(r,\phi)$, extend smoothly up to and including $r=0$ $($the subscript $T$ stands for Taylor expansion$)$. Symbols of this kind do not only have an expansion \eqref{eq:introA} but intrinsically a further expansion of the form 
\begin{align}\label{eq:intro01}
 {a}(\xi;\mu)\sim\sum_{j=0}^{+\infty}a_{[\nu+j]}^{\infty}(\xi)\,[\xi,\mu]^{d-\nu-j}, \qquad
 a_{[\nu+j]}^{\infty}(\xi)\in S^{\nu+j}(\rz^n),
\end{align}
where $[\xi,\mu]$ denotes a smooth function that coincides with the usual modulus away from the origin and $S^{m}(\rz^n)$ is the standard poly-homogeneous H\"ormander class of order $m$ without parameter. See Section \ref{sec:05} for details. Evidently, the expansion \eqref{eq:intro01} resembles the one employed by Grubb-Seeley in \cite{GrSe}. While Grubb-Seeley's expansion is in powers of $\mu$ and has its origin in a meromorphic (at infinity) dependence on the parameter $\mu$, \eqref{eq:intro01} directly originates from the Taylor expansion of the homogeneous components and makes no use of a holomorphic dependence on the parameter.  However, expanding $[\xi,\mu]^m$ in powers of $\mu$ allows us to obtain a Grubb-Seeley expansion and ultimately we can recover the resolvent-trace expansion of $\psi$do shown in \cite{GrSe}. This is discussed in detail in Sections \ref{sec:06} and \ref{sec:07.6}. 

Ellipticity in our class is most simple for a positive regularity number $\nu>0$. In this case, the homogeneous principal symbol extends by continuity to the north-pole, and its non-vanishing yields the existence of a parametrix which is the inverse of $a(D;\mu)$ for large values of the parameter $\mu$. For $\nu=0$, ellipticity is more involved and two additional symbolic levels come into play: 
\begin{itemize}
\item[(a)] the \emph{principal angular symbol} which originates from the leading term of the Taylor expansion of the homogeneous principal symbol, 
\item[(b)] the \emph{principal limit-symbol}, i.e., the symbol $a^\infty_{[0]}$ from \eqref{eq:intro01}.
\end{itemize}
Non-vanishing of the homogeneous principal symbol, of the principal angular symbol, and invertibility of the \emph{operator} $a^\infty_{[0]}(D)$ guarantee the existence of a parametrix in the class which is the inverse for large values of $\mu$. Concerning $(a)$, our calculus appears to be related with Savin-Sternin \cite{SaSt} where a similar structure occurs. 

We show that our calculus of operators on $\rz^n$ is invariant under changes of coordinates, see Section \ref{sec:07.1}. Thus we can define corresponding classes of $\psi$do on closed manifolds $M$, acting on sections of finite-dimensional vector-bundles. While the homogeneous principal symbol and the principal angular symbol have a global meaning as bundle morphisms on $(T^*M\times\rpbar)\setminus0$ and $T^*M\setminus0$, respectively, the expansion \eqref{eq:intro01} is shown to have a global analog too, namely 
\begin{align}\label{eq:intro02}
 A(\mu)\sim\sum_{j=0}^{+\infty}A_{[\nu+j]}^{\infty}\,\Lambda^{d-\nu-j}(\mu), \qquad
 A_{[\nu+j]}^{\infty}\in L^{\nu+j}(M;E_0,E_1),
\end{align}
where $\Lambda^\alpha(\mu)\in L^\alpha(E_0,E_0)$, $\alpha\in\rz$, denote elliptic elements 
in H\"ormander's class with (scalar) homogeneous principal symbol $(|\xi|^2_x+\mu^2)^{\alpha/2}$, where $|\cdot|$ refers to some fixed Riemannian metric on $M$. This is shown in Section \ref{sec:07.2}. The so-called \emph{limit-operator} $A_{[\nu]}^\infty$ takes the place of the above used limit-symbol. In Section \ref{sec:07.7} we discuss an application to parameter-dependent $\psi$do of Toeplitz type, here on closed manifolds; originally the concept of Toeplitz type operators emerged in the study of boundary value problems with Atiyah-Patodi-Singer type boundary conditions, see \cite{Schu37,SSS98}. 

In the present paper we limit ourselves to $\psi$do on $\rz^n$ or closed manifolds. However, it 
is a natural question whether the established calculus allows to build up a corresponding 
calculus for boundary value problems, in the spirit of \cite{Grub,Grub01} and 
\cite{ScSc1}, leading to a parameter-dependent version of the classical 
Boutet de Monvel algebra \cite{Bout1}. 
Similarly, one could address the analogous question for manifolds with singularities 
(conical singularities, in the simplest case), following and extending the approach of Schulze 
\cite{Schu91,Schulze-Wiley}. We plan to address these questions in future work. 

Hoping to help the reader in reading this paper, we finish this introduction by listing the most important  
spaces of pseudodifferential symbols used in the sequel:
\begin{align*}
S^d_{1,0}(\rz^n),S^d(\rz^n): & & & \text{Section \ref{sec:02.2}} \\ 
S^{d,\nu}_{1,0},S^{d,\nu},S^{d,\nu}_{hom}: & & & 
 \text{Definitions \ref{def:Grubb01}, \ref{def:poly}, and \ref{def:hom-grubb}} \\
S^{d}_{1,0},S^{d},S^{d}_{hom}: & & &  
 \text{Definitions \ref{def:Hoer01}, \ref{def:strongly-hom}, and \ref{def:Hoer-poly}}\\ 
\wt{S}^{d,\nu}_{1,0},\wt{S}^{d,\nu},\wt{S}^{d,\nu}_{hom}: & & & 
 \text{Definitions \ref{def:weak01}, \ref{def:weak03}, and \ref{def:weak02}}\\ 
\wtbfS^{d,\nu}_{1,0},\wtbfS^{d,\nu},\wtbfS^{d,\nu}_{hom}: & & &  
 \text{Definitions \ref{def:symbol-expansion}, \ref{def:wtbfs-poly} and \ref{def:angular}}\\ 
\mathbf{S}^{d,\nu},\mathbf{S}^{d,\nu}_{hom}: & & & 
 \text{Definition \ref{def:new_reg}}
\end{align*}

\section{Notations, symbols, and Leibniz product} 
\label{sec:02}

\subsection{Basic notations} 
\label{sec:02.1}

Let $\spk{y}=(1+|y|^2)^{1/2}$ for $y\in\rz^m$ with arbitrary $m$. 
Let $y\mapsto[y]:\rz^m\to\rz$ denote a smooth, strictly positive function that coincides 
with the modulus $|y|$ outside the unit-ball. If $y=(\xi,\mu)$, we write shortly 
$|\xi,\mu|:=|(\xi,\mu)|$, $\spk{\xi,\mu}=\spk{(\xi,\mu)}$, and $[\xi,\mu]:=[(\xi,\mu)]$. 

A \emph{zero-excision function} on $\rz^m$ is a smooth function $\chi(y)$ that vanishes in a 
neighborhood of the origin and such that $(1-\chi)(y)$ has compact support. 

If $f,g:\Omega\to\rz$ are two functions on some set $\Omega$ we shall write $f\lesssim g$ or 
$f(\omega)\lesssim g(\omega)$ if there exists a constant $C\ge 0$ such that 
$f(\omega)\le Cg(\omega)$ for every $\omega\in\Omega$.  

Let $f(\omega,y)$ be defined on a set of the form $\Omega\times(\rz^m\setminus\{0\})$. 
With slight abuse of language, we shall call $f$ \emph{homogeneous} of degree $d$ in $y$ if 
$$f(\omega,ty)=t^d f(\omega,y)\qquad \forall\;(\omega,y)\quad\forall\; t>0;$$
it would be correct to use the terminology \textit{positively} homogeneous, but for brevity we shall 
not do so. Suppose $y=(u,v)$ with $u\in\rz^k$ and $v\in\rz^{m-k}$ $(k$ may be equal to $m$, i.e.,
$y=u)$. We shall say that $f$ is homogeneous of degree $d$ in $(u,v)$ for large $u$ if there exists 
a constant $R\ge 0$ (frequently assumed to be equal to $1)$ such that
$$f(\omega,tu,tv)=t^d f(\omega,u,v)\qquad \forall\;(\omega,v)\quad 
    \forall\;|u|\ge R\quad\forall\; t\ge1.$$

\subsection{H\"ormander's class} 
\label{sec:02.2}

The uniform H\"or\-mander class $S^d_{1,0}(\rz^n)$ consists of those symbols  
$a(x,\xi):\rz^n\times\rz^n\to\cz$ satisfying the uniform estimates
  $$|D^\alpha_\xi D^\beta_x a(x,\xi)|\lesssim \spk{\xi}^{d-|\alpha|}$$
for every multi-indices $\alpha,\beta\in\nz_0^n$. This is a Fr\'{e}chet space with the system of norms 
\begin{equation}\label{eq:semi-norm01}
 \|a\|_j=\max_{|\alpha|+|\beta|\le j}
 \sup_{x,\xi}|D^\alpha_\xi D^\beta_x a(x,\xi)|\spk{\xi}^{|\alpha|-d},\qquad j\in\nz_0.
\end{equation}
A symbol $a(x,\xi)\in S^d_{1,0}(\rz^n)$ is called \emph{poly-homogeneous} 
if there exist smooth functions $a_{\ell}(x,\xi)$ defined on $\rz^n\times(\rz^n\setminus\{0\})$ 
which are homogeneous of degree $d-\ell$ in $\xi$ and satisfy 
  $$|D^\alpha_\xi D^\beta_x a_{\ell}(x,\xi)|\lesssim |\xi|^{d-\ell-|\alpha|},$$
such that 
 $$r_{a,N}(x,\xi):=a(x,\xi)-\chi(\xi)\sum_{\ell=0}^{N-1}a_{\ell}(x,\xi)\in 
   S^{d-N}_{1,0}(\rz^n)$$
for every $N$, where $\chi(\xi)$ is an arbitrary fixed zero-excision function 
$($note that $r_{a,0}=a)$. 
Denote by $S^d(\rz^n)$ the space of all such symbols. 
It is a Fr\'{e}chet space with the system of norms $\|a\|_{j,N}:=\|r_{a,N}\|_j$, $j,N\in\nz_0$, and 
\begin{equation}\label{eq:semi-norm02}
\|a\|_{j,\ell}^\prime=\max_{|\alpha|+|\beta|\le j}\sup_{x,|\xi|=1}
|D^\alpha_\xi D^\beta_x a_{\ell}(x,\xi)|,\qquad j,\ell\in\nz_0.
\end{equation}
 
The $\psi$do associated with $a(x,\xi)$, denoted by $a(x,D)$, is   
 $$[a(x,D)u](x)=\int e^{ix\xi}a(x,\xi)\wh{u}(\xi)\,\dbar\xi,\qquad x\in\rz,$$
acting on the Schwarz space $\scrS(\rz^n)$ of rapidly decreasing functions; here, 
$\dbar\xi=(2\pi)^{-n}\,d\xi$. 
The composition of operators $a_0(x,D)$ and $a_1(x,D)$ corresponds to the so-called 
\emph{Leibniz product}  of symbols,  
\begin{align}\label{eq:leibniz-product}
 (a_1\#a_0)(x,\xi)=\iint e^{iy\eta}a_1(x,\xi+\eta)a_0(x+y,\xi)\,dy\dbar\eta. 
\end{align}
$($integration in the sense of oscillatory integrals$)$, cf. for example \cite{Kuma}. 
If the $a_j$ have order $d_j$, then $a_1\#a_0$ has order $d_0+d_1$. The \emph{adjoint symbol} 
 $$a^{(*)}(x,\xi)=\iint e^{-iy\eta}\overline{a(x+y,\xi+\eta;\mu)}\,dy\dbar\eta$$
gives the formal adjoint operator of $a(x,D)$, i.e., 
 $$(a(x,D)u,v)_{L^2}=(u,a^{(*)}(x,D)v)_{L^2},\qquad u,v\in\scrS(\rz^n).$$
If $a(x,\xi;\mu)$ is a symbol that depends on an additional parameter $\mu$, 
we shall write $a(x,D;\mu)$, Leibniz product and adjoint are applied point-wise in $\mu$. 

Throughout the paper we consider a parameter $\mu\in\rpbar:=[0,+\infty)$.

\section{Symbols with finite regularity number} 
\label{sec:03}

\subsection{Grubb's calculus}\label{sec:03.1}

We briefly review a pseudodifferential calculus introduced by Grubb. 
For further details we refer the reader to Chapter 2.1 of \cite{Grub}. 

\begin{definition}\label{def:Grubb01}
By $S^{d,\nu}_{1,0}$ with $d,\nu\in\rz$ 
$($called order and regularity number, respectively$)$ 
denote the space of all symbols $a(x,\xi;\mu)$ satisfying 
\begin{align}\label{eq:estimate01}
|D^\alpha_\xi D^\beta_x D^j_\mu a(x,\xi;\mu)|
\lesssim\spk{\xi}^{\nu-|\alpha|}\spk{\xi,\mu}^{d-\nu-j}+ \spk{\xi,\mu}^{d-|\alpha|-j}.
\end{align}
\end{definition}

The space of smoothing or regularizing symbols, defined as    
\begin{equation}\label{eq:regularizing-symbol}
 S^{d-\infty,\nu-\infty}=\mathop{\mbox{\Large$\cap$}}_{N\ge0}S^{d-N,\nu-N}_{1,0},
\end{equation}
consists of those symbols satisfying, for every $N$ and all orders of derivatives,   
\begin{align*} 
 |D^\alpha_\xi D^\beta_x D^j_\mu a(x,\xi;\mu)|
 &\lesssim\spk{\xi}^{-N} \spk{\mu}^{d-\nu-j}.
\end{align*}

\begin{proposition}
The Leibniz product induces maps 
\begin{align}\label{eq:leibniz00}
S^{d_1,\nu_1}_{1,0}\times S^{d_0,\nu_0}_{1,0}\lra S^{d_0+d_1,\nu}_{1,0},
\qquad \nu=\min(\nu_0,\nu_1,\nu_0+\nu_1). 
\end{align}
\end{proposition}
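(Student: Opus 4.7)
The proof follows the standard oscillatory-integral approach to composition of pseudodifferential symbols, adapted to the two-term structure of the Grubb estimate \eqref{eq:estimate01}.

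First I would record three elementary closure properties of the class, each obtained directly from the definition: (i) $\partial_{x_k}$ preserves $(d,\nu)$; (ii) $\partial_{\xi_k}$ maps $S^{d,\nu}_{1,0}\to S^{d-1,\nu-1}_{1,0}$; (iii) pointwise multiplication satisfies $S^{d,\nu}_{1,0}\cdot S^{d',\nu'}_{1,0}\subset S^{d+d',\min(\nu,\nu',\nu+\nu')}_{1,0}$. Assertion (iii) is the pivotal computation: expanding $D_\xi^\alpha D_x^\beta D_\mu^j(ab)$ by Leibniz and multiplying the two-term estimates of the factors produces four cross products. One is the pure H\"ormander-type bound $\spk{\xi,\mu}^{d+d'-|\alpha|-j}$, one gives the Grubb form with regularity $\nu+\nu'$, and the remaining two reduce, using $\spk{\xi}\leq\spk{\xi,\mu}$ to absorb excess $\spk{\xi}$-weights into $\spk{\xi,\mu}$, to the Grubb form with regularity $\nu$ and $\nu'$ respectively. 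The optimal common regularity is thus $\min(\nu,\nu',\nu+\nu')$.

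Next, for the Leibniz product itself, I would Taylor-expand $a_1(x,\xi+\eta;\mu)$ in $\eta$ around the origin to order $N$ and substitute into \eqref{eq:leibniz-product}. Integrating the polynomial terms against $e^{iy\eta}$ in the oscillatory sense converts each $\eta^\alpha$ into $D_x^\alpha$ acting on $a_0$, yielding
\begin{equation*}
(a_1\#a_0)(x,\xi;\mu)=\sum_{|\alpha|<N}\frac{1}{\alpha!}\,(\partial_\xi^\alpha a_1)(x,\xi;\mu)\,(D_x^\alpha a_0)(x,\xi;\mu) + r_N(x,\xi;\mu).
\end{equation*}
By properties (i)--(iii), the $\alpha$-th term of the finite sum lies in $S^{d_0+d_1-|\alpha|,\min(\nu_0,\nu_1-|\alpha|,\nu_0+\nu_1-|\alpha|)}_{1,0}\subset S^{d_0+d_1,\nu}_{1,0}$, using the straightforward inclusion $S^{d-1,\nu-1}_{1,0}\subset S^{d,\nu}_{1,0}$.

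The remaining and technically most delicate step is to show $r_N\in S^{d_0+d_1-N,\nu-N}_{1,0}\subset S^{d_0+d_1,\nu}_{1,0}$ for all sufficiently large $N$. The remainder is an oscillatory integral involving $\partial_\xi^\alpha a_1(x,\xi+t\eta;\mu)$ with $|\alpha|=N$ and $t\in[0,1]$, paired with $D_x^\alpha a_0$. Iterated integration by parts via the standard identities $(1-\Delta_y)^K e^{iy\eta}=\spk{\eta}^{2K}e^{iy\eta}$ and $(1-\Delta_\eta)^L e^{iy\eta}=\spk{y}^{2L}e^{iy\eta}$ renders the integral absolutely convergent. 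The main obstacle is to preserve the two-term Grubb structure under the shift $\xi\mapsto\xi+t\eta$: one splits the $\eta$-integration into the regime $|\eta|\leq\tfrac{1}{2}\spk{\xi,\mu}$, where $\spk{\xi+t\eta,\mu}\sim\spk{\xi,\mu}$ uniformly in $t$ and the Grubb bound on $\partial_\xi^\alpha a_1$ transports directly, and the complementary regime $|\eta|>\tfrac{1}{2}\spk{\xi,\mu}$, where rapid decay in $\spk{\eta}$ gained from integration by parts in $y$ compensates the worst-case growth of $\spk{\xi+t\eta,\mu}^{d_1-\nu_1}$. Handling each of the two terms of \eqref{eq:estimate01} separately and recombining gives the desired Grubb estimate for $r_N$, completing the proof.
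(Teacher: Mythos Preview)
The paper does not prove this proposition; it is stated as part of a review of Grubb's calculus, with the proof deferred to Chapter~2.1 of \cite{Grub}. Your argument is the standard one and is correct: the pointwise-multiplication lemma (iii) is exactly where the regularity $\min(\nu_0,\nu_1,\nu_0+\nu_1)$ arises, via the four cross terms you describe, and the asymptotic expansion together with a remainder estimate then transfers this to the Leibniz product.

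One minor remark on the remainder step: your proposed splitting at $|\eta|\le\tfrac12\spk{\xi,\mu}$ controls $\spk{\xi+t\eta,\mu}$ but not $\spk{\xi+t\eta}$ directly (take $\xi$ small, $\mu$ large). In practice one simply uses Peetre's inequality $\spk{\xi+t\eta}^{s}\lesssim\spk{\xi}^{s}\spk{\eta}^{|s|}$ and $\spk{\xi+t\eta,\mu}^{s}\lesssim\spk{\xi,\mu}^{s}\spk{\eta}^{|s|}$ uniformly in $t\in[0,1]$ to transport both weights, absorbing the resulting powers of $\spk{\eta}$ by choosing $K$ large enough in the $(1-\Delta_y)^K$ integration by parts. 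This avoids the case distinction altogether. Also, your claim $r_N\in S^{d_0+d_1-N,\nu-N}_{1,0}$ is correct but goes through an intermediate step: the remainder naturally lands in $S^{d_0+d_1-N,\rho_N}_{1,0}$ with $\rho_N=\min(\nu_0,\nu_1-N,\nu_0+\nu_1-N)$, and one checks $\rho_N\ge\nu-N$ so that the inclusion $S^{d,\rho}_{1,0}\subset S^{d,\rho'}_{1,0}$ for $\rho\ge\rho'$ applies.
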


Asymptotic summations can be performed within the class, in the following sense: 
Given a sequence of symbols $a_\ell\in S^{d-\ell,\nu-\ell}_{1,0}$, there exists an 
$a\in S^{d,\nu}_{1,0}$ such that 
\begin{equation}\label{eq:asymptotic}
 a(x,\xi;\mu)-\sum\limits_{\ell=0}^{N-1} a_\ell(x,\xi;\mu)\in S^{d-N,\nu-N}_{1,0}
\end{equation}
for every $N$; $a$ is uniquely determined modulo $S^{d-\infty,\nu-\infty}$. 


\begin{definition}\label{def:poly}
A symbol $a\in S^{d,\nu}_{1,0}$ is called poly-homogeneous if it satisfies \eqref{eq:asymptotic} 
with $a_\ell\in S^{d-\ell,\nu-\ell}_{1,0}$ that are homogeneous of degree $d-\ell$ in $(\xi,\mu)$ 
for $|\xi|\ge1$.  The space of these symbols is denoted by $S^{d,\nu}$. 
\end{definition} 

If $a\in S^{d,\nu}$, its \textit{homogeneous principal symbol} is defined as  
  $$a^h(x,\xi;\mu):=|\xi|^d a_0\Big(x,\frac{\xi}{|\xi|};\frac{\mu}{|\xi|}\Big)
    =\lim_{t\to+\infty}t^{-d}a(x,t\xi;t\mu),\qquad \xi\not=0.$$
It satisfies 
\begin{align}\label{eq:estimate-hom}
 |D^\alpha_\xi D^\beta_x D^j_\mu a(x,\xi,\mu)|\lesssim |\xi|^{\nu-|\alpha|}|\xi,\mu|^{d-\nu-j}
 +|\xi,\mu|^{d-|\alpha|-j}.
\end{align}

\begin{definition}\label{def:hom-grubb}
$S^{d,\nu}_{hom}$ denotes the space of all smooth functions $a(x,\xi;\mu)$ defined for 
$\xi\not=0$, which are homogeneous of degree $d$ in $(\xi,\mu)$ and satisfy 
$\eqref{eq:estimate-hom}$ for arbitrary orders of derivatives. 
\end{definition}

If $a\in S^{d,\nu}_{hom}$ and $\nu>0$, then $a$ extends by continuity to a function defined for all 
$x$ and $(\xi,\mu)\not=0$; the larger $\nu$ is, the more regular (i.e., differentiable$)$ is this extension.  
This is the justification for the terminology ``regularity number''. In this case we shall identify $a^h$ 
with its extension. 

\begin{definition}\label{def:grubb-elliptic}
Let $\nu>0$. A symbol $a(x,\xi;\mu)\in S^{d,\nu}$ is called elliptic if 
$a^h(x,\xi;\mu)\not=0$ for all $x$ and all $(\xi,\mu)\not=0$ and 
$|a^h(x,\xi,\mu)^{-1}|\lesssim |\xi,\mu|^{-d}$. 
\end{definition}

Note that if $a^h(x,\xi;\mu)$ is constant in $x$ for large $x$, it suffices to require the pointwise 
invertibility of $a^h(x,\xi,\mu)$ 

\begin{theorem}\label{thm:grubb-parametrix}
Let $\nu>0$ and $a\in S^{d,\nu}$. Then $a$ is elliptic if and only if  
there exists a $b\in S^{-d,\nu}$ such that $a\#b-1,b\#a-1\in S^{0-\infty,\nu-\infty}$.  
\end{theorem}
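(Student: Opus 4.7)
The theorem is the standard parametrix construction in Grubb's calculus, carried out once one knows that the three basic operations --- pointwise inversion of the homogeneous principal symbol, Leibniz product, and asymptotic summation --- respect both the order and the regularity number.

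For the \emph{if} direction, suppose $b\in S^{-d,\nu}$ satisfies $a\#b-1\in S^{0-\infty,\nu-\infty}$. Extracting the $(\xi,\mu)$-homogeneous degree-$0$ leading term of the Leibniz expansion gives $a^h\,b^h\equiv 1$ for $|\xi|\ge 1$, which extends by homogeneity to all $\xi\ne 0$. Hence $a^h$ never vanishes on $\xi\ne 0$ (and on all of $(\xi,\mu)\ne 0$ by continuity since $\nu>0$), and $b^h=(a^h)^{-1}$. Applying the estimate \eqref{eq:estimate-hom} to $b^h\in S^{-d,\nu}_{hom}$ at $\alpha=\beta=j=0$, restricting to $|\xi,\mu|=1$, and using homogeneity yields $|a^h(x,\xi,\mu)^{-1}|\lesssim|\xi,\mu|^{-d}$ uniformly in $x$, so $a$ is elliptic.

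For the \emph{only if} direction I construct a right parametrix iteratively. Fix a zero-excision function $\chi(\xi)$ and set $b_0(x,\xi;\mu):=\chi(\xi)\,a^h(x,\xi;\mu)^{-1}$. Differentiating $a^h\cdot(a^h)^{-1}\equiv 1$ and applying a Fa\`a~di~Bruno argument, the two-term estimate \eqref{eq:estimate-hom} for $a^h$ together with the lower bound $|(a^h)^{-1}|\lesssim|\xi,\mu|^{-d}$ propagates to an estimate of the same two-term shape for $(a^h)^{-1}$, with $d$ replaced by $-d$ and the regularity $\nu$ preserved; hence $b_0\in S^{-d,\nu}$. By \eqref{eq:leibniz00}, $a\#b_0\in S^{0,\nu}$, and term-by-term inspection of the Leibniz expansion $a\#b_0\sim\sum_\alpha(\alpha!)^{-1}\partial_\xi^\alpha a\cdot D_x^\alpha b_0$ shows that the degree-$0$ principal homogeneous term equals $a^h b_0^h=\chi$, so
\[
r_1:=1-a\#b_0\in S^{-1,\nu-1}.
\]
Inductively, assume $b_0,\ldots,b_{N-1}$ have been chosen with $b_k\in S^{-d-k,\nu-k}$ (homogeneous of degree $-d-k$ for $|\xi|\ge 1$) and $r_N:=1-a\#(b_0+\cdots+b_{N-1})\in S^{-N,\nu-N}$. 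Let $r_N^{\,h}\in S^{-N,\nu-N}_{hom}$ be the homogeneous principal symbol of $r_N$ and set $b_N:=\chi\cdot(a^h)^{-1}\cdot r_N^{\,h}$. Then $b_N\in S^{-d-N,\nu-N}$, and the same Leibniz computation gives $r_{N+1}\in S^{-(N+1),\nu-(N+1)}$. Finally, the asymptotic summation \eqref{eq:asymptotic} produces $b\in S^{-d,\nu}$ with $b\sim\sum_k b_k$, and by construction $a\#b-1\in S^{0-\infty,\nu-\infty}$. Running the analogous construction on the formal adjoint side (using that the adjoint operation is compatible with the class) yields a left parametrix $b'$, and the standard identity $b-b'\equiv b'\#(a\#b-1)-(b'\#a-1)\#b\pmod{S^{0-\infty,\nu-\infty}}$ shows $b\equiv b'$, so $b$ is two-sided.

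The only genuine technical point is the inversion step: verifying that pointwise inversion of an elliptic $a^h\in S^{d,\nu}_{hom}$ preserves the two-term Grubb estimate \eqref{eq:estimate-hom} with unchanged regularity $\nu$. Everything after that is bookkeeping, since the class $S^{d,\nu}$, its Leibniz product \eqref{eq:leibniz00}, and the asymptotic summation \eqref{eq:asymptotic} are set up precisely to make the classical parametrix iteration go through with the regularity index tracked alongside the order.
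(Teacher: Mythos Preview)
The paper does not give its own proof of this theorem: it is stated in Section~\ref{sec:03.1} as part of a review of Grubb's calculus, with the reader referred to Chapter~2.1 of \cite{Grub} for details. Your argument is the standard parametrix construction from that reference and is essentially correct.

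Two minor remarks. First, the identity you use at the end is in fact exact,
\[
b-b' \;=\; b'\#(a\#b-1)-(b'\#a-1)\#b,
\]
and the right-hand side lies in $S^{-d-\infty,\nu-\infty}$ (not $S^{0-\infty,\nu-\infty}$); this still yields $b\equiv b'$ and hence that $b$ is two-sided. Second, the inversion step you flag as the ``genuine technical point''---that $(a^h)^{-1}\in S^{-d,\nu}_{hom}$ whenever $a^h\in S^{d,\nu}_{hom}$ is elliptic---is indeed the crux; your Fa\`a~di~Bruno sketch is the right mechanism, and in Grubb's book this is carried out carefully (the point being that in each product of derivatives, at most one factor contributes the ``singular'' $|\xi|^{\nu-|\alpha|}$ term while the others can be absorbed into the $|\xi,\mu|$-powers, so the two-term structure survives with the same~$\nu$).
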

 
Note that if $r\in S^{0-\infty,\nu-\infty}$ with $\nu>0$, then $r(\mu)\xrightarrow{\mu\to+\infty}0$ in 
$S^{-\infty}(\rz^n)$. In particular, if $a$ is elliptic then $a(x,D;\mu)$ is invertible for 
large $\mu$. 

\eqref{eq:estimate01} and \eqref{eq:estimate-hom} suggest to introduce two subspaces
of $S^{d,\nu}_{1,0}$ and $S^{d,\nu}_{hom}$, respectively, with estimates corresponding 
to the first and second term on the right-hand side, respectively. 
These will be discussed in the next two subsections.

\subsection{Strong parameter-dependence (symbols of infinite regularity)}
\label{sec:03.2}

In this section we consider the space 
$S^d_{1,0}=\mathop{\mbox{\Large$\cap$}}_{N\ge0}S^{d,N}_{1,0}$ and the 
poly-homogeneous subclass. For clarity we prefer to present it in an independent way. 

\begin{definition}\label{def:Hoer01}
$S^d_{1,0}$ consists of all symbols $a(x,\xi;\mu)$ satisfying, for all orders of derivatives, 
 $$|D^\alpha_\xi D^\beta_x D^j_\mu a(x,\xi;\mu)|\lesssim \spk{\xi,\mu}^{d-|\alpha|-j}.$$
\end{definition}

We shall call such symbols also \textit{strongly parameter-dependent}, since differentiation 
with respect to $\xi$ or $\mu$ improves the decay in $(\xi,\mu)$. 

The space of regularizing symbols $S^{-\infty}=\cap_{d\in\rz} S^d$ consists of those symbols 
which are rapidly decreasing in $(\xi,\mu)$ and $\scrC^\infty_b$ in $x$. 

\begin{proposition}
The Leibniz product induces maps $S^{d_1}_{1,0}\times S^{d_0}_{1,0}\to S^{d_0+d_1}_{1,0}$. 
\end{proposition}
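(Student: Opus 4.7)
The plan is to treat $\mu$ as a passive parameter in the oscillatory integral \eqref{eq:leibniz-product}, apply the composition pointwise in $\mu$, and verify that the resulting symbol estimates carry the correct exponents of $\spk{\xi,\mu}$. The key analytic input is a Peetre-type inequality in the joint covariable,
\begin{equation*}
 \spk{\xi+\eta,\mu}^s\le 2^{|s|/2}\spk{\eta}^{|s|}\spk{\xi,\mu}^s,\qquad s\in\rz,
\end{equation*}
which follows from $|\xi+\eta|^2\le 2|\xi|^2+2|\eta|^2$.

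Since \eqref{eq:leibniz-product} does not involve $\mu$, differentiation in $\mu$ distributes by Leibniz,
\begin{equation*}
 D^j_\mu(a_1\#a_0)=\sum_{j_0+j_1=j}\binom{j}{j_0}(D^{j_1}_\mu a_1)\#(D^{j_0}_\mu a_0),
\end{equation*}
and $D^{j_i}_\mu a_i\in S^{d_i-j_i}_{1,0}$; analogously for $x$-derivatives. It therefore suffices to prove the zeroth-order estimate $|(a_1\#a_0)(x,\xi;\mu)|\lesssim\spk{\xi,\mu}^{d_0+d_1}$; the $D^\alpha_\xi$-estimates then follow by differentiating under the integral sign and repeating the same argument.

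To make the oscillatory integral absolutely convergent I would use the standard integration by parts with
\begin{equation*}
 (1-\Delta_\eta)^N e^{iy\eta}=\spk{y}^{2N}e^{iy\eta},\qquad
 (1-\Delta_y)^M e^{iy\eta}=\spk{\eta}^{2M}e^{iy\eta}.
\end{equation*}
This pulls a factor $\spk{y}^{-2N}\spk{\eta}^{-2M}$ outside the integral and applies $(1-\Delta_\eta)^N$ to $a_1(x,\xi+\eta;\mu)$ and $(1-\Delta_y)^M$ to $a_0(x+y,\xi;\mu)$, producing $\xi$-derivatives of $a_1$ of order at most $2N$ and $x$-derivatives of $a_0$ of order at most $2M$. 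The symbol estimates bound these by $\lesssim\spk{\xi+\eta,\mu}^{d_1}$ and $\lesssim\spk{\xi,\mu}^{d_0}$ respectively (any drop in the $\xi$-exponent being harmless since $\spk{\xi,\mu}\ge 1$), and applying Peetre to the first factor yields the overall integrand bound
\begin{equation*}
 \spk{y}^{-2N}\spk{\eta}^{-2M+|d_1|+2N}\spk{\xi,\mu}^{d_0+d_1}.
\end{equation*}
Choosing $N>n/2$ and $M>(n+|d_1|+2N)/2$ produces an integrable dominant and proves the claim.

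The only technical point is the bookkeeping of $\spk{\eta}$-exponents: Peetre costs up to $\spk{\eta}^{|d_1|+2N}$, and $M$ must be chosen large enough both to absorb this and to secure integrability in $\eta$. Since $N$ and $M$ can be fixed a priori in terms of $d_0,d_1,n$, this is routine. Continuity of the composition as a bilinear map of Fr\'{e}chet spaces follows from tracking which of the semi-norms \eqref{eq:semi-norm01} of $a_0$ and $a_1$ enter these estimates.
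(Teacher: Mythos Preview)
Your argument is correct; this is the standard oscillatory-integral proof of the composition rule, specialized to the case where the covariable is $(\xi,\mu)$ jointly while the integration variables remain $(y,\eta)\in\rz^n\times\rz^n$. The paper itself does not give a proof of this proposition: it is stated as a known fact, since $S^d_{1,0}$ is nothing other than the usual uniform H\"ormander class in the enlarged covariable $(\xi,\mu)\in\rz^n\times\rpbar$, for which the Leibniz-product mapping property is classical (cf.\ \cite{Kuma}).

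One cosmetic remark: your exponent $-2M+|d_1|+2N$ on $\spk{\eta}$ is a slightly coarser bookkeeping than necessary. If you first integrate by parts in $y$ (producing $\spk{\eta}^{-2M}$ and $y$-derivatives of $a_0$) and only then in $\eta$ (producing $\spk{y}^{-2N}$ and $\eta$-derivatives of $\spk{\eta}^{-2M}a_1(x,\xi+\eta;\mu)$), the Peetre loss is only $\spk{\eta}^{|d_1|}$, since the $\xi$-gain from differentiating $a_1$ can simply be discarded and $|D^\gamma_\eta\spk{\eta}^{-2M}|\lesssim\spk{\eta}^{-2M}$. This does not affect the conclusion, as $M$ may be taken arbitrarily large in any case.
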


\begin{definition}\label{def:strongly-hom}
$S^d_{hom}$ consist of all symbols $a(x,\xi;\mu)$ defined for $(\xi,\mu)\not=0$ which 
are homogeneous of degree $d$ in $(\xi,\mu)$ and satisfy, for every order of derivatives, 
\begin{equation}\label{eq:stima}
|D^\alpha_\xi D^\beta_x D^j_\mu a(x,\xi;\mu)|\lesssim |\xi,\mu|^{d-|\alpha|-j}  
\end{equation}
\end{definition}

\forget{
\begin{remark}
Initially one could also ask that the homogeneous components from 
Definition $\ref{def:strongly-hom}$ are defined only for $\xi\not=0$. 
However, since the restriction to any of the domains 
$Q_n=\{(\xi,\mu)\mid 0<|\xi|<1,\,1/n<\mu<n\}$, $n\in\nz$, is smooth 
and has bounded derivatives of any order, the function extends to a smooth 
function on the closure of $Q_n$. Thus the homogeneous component has a 
smooth extension to $\rz^n\times (\rz^n\times\rpbar)\setminus\{0\}$. 
With this observation in mind, it holds 
$S^d_{hom}=\mathop{\mbox{\Large$\cap$}}_{\nu\ge0}S^{d,\nu}_{hom}$. 
\end{remark}
}

\begin{definition}\label{def:Hoer-poly}
A symbol $a\in S^d_{1,0}$ is called poly-homogeneous if there exists a sequence of 
homogeneous symbols $a_{\ell}\in S^{d-\ell}_{hom}$ such that, for every $N$,  
 $$a(x,\xi;\mu)-\sum_{\ell=0}^{N-1} \chi(\xi,\mu) a_\ell(x,\xi;\mu)\in S^{d-N}_{1,0},$$ 
where $\chi(\xi,\mu)$ is an aribtrary zero-excision function. The space of such symbols 
will be denoted by $S^d$. 
\end{definition}

We call $a_0$ the homogeneous principal symbol of $a\in S^d$, and 
 $$a_0(x,\xi;\mu)=\lim_{t\to+\infty}t^{-d}a(x,t\xi;t\mu),\qquad (\xi,\mu)\not=0.$$
Ellipticity of $a$ is defined as in Definition \ref{def:grubb-elliptic} and the obvious analog of 
Theorem \ref{thm:grubb-parametrix} is valid. 

\begin{remark}
In the literature, the space $S^d$ is frequently denoted by  
$S^d_{\mathrm{cl}}$ and the symbols are called \emph{classical} rather than poly-homogeneous. 
\end{remark}

\subsection{Weakly parameter-dependent symbols}\label{sec:03.3}

Let us describe the second natural subspace of $S^{d,\nu}_{1,0}$. 

\begin{definition}\label{def:weak01}
Let $\wt{S}^{d,\nu}_{1,0}$ denote the space of all symbols $a(x,\xi;\mu)$ which satisfy, 
for every order of derivatives, 
\begin{align*}
 |D^\alpha_\xi D^\beta_x D^j_\mu a(x,\xi;\mu)|
 \lesssim \spk{\xi}^{\nu-|\alpha|}\spk{\xi,\mu}^{d-\nu-j}. 
\end{align*}
\end{definition}

Note that $\wt{S}^{d,\nu}_{1,0}=S^{d,\nu}_{1,0}$ whenever $\nu\le0$. In particular, 
$\wt{S}^{d-\infty,\nu-\infty}_{1,0}=S^{d-\infty,\nu-\infty}_{1,0}$

\begin{proposition}
The Leibniz product induces maps 
$\wt{S}^{d_1,\nu_1}_{1,0}\times \wt{S}^{d_0,\nu_0}_{1,0}\to\wt{S}^{d_0+d_1,\nu_0+\nu_1}_{1,0}$. 
\end{proposition}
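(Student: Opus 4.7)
The plan is to reduce the proposition to the standard oscillatory‐integral analysis of the Leibniz product and to combine it with a multiplicativity lemma for the class $\wt{S}^{\cdot,\cdot}_{1,0}$.

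First I would establish two routine stability properties of the class by direct application of Leibniz' rule on the defining estimates in Definition \ref{def:weak01}. (i) Pointwise multiplication maps $\wt{S}^{d_1,\nu_1}_{1,0}\times\wt{S}^{d_0,\nu_0}_{1,0}$ into $\wt{S}^{d_0+d_1,\nu_0+\nu_1}_{1,0}$: the estimates for a derivative $D^\alpha_\xi D^\beta_x D^j_\mu(a_1 a_0)$ distribute as sums of products of bounds $\spk{\xi}^{\nu_k-|\alpha'|}\spk{\xi,\mu}^{d_k-\nu_k-j'}$, and the exponents add precisely to give $\spk{\xi}^{\nu_0+\nu_1-|\alpha|}\spk{\xi,\mu}^{d_0+d_1-\nu_0-\nu_1-j}$. (ii) Differentiation in $\xi$ shifts order and regularity by the same amount, i.e.\ $\partial_\xi^\alpha:\wt{S}^{d,\nu}_{1,0}\to\wt{S}^{d-|\alpha|,\nu-|\alpha|}_{1,0}$, while $D_x^\beta$ preserves the class. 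Combining (i) and (ii) yields, for every $\alpha$,
\[
 \tfrac{1}{\alpha!}(\partial_\xi^\alpha a_1)(D_x^\alpha a_0)\in\wt{S}^{d_0+d_1-|\alpha|,\,\nu_0+\nu_1-|\alpha|}_{1,0},
\]
which is exactly the membership needed for the terms of the formal expansion of $a_1\#a_0$.

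Next I would justify this expansion by the standard argument. Taylor-expanding $a_1(x,\xi+\eta;\mu)$ in $\eta$ to order $N$ and integrating by parts in $y$ via $\eta^\alpha e^{iy\eta}=D_y^\alpha e^{iy\eta}$, the Leibniz product \eqref{eq:leibniz-product} splits as
\[
 (a_1\#a_0)(x,\xi;\mu)=\sum_{|\alpha|<N}\tfrac{1}{\alpha!}(\partial_\xi^\alpha a_1)(D_x^\alpha a_0)(x,\xi;\mu)
 + R_N(x,\xi;\mu),
\]
with
\[
 R_N=\sum_{|\alpha|=N}\tfrac{N}{\alpha!}\int_0^1(1-t)^{N-1} b_{\alpha,t}(x,\xi;\mu)\,dt,
\qquad
 b_{\alpha,t}=\iint e^{iy\eta}(\partial_\xi^\alpha a_1)(x,\xi+t\eta;\mu)(D_x^\alpha a_0)(x+y,\xi;\mu)\,dy\,\dbar\eta.
\]
Once I show $R_N\in\wt{S}^{d_0+d_1-N,\nu_0+\nu_1-N}_{1,0}$ uniformly in $t\in[0,1]$, the proposition follows: the partial sums plus $R_N$ lie in $\wt{S}^{d_0+d_1,\nu_0+\nu_1}_{1,0}$, and letting $N\to\infty$ recovers the full product (which is a well-defined symbol already by the corresponding result in the Grubb class, since $\wt{S}^{d,\nu}_{1,0}\subset S^{d,\nu}_{1,0}$).

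The main obstacle, and the only nontrivial step, is the remainder estimate for $b_{\alpha,t}$. For this I would insert the standard regularizers $(1+|\eta|^2)^{-L}(1-\Delta_y)^L$ and $(1+|y|^2)^{-M}(1-\Delta_\eta)^M$ in the oscillatory integral, integrate by parts, and then estimate the resulting absolutely convergent integral using the Peetre-type inequalities
\[
 \spk{\xi+t\eta,\mu}^{s}\lesssim\spk{\xi,\mu}^{s}\spk{\eta}^{|s|},\qquad
 \spk{\xi+t\eta}^{s}\lesssim\spk{\xi}^{s}\spk{\eta}^{|s|}\qquad(t\in[0,1]),
\]
the second of which is the crucial one to preserve the $\spk{\xi}^{\nu-|\alpha|}$–type decay rather than only the weaker $\spk{\xi,\mu}$–type decay. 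The $\spk{\eta}^{|s|}$ losses are absorbed by taking $L$ large, and the $\spk{y}^{-2M}$ factor provides integrability in $y$; the $x$-derivatives of $a_0$ produce uniform $\scrC^\infty_b$-bounds in the shift $x\mapsto x+y$. After differentiation in $(x,\xi,\mu)$ under the integral sign, the same procedure yields precisely the $\wt{S}^{d_0+d_1-N,\nu_0+\nu_1-N}_{1,0}$ estimate, with constants uniform in $t\in[0,1]$, which closes the argument.
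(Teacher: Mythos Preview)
Your proof is correct. The paper itself states this proposition without proof, treating it as routine; the approach you outline---Taylor expansion of the Leibniz product, pointwise multiplication and $\partial_\xi$-stability of $\wt{S}^{d,\nu}_{1,0}$, and Peetre-inequality estimates for the oscillatory remainder---is exactly the standard one, and it is also the one the paper deploys in the appendix when it proves the finer statement for $\wtbfS^{d,\nu}_{1,0}$ (see in particular Step~1 of the proof of Proposition~\ref{prop:amplitude}, which carries out precisely your Peetre estimate to show that $(y,\eta)\mapsto a(\,\cdot+y,\,\cdot+\eta;\mu)$ is an amplitude function with values in $\wt{S}^{d,\nu}_{1,0}$).

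Two small remarks. First, the phrase ``letting $N\to\infty$ recovers the full product'' is unnecessary: as soon as you have the identity $a_1\#a_0=\sum_{|\alpha|<N}\ldots+R_N$ for a \emph{single} $N\ge 1$, with each summand in $\wt{S}^{d_0+d_1,\nu_0+\nu_1}_{1,0}$, the membership of $a_1\#a_0$ is already established; no limiting process is needed. Second, your appeal to the Grubb class $S^{d,\nu}_{1,0}$ to ensure the Leibniz product is a priori well defined is fine but also dispensable, since the oscillatory integral itself is made absolutely convergent by your regularization, and the resulting estimates directly place it in $\wt{S}^{d_0+d_1,\nu_0+\nu_1}_{1,0}$.
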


\begin{definition}\label{def:weak02}
Let $\wt{S}^{d,\nu}_{hom}$ denote the space of all functions $a(x,\xi;\mu)$ which are defined for 
$\xi\not=0$, are homogeneous in $(\xi,\mu)$ of degree $d$ and satisfy, for every order of derivatives,
\begin{align}\label{eq:estimate-hom-tilde}
 |D^\alpha_\xi D^\beta_x D^j_\mu a(x,\xi;\mu)|\lesssim |\xi|^{\nu-|\alpha|}|\xi,\mu|^{d-\nu-j}.
\end{align}
\end{definition}

\begin{definition}\label{def:weak03}
A symbol $a\in \wt{S}^{d,\nu}_{1,0}$ is called poly-homogeneous if there exists a sequence of 
homogeneous symbols $a_{\ell}\in \wt{S}^{d-\ell,\nu-\ell}_{hom}$ such that, for every $N$,  
 $$a(x,\xi;\mu)-\sum_{\ell=0}^{N-1} \chi(\xi) a_\ell(x,\xi;\mu)\in \wt{S}^{d-N,\nu-N}_{1,0},$$ 
where $\chi(\xi)$ is an arbitrary zero-excision function. The space of such symbols 
will be denoted by $\wt{S}^{d,\nu}$. 
\end{definition}

Again, $a_0$ is called the homogeneous principal symbol of $a\in\wt{S}^{d,\nu}$, and   
 $$a_0(x,\xi;\mu)=\lim_{t\to+\infty}t^{-d}a(x,t\xi;t\mu),\qquad \xi\not=0.$$

\begin{definition}\label{def:tilde-elliptic}
A symbol $a(x,\xi;\mu)\in \wt{S}^{d,\nu}$ is called elliptic if $a_0(x,\xi;\mu)\not=0$ for all 
$x$, $\mu$ and all $\xi\not=0$,  and $|a_0(x,\xi,\mu)^{-1}|\lesssim |\xi|^{-\nu}|\xi,\mu|^{-d+\nu}$. 
\end{definition}

\begin{theorem}\label{thm:tilde-parametrix}
A symbol $a\in \wt{S}^{d,\nu}$ is elliptic if and only if there exists a $b\in\wt{S}^{-d,-\nu}$ 
such that $a\#b-1,b\#a-1\in \wt{S}^{0-\infty,0-\infty}$.  
\end{theorem}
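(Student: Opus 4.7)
My plan is to adapt the standard parametrix construction to the $\wt{S}$-scale, using the Leibniz composition $\wt{S}^{d_1,\nu_1}_{1,0}\times\wt{S}^{d_0,\nu_0}_{1,0}\to\wt{S}^{d_0+d_1,\nu_0+\nu_1}_{1,0}$ together with an asymptotic-summation property internal to the scale (which I expect to hold by a Borel-type argument: differentiation in $\xi$ improves the $\spk{\xi}^{\nu-|\alpha|}$-factor and differentiation in $\mu$ improves $\spk{\xi,\mu}^{d-\nu-j}$ separately, so the two weights can be controlled independently).

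For necessity, if $a\#b-1\in\wt{S}^{0-\infty,0-\infty}$ and $b\in\wt{S}^{-d,-\nu}$, then passing to homogeneous principal symbols in the Leibniz expansion yields $a_0 b_0=1$ for $\xi\neq 0$, so $a_0$ never vanishes, and the case $\alpha=\beta=0=j$ of \eqref{eq:estimate-hom-tilde} applied to $b_0=a_0^{-1}\in\wt{S}^{-d,-\nu}_{hom}$ delivers the required bound $|a_0^{-1}|\lesssim|\xi|^{-\nu}|\xi,\mu|^{-d+\nu}$.

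For sufficiency, I first check that $b_0:=1/a_0\in\wt{S}^{-d,-\nu}_{hom}$ via Fa\`a di Bruno: each term in $D^\alpha_\xi D^\beta_x D^j_\mu(1/a_0)$ is a constant multiple of $a_0^{-k-1}$ times a product of $k$ derivatives $D^{\gamma_\ell}a_0$ with $\sum_\ell\gamma_\ell=(\alpha,\beta,j)$; the powers of $|\xi|$ and $|\xi,\mu|$ from the ellipticity estimate and the $\wt{S}^{d,\nu}_{hom}$-bounds collapse exactly to $|\xi|^{-\nu-|\alpha|}|\xi,\mu|^{-d+\nu-j}$. Setting $b^{(0)}(x,\xi;\mu):=\chi(\xi)\,b_0(x,\xi;\mu)\in\wt{S}^{-d,-\nu}$ for a zero-excision $\chi$, the Leibniz product gives leading contribution $a_0\cdot\chi/a_0=\chi=1-(1-\chi)$; since $1-\chi$ has compact $\xi$-support it lies in $\wt{S}^{0-\infty,0-\infty}$, and the lower-order Leibniz terms together with $a-\chi a_0\in\wt{S}^{d-1,\nu-1}$ absorb into $\wt{S}^{-1,-1}$, so $a\#b^{(0)}=1-r_1$ with $r_1\in\wt{S}^{-1,-1}$.

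The formal Neumann series $\sum_{k=0}^\infty r_1^{\#k}$ has $k$-th term in $\wt{S}^{-k,-k}$ and can therefore be asymptotically summed to some $c\in\wt{S}^{0,0}$; then $b:=b^{(0)}\#c\in\wt{S}^{-d,-\nu}$ satisfies $a\#b-1\in\wt{S}^{0-\infty,0-\infty}$. A symmetric construction produces a left parametrix $b^L\in\wt{S}^{-d,-\nu}$, and the associativity identity $b^L\#(a\#b)=(b^L\#a)\#b$ forces $b^L\equiv b$ modulo $\wt{S}^{0-\infty,0-\infty}$, so the one-sided parametrix is automatically two-sided. The main obstacle I expect is precisely the asymptotic-summation step: because the $\wt{S}^{d,\nu}$-estimates are a strict strengthening of the $S^{d,\nu}$-estimates when $\nu>0$, one cannot simply invoke \eqref{eq:asymptotic}, and one must verify that a Borel summation performed with cutoffs depending only on $|\xi|$ (not on $|\xi,\mu|$) preserves the bi-weighted control simultaneously in both $\spk{\xi}$ and $\spk{\xi,\mu}$.
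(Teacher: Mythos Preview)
The paper states this theorem without proof; it appears in the preliminary Section~3.3 where the weakly parameter-dependent class is introduced, and the argument is implicitly left to the reader as a routine variant of the classical parametrix construction (cf.\ Theorem~\ref{thm:grubb-parametrix} for Grubb's class, also stated without proof). Your proposal is exactly that routine argument and is correct in outline: the Fa\`a di Bruno computation showing $1/a_0\in\wt S^{-d,-\nu}_{hom}$ is right (the powers of $|\xi|$ and $|\xi,\mu|$ do collapse as you claim), the first-order parametrix step is fine, and the Neumann series plus left/right matching is standard.

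Two small points you should tighten. First, you correctly flag the asymptotic-summation issue but do not resolve it; the fix is indeed to use excision in $\xi$ alone, $\chi(\xi/c_j)$ with $c_j\to\infty$, and to observe that for $r\in\wt S^{d-1,\nu-1}_{1,0}$ one has $\chi(\xi/c)r\to 0$ in $\wt S^{d,\nu}_{1,0}$ as $c\to\infty$ (each semi-norm picks up a factor $c^{-1}$ from the extra $\spk{\xi}^{-1}$ available), which is exactly what Proposition~1.1.17 of \cite{Schulze-Wiley} needs. Second, you work in $\wt S^{d,\nu}$, the \emph{poly-homogeneous} subclass, so you must check that $r_1$ and hence each $r_1^{\#k}$ is poly-homogeneous, not merely in $\wt S^{-k,-k}_{1,0}$; this follows once you know the Leibniz product preserves poly-homogeneity in the $\wt S$-scale, which in turn comes from the asymptotic expansion of $\#$ together with closure of $\wt S^{d,\nu}_{hom}$ under pointwise products and derivatives. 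Neither point is deep, but both deserve a sentence.
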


Note that ellipticity of $a\in \wt{S}^{d,\nu}$ is not equivalent to the point-wise invertibility 
of the homogeneous principal symbol $a_0$ on its domain, even not in case of independence 
of the $x$-variable (see Theorem \ref{thm:main01} and the subsequent comment). 
Moreover, a remainder $r\in \wt{S}^{0-\infty,0-\infty}$ is, in general, only bounded but not 
decaying as $\mu\to+\infty$. Therefore $a(x,D;\mu)$ need not be invertible for large $\mu$.  

\section{Regularity number and weighted spaces}
\label{sec:04}

In any of the so far introduced symbol spaces, the involved variable $x$ enters as a 
$\scrC^\infty_b$-variable, while the spaces differ by the structures in the variables 
$(\xi,\mu)$. For this reason, and also to keep notation more lean, in this section we 
ignore the $x$-dependence and focus on symbols depending only on $(\xi,\mu)$.  

Let us denote by $\sz^n_+$ the unit semi-sphere, 
\begin{align}\label{eq:unit-sphere}
 \sz^n_+:=\big\{(\xi,\mu)\in\rz^n\times\rpbar \mid |\xi|^2+\mu^2=1\big\}. 
\end{align}
Every homogeneous symbol $a\in S^d_{hom}$ is of the form
\begin{align}\label{eq:hom_ext}
a(\xi;\mu)=|\xi,\mu|^d\,\wh{a}\Big(\frac{(\xi,\mu)}{|\xi,\mu|}\Big),\qquad 
\wh{a}=a|_{\sz^n_+}\in\scrC^\infty(\sz^n_+),
\end{align}
and the map $a\mapsto \wh{a}$ establishes an isomorphism between $S^d_{hom}$ and 
$\scrC^\infty(\sz^n_+)$. A symbol $a\in\wt{S}^{d,\nu}_{hom}$ is defined for $\xi\not=0$ only, 
hence its restriction is defined only on the punctured unit semi-sphere 
\begin{align}\label{eq:punctured-sphere}
  \whsz^n_+:=\sz^n_+\setminus\{(0,1)\}=\{(\xi,\mu)\in\sz^n_+\mid \xi\not=0\}.
\end{align}
We shall now investigate,  which subspace of $\scrC^\infty(\whsz^n_+)$ is in 1-1-correspondence 
with  $\wt{S}^{d,\nu}_{hom}$. To this end, we shall identify 
$\whsz^n_+$ with $(0,1]\times \sz^{n-1}$, using the (polar-)coordinates 
 $$\xi=r\phi,\qquad \mu=\sqrt{1-r^2}, \qquad (0<r\le1,\quad \phi\in\sz^{n-1}).$$

If $E$ is an arbitrary Fr\'echet space, we shall denote by 
$\scrC^\infty_{B}((0,\eps),E)$ the space of all smooth bounded functions $u:(0,\eps)\to E$ such that 
$(r\partial_r)^\ell u$ is bounded on $(0,\eps)$ for every order of derivatives. 

\begin{definition}
With $\gamma\in\rz$ define  
\begin{align*}
\scrC^{\infty}_{B}(\whsz^n_+)
 &:=\big\{a\in\scrC^\infty(\whsz^n_+)\mid 
     a(r,\phi)\in\scrC^{\infty}_{B}\big((0,\eps),\scrC^\infty(\sz^{n-1})\big)
     \text{ for some }\eps>0\big\},\\
 r^\gamma\,\scrC^{\infty}_{B}(\whsz^n_+)
 &:=\big\{a\in\scrC^\infty(\whsz^n_+)\mid 
     r^{-\gamma}a\in \scrC^{\infty}_{B}(\whsz^n_+)\big\}.
\end{align*}
\end{definition}

In other words, the index $\gamma$ indicates  the rate of (non-)vanishing in the point 
$(\xi,\mu)=(0,1)$; we shall also speak of spaces with weight $\gamma$. Note that $|\xi|=r$. 

\begin{definition}\label{def:weighted-hom}
Let $\wt{S}^{(d,\gamma)}$ denote the space of all functions $a(\xi;\mu)$ defined for $\xi\not=0$ 
of the form 
 $$a(\xi;\mu)=|\xi,\mu|^d \,\wh{a}\Big(\frac{(\xi,\mu)}{|\xi,\mu|}\Big),\qquad 
    \wh{a}\in r^\gamma\scrC^{\infty}_{B}(\whsz^n_+).$$
\end{definition}

Let $a$ and $\wh{a}$ be as in the previous definition. Identifying $\wh{a}(\xi,\mu)$ with its local 
representation $\wh{a}(r,\phi)$, we have the relations 
\begin{align}\label{eq:localcoord}
 \wh{a}(r,\phi)=a\left(r\phi;\sqrt{1-r^2}\right),\qquad 
 a(\xi;\mu)=|\xi,\mu|^d \,\wh{a}\Big(\frac{|\xi|}{|\xi,\mu|},\frac{\xi}{|\xi|}\Big).
\end{align} 
In particular, the $d$-homogeneous extension of $\wh{a}(r,\phi)=r^\nu$ is 
$a(\xi;\mu)=|\xi|^{\nu}|\xi,\mu|^{d-\nu}$. 

\begin{lemma}\label{lem:localized}
Let $\wh\chi\in\scrC^\infty(\sz^n_+)$ vanish in a small neighborhood of $(\xi,\mu)=(0,1)$ and 
let $\chi(\xi,\mu)=\wh{\chi}((\xi,\mu)/|\xi,\mu|)\in S^0_{hom}$ be its homogeneous extension 
of degree $0$. Then 
 $$\chi S^d_{hom}=\chi{S}^{d,\nu}_{hom}=\chi \wt{S}^{(d,\nu)}.$$
\end{lemma}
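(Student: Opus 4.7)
The plan is to reduce the two equalities to three auxiliary inclusions involving a slightly larger cutoff. Since $\wh\chi$ vanishes in a neighborhood of the north pole $(0,1)$, there exists $c>0$ with $|\xi|\ge c|\xi,\mu|$ on $\mathrm{supp}\,\chi$, and I would choose $\wt\chi\in S^0_{hom}$ of the same form as $\chi$, with $\wt\chi\equiv 1$ on $\mathrm{supp}\,\chi$, so that $\chi=\chi\wt\chi$. Together with the trivial inclusion $S^d_{hom}\subseteq S^{d,\nu}_{hom}$, it then suffices to establish
\begin{equation*}
\wt\chi\cdot S^{d,\nu}_{hom}\subseteq S^d_{hom},\qquad
\wt\chi\cdot \wt S^{(d,\nu)}\subseteq S^d_{hom},\qquad
\wt\chi\cdot S^d_{hom}\subseteq \wt S^{(d,\nu)},
\end{equation*}
because then for any $a$ in any of the three spaces one has $\chi a=\chi(\wt\chi a)$, placing $\chi a$ in the appropriate target space.

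The first inclusion rests on the observation that on $\mathrm{supp}\,\wt\chi$ the factors $|\xi|^s$ and $|\xi,\mu|^s$ are equivalent for every $s\in\rz$: the first summand $|\xi|^{\nu-|\alpha|}|\xi,\mu|^{d-\nu-j}$ in the estimate of Definition \ref{def:hom-grubb} then becomes comparable to $|\xi,\mu|^{d-|\alpha|-j}$, matching the estimate of Definition \ref{def:strongly-hom}. Since $\wt\chi\in S^0_{hom}$, Leibniz's rule transports the resulting bound to $\wt\chi a$. Smoothness of $\wt\chi a$ on $(\rz^n\times\rpbar)\setminus\{0\}$ is automatic because $\wt\chi$ vanishes in a neighborhood of the half-line $\{\xi=0,\,\mu>0\}$, which is precisely where $a\in S^{d,\nu}_{hom}$ might fail to be defined.

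For the remaining two inclusions I would pass to the angular representation $a=|\xi,\mu|^d\wh a$ via \eqref{eq:hom_ext}-\eqref{eq:localcoord} and set $\wh{\wt\chi}:=\wt\chi|_{\sz^n_+}$. If $\wh a\in r^\nu\scrC^\infty_B(\whsz^n_+)$, then on $\mathrm{supp}\,\wh{\wt\chi}\subseteq\{r\ge c\}$ the function $r^\nu$ is smooth and bounded together with its $r\partial_r$-derivatives, so $\wh{\wt\chi}\wh a\in \scrC^\infty_B(\whsz^n_+)$ and, vanishing for $r<c$, extends by zero to an element of $\scrC^\infty(\sz^n_+)$, yielding $\wt\chi a\in S^d_{hom}$ by \eqref{eq:hom_ext}. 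Symmetrically, if $\wh a\in\scrC^\infty(\sz^n_+)$, then $r^{-\nu}$ is smooth and bounded on $\{r\ge c\}$, so $r^{-\nu}\wh{\wt\chi}\wh a$ is smooth on $\whsz^n_+$ and all its $(r\partial_r)^\ell\partial_\phi^\beta$-derivatives are bounded; this gives $\wh{\wt\chi}\wh a\in r^\nu\scrC^\infty_B(\whsz^n_+)$, i.e., $\wt\chi a\in\wt S^{(d,\nu)}$.

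The single technical point running through all three reductions is the equivalence of derivative estimates in the $(\xi,\mu)$-variables with those in polar coordinates $(r,\phi)$ on the region $\{r\ge c\}\cap\sz^n_+$; this is routine since $r$ is bounded away from zero and the two coordinate systems are related by a smooth diffeomorphism with bounded Jacobian and inverse Jacobian. I do not expect any serious obstacle beyond bookkeeping of constants.
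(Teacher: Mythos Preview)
Your proposal is correct and rests on exactly the same observation as the paper's proof: since $\wh\chi$ vanishes near the north pole, its support lies in a cone $\{0\le\mu\le c|\xi|\}$ on which $|\xi|\le|\xi,\mu|\lesssim|\xi|$, making the three symbol estimates coincide. The paper condenses the entire argument into this one line; your auxiliary cutoff $\wt\chi$ and the three separate inclusions are a harmless elaboration (the Leibniz rule already keeps all terms supported in $\mathrm{supp}\,\chi$, so working directly with $\chi$ would suffice), but the underlying idea is identical.
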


In fact, it suffices to observe that $\wh{\chi}(\xi,\mu)$ is supported in a set of the form 
$\{(\xi,\mu)\mid 0\le\mu\le c|\xi|\}$ on which $|\xi|\le|\xi,\mu|\lesssim|\xi|$. 

The following theorem shows that, for weakly parameter-dependent homogeneous components, 
regularity number and weight are the same thing. 

\begin{theorem}\label{thm:main01}
$\wt{S}^{(d,\nu)}=\wt{S}^{d,\nu}_{hom}$ for every $d,\nu\in\rz$. 
In particular, the map $a\mapsto a|_{\whsz^n_+}$ establishes an isomorphism between 
$\wt{S}^{d,\nu}_{hom}$ and $r^\nu\scrC^{\infty}_{B}(\whsz^n_+)$. 
\end{theorem}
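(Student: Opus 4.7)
The plan is to translate between the Euclidean derivatives appearing in the defining estimate \eqref{eq:estimate-hom-tilde} of $\wt{S}^{d,\nu}_{hom}$ and the totally characteristic derivatives $(r\partial_r,\partial_\phi)$ defining $\scrC^\infty_B(\whsz^n_+)$. Thanks to Lemma \ref{lem:localized} (applied also to $\wt{S}^{d,\nu}_{hom}$ in place of $S^{d,\nu}_{hom}$, which is immediate from \eqref{eq:estimate-hom-tilde} since $|\xi|\sim|\xi,\mu|$ away from the north pole), it suffices to work in a neighborhood of $(0,1)\in\sz^n_+$. There I use the parametrization $(\xi,\mu)=(tr\phi,\,t\sqrt{1-r^2})$ with $t=|\xi,\mu|$, $r=|\xi|/|\xi,\mu|$, $\phi=\xi/|\xi|$.

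For $\wt{S}^{(d,\nu)}\subseteq\wt{S}^{d,\nu}_{hom}$ I start from $a(\xi;\mu)=t^d r^\nu g(r,\phi)$ with $g\in\scrC^\infty_B$ and use the chain rule
\[
\partial_{\xi_i}=r\phi_i\,\partial_t+\frac{\phi_i(1-r^2)}{t}\,\partial_r+\frac{1}{tr}\sum_j(\delta_{ij}-\phi_i\phi_j)\,\partial_{\phi_j},\qquad
\partial_\mu=\sqrt{1-r^2}\,\partial_t-\frac{r\sqrt{1-r^2}}{t}\,\partial_r.
\]
Since $\partial_r(r^\nu g)=r^{\nu-1}(\nu g+r\partial_r g)$ and both $r\partial_r$ and $\partial_{\phi_j}$ preserve $\scrC^\infty_B$, a direct count of the terms produced gives $\partial_{\xi_i}a=t^{d-1}r^{\nu-1}\wt{g}_i(r,\phi)$ and $\partial_\mu a=t^{d-1}r^\nu g'(r,\phi)$ with $\wt{g}_i,g'\in\scrC^\infty_B$. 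Induction on $|\alpha|+j$ then yields
\[
|D^\alpha_\xi D^j_\mu a|\lesssim t^{d-|\alpha|-j}r^{\nu-|\alpha|}=|\xi|^{\nu-|\alpha|}|\xi,\mu|^{d-\nu-j},
\]
which is exactly \eqref{eq:estimate-hom-tilde}.

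For the reverse inclusion, given $a\in\wt{S}^{d,\nu}_{hom}$ I set $\wh{a}(r,\phi):=a(r\phi;\sqrt{1-r^2})$; homogeneity yields $a(\xi;\mu)=t^d\wh{a}(r,\phi)$, so it remains to show $\wh{a}\in r^\nu\scrC^\infty_B$. Evaluating \eqref{eq:estimate-hom-tilde} on the semi-sphere (where $|\xi|=r$, $|\xi,\mu|=1$) immediately gives $|\wh{a}(r,\phi)|\lesssim r^\nu$. For derivatives, the inverse chain rule expresses $r\partial_r\wh{a}$ and the tangential derivatives $\partial_{\phi_j}\wh{a}$ (in any local coordinates on $\sz^{n-1}$) as smooth, uniformly bounded linear combinations of $r\,(\partial_{\xi_i}a)(r\phi,\sqrt{1-r^2})$ and $r^2(\partial_\mu a)(r\phi,\sqrt{1-r^2})$, whose sizes are controlled by \eqref{eq:estimate-hom-tilde} to be $O(r^\nu)$. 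Iterating yields $(r\partial_r)^\ell\partial_\phi^\beta\wh{a}=O(r^\nu)$, whence $\wh{a}\in r^\nu\scrC^\infty_B$.

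The main obstacle is verifying that the apparently singular coefficients $1/(tr)$ and $1/t$ in the Euclidean-to-polar chain rule are in fact harmless on the class $t^{d'}r^{\nu'}\scrC^\infty_B$: the factor $1/(tr)$ always multiplies $\partial_{\phi_j}$, and $\partial_{\phi_j}$ of a function in $\scrC^\infty_B$ remains in $\scrC^\infty_B$, while the singular part $1/r$ inside $\partial_r=(1/r)(r\partial_r)$ is exactly absorbed into the decrement $r^{\nu}\to r^{\nu-1}$. Once this stability of $t^{d'}r^{\nu'}\scrC^\infty_B$ under $\partial_{\xi_i}$ and $\partial_\mu$ is recorded, both inclusions reduce to routine inductions and the final isomorphism statement follows by identifying $\wh{a}$ with $a|_{\whsz^n_+}$.
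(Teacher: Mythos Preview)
Your proof is correct and follows essentially the same route as the paper: both directions are handled by translating between the Euclidean derivatives $\partial_\xi,\partial_\mu$ and the totally characteristic derivatives $r\partial_r,\partial_\phi$ via the chain rule in the polar parametrization $(\xi,\mu)=(tr\phi,t\sqrt{1-r^2})$, after localizing near the north pole. The paper streamlines the computation by first reducing to $d=\nu=0$ (multiplying by $|\xi|^{-\nu}|\xi,\mu|^{\nu-d}$), using a tensor-product argument to reduce the forward inclusion to the product form $u(r)\omega(\phi)$, and exploiting $0$-homogeneity in the reverse inclusion to write $\wh a(r,\phi)=a(\phi;v(r))$ with $v(r)=\sqrt{1-r^2}/r$; you instead keep $d,\nu$ general and track the powers $t^{d'}r^{\nu'}$ explicitly through the induction, which is slightly more hands-on but avoids the tensor-product step and arrives at the same conclusion.
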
	
\begin{proof}
Let us first prove the inclusion ``$\subseteq$''. Let $a(\xi;\mu)$ be as in 
Definition \ref{def:weighted-hom}. 
By multiplication with $|\xi,\mu|^{-d}$, we may assume without loss of generality that $d=0$. 
In view of Lemma \ref{lem:localized} we may assume that $\wh{a}$ is supported in a small 
neighborhood of $(0,1)$. Hence, in the representation \eqref{eq:localcoord} we may assume that 
$\wh{a}(r,\phi)\in r^\nu\scrC^\infty_B((0,1)\times\sz^{n-1})$ 
vanishes for $r\ge \delta$ for some $\delta<1$. By a standard tensor-product 
argument\footnote{If $E$ is some Fr\'echet space, then 
$\scrC^\infty(\sz^n,E)=\scrC^\infty(\sz^n)\wh{\otimes}_\pi E$ as a completed projective 
tensor-product of Fr\'echet spaces. Thus any function $a\in \scrC^\infty(\sz^n,E)$ can be 
written as an infinite sum $\sum_{j=1}^\infty \lambda_j \omega_j e_j$ with zero-sequences
$(\omega_j)\subseteq\scrC^\infty(\sz^n)$, $(e_j)\subset E$, and an absolutely summable 
numerical sequence $(\lambda_j)$.}
we can assume that $\wh{a}$ is of the form  
 $$\wh{a}(r,\phi)=u(r)\omega(\phi),\qquad u\in r^\nu\scrC^\infty_B((0,1)),\quad 
   \omega\in \scrC^\infty(\sz^{n-1}),$$
where $u$ is supported in $(0,\delta]$.  
We also may assume $\nu=0$, since the homogeneous extension of degree $d=0$ of $r^{\nu}$ 
is just $|\xi|^\nu |\xi,\mu|^{-\nu}$. Summing up, we can assume $d=\nu=0$ and 
 $$a(\xi;\mu)=u\left(\frac{|\xi|}{|\xi,\mu|}\right)\omega\left(\frac{\xi}{|\xi|}\right).$$
By induction, it is then straightforward to verify that $D^\alpha_\xi D^j_\mu a(\xi;\mu)$ 
is a finite linear combination of terms of the form 
 $$((rD_r)^ku)\left(\frac{|\xi|}{|\xi,\mu|}\right)p_{j+\ell}(\xi,\mu)q_m(\xi),
    \quad \ell,m\in\nz_0,\quad \ell+m=|\alpha|,$$
with $p_{j+\ell}\in S^{-(j+\ell)}_{hom}$ and $q_m$ being smooth and homogeneous of degree 
$-m$ in $\xi\not=0$. 
This gives immediately the estimate 
 $$|D^\alpha_\xi D^j_\mu a(\xi;\mu)|
 \lesssim \sum_{\substack{\ell+m=|\alpha|\\ \ell,m\in\nz_0}}|\xi|^{-m}|\xi,\mu|^{-j-\ell}
 \lesssim |\xi|^{-|\alpha|}|\xi,\mu|^{-j}.$$
Next we shall show the inclusion ``$\supseteq$''. Let $a\in\wt{S}^{d,\nu}_{hom}$ be given.  
It is enough to consider the case $d=\nu=0$, since 
$a\in \wt{S}^{d,\nu}_{hom}$ if and only if 
$|\xi|^{-\nu}|\xi,\mu|^{\nu-d}a\in \wt{S}^{0,0}_{hom}$ and 
$|\xi|^{-\nu}|\xi,\mu|^{\nu-d}=r^{-\nu}$ in polar-coordinates. 
Again, $a$ can be assumed do have support in a small conical neighborhood containing $(0,1)$. Thus 
 $$\wh{a}(r,\phi)=a\left(r\phi,\sqrt{1-r^2}\right)
     =a\left(\phi;v(r)\right),\qquad v(r)=\frac{\sqrt{1-r^2}}{r},$$ 
vanishes for $r\ge\delta$ for some $\delta<1$. 
Extend $\wh{a}$ from $(0,1)\times\sz^{n-1}$ to $(0,1)\times(\rz^n\setminus\{0\})$ by 
 $$\wh{a}(r,\phi)=a\left(\frac{\phi}{|\phi|};v(r)\right),\qquad 0\not=\phi\in\rz^n.$$
Using that $r v^\prime(r)/v(r)=1/(r^2-1)$, it  is straightforward to see that 
$(r\partial_r)^\ell \partial^\alpha_\phi \wh{a}(r,\phi)$ is a linear combination of terms of 
the form 
 $$((\mu\partial_\mu)^j\partial^\beta_\xi a)\left(\frac{\phi}{|\phi|},v(r)\right)q(\phi)g(r),
     \qquad j\le \ell,\quad\beta\le\alpha,$$
where $q$ is  smooth and homogeneous of degree $-|\alpha|$ in $\phi\not=0$ and 
$g\in\scrC^\infty([0,1))$. Thus $(r\partial_r)^\ell \partial^\alpha_\phi \wh{a}(r,\phi)$ is 
bounded for $r\in(0,\delta]$ and $\phi$ belonging to a small neighborhood of the unit-sphere 
$\sz^n$. This shows the claim. 
\end{proof}

In particular, we see that $\wt{S}^{d,\nu}_{hom}$ does not behave well under inversion: if 
$a\in r^\nu\scrC^\infty(\whsz^n_+)$ is point-wise invertible, the inverse will, in general, not belong 
to such a weighted space. To guarantee this, an additional control at the singularity of $a$ is needed. 
This will be addressed in the sequel. 

\begin{theorem}\label{thm:main02}
$S^{d,\nu}_{hom}=\wt{S}^{d,\nu}_{hom}+S^{d}_{hom}$
for every $d,\nu\in\rz$. 
\end{theorem}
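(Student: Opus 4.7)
The inclusion $\supseteq$ is immediate by comparing estimates: both $\wt{S}^{d,\nu}_{hom}$ and $S^d_{hom}$ sit inside $S^{d,\nu}_{hom}$, since their defining bounds each dominate one of the two summands on the right-hand side of the $S^{d,\nu}_{hom}$ estimate. Moreover, when $\nu\le 0$ the inequality $|\xi|^{\nu-|\alpha|}|\xi,\mu|^{d-\nu-j}\ge|\xi,\mu|^{d-|\alpha|-j}$ (since $|\xi|\le|\xi,\mu|$) yields $S^{d,\nu}_{hom}=\wt{S}^{d,\nu}_{hom}$ and the theorem is trivial. Henceforth assume $\nu>0$.

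The plan for the remaining inclusion is cutoff plus a Taylor correction. Pick a zero-homogeneous $\omega$ equal to $1$ for $|\xi|/|\xi,\mu|\le 1/4$ and vanishing for $|\xi|/|\xi,\mu|\ge 1/2$. By Lemma \ref{lem:localized}, $(1-\omega)a\in S^d_{hom}$, reducing the task to decomposing the cutoff piece $\omega a$. Dividing by $|\xi,\mu|^d$ we may further assume $d=0$. Set $N:=\lceil\nu\rceil$. The estimate $|\partial^\gamma_\xi a(\xi,1)|\lesssim|\xi|^{\nu-|\gamma|}+1$ together with smoothness of $a$ on $\xi\ne 0$ ensures that the limits $c_\gamma:=\partial^\gamma_\xi a(0,1)$ exist for $|\gamma|<N$. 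Choose $\chi\in\scrC^\infty([0,\infty))$ equal to $1$ near $0$ and vanishing outside a small interval (small enough that $\wt c$ below is supported in $|\xi|\le|\xi,\mu|/2$), and define
\begin{equation*}
 \wt c(\xi,\mu):=\chi\!\left(\tfrac{|\xi|^2}{\mu^2}\right)\sum_{|\gamma|<N}\frac{c_\gamma}{\gamma!}\!\left(\tfrac{\xi}{\mu}\right)^{\!\gamma}\quad(\mu>0),\qquad \wt c(\xi,0):=0.
\end{equation*}
Then $\wt c$ is zero-homogeneous and vanishes identically near $\{\mu=0\}\setminus\{0\}$; writing $\wt c(\xi,\mu)=F(\xi/\mu)$ with $F\in\scrC^\infty_c(\rz^n)$ and using that $\mu\sim|\xi,\mu|$ on $\mathrm{supp}\,\wt c$ shows $\wt c\in S^0_{hom}$. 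By zero-homogeneity of $a$ one checks $\partial^\gamma_\xi\wt c(0,\mu)=\mu^{-|\gamma|}c_\gamma=\partial^\gamma_\xi a(0,\mu)$ for $|\gamma|<N$.

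It remains to show $a':=\omega a-\wt c\in\wt{S}^{0,\nu}_{hom}$. By the matching of Taylor coefficients $\partial^\beta_\xi\partial^j_\mu a'(0,\mu)\equiv 0$ for all $|\beta|<N$ and $j\ge 0$. Taylor's theorem applied in $\xi$ at fixed $\mu=1$, with integral remainder of order $N-|\beta|$, then gives for $|\beta|<N$
\begin{equation*}
 |\partial^\beta_\xi\partial^j_\mu a'(\xi,1)|\lesssim|\xi|^{N-|\beta|}\int_0^1(1-t)^{N-|\beta|-1}\big(|t\xi|^{\nu-N}+1\big)\,dt\lesssim|\xi|^{\nu-|\beta|},
\end{equation*}
the Beta-type integral converging because $\nu-N+1>0$. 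For $|\beta|\ge N$ the raw $S^{0,\nu}_{hom}$ estimate itself yields $|\partial^\beta_\xi\partial^j_\mu a'(\xi,1)|\lesssim|\xi|^{\nu-|\beta|}+1\lesssim|\xi|^{\nu-|\beta|}$ near the north pole, since $\nu-|\beta|\le 0$ makes $|\xi|^{\nu-|\beta|}\ge 1$ in that region; away from the north pole both $\omega$ and $\wt c$ are supported on $|\xi|\sim|\xi,\mu|$, where $|\xi|^{\nu-|\beta|}\sim|\xi,\mu|^{\nu-|\beta|}$ and the bound is trivial. Extending by homogeneity delivers the full $\wt{S}^{0,\nu}_{hom}$ estimate on $\xi\ne 0$.

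The main obstacle is the simultaneous demand on $\wt c$: it must be genuinely smooth on $(\xi,\mu)\ne 0$, zero-homogeneous, and produce vanishing of the mixed $\xi,\mu$-derivatives of $a'$ at $\xi=0$ to the order dictated by $\nu$. The explicit formula above handles all three, but the bookkeeping—Leibniz contributions from $\chi$, the chain rule in $\xi/\mu$, and the precise balance between $|\xi|^{\nu-N}$-blow-up of the top-order derivatives of $a'$ and the $|\xi|^{N-|\beta|}$-decay from the Taylor remainder—must be tracked carefully to obtain the uniform $\wt{S}^{0,\nu}_{hom}$ bound.
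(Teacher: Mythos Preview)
Your proof is correct and follows essentially the same strategy as the paper: after reducing to $d=0$ and localizing near the north pole, subtract the Taylor polynomial in $\xi$ (your $\wt c$ is precisely the paper's $p_N$ made zero-homogeneous via $\xi/\mu$) and show the remainder satisfies the $\wt S^{0,\nu}_{hom}$ estimates via the integral remainder formula, using that $\int_0^1 t^{\nu-N}\,dt$ converges.

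The differences are presentational. The paper passes through Theorem~\ref{thm:main01} and works in polar coordinates $(r,\phi)$ on $\sz^n_+$, showing the remainder lies in $r^\nu\scrC^\infty_B(\whsz^n_+)$; you instead verify the symbol estimates directly at $\mu=1$ and extend by homogeneity. The paper also cites Grubb's book for the $C^N$ extension of $a$ to $\xi=0$, whereas you argue this implicitly from the boundedness of $|\partial^\gamma_\xi a(\xi,1)|$ for $|\gamma|<N$---which does require a short argument (radial limits exist since the gradient is integrable along rays; angular independence follows since the tangential variation is $O(r^{\nu-N+1})$).

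Two small points to tighten. First, the sentence ``away from the north pole both $\omega$ and $\wt c$ are supported on $|\xi|\sim|\xi,\mu|$'' is garbled: both are supported \emph{near} the north pole, and what you need is simply that $a'\equiv 0$ outside a compact $\xi$-set at $\mu=1$, making the estimate there trivial. Second, the claim $\partial^\beta_\xi\partial^j_\mu a'(0,\mu)=0$ for $j\ge 1$ deserves one line: by Euler's relation $\partial_\mu a'=-\mu^{-1}\xi\cdot\nabla_\xi a'$, an induction on $j$ expresses $\partial^\beta_\xi\partial^j_\mu a'$ in terms of $\xi^\delta\partial^{\beta'}_\xi a'$ with $|\beta'|\le|\beta|+j$ and $|\delta|\ge|\beta'|-|\beta|$, each of which vanishes at $\xi=0$ (using $|\xi|\cdot|\xi|^{\nu-N}\to 0$ for the top-order terms).
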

\begin{proof}
The first identity is true in case $\nu\le0$, since then 
$S^{d}_{hom}\subseteq{S}^{d,\nu}_{hom}=\wt{S}^{d,\nu}_{hom}$ 
by definition of the involved spaces. 

It remains to consider $\nu>0$. The inclusion $\supseteq$ is clear. 
By multiplication with $|\xi,\mu|^{-d}$ we may assume without loss of generality that $d=0$. 

Let $a\in S^{0,\nu}_{hom}$ be given. We use Theorem \ref{thm:main01} and show that 
the restriction of $a$ to $\sz^n_+$ is the sum of a smooth function and a function belonging to 
$r^{\nu}\scrC^\infty_B(\whsz^n_+)$. 
By Lemma \ref{lem:localized} it suffices to find a decomposition for $(1-\chi)a$. 

Let $N$ be the largest natural number with $N<\nu$. 
It can be shown $($see Lemma 2.1.10 and Proposition 2.1.11 in \cite{Grub}$)$ 
that $a$ extends as an $N$-times continuously differentiable function to 
$\rz^n\times\rpbar\setminus\{(0,0)\}$ and if $p_N(\xi;\mu)$ denotes the Taylor-polynomial 
of $a$ in $\xi$ around $\xi=0$, then $p_N$ is smooth in $\mu>0$ and 
$$r_N(\xi;\mu):= a(\xi;\mu)-p_N(\xi;\mu) =\sum_{|\alpha|=N+1}\frac{N+1}{\alpha!}
\xi^\alpha\int_0^1 (1-t)^N (\partial^\alpha_\xi a)(t\xi;\mu)\,dt.$$
Since $(1-\chi)p_N$ is smooth on $\sz^n_+$, it remains to verify that the restriction of 
$(1-\chi)r_N$ belongs to $r^{\nu}\scrC^\infty_B(\whsz^n_+)$. To this end let
 $$r_\alpha(\xi,\mu)
   ={\xi}^\alpha\int_0^1 (1-t)^N \partial^\alpha_\xi a(t\xi;\mu)\,dt,\qquad|\alpha|=N+1.$$
Then, in polar-coordinates,  
 $$\wh{r}_\alpha(r,\phi)
   =\phi^\alpha\int_0^1 (1-t)^N (\partial^\alpha_\xi a)(t\phi;v(r))\,dt,\qquad 
   v(r)=\frac{\sqrt{1-r^2}}{r}.$$
It suffices to show that $\wh{r}_\alpha\in r^\nu\scrC^\infty_B((0,\eps),\scrC^\infty(\sz^{n-1}))$ for 
some $\eps>0$. We have 
 $$|\wh{r}_\alpha(r,\phi)|\le \int_0^1|t\phi|^{\nu-|\alpha|}|t\phi,v(r)|^{-\nu}\,dt
   =r^{\nu}\int_0^1 t^{-1+(\nu-N)}|tr\phi,\sqrt{1-r^2}|^{-\nu}\,dt.$$
Since $|tr\phi,\sqrt{1-r^2}|^{-\nu}\lesssim1$ for $r\le\delta$. we find that 
$r^{-\nu}|\wh{r}_\alpha(r,\phi)|$ is bounded. 
Derivatives of $\wh{r}_\alpha$ are treated similarly, proceeding as in the proof of 
Theorem \ref{thm:main01}.  
\end{proof}

This decomposition also shows how to associate with a symbol $a\in S^{d,\nu}_{hom}$ a symbol 
$p\in S^{d,\nu}$ with homogeneous principal symbol equal to $a$. 
In fact, writing $a=\wt{a}+a_{\mathrm{smooth}}$ 
with $a\in\wt{S}^{d,\nu}_{hom}$ and $a_{\mathrm{smooth}}\in S^d_{hom}$, choose 
 $$p(\xi;\mu)=\wt\chi(\xi)\wt{a}(\xi;\mu)+\chi(\xi,\mu) a_{\mathrm{smooth}}(\xi;\mu)$$
with arbitrary zero-excision functions $\chi(\xi,\mu)$ and $\wt\chi(\xi)$. 
Changing the cut-off functions induces remainders in $S^{d-\infty,\nu-\infty}$; 
hence we may assume that 
$\wt\chi(\xi)\chi(\xi,\mu)=\wt\chi(\xi)$ and $p=\wt{\chi}a+(1-\wt{\chi})\chi a_{\mathrm{smooth}}$. 
Then taking another representation 
$a=\wt{a}^\prime+a_{\mathrm{smooth}}^\prime$ with associated symbol $p^\prime$, we find 
 $$p-p^\prime=(1-\chi)(\xi)\kappa(\xi,\mu)(a_{\mathrm{smooth}}-a^\prime_{\mathrm{smooth}}).$$
Noting that $($after restriction to the unit-sphere$)$
$${a}_{\mathrm{smooth}}-\wt{a}^\prime_{\mathrm{smooth}}=r^{n(\nu)}b_{\mathrm{smooth}},
\qquad n(\nu)=\text{smallest integer $\ge\nu$},$$ 
with a function $b\in\scrC^\infty(\sz^n_+)$ one concludes that $p-p^\prime$ belongs to 
$S^{d-\infty,n(\nu)-\infty}\subseteq S^{d-\infty,\nu-\infty}$. 

In combination with Lemma \ref{lem:localized} we obtain the following: 

\begin{theorem}\label{thm:decomposition}
Let $V=\{(\xi,\mu) \mid \mu\ge c|\xi|\}$ with some constant $c\ge0$. Then 
	$${S}^{d,\nu}=\wt{S}^{d,\nu}_V+S^{d},$$
where $\wt{S}^{d,\nu}_V\subset \wt{S}^{d,\nu}$ is the subspace of those symbols 
whose homogeneous components have support in $V$. 
\end{theorem}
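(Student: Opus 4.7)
The easy inclusion $\wt{S}^{d,\nu}_V + S^d \subseteq S^{d,\nu}$ follows by comparing the defining estimates in Definitions \ref{def:Grubb01}, \ref{def:weak01}, and \ref{def:Hoer01}: both $\wt{S}^{d,\nu}_{1,0}$ and $S^d_{1,0}$ are contained in $S^{d,\nu}_{1,0}$, and at the level of the poly-homogeneous expansions the homogeneous components of either summand already belong to $S^{d-\ell,\nu-\ell}_{hom}$.

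For the reverse inclusion I would combine Theorem \ref{thm:main02} with a localization provided by Lemma \ref{lem:localized}. Fix a $0$-homogeneous cut-off $\chi_V\in S^0_{hom}$ supported in $V$ (in $\{\mu\ge (c+\eps)|\xi|\}$ for some small $\eps>0$ if $c>0$) and equal to $1$ in a conical neighborhood of the north-pole $(0,1)$. Given $a\in S^{d,\nu}$ with homogeneous components $a_\ell\in S^{d-\ell,\nu-\ell}_{hom}$, Theorem \ref{thm:main02} decomposes $a_\ell=\wt a_\ell + a_\ell^{\mathrm{sm}}$ with $\wt a_\ell\in \wt{S}^{d-\ell,\nu-\ell}_{hom}$ and $a_\ell^{\mathrm{sm}}\in S^{d-\ell}_{hom}$, and I split further
$$a_\ell \;=\; \underbrace{\chi_V\,\wt a_\ell}_{=:\,b_\ell} \;+\; \underbrace{(1-\chi_V)\,\wt a_\ell + a_\ell^{\mathrm{sm}}}_{=:\,p_\ell}.$$
Multiplication by the bounded $0$-homogeneous factor $\chi_V$ preserves the $\wt{S}$-estimates of Definition \ref{def:weak02}, so $b_\ell\in \wt{S}^{d-\ell,\nu-\ell}_{hom}$ with $\mathrm{supp}\,b_\ell\subseteq V$. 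Since $1-\chi_V$ vanishes in a neighborhood of the north-pole, Lemma \ref{lem:localized} gives $(1-\chi_V)\wt a_\ell\in S^{d-\ell}_{hom}$ and hence $p_\ell\in S^{d-\ell}_{hom}$.

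Assembling the full symbols is then a standard asymptotic-sum argument. Take $b\in\wt{S}^{d,\nu}$ with $b\sim\sum_\ell b_\ell$ as in Definition \ref{def:weak03} and $p\in S^d$ with $p\sim\sum_\ell p_\ell$ as in Definition \ref{def:Hoer-poly}. Since every $b_\ell$ is supported in $V$, the homogeneous components of $b$ are supported in $V$, i.e., $b\in\wt{S}^{d,\nu}_V$. The residual $r:=a-b-p$ has a vanishing asymptotic expansion in $S^{d,\nu}$, hence $r\in S^{d-\infty,\nu-\infty}$; since its homogeneous components are zero it lies trivially in $\wt{S}^{d,\nu}_V$ and can be absorbed into $b$. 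The only mild technical nuisance is that $\wt{S}^{d,\nu}$ and $S^d$ use different zero-excision conventions ($\chi(\xi)$ versus $\chi(\xi,\mu)$); the usual ``change of zero-excision function'' argument shows that switching between them produces only errors with compact support in $\xi$, which are absorbable into $S^{d-\infty,\nu-\infty}$ despite the $\mu$-growth that this class permits.
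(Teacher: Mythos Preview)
Your proposal is correct and follows essentially the same route as the paper: the paper derives Theorem~\ref{thm:decomposition} directly from Theorem~\ref{thm:main02} ``in combination with Lemma~\ref{lem:localized}'', which is precisely your splitting $a_\ell=\chi_V\wt a_\ell+\big((1-\chi_V)\wt a_\ell+a_\ell^{\mathrm{sm}}\big)$ followed by asymptotic summation. The paper leaves the assembly of the full symbols and the handling of the zero-excision conventions implicit, so your write-up is in fact more detailed than the paper's own argument.
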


\section{Expansion at infinity}
\label{sec:05}

One of the motivations for this paper is to extend the concept of ellipticity in the spaces 
$S^{d,\nu}$ with positive regularity number $\nu$ to the case $\nu=0$. 
Ellipticity should still be characterized by the invertibility of one or more principal symbols 
$($plus some uniformity assumptions for preserving the $\scrC^\infty_b$ structure in $x)$ 
and should imply invertibility of $a(x,D;\mu)$ for large values of the parameter $\mu$. 
Recall that $S^{d,0}=\wt{S}^{d,0}$; for systematic reasons we address this 
question in $\wt{S}^{d,\nu}$ for arbitrary $\nu$. 

In a first step, in Section \ref{sec:05.1}, we introduce a subclass $\wtbfS^{d,\nu}_{1,0}$ of 
$\wt{S}^{d,\nu}_{1,0}$ in which elliptic elements are invertible for large values of $\mu$. 
The ellipticity involves an estimate of the full symbol and the invertibilty of a so-called limit-symbol; 
the latter plays the role of a new principal symbol. In a second step we pass to the subclass of 
poly-homogeneous symbols $\wtbfS^{d,\nu}$ where the full symbol can be replaced by the 
homogeneous principal symbol. 

\subsection{Symbols with expansion at infinity}
\label{sec:05.1}

\begin{definition}\label{def:symbol-expansion}
We denote by $\wtbfS^{d,\nu}_{1,0}$, $d,\nu\in\rz$, 
the subspace of $\wt{S}^{d,\nu}_{1,0}$ consisting of all symbols $a(x,\xi;\mu)$ for which 
exists a sequence of symbols $a^\infty_{[\nu+j]}\in S^{\nu+j}_{1,0}(\rz^n)$, 
$j\in\nz_0$, such that  
    $$r_{a,N}(x,\xi;\mu):=a(x,\xi;\mu)
      -\sum_{j=0}^{N-1}a_{[\nu+j]}^\infty(x,\xi)[\xi,\mu]^{d-\nu-j}
      \in \wt{S}^{d,\nu+N}_{1,0}$$ 
for every $N\in\nz_0$; here $[\xi,\mu]\in S^1$ denotes a smooth positive function that coincides 
with $|(\xi,\mu)|$ outside some compact set. The symbol $a^\infty_{[\nu]}$ shall be called the 
\emph{principal limit-symbol} of $a$. 
\end{definition}

The definition does not depend on the choice of the function $[\xi,\mu]$, since the difference 
of two such functions belongs to $\scrC^\infty_{\mathrm{comp}}(\rpbar\times\rz^n)$; 
for a further discussion see Section \ref{sec:07.2} below. 
The coefficients $a_{[\nu+j]}^\infty(x,\xi)$ are uniquely determined by $a$. 
$\wtbfS^{d,\nu}_{1,0}$ is a Fr\'echet space when equipped with the projective topology with respect to 
the mappings 
 $$a\mapsto r_{a,N}:\wtbfS^{d,\nu}_{1,0}\lra \wt{S}^{d,\nu+N}_{1,0},\qquad 
    a\mapsto a_{[\nu+j]}^\infty:\wtbfS^{d,\nu}_{1,0}\lra S^{\nu+j}_{1,0}(\rz^n).$$
Note that $\wtbfS^{d-N,\nu-N}_{1,0}\subset \wtbfS^{d,\nu}_{1,0}$ whenever $N\in\nz$; we define 
 $$\wtbfS^{d-\infty,\nu-\infty}_{1,0}
   =\mathop{\mbox{\large$\cap$}}_{N\in\nz}\wtbfS_{1,0}^{d-N,\nu-N}.$$
Obviously, the maps 
  \begin{align*}
   a\mapsto \spk{\xi}^e a:\wtbfS^{d,\nu}_{1,0}\lra \wtbfS^{d+e,\nu+e}_{1,0},\qquad 
   a\mapsto [\xi,\mu]^e a:\wtbfS^{d,\nu}_{1,0}\lra \wtbfS^{d+e,\nu}_{1,0}, 
  \end{align*}
are isomorphisms with $(\spk{\xi}^e a)_{[\nu+e+j]}^\infty=\spk{\xi}^e a_{[\nu+j]}^\infty$ and 
$([\xi,\mu]^e a)_{[\nu+j]}^\infty=a_{[\nu+j]}^\infty$. 

\begin{example}\label{ex:without-parameter}
Let $a(x,\xi)\in S^d_{1,0}(\rz^n)$ be independent of $\mu$. 
Then $a\in\wtbfS^{d,d}_{1,0}$ with $a_{[d]}^\infty=a$ and $a_{[d+j]}^\infty=0$ 
for every $j\ge1$. 
\end{example}

\begin{proposition}\label{prop:classical}
Let $a\in S^d$. Then $a\in\wtbfS^{d,0}_{1,0}$ with 
principal limit-symbol  
 $$a_{[0]}^\infty(x,\xi)=a_{0}(x,0;1),$$
i.e., the homogeneous principal symbol of $a$ evaluated in $(\xi,\mu)=(0,1)$. Moreover, 
$a_{[j]}^\infty(x,\xi)$ is a homogeneous polynomial in $\xi$ of order $j$. 
\end{proposition}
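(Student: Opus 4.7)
The strategy is to Taylor-expand each homogeneous component of $a$ around the ``north-pole'' direction $(\xi,\mu)=(0,1)$, using the intrinsic weight $[\xi,\mu]$ as the scaling variable.

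Fix an asymptotic expansion $a\sim\sum_{\ell\ge 0}a_\ell$ with $a_\ell\in S^{d-\ell}_{hom}$. For $|\xi,\mu|\ge 1$ one has $a_\ell(x,\xi;\mu)=[\xi,\mu]^{d-\ell}f_\ell(x,\xi/[\xi,\mu])$, where $f_\ell(x,\eta):=a_\ell(x,\eta;\sqrt{1-|\eta|^2})$ is smooth on $\{|\eta|<1\}$ with $\scrC^\infty_b$-dependence on $x$, since $(0,1)$ lies in the open locus on which $a_\ell$ is smooth. The Taylor expansion of $f_\ell$ at $\eta=0$,
$$f_\ell(x,\eta)=\sum_{k=0}^{M-1}P_{\ell,k}(x,\eta)+R_{\ell,M}(x,\eta),$$
with $P_{\ell,k}(x,\cdot)$ a homogeneous polynomial of degree $k$ in $\eta$ and $R_{\ell,M}$ vanishing of order $M$ at the origin, becomes, upon substituting $\eta=\xi/[\xi,\mu]$ and using the $k$-homogeneity $P_{\ell,k}(x,\xi/[\xi,\mu])=[\xi,\mu]^{-k}P_{\ell,k}(x,\xi)$,
$$a_\ell(x,\xi;\mu)=\sum_{k=0}^{M-1}P_{\ell,k}(x,\xi)\,[\xi,\mu]^{d-\ell-k}+\tilde R_{\ell,M}(x,\xi;\mu).$$
After multiplication by a zero-excision cut-off in $(\xi,\mu)$, the rescaled Taylor remainder $\tilde R_{\ell,M}$ satisfies $\wt{S}^{d-\ell,M}_{1,0}$-type estimates, hence belongs to $\wt{S}^{d,M+\ell}_{1,0}$ via the inclusion $\wt{S}^{d-\ell,M}_{1,0}\subset\wt{S}^{d,M+\ell}_{1,0}$.

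Reorganizing the double sum by the total index $j:=\ell+k$ produces the desired expansion
$$a(x,\xi;\mu)\sim\sum_{j\ge 0}a^\infty_{[j]}(x,\xi)\,[\xi,\mu]^{d-j},\qquad a^\infty_{[j]}(x,\xi):=\sum_{\ell+k=j}P_{\ell,k}(x,\xi),$$
so that each $a^\infty_{[j]}$ is a polynomial in $\xi$ of degree at most $j$, hence lies in $S^j_{1,0}(\rz^n)$, with top-degree piece equal to the homogeneous polynomial $P_{0,j}$ of degree $j$ coming from the principal term $a_0$. For $j=0$ only the contribution $\ell=k=0$ survives, yielding $a^\infty_{[0]}(x)=f_0(x,0)=a_0(x,0;1)$, as claimed. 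To verify $r_{a,N}\in\wt{S}^{d,N}_{1,0}$ for every $N$, one truncates the outer asymptotic at $\ell<L$ (noting $a-\sum_{\ell<L}a_\ell\in S^{d-L}_{1,0}\subset\wt{S}^{d,L}_{1,0}$) and the inner Taylor series so that $M+\ell\ge N$; every error term then lies in $\wt{S}^{d,N}_{1,0}$.

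The main obstacle is the two-layered bookkeeping: the outer poly-homogeneous expansion in $(\xi,\mu)$ has to be merged consistently with the inner Taylor expansion at the north pole, while switching the scaling from $(\xi,\mu)$-homogeneity of the $a_\ell$ to $[\xi,\mu]$-weighting of the limit expansion. Since the substitution $\eta=\xi/[\xi,\mu]$ is only legitimate in the conical region $|\xi|\le(1-\eps)[\xi,\mu]$ near the north pole, one localizes by a zero-excision cut-off supported there; contributions from the complementary cone $|\xi|\gtrsim\mu$ pose no difficulty because, by Lemma \ref{lem:localized}, the $a_\ell$'s behave there as strongly parameter-dependent symbols in $S^{d-\ell}$ whose formal Taylor expansion at the north pole is identically zero, so they feed directly into the remainders at every order.
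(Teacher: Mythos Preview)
Your proposal is correct and follows essentially the same route as the paper: reduce to a single homogeneous component $a_\ell$, Taylor-expand its restriction to the unit semi-sphere at the north pole $(0,1)$, use $(d{-}\ell)$-homogeneity to convert this into the $[\xi,\mu]$-weighted expansion, and check that the Taylor remainder (near the pole) and the away-from-pole piece both land in the right $\wt{S}^{d,N}_{1,0}$-spaces. The paper's organization is slightly different---it isolates the remainder estimate as a separate step by writing $u(\xi)=\sum_{|\alpha|=N}\xi^\alpha u_\alpha(\xi)$ and thereby exhibits $\kappa a$ explicitly as $\sum\xi^\alpha p_\alpha$ with $p_\alpha\in S^{d-\ell-N}$---but your estimate ``$\tilde R_{\ell,M}\in\wt{S}^{d-\ell,M}_{1,0}$'' amounts to the same thing. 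Your observation that the aggregate $a^\infty_{[j]}=\sum_{\ell+k=j}P_{\ell,k}$ is a polynomial of degree at most $j$ (with top piece $P_{0,j}$ homogeneous of degree $j$) is in fact more precise than the proposition's phrasing; the paper's proof likewise only produces homogeneous degree-$j$ polynomials for each individual $a_\ell$ viewed in $\wtbfS^{d-\ell,0}_{1,0}$, so after the shift to $\wtbfS^{d,0}_{1,0}$ the coefficients pick up lower-degree contributions exactly as you compute.
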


Note that the proof of Proposition \ref{prop:classical} is constructive, i.e., for given $a$ all 
symbols $a_{[j]}^\infty(x,\xi)$ can be calculated explicitly.  
\forget{
For example, 
if $a(\xi;\mu)=\mu^{d}$ with $d\in\nz$, then $a^\infty_{[j]}(\xi)$ is the $j$-th term in the 
Taylor-series of $a(\xi;\sqrt{1-|\xi|^2})=(1-|\xi|^2)^{d/2}$ centered in $0$.  
}

\begin{proof}[Proof of Proposition $\ref{prop:classical}$]
For convenience assume independence on the $x$-variable. 
First note that $S^{d-N}\subseteq \wt{S}^{d-N,0}_{1,0}\subseteq \wt{S}^{d,N}_{1,0}$, since 
 $$\spk{\xi,\mu}^{d-N-|\alpha|-j}\le\spk{\xi}^{-|\alpha|}\spk{\xi,\mu}^{d-N-j}
     \le \spk{\xi}^{N-|\alpha|}\spk{\xi,\mu}^{d-N-j}.$$  
Thus we may assume that $a(\xi;\mu)=\chi(\xi,\mu)a_\ell(\xi;\mu)$ with 
$a_\ell\in S^{d-\ell}_{hom}$, $\ell\ge 0$, and a zero-excision function $\chi(\xi,\mu)$, 
and to show that $a$ it belongs to 
$\wtbfS^{d-\ell,0}_{1,0}\subseteq\wtbfS^{d,0}_{1,0}$. 

Let $\wh\kappa\in\scrC^\infty(\sz^n_+)$ be supported close to $(0,1)$ and $\kappa\equiv 1$ 
near $(0,1)$ and define $\kappa(\xi;\mu):=\wh\kappa((\xi,\mu)/|\xi,\mu|)$. 

\textbf{Step 1:} $1-\kappa$ is supported in a conical set $V$ of the form 
$V=\{(\xi,\mu)\mid |\xi|\ge c\mu\}$ with $c>0$. Therefore 
$(1-\kappa)(\xi;\mu)a(\xi;\mu)\in \wt{S}^{d-\ell,L}$ for every $L$,  
since $\spk{\xi}\sim\spk{\xi,\mu}$ on its support, hence 
$\spk{\xi,\mu}^{d-\ell-|\alpha|-j}\sim\spk{\xi}^{L-|\alpha|}\spk{\xi,\mu}^{d-\ell-L-j}$.

\textbf{Step 2:} Assume that $a_\ell|_{\sz^n_+}$ vanishes to order $N$ in $(\xi,\mu)=(0,1)$. 
Then 
 $$u(\xi):=(\kappa a_\ell)\big(\xi;\sqrt{1-|\xi|^2}\big)$$
is a smooth function with compact support in $B:=\{\xi\mid|\xi|<1\}$ that vanishes to order 
$N$ in $\xi=0$. Write 
$u(\xi)=\sum\limits_{|\alpha|=N}\xi^\alpha u_\alpha(\xi)$ with $u_\alpha$ also 
compactly supported in $B$. Then 
\begin{align*}
 (\kappa a_\ell)(\xi;\mu)
 =|\xi,\mu|^{d-\ell} u(\xi/|\xi,\mu|)
 =|\xi,\mu|^{d-\ell-N}\sum_{|\alpha|=N}\xi^\alpha u_\alpha(\xi/|\xi,\mu|)
\end{align*}
and therefore 
 $$(\kappa a)(\xi;\mu)= \sum_{|\alpha|=N}\xi^\alpha p_\alpha(\xi;\mu),\qquad 
    p_\alpha\in S^{d-\ell-N}.$$
Hence $(\kappa a)(\xi;\mu)\in \wt{S}^{d-\ell,N}_{1,0}$ and thus, 
by Step 1,  $a(\xi;\mu)\in \wt{S}^{d-\ell,N}_{1,0}$.  

\textbf{Step 3:} Let 
$p(\xi;\mu)=\sum\limits_{|\alpha|\le N-1}u_\alpha\xi^\alpha[\xi,\mu]^{d-\ell-|\alpha|}$ 
where $u_\alpha$ is the $\alpha$-th Taylor coefficient of 
$a_{\ell}\big(\xi;\sqrt{1-|\xi|^2}\big)$ in $\xi=0$. 
Then $p\in S^{d-\ell}$ is homogeneous of degree $d-\ell$ for $|\xi,\mu|\ge1$; let 
$p_\ell\in S^{d-\ell}_{hom}$ be the homogeneous principal symbol. Write 
 $$a-p=\chi(a_{\ell}-p_{\ell})-(p-\chi p_\ell)=\chi(a_{\ell}-p_{\ell})
     \mod S^{-\infty}.$$
Since $(a_{\ell}-p_\ell)|_{\sz^n_+}$ vanishes to order $N$ in $(0,1)$, we conclude by Step 2 
that  $a-p\in \wt{S}^{d-\ell,N}_{1,0}$. Hence 
 $$a(\xi;\mu)\equiv \sum_{j=0}^{N-1}a^\infty_{[j]}(\xi)[\xi,\mu]^{d-\ell-j},\qquad 
     a^\infty_{[j]}(\xi)=\sum_{|\alpha|=j}u_\alpha\xi^\alpha,$$
modulo $\wt{S}^{d-\ell,N}_{1,0}$. 
\end{proof}

\forget{
\begin{lemma}
If $a_k\in \wtbfS^{d_k,\nu_k}_{1,0}$ for $k=0,1$, then 
$a_1a_0\in \wtbfS_{1,0}^{d_0+d_1,\nu_0+\nu_1}$ with 
 $$(a_1a_0)_{[\nu_0+\nu_1+j]}^\infty
    =\sum_{k+\ell=j}a_{1,[\nu_1+\ell]}^\infty a_{0,[\nu_0+k]}^\infty.$$
\end{lemma}
\begin{proof}
Obviously, $a:=a_1a_0\in\wt{S}^{d_0+d_1,\nu_0+\nu_1}_{1,0}$. 
By a straightforward calculation, 
\begin{align*} 
 \Big(\sum_{j=0}^{N-1}[\xi,\mu]^{d_1-\nu_1-j}&a_{1,[\nu_1-j]}^\infty\Big) 
   \Big(\sum_{j=0}^{N-1}[\xi,\mu]^{d_0-\nu_0-j}a_{0,[\nu_0+j]}^\infty\Big)\\
   &\equiv \sum_{j=0}^{N-1}[\xi,\mu]^{d_0+d_1-(\nu_0+\nu_1)-j}a_{[\nu_0+\nu_1+j]}^\infty
   \mod \wt{S}^{d_0+d_1,\nu_0+\nu_1+N}_{1,0}. 
\end{align*} 
Thus we find, modulo $\wt{S}^{d_0+d_1,\nu_0+\nu_1+N}_{1,0}$, 
 $$a-\sum_{j=0}^{N-1}[\xi,\mu]^{d_0+d_1-(\nu_0+\nu_1)-j}a_{[\nu_0+\nu_1+j]}^\infty
   \equiv a_1 r_{a_0,N}+r_{a_1,N}\sum_{j=0}^{N-1}[\xi,\mu]^{d_0-\nu_0-j}a_{0,[\nu_0+j]}^\infty.$$
Both summands on the right-hand side belong to $\wt{S}^{d_0+d_1,\nu_0+\nu_1+N}_{1,0}$. 

The continuity follows from the closed graph theorem, since multiplication is a continuous map
$\wt{S}^{d_1,\nu_1}_{1,0}\times\wt{S}^{d_0,\nu_0}_{1,0}\lra 
\wt{S}_{1,0}^{d_0+d_1,\nu_0+\nu_1}$. 
\end{proof}
} 

\begin{lemma}
The following holds true$:$ 
\begin{itemize}
\item[i$)$ ]If $a_k\in \wtbfS^{d_k,\nu_k}_{1,0}$ for $k=0,1$, then 
 $a_1a_0\in \wtbfS_{1,0}^{d_0+d_1,\nu_0+\nu_1}$ with 
  $$(a_1a_0)_{[\nu_0+\nu_1+j]}^\infty
    =\sum_{k+\ell=j}a_{1,[\nu_1+\ell]}^\infty a_{0,[\nu_0+k]}^\infty.$$
\item[ii$)$] $D^\alpha_\xi D^\beta_x: 
	\wtbfS^{d,\nu}_{1,0}\to\wtbfS^{d-|\alpha|,\nu-|\alpha|}_{1,0}$ with $
	(D^\alpha_\xi D^\beta_x a)_{[\nu-|\alpha|]}^\infty=D^\alpha_\xi D^\beta_x a_{[\nu]}^\infty$,  
\item[iii$)$] $\partial_\mu^j: \wtbfS^{d,\nu}_{1,0}\to\wtbfS^{d-j,\nu}_{1,0}$ with 
	$(\partial^j_\mu a)_{[\nu]}^\infty=(d-\nu)(d-\nu-1)\ldots(d-\nu-j+1)a_{[\nu]}^\infty$. 
\end{itemize}
\end{lemma}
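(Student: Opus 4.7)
The strategy is to reduce (ii) and (iii) to (i) by differentiating the finite expansions of $a$ term-by-term. The essential auxiliary fact is that $[\xi,\mu]^m$ is a classical parameter-dependent symbol in $S^m_{1,0}$, so every derivative $\partial_\mu^\ell[\xi,\mu]^m$ or $D^\gamma_\xi[\xi,\mu]^m$ is classical of the corresponding order and thus lies in $\wtbfS^{\cdot,0}_{1,0}$ by Proposition \ref{prop:classical}, with principal limit-symbol obtained by evaluating its homogeneous principal part at $(\xi,\mu)=(0,1)$.

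For (i), insert the finite expansions $a_k=\sum_{\ell=0}^{N-1} a^\infty_{k,[\nu_k+\ell]}[\xi,\mu]^{d_k-\nu_k-\ell}+r_{k,N}$ with $r_{k,N}\in \wt{S}^{d_k,\nu_k+N}_{1,0}$, multiply, and regroup by the total index $j=\ell+m$. Terms with $j\le N-1$ contribute the claimed coefficient $\sum_{\ell+m=j}a^\infty_{1,[\nu_1+\ell]}a^\infty_{0,[\nu_0+m]}\in S^{\nu_0+\nu_1+j}(\rz^n)$ in front of $[\xi,\mu]^{d_0+d_1-\nu_0-\nu_1-j}$. All remaining contributions -- those with $\ell+m\ge N$ and those involving some $r_{k,N}$ -- lie in $\wt{S}^{d_0+d_1,\nu_0+\nu_1+N}_{1,0}$, using the product rule for $\wt{S}^{\cdot,\cdot}_{1,0}$ and the inclusion $\wt{S}^{d,\nu+M}_{1,0}\subset \wt{S}^{d,\nu+N}_{1,0}$ for $M\ge N$.

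For (iii), apply $\partial_\mu$ to the expansion of $a$; since $a^\infty_{[\nu+j]}$ is independent of $\mu$, one gets $\partial_\mu a=\sum_{j=0}^{N-1} a^\infty_{[\nu+j]}\partial_\mu[\xi,\mu]^{d-\nu-j}+\partial_\mu r_N$ with $\partial_\mu r_N\in \wt{S}^{d-1,\nu+N}_{1,0}$. The homogeneous principal part of $\partial_\mu[\xi,\mu]^{d-\nu-j}$ is $(d-\nu-j)\mu|\xi,\mu|^{d-\nu-j-2}$, which evaluates to the constant $d-\nu-j$ at $(0,1)$; hence $\partial_\mu[\xi,\mu]^{d-\nu-j}\in \wtbfS^{d-\nu-j-1,0}_{1,0}$ has principal limit-symbol $d-\nu-j$. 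By (i), the summand $a^\infty_{[\nu+j]}\partial_\mu[\xi,\mu]^{d-\nu-j}$ lies in $\wtbfS^{d-1,\nu+j}_{1,0}$ with principal limit-symbol $(d-\nu-j)a^\infty_{[\nu+j]}$; the $j=0$ summand supplies the asserted principal limit-symbol $(d-\nu)a^\infty_{[\nu]}$ of $\partial_\mu a\in \wtbfS^{d-1,\nu}_{1,0}$, while the $j\ge1$ summands fill the higher-regularity levels of the expansion. Iterating $\partial_\mu$ gives the falling factorial $(d-\nu)(d-\nu-1)\cdots(d-\nu-j+1)$. For (ii), apply Leibniz to $D^\alpha_\xi D^\beta_x(a^\infty_{[\nu+j]}[\xi,\mu]^{d-\nu-j})$: the $\gamma=0$ term $(D^\alpha_\xi D^\beta_x a^\infty_{[\nu+j]})[\xi,\mu]^{d-\nu-j}$, with $D^\alpha_\xi D^\beta_x a^\infty_{[\nu+j]}\in S^{\nu+j-|\alpha|}(\rz^n)$, lies in $\wtbfS^{d-|\alpha|,\nu+j-|\alpha|}_{1,0}$ and yields, at $j=0$, the claimed principal limit-symbol $D^\alpha_\xi D^\beta_x a^\infty_{[\nu]}$ in $\wtbfS^{d-|\alpha|,\nu-|\alpha|}_{1,0}$. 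Every other Leibniz term, by (i) applied to the building blocks from the first paragraph, lies in $\wtbfS^{d-|\alpha|,\nu+j-|\alpha|+|\gamma|}_{1,0}$ with $|\gamma|\ge1$, thus at strictly higher regularity level, and $D^\alpha_\xi D^\beta_x r_N\in \wt{S}^{d-|\alpha|,\nu+N-|\alpha|}_{1,0}$ is the correct $N$-th remainder.

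\textbf{Main obstacle.} The key technical point is the explicit expansion of $\partial_\mu[\xi,\mu]^m$ in $\wtbfS^{m-1,0}_{1,0}$; concretely, that $\partial_\mu[\xi,\mu]^m-m[\xi,\mu]^{m-1}$ lies in $\wt{S}^{m-1,2}_{1,0}$, which rests on the identity $|\xi,\mu|-\mu=|\xi|^2/(|\xi,\mu|+\mu)$ valid outside a compact set, together with analogous control on derivatives (needed to push the expansion beyond the principal level and to justify the claim for all $j\ge1$). Once this is in place, parts (ii) and (iii) follow by term-by-term Leibniz and (i).
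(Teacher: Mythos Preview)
Your proposal is correct and follows essentially the same line as the paper's proof. The only difference is cosmetic: for (iii) the paper writes $\partial_\mu[\xi,\mu]^{d-\nu-j}\equiv(d-\nu-j)[\xi,\mu]^{d-\nu-j-2}\mu$ modulo $\scrC^\infty_{\mathrm{comp}}$ and then feeds the factor $\mu\in S^1$ into Proposition~\ref{prop:classical}, whereas you apply Proposition~\ref{prop:classical} directly to $\partial_\mu[\xi,\mu]^m\in S^{m-1}$; for (ii) the paper proceeds by induction on $|\alpha|$ while you do the full Leibniz decomposition in one shot.

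One remark: the ``main obstacle'' you flag is not an obstacle. Once you know $\partial_\mu[\xi,\mu]^m\in S^{m-1}$, Proposition~\ref{prop:classical} already delivers the \emph{complete} expansion in $\wtbfS^{m-1,0}_{1,0}$ to all orders, with coefficients given by the Taylor expansion of $m\sqrt{1-r^2}$ in $r$; there is no need to invoke the identity $|\xi,\mu|-\mu=|\xi|^2/(|\xi,\mu|+\mu)$ by hand. In particular your assertion that $\partial_\mu[\xi,\mu]^m-m[\xi,\mu]^{m-1}\in\wt{S}^{m-1,2}_{1,0}$ (which is true, since the degree-$1$ Taylor coefficient vanishes) is more than the lemma requires and comes for free from that proposition.
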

\begin{proof}
i) is straight-forward, as is ii) using induction on $|\alpha|$. 

By induction, it is enough to show iii) for $j=1$. Observe that    
 $$\partial_{\mu}[\xi,\mu]^{d-\nu-j}\equiv (d-\nu-j)[\xi,\mu]^{d-\nu-j-2}\mu
   \mod\scrC^\infty_{\mathrm{comp}}(\rpbar\times\rz^n).$$
Now use the expansion of $\mu\in \wtbfS^{1,0}_{1,0}$, cf. Proposition \ref{prop:classical}, 
to find a resulting expansion of $\partial_\mu a$.  
\forget{
i$)$ By induction, it suffices to consider the case $|\alpha|=1$. 
Now, using the notation of Definition \ref{def:symbol-expansion}, 
 $$D_{\xi_k} D^\beta_x a(x,\xi;\mu)
   =D_{\xi_k}\sum_{j=0}^{N-1}[\xi,\mu]^{d-\nu-j}D^\beta_xa_{[\nu+j]}^\infty(x,\xi)
   +D_{\xi_k} D^\beta_x r_{a,N}(x,\xi;\mu).$$   
Now observe that 
 $$D_{\xi_k}[\xi,\mu]^{d-\nu-j}\equiv -i(d-\nu-j)[\xi,\mu]^{d-\nu-j-2}\xi_k
   \mod\scrC^\infty_{\mathrm{comp}}(\rpbar\times\rz^n).$$
It follows that 
 $$D_{\xi_k} D^\beta_x a(x,\xi;\mu)\equiv    
   \sum_{j=0}^{N-1}[\xi,\mu]^{d-\nu-j}b_{[\nu-1+j]}^\infty(x,\xi) 
   \mod \wt{S}^{d-|\alpha|,\nu-|\alpha|+N}_{1,0},$$
where $b_{[\nu-1+j]}^\infty=D_{\xi_k} D^\beta_x a_{[\nu+j]}^\infty$ for $j=0$ and $j=1$, 
while 
 $$b_{[\nu-1+j]}^\infty=D_{\xi_k} D^\beta_x a_{[\nu+j]}^\infty
   -i(d-\nu-j+2)\xi_ka_{[\nu+j-2]}^\infty$$ 
for $j\ge 2$. Continuity follows again with the closed graph theorem. 

ii) One also proceeds by induction. Observe that    
 $$D_{\mu}[\xi,\mu]^{d-\nu-j}\equiv -i(d-\nu-j)[\xi,\mu]^{d-\nu-j-2}\mu
   \mod\scrC^\infty_{\mathrm{comp}}(\rpbar\times\rz^n).$$
Now use the expansion of $\mu\in \wtbfS^{1,0}_{1,0}$, cf. Proposition \ref{prop:classical}, 
to find a resulting expansion of $D_\mu a$. We leave the details to the reader. 
}
\end{proof}

\begin{theorem}[Asymptotic summation]\label{thm:asympsumm}
Let $a_j\in\wtbfS^{d-j,\nu-j}_{1,0}$, $j\in\nz_0$. 
Then there exists an $a\in\wtbfS^{d,\nu}_{1,0}$ such that 
$a-\sum\limits_{j=0}^{N-1}a_j\in \wtbfS^{d-N,\nu-N}_{1,0}$ for every $N$. Moreover, 
 $$a_{[\nu+j]}^\infty\sim\sum_{k=0}^{+\infty}a_{k,[(\nu-k)+j]}^\infty,\qquad j\in\nz_0,$$
asymptotically in $S^{\nu+j}_{1,0}(\rz^n)$. The symbol $a$ is unique 
modulo $\wtbfS^{d-\infty,\nu-\infty}_{1,0}$. 
\end{theorem}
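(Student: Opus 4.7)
The argument is a Borel-type summation adapted to the two-scale structure of $\wtbfS^{d,\nu}_{1,0}$. Uniqueness is immediate since the difference of two candidates would lie in $\bigcap_N \wtbfS^{d-N,\nu-N}_{1,0} = \wtbfS^{d-\infty,\nu-\infty}_{1,0}$. For existence, I would fix a zero-excision function $\psi \in \scrC^\infty(\rz^n)$ depending only on the covariable $\xi$, set $\psi_j(\xi) := \psi(\xi/T_j)$ for positive constants $T_j \uparrow \infty$ to be chosen diagonally, and define
\[ a(x,\xi;\mu) := \sum_{j=0}^\infty \psi_j(\xi)\,a_j(x,\xi;\mu),\qquad a_{[\nu+k]}^\infty(x,\xi) := \sum_{j=0}^\infty \psi_j(\xi)\,a_{j,[(\nu-j)+k]}^\infty(x,\xi). \]
The key point is that $\psi_j$ depends only on $\xi$: it commutes with the factors $[\xi,\mu]^{d-\nu-k}$ appearing in the expansion at infinity of each $a_j$. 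Writing
\[ a_j = \sum_{k=0}^{K-1} a_{j,[(\nu-j)+k]}^\infty[\xi,\mu]^{d-\nu-k} + r_j^{(K)}, \qquad r_j^{(K)} \in \wt{S}^{d-j,\nu-j+K}_{1,0}, \]
one obtains the clean algebraic identity
\[ a - \sum_{k=0}^{K-1} a_{[\nu+k]}^\infty[\xi,\mu]^{d-\nu-k} = \sum_{j=0}^\infty \psi_j(\xi)\,r_j^{(K)}. \]

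The verification then amounts to standard Borel estimates. One first checks the inclusion $\wt{S}^{d-j,\nu-j+K}_{1,0} \subseteq \wt{S}^{d,\nu+K}_{1,0}$ (just a factor $\spk{\xi}^{-j}\leq 1$), and then, using that $\psi_j$ is supported where $|\xi| \gtrsim T_j$, a direct Leibniz-rule computation gives, for every individual $\wt{S}^{d,\nu+K}_{1,0}$-seminorm,
\[ \|\psi_j\,r_j^{(K)}\|_{\wt{S}^{d,\nu+K}_{1,0}} \lesssim T_j^{-j}\,\|r_j^{(K)}\|_{\wt{S}^{d-j,\nu-j+K}_{1,0}}, \]
the decay factor $T_j^{-j}$ coming from extracting $\spk{\xi}^{-j}$ on the support of $\psi_j$. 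The same estimate in $S^{\nu+k}_{1,0}(\rz^n)$ yields convergence of the series defining $a_{[\nu+k]}^\infty$, and moreover
\[ a_{[\nu+k]}^\infty - \sum_{j=0}^{J-1}a_{j,[(\nu-j)+k]}^\infty \in S^{\nu+k-J}_{1,0}(\rz^n) \qquad \forall\,J, \]
which is the asymptotic expansion stated in the theorem. To check the condition $a - \sum_{j<N}a_j \in \wtbfS^{d-N,\nu-N}_{1,0}$, I would split
\[ a - \sum_{j<N}a_j = \sum_{j<N}(\psi_j-1)a_j + \sum_{j\geq N}\psi_j\,a_j; \]
the first sum has compact $\xi$-support and thus lies in $\wtbfS^{d-\infty,\nu-\infty}_{1,0}$, while for $j \geq N$ the inclusion $\wtbfS^{d-j,\nu-j}_{1,0}\hookrightarrow\wtbfS^{d-N,\nu-N}_{1,0}$ is routine from the definitions (comparing estimates and noting that the limit-symbols automatically live in the appropriately smaller Hörmander classes).

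The main obstacle is the diagonal choice of the $T_j$, which must ensure convergence in all the countably many Fréchet seminorms of $\wtbfS^{d,\nu}_{1,0}$ simultaneously: the $\wt{S}^{d,\nu+K}_{1,0}$-seminorms of the $\psi_j\,r_j^{(K)}$ for every $K$ and every derivative order, together with the $S^{\nu+k}_{1,0}(\rz^n)$-seminorms of $\psi_j\,a_{j,[(\nu-j)+k]}^\infty$ for every $k$. This is handled by the standard procedure: at step $j$, require $T_j$ to be so large that the first $j$ seminorms of the $j$-th term (in all relevant spaces with $K, k \leq j$) are bounded by $2^{-j}$; then in each fixed seminorm the tail is dominated by a convergent geometric series, giving $a\in\wtbfS^{d,\nu}_{1,0}$ with the properties claimed.
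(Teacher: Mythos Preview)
Your proposal is correct and follows essentially the same strategy as the paper: both use dilated zero-excision functions in the covariable $\xi$ alone (so that they commute with the factors $[\xi,\mu]^{d-\nu-k}$ and act compatibly on the limit-symbols), verify that $\chi(\xi/T)\,a_j\to 0$ in the larger class $\wtbfS^{d-j+1,\nu-j+1}_{1,0}$ as $T\to\infty$, and then perform a diagonal Borel summation. The only difference is packaging: the paper records the three convergence statements $(1)$--$(3)$ and then invokes the abstract asymptotic-summation lemma of Schulze (Proposition~1.1.17 in \cite{Schulze-Wiley}), whereas you spell out the Borel construction and the diagonal choice of the $T_j$ by hand.
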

\begin{proof}
Let $\chi(\xi)$ be a zero-excision function and denote by $\chi_c$, $c>0$, 
the operator of multiplication by $\chi(\xi/c)$. 
Then $\chi_c\in\scrL(\wtbfS^{d,\nu}_{1,0})$ for every $d,\nu$ with 
$(\chi_ca)_{[\nu+j]}^\infty=\chi_ca_{[\nu+j]}^\infty$ and 
$(1-\chi_c)a\in \wtbfS^{d-\infty,\nu-\infty}_{1,0}$. 

Moreover, the following statements are checked by straight-forward calculations$:$ 
\begin{itemize}
	\item[$(1)$] If $a\in \wt{S}^{d-1,\nu-1}_{1,0}$ then $\chi_ca\xrightarrow{c\to+\infty}0$ in 
	 $\wt{S}^{d,\nu}_{1,0}$. 
	\item[$(2)$] If $a\in \wtbfS^{d-1,\nu-1}_{1,0}$ then 
	 $\chi_ca_{[\nu-1+j]}^\infty\xrightarrow{c\to+\infty}0$ in ${S}^{\nu+j}_{1,0}(\rz^n)$. 
	\item[$(3)$] If $r\in \wt{S}^{d-1,\nu-1+N}_{1,0}$ then $\chi_cr\xrightarrow{c\to+\infty}0$ 
	 in $\wt{S}^{d,\nu+N}_{1,0}$.
\end{itemize}
In other words, given $a\in\wtbfS^{d-1,\nu-1}_{1,0}$ then 
$\chi_c a\xrightarrow{c\to+\infty}0$ in $\wtbfS^{d,\nu}_{1,0}$. 

Now the existence of $a$ follows from Proposition 1.1.17 of \cite{Schulze-Wiley} 
(with $E^j:=\wtbfS^{d-j,\nu-j}_{1,0}$ and $\chi^j(c)=\chi_c:E^j\to E^j$). 
The remaining statements are clear. 
\end{proof}

For the detailed proofs of the following two theorems, concerning composition and $($formal$)$ adjoint, 
see the appendix.  

\begin{theorem}\label{thm:Leibniz product}
Let $a_j\in \wtbfS^{d_j,\nu_j}_{1,0}$, $j=0,1$. 
Then $a_1\#a_0 \in \wtbfS^{d_0+d_1,\nu_0+\nu_1}_{1,0}$ 
and the limit-symbol behaves multiplicatively$:$ 
$(a_1\#a_0)_{[\nu_0+\nu_1]}^\infty=a_{1,[\nu_1]}^{\infty}\# a_{0,[\nu_0]}^{\infty}$. 
Moreover, 
\begin{equation}
  a_1\#a_0\equiv\sum_{|\alpha|=0}^{N-1}\frac{1}{\alpha!}(\partial^\alpha_\xi a_1)(D^\alpha_x a_0)
    \mod \wtbfS^{d_0+d_1-N,\nu_0+\nu_1-N}_{1,0}.
\end{equation}
\end{theorem}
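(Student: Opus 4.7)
The plan is to reduce the theorem to a standard oscillatory-integral Leibniz expansion in the $\wt{S}$-classes, combined with the asymptotic summation result for $\wtbfS$ (Theorem \ref{thm:asympsumm}). The starting point is the classical Kumano-go type formula, carried out pointwise in $\mu$ with careful tracking of the weights $\spk{\xi}$ and $\spk{\xi,\mu}$ coming from Definition \ref{def:weak01}. This should yield
$$a_1 \# a_0 - \sum_{|\alpha|<N}\frac{1}{\alpha!}(\partial^\alpha_\xi a_1)(D^\alpha_x a_0) \in \wt{S}^{d_0+d_1-N,\nu_0+\nu_1-N}_{1,0}$$
for every $N\in\nz_0$, where the remainder is represented by the usual integral $\int_0^1(1-t)^{N-1}\iint e^{-iy\eta}\partial^\alpha_\xi a_1(x,\xi+t\eta;\mu)D^\alpha_x a_0(x+y,\xi;\mu)\,dy\,\dbar\eta\,dt$.

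Next I would pass from $\wt{S}$ to $\wtbfS$. Each summand $\frac{1}{\alpha!}(\partial^\alpha_\xi a_1)(D^\alpha_x a_0)$ lies in $\wtbfS^{d_0+d_1-|\alpha|,\nu_0+\nu_1-|\alpha|}_{1,0}$ by parts (ii) and (i) of the preceding lemma. Theorem \ref{thm:asympsumm} then produces a symbol $b\in \wtbfS^{d_0+d_1,\nu_0+\nu_1}_{1,0}$ whose partial sums differ from $\sum_{|\alpha|<N}\frac{1}{\alpha!}(\partial^\alpha_\xi a_1)(D^\alpha_x a_0)$ by elements of $\wtbfS^{d_0+d_1-N,\nu_0+\nu_1-N}_{1,0}$ for every $N$. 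Combining with the first step gives $a_1\#a_0-b\in \wt{S}^{d_0+d_1-N,\nu_0+\nu_1-N}_{1,0}$ for every $N$, hence $a_1\#a_0-b\in \wt{S}^{-\infty,-\infty}_{1,0}$, which is contained in $\wtbfS^{-\infty,-\infty}_{1,0}$ (any rapidly decreasing symbol trivially has zero limit-symbols). Consequently $a_1\#a_0\in \wtbfS^{d_0+d_1,\nu_0+\nu_1}_{1,0}$ with the stated asymptotic expansion modulo $\wtbfS^{d_0+d_1-N,\nu_0+\nu_1-N}_{1,0}$.

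For the multiplicativity of the principal limit-symbol, I would apply the last statement of Theorem \ref{thm:asympsumm} to $b$. Setting $a_k:=\sum_{|\alpha|=k}\frac{1}{\alpha!}(\partial^\alpha_\xi a_1)(D^\alpha_x a_0)\in \wtbfS^{d_0+d_1-k,\nu_0+\nu_1-k}_{1,0}$, the product part of the preceding lemma identifies the principal limit-symbol of $a_k$ as $\sum_{|\alpha|=k}\frac{1}{\alpha!}(\partial^\alpha_\xi a_{1,[\nu_1]}^{\infty})(D^\alpha_x a_{0,[\nu_0]}^{\infty})$. Summing these over $k$ via Theorem \ref{thm:asympsumm} recovers exactly the classical Leibniz-product expansion of $a_{1,[\nu_1]}^{\infty}\# a_{0,[\nu_0]}^{\infty}$, which gives $(a_1\#a_0)_{[\nu_0+\nu_1]}^\infty = a_{1,[\nu_1]}^{\infty}\#a_{0,[\nu_0]}^{\infty}$ in $S^{\nu_0+\nu_1}_{1,0}(\rz^n)$. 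The main obstacle I anticipate is the first step: the standard remainder estimates must be refined so that each integration by parts in $\eta$ produces the correct $\spk{\xi}^{-1}$-factor (and not merely $\spk{\xi,\mu}^{-1}$), reflecting the weak parameter dependence of Definition \ref{def:weak01}; this is precisely the bookkeeping that is deferred to the appendix.
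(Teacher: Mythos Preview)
Your argument has a genuine gap at the point where you assert that
$\wt{S}^{d_0+d_1-\infty,\nu_0+\nu_1-\infty}_{1,0}\subset\wtbfS^{d_0+d_1-\infty,\nu_0+\nu_1-\infty}_{1,0}$,
justified by ``any rapidly decreasing symbol trivially has zero limit-symbols''. This inclusion is false.
The diagonal intersection $\cap_N\wt{S}^{d-N,\nu-N}_{1,0}$ keeps the difference $d-\nu$ fixed, so it only forces rapid decay in $\xi$ while leaving the $\mu$-behaviour at the level $\spk{\mu}^{d-\nu}$ (cf.\ the description after \eqref{eq:regularizing-symbol}). Membership in $\wtbfS^{d,\nu}_{1,0}$, however, requires the remainders $r_{a,L}$ to lie in $\wt{S}^{d,\nu+L}_{1,0}$, which demands decay of order $\spk{\xi,\mu}^{d-\nu-L}$, i.e.\ \emph{additional} $\mu$-decay for every $L$. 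A concrete counterexample with $d_0+d_1=\nu_0+\nu_1=0$ is
$r(\xi;\mu)=e^{-|\xi|^2}\sin(\log(1+\mu))$:
one checks $r\in\cap_N\wt{S}^{-N,-N}_{1,0}$, but $r\notin\wtbfS^{0,0}_{1,0}$ since $\mu\mapsto\sin(\log(1+\mu))$ has no limit as $\mu\to\infty$, so no principal limit-symbol $r^\infty_{[0]}$ can exist. Thus from $a_1\#a_0-b\in\cap_N\wt{S}^{d-N,\nu-N}_{1,0}$ you cannot conclude $a_1\#a_0\in\wtbfS^{d,\nu}_{1,0}$; your Step~1 only controls the remainder in $\wt{S}$, and that information is too weak to close the argument by asymptotic summation.

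The paper circumvents this by never leaving the bold class: it regards $\wtbfS^{d_0+d_1,\nu_0+\nu_1}_{1,0}$ as a Fr\'echet space and shows (Proposition~\ref{prop:amplitude}, using Lemma~\ref{lem:amplitude} and Proposition~\ref{prop:classical}) that the integrand $(y,\eta)\mapsto a_1(x,\xi+\eta;\mu)a_0(x+y,\xi;\mu)$ is an amplitude function \emph{with values in} $\wtbfS^{d_0+d_1,\nu_0+\nu_1}_{1,0}$. The oscillatory integral then converges in this space, giving $a_1\#a_0\in\wtbfS^{d_0+d_1,\nu_0+\nu_1}_{1,0}$ directly, and the multiplicativity of the limit-symbol follows by continuity of $a\mapsto a^\infty_{[\nu]}$. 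The remainder in the Leibniz expansion is handled the same way, landing in $\wtbfS^{d_0+d_1-N,\nu_0+\nu_1-N}_{1,0}$ rather than merely in $\wt{S}^{d_0+d_1-N,\nu_0+\nu_1-N}_{1,0}$. The essential extra ingredient you are missing is precisely that translation $\eta\mapsto[\xi+\eta,\mu]^{m}$ is an amplitude function with values in $S^m\subset\wtbfS^{m,0}_{1,0}$, which is what lets one track the expansion-at-infinity structure through the oscillatory integral.
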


\begin{theorem}\label{thm:adjoint}
If $a\in\wtbfS^{d,\nu}_{1,0}$ then $a^{(*)}\in\wtbfS^{d,\nu}_{1,0}$ with 
$(a^{(*)})_{[\nu]}^\infty=(a_{[\nu]}^{\infty})^{(*)}$ and  
\begin{equation}\label{eq:leibniz-expansion2}
 a^{(*)}(x,\xi;\mu)=\sum_{|\alpha|=0}^{N-1}\frac{1}{\alpha!}\partial^\alpha_\xi D^\alpha_x 
 \overline{a(x,\xi;\mu)}\mod \wtbfS^{d-N,\nu-N}_{1,0}.
\end{equation}
\end{theorem}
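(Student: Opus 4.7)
The plan mirrors the structure of the companion Theorem \ref{thm:Leibniz product}. First I would establish the asymptotic adjoint formula modulo $\wtbfS^{d-N,\nu-N}_{1,0}$, then independently verify that $a^{(*)}\in\wtbfS^{d,\nu}_{1,0}$ with the claimed principal limit-symbol; uniqueness of the limit-symbols will then force compatibility of the two expansions.

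For the first part, since $[\xi,\mu]$ is real-valued, complex conjugation preserves $\wtbfS^{d,\nu}_{1,0}$ with $\overline{a}_{[\nu+j]}^\infty=\overline{a_{[\nu+j]}^\infty}$. Part (ii) of the derivatives lemma then gives $\frac{1}{\alpha!}\partial^\alpha_\xi D^\alpha_x\overline{a}\in\wtbfS^{d-|\alpha|,\nu-|\alpha|}_{1,0}$, and Theorem \ref{thm:asympsumm} produces a $b\in\wtbfS^{d,\nu}_{1,0}$ with
$$b-\sum_{|\alpha|<N}\frac{1}{\alpha!}\partial^\alpha_\xi D^\alpha_x\overline{a}\in\wtbfS^{d-N,\nu-N}_{1,0}\qquad\forall N\in\nz_0.$$

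For the second part I would expand $a=\sum_{j<N}a_{[\nu+j]}^\infty(x,\xi)[\xi,\mu]^{d-\nu-j}+r_{a,N}$ with $r_{a,N}\in\wt{S}^{d,\nu+N}_{1,0}$ and take adjoints termwise. Since $[\xi,\mu]$ depends only on $(\xi,\mu)$ and is real-valued, the Fourier multiplier $[D,\mu]^{d-\nu-j}$ is self-adjoint, and the adjoint symbol of the model piece $a_{[\nu+j]}^\infty(x,\xi)[\xi,\mu]^{d-\nu-j}$ coincides with the Leibniz product $[\xi,\mu]^{d-\nu-j}\#(a_{[\nu+j]}^\infty)^{(*)}(x,\xi)$. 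In the asymptotic expansion of this product only $\xi$-derivatives of $[\xi,\mu]^{d-\nu-j}$ contribute; each such derivative is a finite linear combination of $\xi^\beta[\xi,\mu]^{d-\nu-j-|\gamma|-|\beta|}$-terms, which after regrouping by the exponent of $[\xi,\mu]$ yields an expansion of the adjoint of the model piece in the $\wtbfS^{d-j,\nu+j}_{1,0}$-sense, with principal limit-symbol $(a_{[\nu+j]}^\infty)^{(*)}$. For the Taylor remainder $r_{a,N}$ I would invoke the continuity of the adjoint operation on $\wt{S}^{d,\nu+N}_{1,0}$ (standard in Grubb's calculus) to conclude $(r_{a,N})^{(*)}\in\wt{S}^{d,\nu+N}_{1,0}$. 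Collecting the contributions of exponent $d-\nu-k$ from all model pieces gives a candidate $(a^{(*)})_{[\nu+k]}^\infty\in S^{\nu+k}_{1,0}(\rz^n)$, and in particular $(a^{(*)})_{[\nu]}^\infty=(a_{[\nu]}^\infty)^{(*)}$; the tails beyond order $N$ together with $(r_{a,N})^{(*)}$ combine into one $\wt{S}^{d,\nu+N}_{1,0}$-remainder. This proves $a^{(*)}\in\wtbfS^{d,\nu}_{1,0}$.

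Finally, the classical adjoint expansion in $\wt{S}^{d,\nu}_{1,0}$ gives $a^{(*)}-b\in\wt{S}^{d-M,\nu-M}_{1,0}$ for every $M$; since both $a^{(*)}$ and $b$ lie in $\wtbfS^{d,\nu}_{1,0}$ and (by computing with Theorem \ref{thm:asympsumm} and the derivatives lemma) their limit-symbols at every order coincide, uniqueness of the limit-symbols forces $a^{(*)}-b\in\wtbfS^{d-\infty,\nu-\infty}_{1,0}$, yielding the stated asymptotic identity. I expect the main obstacle to be the bookkeeping required to combine the infinite family of infinite Leibniz expansions $[\xi,\mu]^{d-\nu-j}\#(a_{[\nu+j]}^\infty)^{(*)}$ with $(r_{a,N})^{(*)}$ into a single controlled $\wt{S}^{d,\nu+N}_{1,0}$-residual at each finite order $N$ — the same kind of careful tracking of the regularity parameter that underlies Grubb's $\wt{S}$-calculus adjoint theorem.
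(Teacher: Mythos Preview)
Your approach is workable but takes a genuinely different route from the paper. The paper does \emph{not} expand $a$ in its limit-symbol series and take adjoints termwise. Instead, it proves once and for all (Proposition~\ref{prop:amplitude} in the appendix) that the map $(y,\eta)\mapsto\big((x,\xi,\mu)\mapsto a(x+y,\xi+\eta;\mu)\big)$ is an amplitude function with values in the Fr\'echet space $\wtbfS^{d,\nu}_{1,0}$ itself. Since complex conjugation is continuous on $\wtbfS^{d,\nu}_{1,0}$, the oscillatory integral $a^{(*)}(x,\xi;\mu)=\iint e^{-iy\eta}\overline{a(x+y,\xi+\eta;\mu)}\,dy\dbar\eta$ converges \emph{in} $\wtbfS^{d,\nu}_{1,0}$; the formula $(a^{(*)})_{[\nu]}^\infty=(a_{[\nu]}^\infty)^{(*)}$ then drops out from the continuity of $a\mapsto a^\infty_{[\nu]}$, and the remainder in \eqref{eq:leibniz-expansion2} is handled by the same integral representation as for the Leibniz remainder. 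All of the bookkeeping you anticipate---regrouping the doubly-indexed family $[\xi,\mu]^{d-\nu-j}\#(a^\infty_{[\nu+j]})^{(*)}$ and controlling the adjoint of $r_{a,N}$ uniformly in $N$---is absorbed into the single amplitude estimate of Proposition~\ref{prop:amplitude}.

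What your approach buys is that it avoids the Fr\'echet-space oscillatory-integral machinery and stays within elementary symbol manipulations; the price is the bookkeeping you identify, plus one implicit lemma you do not state: to pass from ``$a^{(*)}-\sum_{|\alpha|<N}\cdots\in\wt S^{d-N,\nu-N}_{1,0}$ and lies in $\wtbfS^{d,\nu}_{1,0}$'' to membership in $\wtbfS^{d-N,\nu-N}_{1,0}$, you need $\wtbfS^{d,\nu}_{1,0}\cap\wt S^{d-N,\nu-N}_{1,0}=\wtbfS^{d-N,\nu-N}_{1,0}$. This is true (one extracts each $c^\infty_{[\nu+k]}$ as a $\mu\to\infty$ limit and reads off the improved $\xi$-order from the $\wt S^{d-N,\nu-N}_{1,0}$-bound), but it is a separate step that the paper's oscillatory-integral route sidesteps entirely. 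Also, a small slip: the adjoint of the $j$-th model piece lies in $\wtbfS^{d,\nu+j}_{1,0}$, not $\wtbfS^{d-j,\nu+j}_{1,0}$.
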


\subsection{Ellipticity and parametrix construction}
\label{sec:05.2}
For the following considerations it is convenient to introduce the spaces 
$S^d_{1,0}(\rpbar;E)$ consisting of all smooth functions $a(\mu)$ with values in a 
Fr\`{e}chet space $E$, such that 
 $$\trinorm{{D^j_\mu a(\mu)}}\lesssim \spk{\mu}^{d-j}$$ 
for every $j$ and every continuous semi-norm $\trinorm{\cdot}$ of $E$. 

\begin{lemma}\label{prop:1+r}
Let $a(x,\xi;\mu)\in\wtbfS^{0-\infty,0-\infty}_{1,0}$ and assume that $1-a_{[0]}^\infty(x,D)$ is 
invertible in $\scrL(H^s(\rz^n))$ for some $s\in\rz$. 
Then $1-a(x,D;\mu)$ is invertible for large $\mu$ and  
$(1-a(x,D;\mu))^{-1}=1-b(x,D;\mu)$ for some $b(x,\xi;\mu)\in\wtbfS^{0-\infty,0-\infty}_{1,0}$.
\end{lemma}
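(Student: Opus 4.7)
The plan has two stages: construct a parametrix $1-b$ in the calculus via an iterative Leibniz construction that kills limit-symbol coefficients one by one, then use a Neumann series at the operator level to upgrade the parametrix to an exact inverse, verifying that the corresponding correction stays in the calculus.

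First I would observe that $a\in \wtbfS^{0-\infty,0-\infty}_{1,0}=\bigcap_N\wtbfS^{-N,-N}_{1,0}$ together with uniqueness of the limit-symbol expansion across overlapping viewpoints forces each coefficient $a^\infty_{[j]}$ to lie in $\bigcap_N S^{-N}_{1,0}(\rz^n)=S^{-\infty}(\rz^n)$, so $a^\infty_{[0]}(x,D)$ is smoothing; the standard theory of identity-plus-smoothing perturbations applied to its assumed invertibility yields $(1-a^\infty_{[0]}(x,D))^{-1}=1-b_{[0]}(x,D)$ with $b_{[0]}\in S^{-\infty}(\rz^n)$. Setting $b^{(0)}:=b_{[0]}$ (viewed as $\mu$-independent in the calculus), Theorem~\ref{thm:Leibniz product} gives $((1-a)\#(1-b^{(0)}))^\infty_{[0]}=(1-a^\infty_{[0]})\#(1-b_{[0]})=1$, so the remainder $r^{(1)}:=1-(1-a)\#(1-b^{(0)})\in \wtbfS^{0-\infty,0-\infty}_{1,0}$ has vanishing principal limit-symbol. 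Iteratively at step $k\ge 1$ I would set $\delta b^{(k)}(x,\xi;\mu):=b_{[k]}(x,\xi)\,[\xi,\mu]^{-k}$ with $b_{[k]}:=-(1-b_{[0]})\#(r^{(k)})^\infty_{[k]}\in S^{-\infty}(\rz^n)$; a direct calculation shows $\delta b^{(k)}\in \wtbfS^{-k,-k}_{1,0}\cap\wtbfS^{0-\infty,0-\infty}_{1,0}$, and multiplicativity of the principal limit-symbol implies that $b^{(k+1)}:=b^{(k)}+\delta b^{(k)}$ eliminates one further limit-symbol coefficient of the remainder.

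Theorem~\ref{thm:asympsumm} then asymptotically sums the $\delta b^{(k)}$ to a symbol $b\in \wtbfS^{0-\infty,0-\infty}_{1,0}$ so that $(1-a)\#(1-b)=1-R$ with $R\in \wtbfS^{0-\infty,0-\infty}_{1,0}$ having \emph{all} limit-symbol coefficients equal to zero. Simultaneous membership in $\widetilde{S}^{-M,-M}_{1,0}$ (yielding $|D^\alpha_\xi D^\beta_x D^j_\mu R|\lesssim \spk{\xi}^{-M-|\alpha|}\spk{\xi,\mu}^{-j}$) and in $\widetilde{S}^{0,N}_{1,0}$ (yielding $\lesssim \spk{\xi}^{N-|\alpha|}\spk{\xi,\mu}^{-N-j}$) for all $M,N$ allows one, by splitting the covariable space into $|\xi|\le\mu^{1/2}$ and its complement, to conclude $|D^\alpha_\xi D^\beta_x D^j_\mu R|\lesssim \mu^{-N}\spk{\xi}^{-|\alpha|}$ for every $N$; then Calder\'{o}n--Vaillancourt gives $\|R(x,D;\mu)\|_{H^s\to H^s}\lesssim \mu^{-N}$ as $\mu\to+\infty$.

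For $\mu$ large enough, $1-R(x,D;\mu)$ is invertible by the Neumann series, and $(1-a(x,D;\mu))^{-1}=(1-b(x,D;\mu))(1-R(x,D;\mu))^{-1}$. Asymptotically summing $\sum_{k\ge 0}R^{\#k}$ in the calculus via Theorem~\ref{thm:asympsumm} produces an $S\in \wtbfS^{0-\infty,0-\infty}_{1,0}$ realizing $(1-R(x,D;\mu))^{-1}=1+S(x,D;\mu)$, so $(1-a(x,D;\mu))^{-1}=1-\widetilde{b}(x,D;\mu)$ with $\widetilde{b}:=b-S+b\#S\in \wtbfS^{0-\infty,0-\infty}_{1,0}$, as required. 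The main obstacle I anticipate is the bookkeeping at the iterative step---verifying that $\delta b^{(k)}$ really lies in the intersection $\wtbfS^{-k,-k}_{1,0}\cap\wtbfS^{0-\infty,0-\infty}_{1,0}$ and that the update indeed eliminates the next limit-symbol coefficient---and combining the two families of decay estimates for the final remainder $R$ cleanly enough to deduce operator-norm decay to zero.
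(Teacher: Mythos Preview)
Your overall strategy is sound and shares the paper's opening move (invert $1-a^\infty_{[0]}$ via the identity-plus-smoothing theory), but diverges afterwards. The paper does \emph{not} iteratively kill all limit-symbol coefficients. Instead, once $a^\infty_{[0]}=0$ (so $a\in S^{-1}_{1,0}(\rpbar,S^{-\infty}(\rz^n))$), it immediately invokes the spectral invariance of $\{p(x,D)\mid p\in S^0_{1,0}(\rz^n)\}$ in $\scrL(H^s)$ to obtain the \emph{exact} inverse $(1-a)^{-\#}$ for large $\mu$, and then reads off the required expansion from the algebraic identity $(1-a)^{-\#}=1+\sum_{j=1}^{N-1}a^{\#j}+a^{\#N}\#(1-a)^{-\#}$, noting that $a^{\#N}\#(1-a)^{-\#}\in S^{-N}_{1,0}(\rpbar,S^{-\infty})$. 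Your iterative construction is thus extra work that the paper avoids.

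More importantly, your final step has a genuine gap. After producing $R$ with all $R^\infty_{[j]}=0$ you correctly deduce $R\in\bigcap_N S^{-N}_{1,0}(\rpbar,S^{-\infty}(\rz^n))$ and hence operator-norm decay, giving invertibility of $1-R(x,D;\mu)$. But the claim that ``asymptotically summing $\sum_{k\ge 0}R^{\#k}$ via Theorem~\ref{thm:asympsumm} produces an $S$ realizing $(1-R(x,D;\mu))^{-1}=1+S(x,D;\mu)$'' does not follow: asymptotic summation yields only an asymptotic equality $(1-R)\#(1+S)\equiv 1$ modulo the residual class, and that residual class (rapidly decreasing in $\mu$ with values in $S^{-\infty}$) is not zero, so you have not shown the \emph{exact} inverse lies in the calculus. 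What is needed here---and what the paper invokes explicitly---is spectral invariance: from $(1-R(\mu))^{-1}\in\scrL(H^s)$ one gets a symbol in $S^0_{1,0}(\rz^n)$, and then $S:=R\#(1-R)^{-\#}$ inherits $R$'s rapid decay. Once you insert this, your argument closes; but at that point the iterative killing of all $R^\infty_{[j]}$ is superfluous, since spectral invariance already handles the case $a^\infty_{[0]}=0$ directly.
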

\begin{proof}
First observe that $\wtbfS^{0-\infty,0-\infty}_{1,0}$ consists of those symbols $a$ for 
which exists a sequence of symbols $a_{[j]}^\infty\in S^{-\infty}(\rz^n)$ such that, 
for every $N\in\nz_0$, 
 $$a(x,\xi;\mu)=\sum_{j=0}^{N-1}a_{[j]}^\infty(x,\xi)[\xi,\mu]^{-j} \mod 
   \wt{S}^{0-\infty,N-\infty}_{1,0}=S^{-N}_{1,0}(\rpbar,S^{-\infty}(\rz^n)).$$
Also recall that $(1-T)^{-1}=\sum_{j=0}^{N-1}T^j+T^N(1-T)^{-1}$ whenever 
$T$ belongs to a unital algebra and $1-T$ is invertible.  
 
\textbf{Step 1:} Let us assume that $a_{[0]}^\infty=0$. In particular, 
$a\in S^{-1}_{1,0}(\rpbar,S^{-\infty}(\rz^n))$. 

Due to the spectral-invariance of the algebra $\{p(x,D)\mid p\in S^{0}_{1,0}(\rz^n)\}$ 
in $\scrL(H^s(\rz^n))$, we find that $1-a$ is invertible with respect to the Leibniz product for 
large $\mu$ and that $\chi(\mu)(1-a(\mu))^{-\#}$ belongs to 
$S^{0}_{1,0}(\rpbar,S^{0}(\rz^n))$  for a suitable zero-excision function $\chi$.  
But then 
 $$b:=-a- a\#\chi(1-a)^{-\#}\#a\in S^{-1}_{1,0}(\rpbar,S^{-\infty}(\rz^n))$$
and $1-b=(1-a)^{-\#}$ for large $\mu$. Hence, for large $\mu$, 
 $$(1-a)^{-\#}=1+\sum_{j=1}^{N-1}a^{\#j}+a^{\#N}\#(1-b)\equiv 1+\sum_{j=1}^{N-1}a^{\#j} 
   \mod S^{-N}_{1,0}(\rpbar,S^{-\infty}(\rz^n)).$$
Using the expansions of $a^{\#j}\in \wtbfS^{0-\infty,0-\infty}_{1,0}$ and noting that 
$(a^{\#j})_{[0]}^\infty=0$ for every $j$ due to the multiplicativity of the principal limit-symbol, 
we find a sequence of symbols $b_{[j]}^\infty\in S^{-\infty}(\rz^n)$ such that, 
for every $N\in\nz_0$, 
 $$(1-a)^{-\#}=1+\sum_{j=1}^{N-1}b_{[j]}^\infty[\xi,\mu]^{-j}+ r_N,\qquad  
   r_N\in S^{-N}_{1,0}(\rpbar,S^{-\infty}(\rz^n)),$$
for large $\mu$. Thus, for a suitable zero-excision function $\kappa(\mu)$, 
\begin{align*}
 1-b=\kappa(1-b)+(1-\kappa)-(1-\kappa)b\equiv \kappa(1-a)^{-\#}+(1-\kappa)\mod S^{-\infty},
\end{align*}
hence 
\begin{align*}
 1-b\equiv 1+\kappa\sum_{j=1}^{N-1}b_{[j]}^\infty[\xi,\mu]^{-j}+ \kappa r_N
 \equiv 1+\sum_{j=1}^{N-1}b_{[j]}^\infty[\xi,\mu]^{-j},  
\end{align*}
modulo $S^{-N}_{1,0}(\rpbar,S^{-\infty}(\rz^n))$, since 
$(1-\kappa)b_{[j]}^\infty\in S^{-\infty}$. 

\textbf{Step 2:} In the general case, again by spectral invariance, we find a 
$b_{[0]}^\infty\in S^{-\infty}(\rz^n)$ such that $1-b_{[0]}^\infty(x,D)$ is the 
inverse of $1-a_{[0]}^\infty(x,D)$. Then $(1-a)\#(1-b_{[0]}^\infty)=1-a^\prime$, 
where $a^\prime\in\wtbfS^{0-\infty,0-\infty}_{1,0}$ has vanishing principal limit-symbol. 
Apply Step 1 to $1-a^\prime$ to find a corresponding parametrix $1-b^\prime$. 
Then the claim follows by choosing 
$b=1-(1-b_{[0]}^\infty)\#(1-b^\prime)=b^\prime+b_{[0]}^\infty-b_{[0]}^\infty\#b^\prime$. 
\end{proof}

\begin{definition}\label{def:ell1}
We call $a\in\wtbfS^{d,\nu}_{1,0}$ elliptic if there exist an $R\ge0$ such that 
\begin{itemize} 
 \item[$(1)$] $a(x,\xi;\mu)$ is invertible whenever $|\xi|\ge R$ and   
  $$|a(x,\xi;\mu)^{-1}|\lesssim\spk{\xi}^{-\nu}\spk{\xi,\mu}^{\nu-d},$$ 
 \item[$(2)$] $a_{[\nu]}^\infty(x,D)$ is invertible in $\scrL(H^s(\rz^n), H^{s-\nu}(\rz^n))$ 
  for some $s\in\rz$.
\end{itemize}
\end{definition}

Note that condition $(2)$ is equivalent to the existence of an inverse of $a_{[\nu]}^\infty(x,\xi)$ 
with respect to the Leibniz product, with inverse belonging to $S^{-\nu}_{1,0}(\rz^n)$. 

\begin{theorem}\label{thm:param_exp}
Let $a\in\wtbfS^{d,\nu}_{1,0}$ be elliptic. Then there exists a $b\in\wtbfS^{-d,-\nu}_{1,0}$ 
such that both $a\#b-1$ and $b\#a-1$ belong to 
$\scrC^\infty_{\mathrm{comp}}(\rpbar,S^{-\infty}(\rz^n))$. 
In particular, $b(x,D;\mu)=a(x,D;\mu)^{-1}$ provided $\mu$ is sufficiently large. 
\end{theorem}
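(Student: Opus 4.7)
The plan is to first construct, by a formal iteration within the calculus, a symbol $b\in\wtbfS^{-d,-\nu}_{1,0}$ such that $r:=1-a\#b$ belongs to $\wtbfS^{0-\infty,0-\infty}_{1,0}$ and has vanishing principal limit-symbol; then to apply Lemma \ref{prop:1+r} to invert $1-r$ on the operator side for large $\mu$, obtaining a true right inverse; and finally to graft this large-$\mu$ behaviour onto the formal parametrix by a cutoff in $\mu$, making the remainder compactly supported in $\mu$ and rapidly decreasing in $(x,\xi)$. The same scheme on the left, together with the coincidence of one-sided inverses modulo smoothing, will deliver the two-sided statement.

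\textbf{Construction of the formal parametrix.} Condition $(2)$ combined with spectral invariance of H\"ormander's algebra in $\scrL(H^s(\rz^n),H^{s-\nu}(\rz^n))$ yields a Leibniz-inverse $q:=(a_{[\nu]}^\infty)^{-\#}\in S^{-\nu}_{1,0}(\rz^n)$. I would start with the ansatz $b^{(0)}(x,\xi;\mu):=q(x,\xi)\,[\xi,\mu]^{\nu-d}$, which lies in $\wtbfS^{-d,-\nu}_{1,0}$ (Example \ref{ex:without-parameter} together with the mapping properties recorded after Definition \ref{def:symbol-expansion}) and has principal limit-symbol $q$. By the multiplicativity in Theorem \ref{thm:Leibniz product}, $r^{(0)}:=1-a\#b^{(0)}$ has vanishing principal limit-symbol. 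The iteration then proceeds on two fronts simultaneously: the expansion-at-infinity lets one cancel successive terms $(r^{(j-1)})_{[j]}^\infty$ of the residual by choosing correctors of the form $q\#(r^{(j-1)})_{[j]}^\infty[\xi,\mu]^{\nu-d-j}$ (gaining regularity), while condition $(1)$ provides, via the standard H\"ormander-type construction using the pointwise inverse $\chi(\xi)a^{-1}$ for $|\xi|\ge R$ and Leibniz-remainder bookkeeping, correctors that gain in order. Coupled through the multiplicativity of the limit-symbol, these combine to produce $b^{(j)}\in\wtbfS^{-d-j,-\nu-j}_{1,0}$ annihilating the residual at level $j$. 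Theorem \ref{thm:asympsumm} then yields $b\in\wtbfS^{-d,-\nu}_{1,0}$ asymptotically summing the $b^{(j)}$, and $r:=1-a\#b$ ends up in $\wtbfS^{0-\infty,0-\infty}_{1,0}$ with $r_{[0]}^\infty=0$.

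\textbf{Passage to an exact inverse and compact support in $\mu$.} Since $r_{[0]}^\infty=0$, $1-r_{[0]}^\infty(x,D)=\mathrm{id}$ is trivially invertible, so Lemma \ref{prop:1+r} supplies $r'\in\wtbfS^{0-\infty,0-\infty}_{1,0}$ with $(1-r(\mu))^{-\#}=1-r'(\mu)$ for some $M$ and all $\mu\ge M$. Choose $\theta\in\scrC^\infty(\rpbar)$ with $\theta\equiv 0$ on $[0,M]$ and $\theta\equiv 1$ on $[M+1,\infty)$, and set $\hat b:=b-\theta(\mu)\,b\#r'$; then $\hat b\in\wtbfS^{-d,-\nu}_{1,0}$. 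On $\mu\ge M+1$ one has $a\#\hat b=(1-r)\#(1-r')=1$, while on $\mu\le M$ one has $a\#\hat b-1=-r\in\wtbfS^{0-\infty,0-\infty}_{1,0}$, whose elements are by definition $S^{-\infty}(\rz^n)$-valued uniformly in $\mu$. Hence $a\#\hat b-1\in\scrC^\infty_{\mathrm{comp}}(\rpbar,S^{-\infty}(\rz^n))$, and the analogous modification on the left completes the proof. The main obstacle is the coupling in Step 1: ensuring that the correctors land in $\wtbfS^{-d-j,-\nu-j}_{1,0}$, so that both the regularity gain from the limit-symbol machinery and the order gain from the full-symbol invertibility are harvested at each iteration---a reconciliation that rests entirely on the compatibility expressed by the multiplicativity of the principal limit-symbol in Theorem \ref{thm:Leibniz product}.
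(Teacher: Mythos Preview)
Your second step---the passage to an exact inverse via Lemma \ref{prop:1+r} and the cutoff in $\mu$---is correct and mirrors the paper's argument. The gap is in your first step.

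The difficulty is this: your initial ansatz $b^{(0)}=q\,[\xi,\mu]^{\nu-d}$ produces a residual $r^{(0)}=1-a\#b^{(0)}\in\wtbfS^{0,0}_{1,0}$ whose principal limit-symbol vanishes. But vanishing of $(r^{(0)})_{[0]}^\infty$ only places $r^{(0)}$ in $\wt S^{0,1}_{1,0}$ (one step better in \emph{regularity}); it does \emph{not} place it in $\wtbfS^{-1,-1}_{1,0}$ (one step better in \emph{both order and regularity}), which is what a Neumann iteration producing correctors in $\wtbfS^{-d-j,-\nu-j}_{1,0}$ would require. Your proposed correctors $q\#(r^{(j-1)})_{[j]}^\infty\,[\xi,\mu]^{\nu-d-j}$ have $q\#(r^{(j-1)})_{[j]}^\infty\in S^{-\nu+j}_{1,0}(\rz^n)$ and hence live in $\wtbfS^{-d,-\nu+j}_{1,0}$, not $\wtbfS^{-d-j,-\nu-j}_{1,0}$; they do not feed the asymptotic summation of Theorem \ref{thm:asympsumm} in the filtration you need. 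The ``two fronts simultaneously'' language does not bridge this: gaining regularity from the limit-symbol side and gaining order from the pointwise-inverse side are genuinely different filtrations of $\wtbfS^{0,0}_{1,0}$, and you have not explained how a single sequence of correctors can sit in their intersection.

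The paper resolves this by a different route: rather than starting from $q\,[\xi,\mu]^{\nu-d}$, it takes directly $c(x,\xi;\mu)=\chi(\xi)\,a(x,\xi;\mu)^{-1}$, which a priori is only known to lie in $\wt S^{-d,-\nu}_{1,0}$ by condition (1). The key computation (Step 1 of the proof, after reducing to $d=\nu=0$) is to show that $c$ actually belongs to $\wtbfS^{0,0}_{1,0}$, by recursively building candidate coefficients $c_{[j]}^\infty$ from those of $a$ via $\wt c_{[0]}^\infty=\chi(2\xi)(a_{[0]}^\infty)^{-1}$ and $\wt c_{[j]}^\infty=-\wt c_{[0]}^\infty\sum_{k+\ell=j,\,\ell<j}a_{[k]}^\infty\,\wt c_{[\ell]}^\infty$, and then verifying that $c$ minus the corresponding partial sums lands in $\wt S^{0,N}_{1,0}$. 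Once $c\in\wtbfS^{0,0}_{1,0}$ is established, $a\#c-1\in\wtbfS^{-1,-1}_{1,0}$ follows from $a\#c\equiv ac=\chi(\xi)$ modulo $\wtbfS^{-1,-1}_{1,0}$, and the standard Neumann series in the filtration $\wtbfS^{-j,-j}_{1,0}$ runs cleanly. Your proposal gestures at $\chi a^{-1}$ but never confronts the question of why it admits an expansion at infinity---and that is precisely the crux.
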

\begin{proof}
By order reduction we may assume without loss of generality that $d=\nu=0$. 

\textbf{Step 1:} Since $a_{[0]}^\infty(x,D)$ is invertible, $a_{[0]}^\infty$ is also elliptic. 
Thus we can choose a zero-excision function $\chi(\xi)$ such that 
\begin{align*} 
 \chi(2\xi)a(x,\xi;\mu)^{-1}\in \wt{S}^{0,0}_{1,0},\qquad  
 \wt{c}_{[0]}^\infty(x,\xi):=\chi(2\xi)a_{[0]}^\infty(x,\xi)^{-1}\in S^0_{1,0}(\rz^n),
\end{align*} 
and $\chi(\xi)\chi(2\xi)=\chi(\xi)$. Now define recursively, 
 $$\wt{c}_{[j]}^\infty(x,\xi)=-\wt{c}_{[0]}^\infty(x,\xi)\sum_{\substack{k+\ell=j,\\ \ell<j}}
   a_{[k]}^\infty(x,\xi)\wt{c}_{[\ell]}^\infty(x,\xi),\qquad j\in\nz,$$
and set ${c}_{[j]}^\infty(x,\xi)=\chi(\xi)\wt{c}_{[j]}^\infty(x,\xi)$. Then 
 $$\Big(\sum_{j=0}^{N-1}{a}_{[j]}^\infty(x,\xi)[\xi,\mu]^{-j}\Big)
   \Big(\sum_{j=0}^{N-1}{c}_{[j]}^\infty(x,\xi)[\xi,\mu]^{-j}\Big)= \chi(\xi) - r_N(x,\xi;\mu)$$  
with $r_N\in\wt{S}^{0,N}_{1,0}$. 
Thus, if $c(x,\xi;\mu):=\chi(\xi)a(x,\xi;\mu)^{-1}$, then 
\begin{align*} 
 c(x,\xi;\mu)&=\chi(\xi)\chi(2\xi)a(x,\xi;\mu)^{-1}\\
 &\equiv \Big(\sum_{j=0}^{N-1}{a}_{[j]}^\infty(x,\xi)[\xi,\mu]^{-j}\Big)
   \Big(\sum_{j=0}^{N-1}{c}_{[j]}^\infty(x,\xi)[\xi,\mu]^{-j}\Big)\chi(2\xi)a(x,\xi;\mu)^{-1}
\end{align*} 
modulo $\wt{S}^{0,N}_{1,0}$. The first factor on the right-hand side equals $a-r_{a,N}$ with 
$r_{a,N}\in\wt{S}^{0,N}$. It follows that 
\begin{align*} 
 c(x,\xi;\mu)\equiv \sum_{j=0}^{N-1}{c}_{[j]}^\infty(x,\xi)[\xi,\mu]^{-j}\mod \wt{S}^{0,N}_{1,0}. 
\end{align*} 
This shows that $c(x,\xi;\mu)=\chi(\xi)a(x,\xi;\mu)^{-1}\in  \wtbfS^{0,0}_{1,0}$. 

\textbf{Step 2:} Let $c$ as constructed in Step 1. Then, by Theorem \ref{thm:Leibniz product}, 
$a\#c\equiv ac=\chi(\xi)$ modulo $\wtbfS^{-1,-1}_{1,0}$. Thus $a\#c-1\in \wtbfS^{-1,-1}_{1,0}$ 
and the usual Neumann series argument, which is possible in view of Theorem \ref{thm:asympsumm},
allows to construct a symbol $c^\prime\in\wtbfS^{0,0}_{1,0}$ such that $a\#c^\prime=1-r$ with 
$r\in\wtbfS^{0-\infty,0-\infty}_{1,0}$. Now define 
 $$c^{\prime\prime}:=c^\prime+(a_{[0]}^\infty)^{-\#}\# r_{[0]}^{\infty};$$
note that $r_{[0]}^{\infty}\in S^{-\infty}(\rz^n)$, hence 
$c^{\prime\prime}-c^\prime\in\wtbfS^{0-\infty,0-\infty}_{1,0}$. It follows that 
$a\#c^{\prime\prime}-1\in \wtbfS^{0-\infty,0-\infty}_{1,0}$ and 
 $$(a\#c^{\prime\prime}-1)_{[0]}^\infty
     =a_{[0]}^\infty\#((c^\prime)_{[0]}^\infty+(a_{[0]}^\infty)^{-1}\#r_{[0]}^\infty)-1
     =a_{[0]}^\infty\#(c^\prime)_{[0]}^\infty+r_{[0]}^\infty-1=0$$ 
by construction. Thus $a\#c^{\prime\prime}=1-r^\prime$, where 
$r^\prime\in\wtbfS^{0-\infty,0-\infty}_{1,0}$ has vanishing limit-symbol. 
Using Proposition \ref{prop:1+r} we thus find a right-parametrix $b_R\in\wtbfS^{0,0}_{1,0}$ such that 
$a\#b_R-1\in\scrC^\infty_{\mathrm{comp}}(\rpbar,S^{-\infty}(\rz^n))$. 

Analogously, we construct a left-parametrix $b_L$. Then the claim follows by choosing 
$b=b_L$ or $b=b_R$. 
\end{proof}

\subsection{Poly-homogeneous symbols with expansion at infinity}
\label{sec:05.3}

As already mentioned $\wt{S}^{d,\nu}_{hom}\cong r^\nu\scrC^\infty_B(\whsz^n_+)$ does not  
behave well under inversion because there is no sufficient control at the singularity. We pass 
to a subclass which also is compatible with the previously introduced expansion at infinity. 

\begin{definition}\label{def:taylor}
A function $\wh{a}(\xi;\mu)\in r^\nu\scrC^\infty_B(\wh{\sz}^n_+)$ is said to have a weighted 
Taylor expansion in $(0,1)$, if there exist $\wh{a}_{\spk{\nu+j}}\in\scrC^\infty(\sz^{n-1})$, 
$j\in\nz_0$, such that the representation $\wh{a}(r,\phi)=\wh{a}(r\phi;\sqrt{1-r^2})$ of $a$ in 
polar-coordinates satisfies 
 $$\wh{a}(r,\phi)-\omega(r)\sum_{j=0}^{N-1}r^{\nu+j}\wh{a}_{\spk{\nu+j}}(\phi)\in 
    r^{\nu+N}\scrC^\infty_B((0,1)\times\sz^{n-1})$$
for every $N\in\nz_0$, where $\omega\in\scrC^\infty([0,1))$ is a cut-off function. 
The space of all such functions will be denoted by $r^\nu\scrC^\infty_T(\wh{\sz}^n_+)$. 
\end{definition}

Note that $\wh{a}(\xi;\mu)\in r^\nu\scrC^\infty_T(\wh{\sz}^n_+)$ is invertible with inverse in 
$r^{-\nu}\scrC^\infty_T(\wh{\sz}^n_+)$ if, and only if, $\wh{a}(\xi;\mu)\not=0$ whenever 
$\xi\not=0$ and $\wh{a}_{\spk{\nu}}(\phi)\not=0$ for all $\phi\in\sz^{n-1}$. 

\begin{definition}\label{def:angular}
$\wtbfS^{d,\nu}_{hom}$ consists of all functions of the form 
 $$a(x,\xi;\mu)=|\xi,\mu|^d \,\wh{a}\Big(x,\frac{(\xi,\mu)}{|\xi,\mu|}\Big),\qquad 
   \wh{a}\in \scrC^\infty_b\big(\rz^n_x,r^\nu\scrC^{\infty}_{T}(\whsz^n_+)\big).$$
Define the \textit{principal angular symbol} 
$a_{\spk{\nu}}(x,\xi)\in S^{\nu}_{hom}(\rz^n)$ of $a$ as  
  $$a_{\spk{\nu}}(x,\xi)=|\xi|^{\nu}\,\wh{a}_{\spk{\nu}}\Big(x,\frac{\xi}{|\xi|}\Big)
      =|\xi|^{\nu}\lim_{r\to 0+}r^{-\nu}\,a\Big(x,r\frac{\xi}{|\xi|};\sqrt{1-r^2}\Big).$$
\end{definition}

Note that, by construction, $\wtbfS^{d,\nu}_{hom}\subseteq\wt{S}^{d,\nu}_{hom}$. The following 
proposition shows that such homogeneous components intrinsically admit an expansion at infinity 
in the sense of Definition \ref{def:symbol-expansion}. 

\begin{proposition}\label{prop:excision}
Let $a(x,\xi;\mu)\in \wtbfS^{d,\nu}_{hom}$ be as in Definition $\ref{def:angular}$ with $\wh{a}$ 
as in Definition $\ref{def:taylor}$. Let $p(x,\xi;\mu)=\chi(\xi)a(x,\xi;\mu)$ with a zero-excision function 
$\chi(\xi)$. Then $p\in\wtbfS^{d,\nu}_{1,0}$ with  
\begin{align*}
 p_{[\nu+j]}^\infty(x,\xi;\mu)=\chi(\xi)|\xi|^{\nu+j}{a}_{\spk{\nu+j}}\Big(x,\frac{\xi}{|\xi|}\Big),
\qquad j\in\nz_0. 
\end{align*}
\end{proposition}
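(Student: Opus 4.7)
The plan is to transfer the weighted Taylor expansion of $\wh{a}$ on the unit semi-sphere to an expansion of $p=\chi(\xi)a$ at infinity, by first extending termwise as $d$-homogeneous functions on $\rz^n\times\rpbar$ and then replacing $|\xi,\mu|$ by the regularized quantity $[\xi,\mu]$.

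\textbf{Step 1.} Apply Definition \ref{def:taylor} to $\wh{a}$. In polar coordinates $(r,\phi)$ on $\whsz^n_+$ we have
\begin{equation*}
\wh{a}(x,r,\phi)=\omega(r)\sum_{j=0}^{N-1}r^{\nu+j}\,\wh{a}_{\spk{\nu+j}}(x,\phi)+\wh{r}_N(x,r,\phi),
\end{equation*}
with $\wh{r}_N\in\scrC^\infty_b\bigl(\rz^n_x,r^{\nu+N}\scrC^\infty_B((0,1)\times\sz^{n-1})\bigr)$. Extend this identity to $\{\xi\neq 0\}$ as a $d$-homogeneous function of $(\xi,\mu)$; using $r=|\xi|/|\xi,\mu|$ and $\phi=\xi/|\xi|$ the summand with index $j$ becomes
\begin{equation*}
\omega\!\Bigl(\tfrac{|\xi|}{|\xi,\mu|}\Bigr)|\xi|^{\nu+j}|\xi,\mu|^{d-\nu-j}\wh{a}_{\spk{\nu+j}}\!\Bigl(x,\tfrac{\xi}{|\xi|}\Bigr),
\end{equation*}
while Theorem \ref{thm:main01} identifies the homogeneous extension of $\wh{r}_N$ with an element of $\wt{S}^{d,\nu+N}_{hom}$, whose product with $\chi(\xi)$ belongs to $\wt{S}^{d,\nu+N}_{1,0}$.

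\textbf{Step 2.} Show that for each $j$, modulo $\wt{S}^{d,\nu+N}_{1,0}$,
\begin{equation*}
\chi(\xi)\omega\!\Bigl(\tfrac{|\xi|}{|\xi,\mu|}\Bigr)|\xi|^{\nu+j}|\xi,\mu|^{d-\nu-j}\wh{a}_{\spk{\nu+j}}\!\Bigl(x,\tfrac{\xi}{|\xi|}\Bigr)\ \equiv\ \chi(\xi)|\xi|^{\nu+j}\wh{a}_{\spk{\nu+j}}\!\Bigl(x,\tfrac{\xi}{|\xi|}\Bigr)[\xi,\mu]^{d-\nu-j}.
\end{equation*}
The two defects are (i) the factor $1-\omega(|\xi|/|\xi,\mu|)$, which is supported where $|\xi|\sim|\xi,\mu|$; on that region $|\xi|^{m}$ can be redistributed as $\spk{\xi}^{L}\spk{\xi,\mu}^{m-L}$ for any $L$, so this contribution lies in $\wt{S}^{d,L}_{1,0}$ for every $L$; (ii) the difference $|\xi,\mu|^{d-\nu-j}-[\xi,\mu]^{d-\nu-j}$, which is supported in a compact set in $(\xi,\mu)$; combined with $\chi(\xi)$ and the boundedness of $\wh{a}_{\spk{\nu+j}}$ this yields a smooth, compactly supported (hence Schwartz) remainder.

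\textbf{Step 3.} Summing over $j=0,\dots,N-1$ and combining with the remainder from Step 1 yields
\begin{equation*}
p(x,\xi;\mu)-\sum_{j=0}^{N-1}\chi(\xi)|\xi|^{\nu+j}\wh{a}_{\spk{\nu+j}}\!\Bigl(x,\tfrac{\xi}{|\xi|}\Bigr)[\xi,\mu]^{d-\nu-j}\ \in\ \wt{S}^{d,\nu+N}_{1,0}
\end{equation*}
for every $N$. Since $\chi(\xi)$ is zero near the origin and $\wh{a}_{\spk{\nu+j}}(x,\phi)\in\scrC^\infty_b(\rz^n_x,\scrC^\infty(\sz^{n-1}))$, the coefficient $\chi(\xi)|\xi|^{\nu+j}\wh{a}_{\spk{\nu+j}}(x,\xi/|\xi|)$ is smooth, $\scrC^\infty_b$ in $x$, and poly-homogeneous of degree $\nu+j$ in $\xi$, so it belongs to $S^{\nu+j}_{1,0}(\rz^n)$ as required by Definition \ref{def:symbol-expansion}. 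This identifies the limit-symbols and shows $p\in\wtbfS^{d,\nu}_{1,0}$.

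\textbf{Main obstacle.} The delicate point is Step 2: the Taylor expansion on the semi-sphere carries the cut-off $\omega(r)$ which, when lifted by homogeneity, becomes $\omega(|\xi|/|\xi,\mu|)$ — a cut-off depending on both $\xi$ and $\mu$ — whereas the required expansion uses the "clean" weight $[\xi,\mu]^{d-\nu-j}$. Showing that the discrepancy lies in the deeper space $\wt{S}^{d,\nu+N}_{1,0}$ hinges on the equivalence $\spk{\xi}\sim\spk{\xi,\mu}$ on the support of $1-\omega(|\xi|/|\xi,\mu|)$, which is exactly the mechanism already exploited in Lemma \ref{lem:localized} and Step~1 of the proof of Proposition \ref{prop:classical}.
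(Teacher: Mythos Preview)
Your proof is correct and follows essentially the same approach as the paper: both use the weighted Taylor expansion of $\wh{a}$, invoke Theorem~\ref{thm:main01} to place the remainder in $\wt{S}^{d,\nu+N}$, and then eliminate the lifted cut-off $\omega(|\xi|/|\xi,\mu|)$ by exploiting that $\spk{\xi}\sim\spk{\xi,\mu}$ on its complement's support. The only cosmetic difference is ordering: the paper first passes from $|\xi,\mu|^{d-\nu-j}$ to $[\xi,\mu]^{d-\nu-j}$ and then removes $\omega$, whereas you treat both defects together in Step~2; the paper also makes the $\wt{S}^{0,L}_{1,0}$ membership of $\chi(\xi)(1-\omega)$ slightly more explicit via an auxiliary zero-excision function $\kappa(\xi,\mu)$ with $\kappa\chi=\chi$.
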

\begin{proof}
By Theorem \ref{thm:main01},  
\begin{align*}
 p(x,\xi;\mu)
 &\equiv \chi(\xi)|\xi,\mu|^d\omega\Big(\frac{|\xi|}{|\xi,\mu|}\Big)
   \sum_{j=0}^{N-1}\Big(\frac{|\xi|}{|\xi,\mu|}\Big)^{\nu+j}
   {a}_{\spk{\nu+j}}\Big(x,\frac{\xi}{|\xi|}\Big)\\
 &\equiv \chi(\xi)\omega\Big(\frac{|\xi|}{|\xi,\mu|}\Big)
  \sum_{j=0}^{N-1}|\xi|^{\nu+j}{a}_{\spk{\nu+j}}\Big(x,\frac{\xi}{|\xi|}\Big) [\xi,\mu]^{d-\nu-j}
\end{align*}
modulo $\wt{S}^{d,\nu+N}$ for every $N$. 
Now observe that $w(\xi;\mu):=(1-\omega)\Big(\frac{|\xi|}{|\xi,\mu|}\Big)$ is a smooth function 
on $(\rz^n\times\rpbar)\setminus\{0\}$ which is homogeneous of degree $0$ and is supported in 
a set of the form $\{(\xi,\mu)\mid 0\le\mu\le c|\xi|\}$. 
Thus, if $\kappa(\xi,\mu)$ is a zero-excision function, then 
$\kappa(\xi,\mu)w(\xi;\mu)\in \wt{S}^{0,L}_{1,0}$ for every $L$, since on its support 
$\spk{\xi,\mu}\sim\spk{\xi}$. Choosing $\kappa$ such that $\kappa(\xi,\mu)\chi(\xi)=\chi(\xi)$, 
we conclude that  
\begin{align*}
p(\xi;\mu)
\equiv \sum_{j=0}^{N-1}\chi(\xi)|\xi|^{\nu+j}{a}_{\spk{\nu+j}}\Big(x,\frac{\xi}{|\xi|}\Big)
[\xi,\mu]^{d-\nu-j}  
\end{align*}
modulo $\wt{S}^{d,\nu+N}$ for every $N$. This concludes the proof. 
\end{proof}

\begin{definition}\label{def:wtbfs-poly}
The space $\wtbfS^{d,\nu}$ consists of all symbols 
$a(x,\xi;\mu)\in \wtbfS^{d,\nu}_{1,0}$ for which exists a sequence of homogeneous components 
$a_j(x,\xi;\mu)\in \wtbfS^{d-j,\nu-j}_{hom}$ such that 
\begin{align}\label{eq:asymptilde}
 a(x,\xi;\mu)-\chi(\xi)\sum_{j=0}^{N-1}a_j(x,\xi;\mu)\in \wtbfS^{d-N,\nu-N}_{1,0}
\end{align}
for every $N\in\nz_0$. 
The \emph{principal angular symbol} of $a(x,\xi;\mu)$ is, by definition, 
the principal angular symbol $a_{0,\spk{\nu}}(x,\xi)$ of the homogeneous principal symbol of 
$a_0(x,\xi;\mu)$ $($cf. Definition $\ref{def:angular})$. 
\end{definition}

The previous definition is meaningful according to Proposition \ref{prop:excision} and 
Theorem \ref{thm:asympsumm}. 
If $a$ is as in \eqref{eq:asymptilde} then the principal limit-symbol $a_{[\nu]}^\infty(x,\xi)$ belongs to 
$S^{\nu}(\rz^n)$ and has the asymptotic expansion 
 $$a_{[\nu]}^\infty(x,\xi)\sim\chi(\xi)\sum_{j=0}^{+\infty}|\xi|^{\nu-j}
   a_{j,\spk{\nu-j}}\Big(x,\frac{\xi}{|\xi|}\Big).$$
In particular, we have the following$:$

\begin{proposition}\label{prop:compatibility}
Let $a(x,\xi;\mu)\in\wtbfS^{d,\nu}$. Then the homogeneous principal symbol of the principal 
limit-symbol $a_{[\nu]}^\infty(x,\xi)$ coincides with the principal angular symbol of $a(x,\xi;\mu)$. 
\end{proposition}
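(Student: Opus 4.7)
The plan is to observe that Proposition \ref{prop:compatibility} is essentially the identification of the leading term in the asymptotic expansion for $a_{[\nu]}^\infty$ displayed in the paragraph immediately preceding its statement, so the work reduces to combining three ingredients already in place: the defining poly-homogeneous expansion \eqref{eq:asymptilde}, the limit-symbol expansion of each homogeneous component supplied by Proposition \ref{prop:excision}, and the compatibility of limit-symbols with asymptotic summation given by Theorem \ref{thm:asympsumm}.

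First I would fix a poly-homogeneous expansion $a\sim \chi(\xi)\sum_{j}a_j$ with $a_j\in\wtbfS^{d-j,\nu-j}_{hom}$ and apply Proposition \ref{prop:excision} to each summand $\chi a_j$. This places $\chi a_j$ in $\wtbfS^{d-j,\nu-j}_{1,0}$ and identifies its principal limit-symbol as
\[
 (\chi a_j)_{[\nu-j]}^{\infty}(x,\xi)=\chi(\xi)\,|\xi|^{\nu-j}\,a_{j,\spk{\nu-j}}\!\Big(x,\tfrac{\xi}{|\xi|}\Big).
\]
Next, Theorem \ref{thm:asympsumm} applied to the sequence $\chi a_j\in\wtbfS^{d-j,\nu-j}_{1,0}$ yields, in the case $j=0$, the asymptotic identity
\[
 a_{[\nu]}^{\infty}(x,\xi)\sim\sum_{k=0}^{+\infty}(\chi a_k)_{[\nu-k]}^{\infty}(x,\xi)
  =\chi(\xi)\sum_{k=0}^{+\infty}|\xi|^{\nu-k}\,a_{k,\spk{\nu-k}}\!\Big(x,\tfrac{\xi}{|\xi|}\Big)
\]
in $S^{\nu}_{1,0}(\rz^n)$, with the order of the $k$-th term decreasing by one in $k$.

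Finally, I would read off the homogeneous principal symbol of $a_{[\nu]}^\infty\in S^\nu(\rz^n)$ from this expansion: the $k=0$ term, homogeneously extended off $\chi$, is $|\xi|^{\nu}a_{0,\spk{\nu}}(x,\xi/|\xi|)=a_{0,\spk{\nu}}(x,\xi)$, which is precisely the principal angular symbol of $a$ by Definitions \ref{def:angular} and \ref{def:wtbfs-poly}. The remaining terms have strictly smaller homogeneity degree and contribute to lower-order homogeneous components. I do not foresee a genuine obstacle here; the only thing that requires care is matching the index shifts in the two expansions (the homogeneity index $j$ in \eqref{eq:asymptilde} versus the weight index in the limit-symbol expansion of Proposition \ref{prop:excision}) so that the limit-symbols of the tails $r_{a,N}=a-\chi\sum_{j<N}a_j\in\wtbfS^{d-N,\nu-N}_{1,0}$ contribute only to orders $\le\nu-N$ in $S^\nu(\rz^n)$, confirming that the identification of the principal part is unambiguous.
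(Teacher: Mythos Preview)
Your proposal is correct and follows exactly the route the paper takes: the proposition is stated in the paper as an immediate consequence (``In particular'') of the displayed asymptotic expansion for $a_{[\nu]}^\infty$ that precedes it, and you have simply supplied the justification of that expansion by combining Proposition~\ref{prop:excision} with the limit-symbol formula from Theorem~\ref{thm:asympsumm}. Your handling of the index shifts is accurate and matches the paper's formula term for term.
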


Now let us turn to ellipticity and parametrix. 

\begin{definition}\label{def:ell}
A symbol $a(x,\xi;\mu)\in\wtbfS^{d,\nu}$ is called elliptic if  
\begin{itemize} 
 \item[$(1)$] The homogeneous principal symbol $a_0(x,\xi;\mu)$ is invertible whenever $\xi\not=0$ and   
  $$|a_0(x,\xi;\mu)^{-1}|\lesssim|\xi|^{-\nu}|\xi,\mu|^{\nu-d}.$$ 
 \item[$(2)$] $a_{[\nu]}^\infty(x,D)$ is invertible in $\scrL(H^s(\rz^n), H^{s-\nu}(\rz^n))$ for  
  some $s\in\rz$. 
\end{itemize}
\end{definition}

Due to Proposition \ref{prop:compatibility}, condition $(2)$ implies the invertibility of the 
principal angular symbol of $a$. 
Moreover, if the homogeneous principal symbol of $a(x,\xi;\mu)$ does not depend on $x$ 
for large $|x|$, then condition $(1)$ in Definition \ref{def:ell} can be substituted by  
\begin{itemize} 
 \item[$(1^\prime)$] The homogeneous principal symbol $a_0(x,\xi;\mu)$ 
 is invertible whenever $\xi\not=0$. 
\end{itemize}

\begin{theorem}\label{thm:param}
Let $a\in\wtbfS^{d,\nu}$ be elliptic. Then there exists a $b\in\wtbfS^{-d,-\nu}$ such that both 
$a\#b-1$ and $b\#a-1$ belong to 
$\scrC^\infty_{\mathrm{comp}}(\rpbar,S^{-\infty}(\rz^n))$. 
In particular, $b(x,D;\mu)=a(x,D;\mu)^{-1}$ provided $\mu$ is sufficiently large. 
\end{theorem}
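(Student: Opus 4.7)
The idea is to mimic the two-step construction from Theorem \ref{thm:param_exp}, but to perform the entire recursion inside the poly-homogeneous subclass $\wtbfS^{d,\nu}$ rather than only in $\wtbfS^{d,\nu}_{1,0}$.

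First, I would invert the homogeneous principal symbol level by level. Define $b_0(x,\xi;\mu) := a_0(x,\xi;\mu)^{-1}$ for $\xi\neq0$. Condition $(1)$ of Definition \ref{def:ell} supplies pointwise invertibility of $a_0$, while condition $(2)$, combined with Proposition \ref{prop:compatibility}, forces the principal angular symbol $a_{0,\spk{\nu}}$ to be invertible as well. By the characterization following Definition \ref{def:taylor}, applied uniformly in $x$, this is exactly what is needed to place $b_0 \in \wtbfS^{-d,-\nu}_{hom}$. Using the Leibniz-product expansion of Theorem \ref{thm:Leibniz product}, I would then define $b_j \in \wtbfS^{-d-j,-\nu-j}_{hom}$ recursively by the standard formula requiring the $(d-j,\nu-j)$-homogeneous component of $a \# (b_0+\ldots+b_j)$ to vanish; since differentiation, multiplication, and composition with $b_0$ preserve the class $\wtbfS^{\cdot,\cdot}_{hom}$, the recursion stays poly-homogeneous. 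Asymptotic summation (Theorem \ref{thm:asympsumm}) produces $\tilde b\in\wtbfS^{-d,-\nu}_{1,0}$ which, by Definition \ref{def:wtbfs-poly} and Proposition \ref{prop:excision}, actually lies in $\wtbfS^{-d,-\nu}$.

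The second step is to correct the remainder $r := 1 - a\#\tilde b$ into a compactly supported one. By construction $r\in\wtbfS^{0-\infty,0-\infty}_{1,0}$; moreover the matching of the $b_j$ together with Proposition \ref{prop:excision} forces every limit-symbol $r_{[j]}^\infty$ to vanish. In particular $1-r_{[0]}^\infty(x,D)=1$ is trivially invertible, so Proposition \ref{prop:1+r} gives $(1-r)^{-\#}=1-s$ with $s\in\wtbfS^{0-\infty,0-\infty}_{1,0}$ and the identity $(1-r)\#(1-s)=1$ for sufficiently large $\mu$. Setting $b := \tilde b\#(1-s)$ yields $a\#b=1$ for large $\mu$, hence $a\#b-1\in\scrC^\infty_{\mathrm{comp}}(\rpbar,S^{-\infty}(\rz^n))$. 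A symmetric argument produces a left parametrix, and the standard trick of comparing left- and right-parametrices modulo $\scrC^\infty_{\mathrm{comp}}(\rpbar,S^{-\infty})$ gives a single two-sided $b$; the final invertibility assertion is then immediate since the remainder vanishes for large $\mu$.

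The main obstacle will be showing that $b=\tilde b\#(1-s)$ remains inside the poly-homogeneous class $\wtbfS^{-d,-\nu}$, i.e., that the perturbation $\tilde b\#s$ is poly-homogeneous. The plan here is to revisit the proof of Proposition \ref{prop:1+r} and observe that when $r$ itself is poly-homogeneous with all limit-symbols vanishing (as is the case here), both the Neumann-series step and the spectral-invariance correction can be carried out inside $\wtbfS^{-\infty,-\infty}$, so that $s\in\wtbfS^{-\infty,-\infty}$ and hence $\tilde b\#s\in\wtbfS^{-d-\infty,-\nu-\infty}$. An alternative, if one wishes to avoid refining Proposition \ref{prop:1+r}, is to write the correction as $b=\tilde b + c$ and to build $c$ directly as a poly-homogeneous symbol matching $a\#c=r$ order by order in $\wtbfS^{\cdot,\cdot}_{hom}$ for arbitrarily negative orders, which is unobstructed because every homogeneous component of $r$ vanishes.
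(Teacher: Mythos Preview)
Your Step~1 is fine and matches the paper's approach: invert $a_0$ using condition~(1) together with the angular-symbol invertibility (deduced from condition~(2) via Proposition~\ref{prop:compatibility}), then run the standard homogeneous recursion and asymptotically sum.

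The gap is in Step~2. You assert that ``the matching of the $b_j$ together with Proposition~\ref{prop:excision} forces every limit-symbol $r_{[j]}^\infty$ to vanish.'' This is not correct. Matching all homogeneous components of $a\#\tilde b-1$ to zero only places $r$ in $\wtbfS^{0-\infty,0-\infty}_{1,0}$; by the discussion after Definition~\ref{def:wtbfs-poly} this forces the \emph{asymptotic expansions} of the $r_{[j]}^\infty$ to be trivial, i.e.\ $r_{[j]}^\infty\in S^{-\infty}(\rz^n)$, but not $r_{[j]}^\infty=0$. Concretely, $r_{[0]}^\infty=1-a_{[\nu]}^\infty\#\tilde b_{[-\nu]}^\infty$, and $\tilde b_{[-\nu]}^\infty$ is only a parametrix of $a_{[\nu]}^\infty$ modulo $S^{-\infty}$ (the asymptotic summation in Theorem~\ref{thm:asympsumm} fixes $\tilde b$ only modulo $\wtbfS^{-d-\infty,-\nu-\infty}_{1,0}$, so $\tilde b_{[-\nu]}^\infty$ is determined only modulo $S^{-\infty}$). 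Hence $1-r_{[0]}^\infty(x,D)=a_{[\nu]}^\infty(x,D)\,\tilde b_{[-\nu]}^\infty(x,D)$ need not be invertible, and your application of Proposition~\ref{prop:1+r} is unjustified. The fix is exactly what the paper does (Step~2 of Theorem~\ref{thm:param_exp}): use condition~(2) once more to replace $\tilde b$ by $\tilde b + (a_{[\nu]}^\infty)^{-\#}\#r_{[0]}^\infty\,[\xi,\mu]^{-(d-\nu)}$, which kills $r_{[0]}^\infty$ exactly.

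Separately, your ``main obstacle'' is a non-issue. Any $s\in\wtbfS^{0-\infty,0-\infty}_{1,0}=\cap_N\wtbfS^{-N,-N}_{1,0}$ is automatically poly-homogeneous in $\wtbfS^{d,\nu}$ for every $d,\nu$, with all homogeneous components zero (Definition~\ref{def:wtbfs-poly} is satisfied trivially). So $\tilde b\#s\in\wtbfS^{-d-\infty,-\nu-\infty}_{1,0}\subset\wtbfS^{-d,-\nu}$ with no extra work.
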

\begin{proof}
By ellipticity assumption $(2)$, there exists a $b(x,\xi)\in S^{-\nu}(\rz^n)$ 
which is the inverse of $a_{[\nu]}^\infty(x,\xi)$ with respect to the Leibniz product. 
By Proposition \ref{prop:compatibility} it follows that the principal angular symbol of $a$ 
$($i.e., that of $a_0)$ is invertible and the inverse is just the homogeneous principal 
symbol of $b$. Together with $(1)$ we conclude that the homogeneous principal symbol 
$a_0(x,\xi;\mu)$ is invertible with inverse belonging to $\wtbfS^{-d,-\nu}_{hom}$. 
Thus there exists a $c(x,\xi;\mu)\in \wtbfS^{-d,-\nu}$ which is a parametrix of 
$a(x,\xi;\mu)$ modulo $\wtbfS^{-1,-1}$. Then proceed as in Step $2$ of the proof 
of Theorem \ref{thm:param_exp}. 
\end{proof}

\subsection{Refined calculus for symbols of finite regularity}
\label{sec:05.4}

As proved in Theorem \ref{thm:decomposition}, Grubb's class $S^{d,\nu}$ coincides with the 
non-direct sum $\wt{S}^{d,\nu}+S^d$. 
In light of the above considerations it is now natural to introduce the following class: 

\begin{definition}\label{def:new_reg}
With $d\in\rz$ and $\nu\in\gz$ define 
 $$\bfS^{d,\nu}= \wtbfS^{d,\nu}+S^d,\qquad 
     \bfS^{d,\nu}_{hom}= \wtbfS^{d,\nu}_{hom}+S^d_{hom}.$$ 
\end{definition}

The limitation to integer values of $\nu$ is due to Lemma \ref{lem:inv}, below. 
Note also that $\bfS^{d,\nu}=\wtbfS^{d,\nu}$ whenever $\nu\le0$, and  
$\bfS^{d,\nu}\subset\wtbfS^{d,0}$ whenever $\nu>0$. 

By Proposition \ref{prop:classical} and Theorem \ref{thm:Leibniz product}, 
the Leibniz product induces maps 
 $$\bfS^{d_1,\nu_1}\times\bfS^{d_0,\nu_0}\lra \bfS^{d_0+d_1,\nu},\qquad 
   \nu=\min(\nu_0,\nu_1,\nu_0+\nu_1).$$
By Theorem \ref{thm:adjoint} the class is closed under taking the $($formal$)$ adjoint. 
Since in both spaces involved in Definition \ref{def:new_reg} asymptotic summation is 
possible $($cf. Section \ref{sec:03.2} and Theorem \ref{thm:asympsumm}$)$, a 
sequence of symbols $a_j\in \bfS^{d-j,\nu-j}$ can be summed asymptotically in $\bfS^{d,\nu}$. 

\begin{lemma}\label{lem:inv}
Let $\nu\in\nz$ be a positive integer and 
$a(\xi;\mu)\in r^\nu\scrC^\infty_T(\wh\sz^n_+)+\scrC^\infty(\sz^n_+)$ 
be point-wise invertible on $\sz^n_+$.\footnote{Recall that $a(\xi;\mu)$ extends by continuity 
to the whole semi-sphere, since $\nu>0$.} 
Then $a(\xi;\mu)^{-1}\in r^\nu\scrC^\infty_T(\wh\sz^n_+)+\scrC^\infty(\sz^n_+)$. 
\end{lemma}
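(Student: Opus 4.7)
Write $a = a_s + a_T$ with $a_s \in \scrC^\infty(\sz^n_+)$ and $a_T \in r^\nu \scrC^\infty_T(\whsz^n_+)$. Since $\nu>0$, $a_T$ vanishes at the north pole $(0,1)$, so $a$ extends continuously to $\sz^n_+$ with value $a_s(0,1)$ there; pointwise invertibility forces $a_s(0,1) \ne 0$, and hence $a_s$ is bounded away from zero on some neighborhood $U$ of $(0,1)$. On $\whsz^n_+$ both summands are smooth and $a$ never vanishes, so $a^{-1} \in \scrC^\infty(\whsz^n_+)$. Pick a cut-off $\chi \in \scrC^\infty(\sz^n_+)$ with $\chi \equiv 1$ near $(0,1)$ and $\mathrm{supp}\,\chi \subset U$; then $(1-\chi)a^{-1}$ extends by zero to an element of $\scrC^\infty(\sz^n_+)$, and it remains only to place $\chi a^{-1}$ in the claimed sum.

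On $\wh{U} := U \setminus \{(0,1)\}$ set $g := a_T/a_s$. Since $1/a_s$ is smooth on $U$, and $r^\nu \scrC^\infty_T$ is a module over smooth functions on $\sz^n_+$ (just multiply weighted Taylor expansions), $g \in r^\nu \scrC^\infty_T(\wh{U})$ with $g(0,1)=0$. Writing $a^{-1} = a_s^{-1}(1+g)^{-1}$, the lemma will follow once we exhibit $h \in r^\nu \scrC^\infty_T(\wh{U})$ with $(1+g)^{-1} = 1 + h$: then $\chi a^{-1} = \chi a_s^{-1} + \chi a_s^{-1} h$, with $\chi a_s^{-1}$ extending smoothly by zero to $\sz^n_+$ and $\chi a_s^{-1} h$ lying in $r^\nu \scrC^\infty_T(\whsz^n_+)$ by the same module property.

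For the construction of $h$, set $h_N := \sum_{k=1}^N (-g)^k \in r^\nu \scrC^\infty_T(\wh{U})$. The geometric sum identity $(1+g)(1+h_N) = 1 - (-g)^{N+1}$ rearranges to
$$(1+g)^{-1} - 1 - h_N = (1+g)^{-1}(-g)^{N+1}.$$
The main technical step — and the only non-routine one in my view — is to show $(1+g)^{-1} \in \scrC^\infty_B(\wh{U})$: since $g(0,1)=0$, $(1+g)^{-1}$ is bounded near the north pole, and by induction on $\ell+|\alpha|$ the totally characteristic derivative $(r\partial_r)^\ell \partial_\phi^\alpha (1+g)^{-1}$ is a polynomial in the bounded quantities $(r\partial_r)^j \partial_\phi^\beta g$ with $j \le \ell$, $|\beta| \le |\alpha|$, and in powers of $(1+g)^{-1}$, hence is itself bounded. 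Consequently $(1+g)^{-1}(-g)^{N+1} \in r^{(N+1)\nu}\scrC^\infty_B(\wh{U})$ for every $N$. A standard Borel-type summation in the scale $r^{k\nu}\scrC^\infty_T$ (cutting each $(-g)^k$ off by $\omega(r/c_k)$ with $c_k \searrow 0$ chosen fast enough) produces $h \in r^\nu \scrC^\infty_T(\wh{U})$ with $h - h_N \in r^{(N+1)\nu}\scrC^\infty_T$ for every $N$; then $(1+g)^{-1} - 1 - h$ lies in $\bigcap_M r^M \scrC^\infty_B$, which sits inside $r^\nu \scrC^\infty_T$ with vanishing weighted Taylor expansion, completing the proof.
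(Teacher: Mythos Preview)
Your proof is correct and follows essentially the same route as the paper: split $a$ into smooth and $r^\nu\scrC^\infty_T$ parts, localize near the north pole, reduce to inverting $1+g$ with $g\in r^\nu\scrC^\infty_T$ small, verify $(1+g)^{-1}\in\scrC^\infty_B$ by the chain rule, and read off the weighted Taylor expansion of $(1+g)^{-1}-1$ from the geometric-series remainder identity.

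The one place where you do more work than necessary is the Borel summation at the end. The paper simply takes $h:=(1+g)^{-1}-1$ itself and observes that the identity $h=h_N+(1+g)^{-1}(-g)^{N+1}$ already exhibits its weighted Taylor expansion: since $\nu\ge1$, the term $(-g)^k\in r^{k\nu}\scrC^\infty_T$ contributes only to Taylor coefficients of order $\ge k\nu$, so the coefficients read off from $h_N$ stabilize as $N$ grows; and for any target order $\nu+M$ one chooses $N$ with $(N+1)\nu\ge\nu+M$ to place the remainder in $r^{\nu+M}\scrC^\infty_B$. There is no need to build a separate asymptotic sum and then compare.
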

\begin{proof}
Write $a=\wt{a}+a_0$ with $\wt{a}\in r^\nu\scrC^\infty_T(\wh\sz^n_+)$ and 
$a_0\in \scrC^\infty(\sz^n_+)$. Clearly $a$ is smooth on $\wh\sz^n_+$. 
Since $a_0(0;1)=a(0;1)$ is invertible, there exists a $b_0\in\scrC^\infty(\sz^n_+)$ everywhere 
invertible and such that $b_0=a_0$ in a neighborhood of $(0,1)$. 
Then $ab_0^{-1}$ has the same structure as $a$, hence we may assume from the very 
beginning, that $a_0\equiv1$ in a neighborhood $(0,1)$. 
Now let $\psi_1,\psi\in\scrC^\infty(\sz^n_+)$ be supported in this neighborhood, 
such that $\psi,\psi_1\equiv1$ near $(0,1)$ and $\psi\psi_1=\psi$. 
Then $\psi a^{-1}=\psi(1+\psi_1\wt{a})^{-1}$. Taking the support of $\psi_1$ and $\psi$ 
sufficiently small, we have that $\wt{b}:=-(\psi_1\wt{a})\in r^\nu\scrC^\infty_T(\wh\sz^n_+)$ 
and $|\wt{b}|\le 1/2$ on $\sz^n_+$. By chain rule it is straight-forward to see that 
$(1-\wt{b})^{-1}\in\scrC^\infty_B(\wh\sz^n_+)$. Then 
 $$(1-\wt{b})^{-1}=1+\sum_{j=1}^{N-1}\wt{b}^j+\wt{b}^N (1-\wt{b})^{-1},\qquad N\in\nz,$$
shows that $(1-\wt{b})^{-1}=1-\wt{c}$ with $\wt{c}\in r^\nu\scrC^\infty_T(\wh\sz^n_+)$. 
This yields the claim. 
\end{proof}

In case of $x$-dependence we need to pose, as usual, an additional uniform bound on the inverse. 
Since symbols of $\bfS^{d,\nu}_{hom}$ are just the homogeneous extensions of degree $d$ of 
functions from $r^\nu\scrC^\infty_T(\wh\sz^n_+)+\scrC^\infty(\sz^n_+)$, we immediately have 
the following corollary. 

\begin{corollary}\label{cor:inv}
Let $\nu\in\nz$ be positive and $a(x,\xi;\mu)\in \bfS^{d,\nu}_{hom}$.  
Assume that $a(x,\xi;\mu)$ is invertible whenever $(\xi,\mu)\not=0$ 
and that $|a(x,\xi;\mu)^{-1}|\lesssim |\xi,\mu|^{-d}$. 
Then $a(x,\xi;\mu)^{-1}\in\bfS^{-d,\nu}_{hom}$. 
\end{corollary}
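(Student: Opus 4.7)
The plan is to reduce the corollary to the pointwise statement of Lemma \ref{lem:inv} by running its proof with $\scrC^\infty_b$-uniform $x$-dependence. First I would use homogeneity to normalize: multiplying $a$ by $|\xi,\mu|^{-d}$ lands us in $\bfS^{0,\nu}_{hom}$, the assumed bound becomes $|a(x,\xi;\mu)^{-1}|\lesssim 1$ uniformly, and the claim becomes that $a^{-1}\in\bfS^{0,\nu}_{hom}$. Restricting to $\sz^n_+$ reduces everything to showing
\[
\wh{a}(x,\cdot)^{-1}\in\scrC^\infty_b\!\left(\rz^n_x,\;r^\nu\scrC^\infty_T(\whsz^n_+)+\scrC^\infty(\sz^n_+)\right),
\]
given that $\wh{a}\in\scrC^\infty_b(\rz^n_x,\,r^\nu\scrC^\infty_T(\whsz^n_+)+\scrC^\infty(\sz^n_+))$ with $\wh{a}(x,\cdot)$ uniformly bounded below in modulus on $\sz^n_+$.

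Next I would decompose $\wh{a}=\wt{a}+a_0$ with $\wt{a}\in\scrC^\infty_b(\rz^n_x,r^\nu\scrC^\infty_T(\whsz^n_+))$ and $a_0\in\scrC^\infty_b(\rz^n_x,\scrC^\infty(\sz^n_+))$. Since $\nu>0$, the smooth part carries all the behaviour at $(0,1)$: $\wh{a}(x,0;1)=a_0(x,0;1)$, so the hypothesis forces $a_0(x,0;1)$ to be uniformly invertible in $x$. I would then construct $b_0\in\scrC^\infty_b(\rz^n_x,\scrC^\infty(\sz^n_+))$, everywhere invertible with $\scrC^\infty_b$-uniform lower bound, such that $b_0\equiv a_0$ on a fixed (\emph{$x$-independent}) neighbourhood $U$ of $(0,1)$; this is possible because the uniform invertibility of $a_0(x,0;1)$ plus $\scrC^\infty_b$ dependence lets us glue $a_0$ near $(0,1)$ to a uniform non-vanishing constant multiple away from it via an $x$-independent partition of unity on $\sz^n_+$. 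Replacing $\wh{a}$ by $\wh{a}\,b_0^{-1}$ (whose inverse has the same structure iff $\wh{a}^{-1}$ does), I may assume $a_0\equiv 1$ on $U$.

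Now I would run the Neumann argument of Lemma \ref{lem:inv} uniformly. Choose $x$-independent cut-offs $\psi,\psi_1\in\scrC^\infty(\sz^n_+)$ supported in $U$ with $\psi\psi_1=\psi$ and $\psi_1\equiv 1$ near $(0,1)$, with $\mathrm{supp}\,\psi_1$ small enough that $|\psi_1(\xi,\mu)\,\wt{a}(x,\xi,\mu)|\le 1/2$ for every $x$ — this is possible because $\wt{a}=O(r^\nu)$ near $(0,1)$ with $\scrC^\infty_b$ constants. Setting $\wt{b}=-\psi_1\wt{a}\in\scrC^\infty_b(\rz^n_x,r^\nu\scrC^\infty_T(\whsz^n_+))$, the identity $(1-\wt{b})^{-1}=\sum_{j=0}^{N-1}\wt{b}^j+\wt{b}^N(1-\wt{b})^{-1}$ together with the chain-rule bound $(1-\wt{b})^{-1}\in\scrC^\infty_B$ uniformly in $x$ shows $(1-\wt{b})^{-1}=1-\wt{c}$ with $\wt{c}\in\scrC^\infty_b(\rz^n_x,r^\nu\scrC^\infty_T(\whsz^n_+))$. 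Finally, the globalization $\wh{a}^{-1}=\psi(1-\wt{b})^{-1}+(1-\psi)\wh{a}^{-1}$ exhibits the desired decomposition, once I observe that $(1-\psi)\wh{a}^{-1}$ is $\scrC^\infty_b$ in $x$ with values in $\scrC^\infty(\sz^n_+)$: on $\sz^n_+\setminus U$ the function $\wh{a}$ is a $\scrC^\infty_b$-family of smooth non-vanishing functions with uniform lower modulus bound, so routine chain-rule estimates apply.

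The main obstacle is the uniformity in $x$: every choice made pointwise in Lemma \ref{lem:inv} (the function $b_0$, the cut-offs $\psi,\psi_1$, the smallness of $\wt b$) must be realized $x$-independently or with $\scrC^\infty_b$ estimates. This is ultimately forced by the two assumptions — $\scrC^\infty_b$ dependence in the definition of $\bfS^{d,\nu}_{hom}$ and the uniform modulus bound $|a^{-1}|\lesssim|\xi,\mu|^{-d}$ — but requires the explicit re-tracing above rather than a black-box appeal to the lemma.
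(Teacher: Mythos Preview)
Your proposal is correct and follows essentially the same approach as the paper. The paper derives the corollary in one sentence from Lemma~\ref{lem:inv}, observing that elements of $\bfS^{d,\nu}_{hom}$ are precisely the degree-$d$ homogeneous extensions of functions in $r^\nu\scrC^\infty_T(\whsz^n_+)+\scrC^\infty(\sz^n_+)$ and remarking that the additional uniform bound $|a^{-1}|\lesssim|\xi,\mu|^{-d}$ is what is needed for the $\scrC^\infty_b$ control in $x$; you have simply made explicit the uniform-in-$x$ re-tracing of the lemma's proof that this remark leaves to the reader.
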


After this observation it is clear that we can construct a parametrix in the class$:$

\begin{theorem}\label{thm:param02}
Let $\nu\in\nz$ be a positive integer and $a(x,\xi;\mu)\in \bfS^{d,\nu}$ be elliptic 
$($i.e., the homogeneous principal symbol satisfies the assumptions of Corollary $\ref{cor:inv})$. 
Then there exists a parametrix  $b(x,\xi;\mu)\in \bfS^{-d,\nu}$ such that both $a\#b-1$ and 
$b\#a-1$ belong to $\scrC^\infty_{\mathrm{comp}}(\rpbar,S^{-\infty}(\rz^n))$.   
\end{theorem}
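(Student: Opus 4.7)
The proof follows the three-step parametrix scheme used for Theorems \ref{thm:param_exp} and \ref{thm:param}, performed inside the refined class $\bfS^{d,\nu}$. I construct a right-parametrix; a left-parametrix is built symmetrically, and the two coincide modulo $\scrC^\infty_{\mathrm{comp}}(\rpbar,S^{-\infty}(\rz^n))$ by the usual argument.

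\textbf{Step 1 (invert the principal symbol).} The ellipticity hypothesis is precisely that of Corollary \ref{cor:inv}, which yields $a_0^{-1}\in\bfS^{-d,\nu}_{hom}$. A zero-excision cutoff in $(\xi,\mu)$ produces $b_0\in\bfS^{-d,\nu}$ whose homogeneous principal symbol equals $a_0^{-1}$. Writing $a$ and $b_0$ via Definition \ref{def:new_reg} and applying the Leibniz product summand-wise (Theorem \ref{thm:Leibniz product} for the $\wtbfS$-parts, classical H\"ormander theory for the $S^d$-parts), one obtains $a\#b_0=1-r_0$ with $r_0\in\bfS^{-1,\nu-1}$.

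\textbf{Step 2 (iterated parametrix).} Iteratively construct correction terms $b_j\in\bfS^{-d-j,\nu-j}$ so that $a\#(b_0+\cdots+b_{N-1})\equiv 1\pmod{\bfS^{-N,\nu-N}}$, at each step using the inverse of the principal symbol to cancel the leading term of the previous remainder. Asymptotic summation in $\bfS^{-d,\nu}$ is available (Theorem \ref{thm:asympsumm} applied to the $\wtbfS$-parts of each $b_j$, classical H\"ormander theory to the $S^d$-parts, after the decomposition of Definition \ref{def:new_reg}), and it produces $b_R\in\bfS^{-d,\nu}$ with $a\#b_R-1\in\bfS^{-\infty,\nu-\infty}$.

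\textbf{Step 3 (compactly supported remainder).} This step mirrors Step 2 of the proof of Theorem \ref{thm:param_exp}. Since $\nu>0$, every element of $\bfS^{-\infty,\nu-\infty}$ tends to zero in $\scrL(H^s,H^{s-d})$ as $\mu\to+\infty$, so $a(x,D;\mu)$ is invertible for large $\mu$. The remainder $r_R:=1-a\#b_R$, viewed inside $\wtbfS^{0-\infty,0-\infty}_{1,0}$, has vanishing principal limit-symbol: by multiplicativity of the limit-symbol (Theorem \ref{thm:Leibniz product}) together with the observation that the limit-symbol of $a$ in $\wtbfS^{d,0}$ equals $p_0(x,0;1)$ (where $p_0$ is the principal symbol of the classical summand of $a$, uniformly invertible by ellipticity and Proposition \ref{prop:classical}), this limit-symbol is $1-p_0(x,0;1)\cdot p_0(x,0;1)^{-1}=0$. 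Proposition \ref{prop:1+r} then produces a correction of $b_R$ yielding a right-parametrix $b$ such that $a\#b-1\in\scrC^\infty_{\mathrm{comp}}(\rpbar,S^{-\infty}(\rz^n))$. The main obstacle is to verify that this correction stays inside the refined subclass $\bfS^{-\infty,\nu-\infty}$ rather than only the ambient $\wtbfS^{0-\infty,0-\infty}_{1,0}$; this is done by tracing the Neumann-series argument in the proof of Proposition \ref{prop:1+r}, using that each iterate $r_R^{\#j}$ stays in $\bfS^{-\infty,\nu-\infty}$ by Theorem \ref{thm:Leibniz product} and that the asymptotic summation of Step 2 realizes the formal inverse series within $\bfS$.
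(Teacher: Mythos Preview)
Your approach is correct and is precisely the argument the paper has in mind when it says ``After this observation it is clear that we can construct a parametrix in the class'': invert the principal symbol inside $\bfS^{-d,\nu}_{hom}$ via Corollary~\ref{cor:inv}, run the usual formal Neumann inversion, sum asymptotically in $\bfS^{-d,\nu}$, and then remove the smoothing remainder.

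Two remarks that would streamline your write-up. First, in Step~1 a zero-excision cutoff in $(\xi,\mu)$ alone does not produce an element of $\bfS^{-d,\nu}$, since the $\wtbfS^{-d,\nu}_{hom}$-summand of $a_0^{-1}$ is singular at $\xi=0$; you must split $a_0^{-1}=\tilde c+q$ according to Definition~\ref{def:new_reg} and cut off $\tilde c$ in $\xi$ and $q$ in $(\xi,\mu)$ (cf.\ the discussion after Theorem~\ref{thm:decomposition}). Second, Step~3 is more involved than necessary. Since $\nu>0$ we have $\bfS^{0-\infty,\nu-\infty}=\wtbfS^{0-\infty,\nu-\infty}$, and any element of this space, viewed in $\wtbfS^{0-\infty,0-\infty}_{1,0}$, automatically has its first $\nu$ limit-coefficients equal to zero (its expansion starts at $[\xi,\mu]^{-\nu}$). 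Hence the principal limit-symbol of $r_R$ vanishes without invoking multiplicativity or identifying $a_{[0]}^\infty$ with $p_0(x,0;1)$; your parenthetical that ``$p_0$ is uniformly invertible by ellipticity'' is in fact not justified (only $a_0$ is). With this observation Proposition~\ref{prop:1+r} applies directly, and the Neumann iterates $r_R^{\#j}$ lie in $\wtbfS^{0-\infty,j\nu-\infty}\subset\wtbfS^{0-\infty,\nu-\infty}$, so the correction and hence $b$ remain in the refined class as you indicate.
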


If $a\in \bfS^{d,\nu}$ with positive $\nu\in\nz$, then also $a\in\wtbfS^{d,0}$. 
Due to Propositions \ref{prop:excision} and \ref{prop:classical}, its principal limit-symbol is 
 $$a_{[0]}^\infty(x,\xi)=a_0(x,0;1),$$
where $a_0$ is the homogeneous principal symbol of $a$ 
$($defined on $\sz^n_+$ by continuous extension$)$. 
Recalling Definition \ref{def:ell}, we find the following result which unifies the notions of 
ellipticity for symbols of regularity number $\nu=0$ and $\nu\in\nz$, respectively. 

\begin{proposition}\label{prop:uniform}
Let $\nu\in\nz$ be a non-negative integer and $a\in \bfS^{d,\nu}$. 
Then $a$ is elliptic if, and only if, $a$ is elliptic as an element of $\wtbfS^{d,0}$. 
\end{proposition}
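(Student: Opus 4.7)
If $\nu=0$ the claim is tautological, since $\bfS^{d,0}=\wtbfS^{d,0}$ by the remark following Definition~\ref{def:new_reg}. Assume $\nu>0$ henceforth. The crucial preliminary, already flagged in the paragraph just before the statement, is the identity
$$a_{[0]}^\infty(x,\xi)=a_0(x,0;1),$$
where $a_0$ denotes the homogeneous principal symbol of $a\in\bfS^{d,\nu}$, which since $\nu>0$ extends by continuity to the full semi-sphere $\sz^n_+$, in particular to the north pole $(0,1)$. One derives this by decomposing $a=\wt a+q\in\wtbfS^{d,\nu}+S^d$: Proposition~\ref{prop:excision} shows the limit-symbol expansion of $\wt a$ starts at level $\nu\ge1$, so $\wt a_{[0]}^\infty=0$, while Proposition~\ref{prop:classical} gives $q_{[0]}^\infty(x,\xi)=q_0(x,0;1)$; correspondingly $\wt a_0$ vanishes at the north pole (being the homogeneous extension of an element of $r^\nu\scrC^\infty_T(\whsz^n_+)$), so $a_0(x,0;1)=q_0(x,0;1)$. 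In particular $a_{[0]}^\infty$ is independent of $\xi$, and $a_{[0]}^\infty(x,D)$ is nothing but multiplication by $\phi(x):=a_0(x,0;1)\in\scrC^\infty_b(\rz^n)$.

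For the forward direction, assume $a$ is elliptic in $\bfS^{d,\nu}$ in the sense of Corollary~\ref{cor:inv}. Restricting the invertibility of $a_0(x,\xi;\mu)$ and the estimate $|a_0^{-1}|\lesssim|\xi,\mu|^{-d}$ to $\xi\ne0$ immediately yields condition $(1)$ of Definition~\ref{def:ell} (with $\nu$ there replaced by $0$). Specialising to $(\xi,\mu)=(0,1)$ gives the uniform bound $|\phi(x)|\ge c>0$, whence $1/\phi\in\scrC^\infty_b(\rz^n)$ and multiplication by $1/\phi$ provides a bounded inverse of $\phi(x,D)$ on every $H^s(\rz^n)$, which is condition~$(2)$.

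For the reverse direction, condition~$(1)$ already supplies invertibility of $a_0(x,\xi;\mu)$ with the required estimate whenever $\xi\ne0$, so only the axis $\xi=0$, $\mu>0$ remains. By the homogeneity of $a_0$ it is enough to establish invertibility and the bound $|a_0^{-1}|\lesssim1$ at $(\xi,\mu)=(0,1)$, where $a_0(x,0;1)=\phi(x)$. Condition~$(2)$ tells us that $\phi(x,D)$, i.e., multiplication by $\phi$, is invertible on some $H^s(\rz^n)$; for a multiplication operator with symbol in $\scrC^\infty_b$ this is well known to force $|\phi(x)|\ge c>0$ uniformly in $x$ (otherwise unit-norm bumps concentrated near a near-zero of $\phi$ would violate the operator lower bound $\|u\|_{H^s}\le C\|\phi u\|_{H^s}$). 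Homogeneity then upgrades this to $|a_0(x,0;\mu)^{-1}|\lesssim\mu^{-d}=|0,\mu|^{-d}$ for every $\mu>0$, completing the verification of ellipticity in the sense of Corollary~\ref{cor:inv}. There is no serious obstacle in the argument: everything rests on the clean formula $a_{[0]}^\infty(x,\xi)=a_0(x,0;1)$, which reduces the operator-theoretic condition~$(2)$ to a pointwise non-vanishing condition at the north pole, i.e., exactly the single piece of information condition~$(1)$ is missing to become full ellipticity in $\bfS^{d,\nu}$.
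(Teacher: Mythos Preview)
Your proof is correct and follows exactly the approach the paper has in mind: the paper itself offers no separate proof, merely recording the key identity $a_{[0]}^\infty(x,\xi)=a_0(x,0;1)$ in the paragraph preceding the proposition and declaring the result. You have filled in the details the paper leaves implicit, in particular the reverse-direction argument that invertibility of the multiplication operator $\phi(x,D)$ on some $H^s$ forces a uniform lower bound $|\phi|\ge c>0$, which is precisely what is needed to extend the principal-symbol estimate across the north pole.
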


\section{Resolvent-kernel expansions}
\label{sec:06}

We shall discuss  how our calculus allows to recover the well-known resolvent trace 
expansion for elliptic $\psi$do due to Grubb-Seeley, cf. \cite{GrSe}. 

In the following we shall write $r(x,\xi;\mu)=O(\mu^m,S^{M}_{1,0})$ 
if $\mu^{-m} r(\mu)\in S^{M}_{1,0}(\rz^n)$ uniformly in $\mu>0$. 

\subsection{Preparation}
\label{sec:06.1}

The following Lemma is a slight modification of  \cite[Lemma 1.3]{GrSe}. 

\begin{lemma}\label{lem:mu-expansion}
Let $a(x,\xi;\mu)\in S^m$ be homogeneous of degree $m$ for $|\xi,\mu|\ge1$. 
Let $m_+=\max(m,0)$. 
Then there exist symbols $\zeta_{j}(x,\xi)=\sum\limits_{|\alpha|=j}c_{j \alpha}(x)\xi^\alpha$ 
such that 
 $$a(x,\xi;\mu)=\sum_{j=0}^{N-1}\zeta_{j}(x,\xi)\mu^{m-j}
     +O(\mu^{m-N},S^{m_++N}_{1,0})$$
for every $N\in\nz$. In particular, $\zeta_0(x,\xi)=a(x,0;1)$ and 
$\mu^{-m}a(x,\xi;\mu)\to a(x,0;1)$ in 
$S^{m_++1}_{1,0}(\rz^n)$ as $\mu\to+\infty$. 
\end{lemma}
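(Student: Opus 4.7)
The core idea is to use the exact homogeneity of $a$ on $\{|\xi,\mu|\ge 1\}$ to reduce the two-variable asymptotics in $(\xi,\mu)$ to a one-variable Taylor expansion at $\eta=0$ of the fiber symbol $g(x,\eta):=a(x,\eta;1)$, via the change of variable $\eta=\xi/\mu$.

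First I would set $g(x,\eta):=a(x,\eta;1)$, which from the symbol estimates for $a\in S^m$ satisfies $g\in S^m_{1,0}(\rz^n)$ (with coefficients in $\scrC^\infty_b$ with respect to $x$). Since $a$ is homogeneous of degree $m$ on $\{|\xi,\mu|\ge 1\}$, scaling by $1/\mu$ gives $a(x,\xi;\mu)=\mu^m g(x,\xi/\mu)$ for all $\mu\ge 1$. The multivariate Taylor formula for $g$ at $\eta=0$ to order $N$ reads
$$g(x,\eta)=\sum_{|\alpha|<N}\frac{(\partial_\eta^\alpha g)(x,0)}{\alpha!}\,\eta^\alpha+N\sum_{|\alpha|=N}\frac{\eta^\alpha}{\alpha!}\int_0^1(1-t)^{N-1}(\partial_\eta^\alpha g)(x,t\eta)\,dt,$$
and defining $\zeta_j(x,\xi):=\sum_{|\alpha|=j}\frac{1}{\alpha!}(\partial_\eta^\alpha g)(x,0)\,\xi^\alpha$---a homogeneous polynomial of degree $j$ in $\xi$ with $\scrC^\infty_b$ coefficients, satisfying $\zeta_0(x,\xi)=g(x,0)=a(x,0;1)$---the substitution $\eta=\xi/\mu$ combined with $\zeta_j(x,\xi/\mu)=\mu^{-j}\zeta_j(x,\xi)$ produces, after multiplication by $\mu^m$,
$$a(x,\xi;\mu)=\sum_{j=0}^{N-1}\zeta_j(x,\xi)\,\mu^{m-j}+\mu^{m-N}\sum_{|\alpha|=N}\xi^\alpha h_\alpha(x,\xi/\mu)\qquad(\mu\ge 1),$$
with $h_\alpha(x,\eta):=\frac{N}{\alpha!}\int_0^1(1-t)^{N-1}(\partial_\eta^\alpha g)(x,t\eta)\,dt$.

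What remains is to verify that the right-hand remainder is $O(\mu^{m-N},S^{m_++N}_{1,0})$. Since $\partial_\eta^\alpha g\in S^{m-N}_{1,0}$ for $|\alpha|=N$, one has $|(\partial_\eta^\alpha g)(x,t\xi/\mu)|\lesssim\spk{t\xi/\mu}^{m-N}$; for $\mu\ge 1$ and $t\in[0,1]$, the bound $\spk{t\xi/\mu}\le\spk{\xi/\mu}\le\spk{\xi}$ gives $|h_\alpha(x,\xi/\mu)|\lesssim\spk{\xi}^{(m-N)_+}$, and combined with $|\xi^\alpha|\lesssim\spk{\xi}^N$ this yields the pointwise estimate $\spk{\xi}^{N+(m-N)_+}=\spk{\xi}^{\max(m,N)}\le\spk{\xi}^{m_++N}$. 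Differentiating in $\xi$ introduces $\mu^{-1}$ chain-rule factors together with higher $\eta$-derivatives of $g$ lying in $S^{m-N-|\beta|}_{1,0}$; the rescaling $s=t|\xi|/\mu$ in the integrand converts these into $(\mu/|\xi|)^{|\beta|+1}\int_0^{|\xi|/\mu}s^{|\beta|}\spk{s}^{m-N-|\beta|}\,ds$, and after splitting at $s=1$ and analyzing the regimes $|\xi|\lesssim\mu$ and $|\xi|\gg\mu$ separately one obtains the symbol estimate $\spk{\xi}^{m_++N-|\beta|}$ uniformly in $\mu\ge 1$ (using $\mu\le|\xi|$ in the latter regime); the $x$-derivatives are immediate since the Taylor coefficients $c_{j\alpha}$ and their derivatives are uniformly bounded.

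The main obstacle is the careful case analysis of the rescaled integral: the naive pointwise bounds would lose a factor of $\spk{\xi}$ per $\xi$-derivative when $|\xi|\gg\mu$, but the $\mu^{-|\beta|}$ factor from the chain rule combined with the weighted integral $\int_1^{|\xi|/\mu}s^{m-N}\,ds$ exactly compensates this loss; the boundary cases $m-N<-1$, $m-N=-1$, and $m-N>-1$ each require slightly different bookkeeping, with the logarithmic case being absorbed by a polynomial factor. The convergence $\mu^{-m}a(x,\xi;\mu)\to a(x,0;1)$ in $S^{m_++1}_{1,0}$ as $\mu\to+\infty$ then follows immediately from the $N=1$ case, since the remainder carries a decaying $\mu^{-1}$ factor in $S^{m_++1}_{1,0}$.
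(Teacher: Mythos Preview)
Your approach is essentially the same as the paper's: reduce via homogeneity to $a(x,\xi;\mu)=\mu^m g(x,\xi/\mu)$ and Taylor expand $g(x,\eta)=a(x,\eta;1)$ at $\eta=0$. The paper organises this as a one-variable expansion in $t$ of $u(t,\xi)=a(t\xi;1)$ and then substitutes $t=1/\mu$, which is the same computation in different packaging.

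Where you diverge is in the remainder estimate, and here you have made life much harder than necessary. The rescaling $s=t|\xi|/\mu$ and the ensuing case analysis in $m-N$ and in $|\xi|\lessgtr\mu$ can be avoided entirely by the following pointwise observation, which is what the paper uses: for $|\alpha|=N$, $0\le t\le 1$, $\mu\ge1$,
\[
\big|\partial_\xi^\beta\big[(\partial_\eta^\alpha g)(x,t\xi/\mu)\big]\big|
=(t/\mu)^{|\beta|}\big|(\partial_\eta^{\alpha+\beta}g)(x,t\xi/\mu)\big|
\lesssim (t/\mu)^{|\beta|}\spk{t\xi/\mu}^{m-N-|\beta|}
\lesssim \spk{\xi}^{m_+-|\beta|},
\]
using only the two elementary inequalities $(t/\mu)\spk{t\xi/\mu}^{-1}\lesssim\spk{\xi}^{-1}$ and $\spk{t\xi/\mu}^{m-N}\le\spk{\xi}^{m_+}$, both valid since $t/\mu\in[0,1]$. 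Integrating over $t\in[0,1]$ shows that $(\xi,\mu)\mapsto h_\alpha(x,\xi/\mu)$ lies in $S^{m_+}_{1,0}$ uniformly in $\mu\ge1$; multiplying by $\xi^\alpha$ then gives $S^{m_++N}_{1,0}$ directly, with no integration tricks and no subcases. Your argument is salvageable (for instance, in your logarithmic case the factor $\mu/|\xi|$ kills the $\log(|\xi|/\mu)$), but the detour is unnecessary.
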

\begin{proof}
For convenience of notation assume independence of $x$. 
Obviously it suffices to consider $\mu\ge1$. 
Then $a(\xi;\mu)=\mu^ma(\xi/\mu;1)$. 
Let $u(t,\xi)=a(t\xi;1)$, $0\le t\le 1$. 
The $j$-th $t$-derivative of $u$ is 
 $$u^{(j)}(t,\xi)=\sum\limits_{|\alpha|=j}c_{\alpha}\xi^\alpha(\partial_\xi^\alpha a)(t\xi;1)$$
with certain universal constants $c_\alpha$. Thus the Taylor expansion of $u$ in $t$ centered in 
$t=0$ is of the form 
 $$u(t,\xi)=\sum_{j=0}^{N-1}\zeta_j(\xi)t^j+t^N\int_0^1 (1-\tau)^Nu^{(N)}(t\tau,\xi)\,d\tau$$
with polynomials $\zeta_j(\xi)$ as described. Then using the fact that 
 $$|\partial^\beta_\xi[(\partial_\xi^\alpha a)(t\tau\xi;1)]|\lesssim 
     (t\tau)^{|\beta|}\spk{t\tau\xi}^{m-|\alpha|-|\beta|}\lesssim 
     (t\tau)^{|\beta|}\spk{t\tau\xi}^{-|\beta|}\spk{\xi}^{m_+}
     \lesssim\spk{\xi}^{m_+-|\beta|},$$
for $0\le t,\tau\le 1$, the above integral belongs to $S^{m_++N}_{1,0}(\rz^n)$ 
uniformly in $0\le t\le 1$. Substituting $t=1/\mu$ yields the claim. 
\end{proof}

\forget{
\begin{lemma}\label{lem:mu-expansion}
Let $m\le 0$ be a real number. Then there exist homogeneous polynomials 
$\zeta_{m,j}(\xi)=\sum\limits_{|\alpha|=j}c_{m j \alpha}\xi^\alpha$ 
such that, for every $N\in\nz$, 
 $$[\xi,\mu]^m=\sum_{j=0}^{N-1}\zeta_{m,j}(\xi)\mu^{m-j}
     +O(\mu^{m-N},S^N_{1,0}).$$
\end{lemma}
\begin{proof}
Obviously it suffices to consider $\mu\ge1$. 
Then $[\xi,\mu]^m=|\xi,\mu|^m=\mu^m\spk{\xi/\mu}^m$. 
Let $p(\xi)=\spk{\xi}^m$ and define $u(t,\xi)=\spk{t\xi}^m$, $0\le t\le 1$. 
By direct computation, the $j$-th $t$-derivative of $u$ is 
 $$u^{(j)}(t,\xi)=\sum\limits_{|\alpha|=j}c_{\alpha}\xi^\alpha(\partial_\xi^\alpha p)(t\xi)$$
with certain universal constants $c_\alpha$. Thus the Taylor expansion of $u$ in $t$ centered in 
$t=0$ is of the form 
 $$u(t,\xi)=\sum_{j=0}^{N-1}\zeta_j(\xi)t^j+t^N\int_0^1 (1-\tau)^Nu^{(N)}(t\tau,\xi)\,d\tau$$
with polynomials $\zeta_j(\xi)$ as described. Then using the fact that 
 $$|\partial^\beta_\xi[(\partial_\xi^\alpha p)(t\tau\xi)]|\prec 
     (t\tau)^{|\beta|}\spk{t\tau\xi}^{m-|\alpha|-|\beta|}\prec 
     (t\tau)^{|\beta|}\spk{t\tau\xi}^{-|\beta|}
     \prec\spk{\xi}^{-|\beta|}$$
uniformly in $0\le t,\tau\le 1$, the above integral belongs to $S^{N}(\rz^n)$ uniformly in $0\le t\le 1$.  
Substituting $t=1/\mu$ the claim follows. 
\end{proof}
}

A case of particular interest below is that 
\begin{equation}\label{eq:bracket}
 [\xi,\mu]^m=\sum_{j=0}^{N-1}\zeta_{m,j}(\xi)\mu^{m-j}
     +O(\mu^{m-N},S^N_{1,0}) 
\end{equation}
whenever $m\le 0$; any $\zeta_{m,j}(\xi)$ is a homogeneous polynomial of degree $j$.  

\begin{corollary}\label{cor:mu-expansion}
Let $a(x,\xi;\mu)\in\wtbfS^{d,\nu}_{1,0}$ with $d-\nu\le0$ have the expansion 
 $$a(x,\xi;\mu)=\sum_{j=0}^{N-1} a^\infty_{[\nu+j]}(x,\xi)[\xi,\mu]^{d-\nu-j}
    \mod \wt{S}^{d,\nu+N}_{1,0}.$$ 
Then 
 $$a(x,\xi;\mu)=\sum_{\ell=0}^{N-1} q_{\ell}(x,\xi)\mu^{d-\nu-\ell}
    +O(\mu^{d-\nu-N},S^{\nu+N}_{1,0}),$$
where, with notation of $\eqref{eq:bracket}$, 
 $$q_{\ell}(x,\xi)=\sum_{j+k=\ell}a^\infty_{[\nu+j]}(x,\xi)\zeta_{d-\nu-j,k}(\xi)
 \in S^{\nu+\ell}_{1,0}(\rz^n).$$
\end{corollary}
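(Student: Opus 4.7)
The plan is to combine the given expansion of $a$ with the Taylor expansion \eqref{eq:bracket} applied separately to each factor $[\xi,\mu]^{d-\nu-j}$, and then re-index the resulting double sum by $\ell=j+k$. The assumption $d-\nu\le 0$ is what makes \eqref{eq:bracket} directly applicable to each exponent $d-\nu-j\le 0$.

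First I would apply \eqref{eq:bracket} with $m=d-\nu-j$ expanded to order $N-j$, obtaining
\begin{equation*}
 [\xi,\mu]^{d-\nu-j}=\sum_{k=0}^{N-j-1}\zeta_{d-\nu-j,k}(\xi)\mu^{d-\nu-j-k}+O(\mu^{d-\nu-N},S^{N-j}_{1,0}).
\end{equation*}
Multiplying by $a^\infty_{[\nu+j]}(x,\xi)\in S^{\nu+j}_{1,0}(\rz^n)$, the product of $a^\infty_{[\nu+j]}$ with the polynomial $\zeta_{d-\nu-j,k}\in S^k_{1,0}$ lies in $S^{\nu+j+k}_{1,0}$, and the remainder is absorbed into $O(\mu^{d-\nu-N},S^{\nu+N}_{1,0})$ because $S^{\nu+j}_{1,0}\cdot S^{N-j}_{1,0}\subseteq S^{\nu+N}_{1,0}$. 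Summing over $j=0,\dots,N-1$ and reorganizing by $\ell=j+k\in\{0,\dots,N-1\}$ produces exactly the coefficients
\begin{equation*}
 q_\ell(x,\xi)=\sum_{j+k=\ell}a^\infty_{[\nu+j]}(x,\xi)\zeta_{d-\nu-j,k}(\xi)\in S^{\nu+\ell}_{1,0}(\rz^n).
\end{equation*}

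It then remains to control the remainder $r_N:=a-\sum_{j=0}^{N-1}a^\infty_{[\nu+j]}[\xi,\mu]^{d-\nu-j}\in\wt{S}^{d,\nu+N}_{1,0}$. By Definition \ref{def:weak01} one has $|D^\alpha_\xi D^\beta_x r_N|\lesssim\spk{\xi}^{\nu+N-|\alpha|}\spk{\xi,\mu}^{d-\nu-N}$. Since $d-\nu-N\le 0$ and $\spk{\xi,\mu}\ge\spk{\mu}$, the estimate $\spk{\xi,\mu}^{d-\nu-N}\le\spk{\mu}^{d-\nu-N}$ holds, which (after the standard interpretation for $\mu$ large, with the bounded $\mu$-range being trivial) yields $r_N=O(\mu^{d-\nu-N},S^{\nu+N}_{1,0})$. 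Assembling the three contributions gives the claimed expansion.

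The only step that is not pure bookkeeping is the passage from the $\wt{S}^{d,\nu+N}_{1,0}$-estimate to the $O(\mu^{d-\nu-N},S^{\nu+N}_{1,0})$-estimate; this is precisely where the hypothesis $d-\nu\le 0$ enters, as it ensures that the exponent on $\spk{\xi,\mu}$ is non-positive and can be traded for a pure power of $\mu$ without losing control in $\xi$. The rest is a straightforward Cauchy product of finite expansions with explicit remainders.
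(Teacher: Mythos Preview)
Your proof is correct and follows essentially the same approach as the paper: both expand each $[\xi,\mu]^{d-\nu-j}$ via \eqref{eq:bracket} to order $N-j$, control the remainder $r_N\in\wt{S}^{d,\nu+N}_{1,0}$ by bounding $\spk{\xi,\mu}^{d-\nu-N}\le\spk{\mu}^{d-\nu-N}$, and then collect terms. Your write-up is slightly more explicit about the Cauchy-product reindexing and the role of the hypothesis $d-\nu\le0$, but the argument is identical in substance.
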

\begin{proof}
First note that for $r(x,\xi;\mu)\in \wt{S}^{d,\nu+N}$, 
 $$ |\partial^\alpha_\xi\partial^\beta_x r(x,\xi;\mu)|\lesssim 
     \spk{\xi}^{\nu+N-|\alpha|}\spk{\xi,\mu}^{d-\nu-N},$$
hence $r(\mu)= O(\mu^{d-\nu-N},S^{\nu+N})$. 
Inserting the expansions 
 $$[\xi,\mu]^{d-\nu-j}=\sum_{k=0}^{N-j-1}\zeta_{d-\nu-j,k}(\xi)\mu^{d-\nu-j-k}
     +O(\mu^{d-\nu-N},S^{N-j}_{1,0})$$
the result follows immediately. 
\end{proof}

\begin{theorem}\label{thm:trace-expansion-01}
Let $a(x,\xi;\mu)\in\wtbfS^{d,\nu}$ with $d<-n$ and $d-\nu\le0$. Let 
 $$k(x,y;\mu)=\int e^{i(x-y)\xi}a(x,\xi;\mu)\,\dbar\xi$$
the distributional kernel of $a(x,D;\mu)$. 
Then there exist functions $c_\ell(x),c_\ell^\prime(x),c_\ell^{\prime\prime}(x)$, $j\in\nz_0$, which are 
continuous and bounded such that, for $\mu\to+\infty$,  
 $$k(x,x;\mu)\sim \sum_{j=0}^{+\infty}c_j(x)\mu^{d-j+n}+\sum_{\ell=0}^{+\infty}
    \big(c_\ell^\prime(x)\log\mu+c_\ell^{\prime\prime}(x)\big)\mu^{d-\nu-\ell}.$$
\end{theorem}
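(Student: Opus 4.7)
The plan is to convert $a$ into a Grubb--Seeley type $\mu$-expansion via Corollary \ref{cor:mu-expansion}, to integrate each term by rescaling $\xi = \mu\eta$, and to absorb the formally divergent contributions of individual terms into the two asymptotic series stated in the theorem.

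First, applying Corollary \ref{cor:mu-expansion} to $a \in \wtbfS^{d,\nu}$ (the hypothesis $d-\nu \le 0$ is at our disposal), for every $N \in \nz$ I obtain
$$a(x,\xi;\mu) = \sum_{\ell=0}^{N-1} q_\ell(x,\xi)\mu^{d-\nu-\ell} + R_N(x,\xi;\mu),\qquad R_N = O(\mu^{d-\nu-N}, S^{\nu+N}_{1,0}),$$
where $q_\ell = \sum_{j+k=\ell}a^\infty_{[\nu+j]}\,\zeta_{d-\nu-j,k}$ belongs to the \emph{classical} poly-homogeneous class $S^{\nu+\ell}(\rz^n)$: the limit-symbols $a^\infty_{[\nu+j]}$ are classical of order $\nu+j$ by the discussion following Definition \ref{def:wtbfs-poly}, and $\zeta_{d-\nu-j,k}$ is a homogeneous polynomial of degree $k$ by Lemma \ref{lem:mu-expansion}.

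Next, I would decompose each $q_\ell = \chi(\xi)\sum_{m=0}^{M-1} q_{\ell,m} + \widetilde{s}_{\ell,M}$ into homogeneous pieces $q_{\ell,m}$ of degree $\nu+\ell-m$ plus an integrable tail $\widetilde{s}_{\ell,M}\in S^{\nu+\ell-M}(\rz^n)$ for $M > \nu+\ell+n$. The tail contributes $\mu^{d-\nu-\ell}\int\widetilde{s}_{\ell,M}(x,\xi)\,\dbar\xi$, a bounded continuous function in $x$ feeding $c_\ell''(x)$. For each homogeneous piece, the rescaling $\xi = \mu\eta$ produces
$$\int\chi(\xi)q_{\ell,m}(x,\xi)\mu^{d-\nu-\ell}\,\dbar\xi = \mu^{d+n-m}\int\chi(\mu\eta)q_{\ell,m}(x,\eta)\,\dbar\eta,$$
and the $\eta$-integral---cut off from below at $|\eta|\sim 1/\mu$ by $\chi(\mu\eta)$ and truncated at infinity using the convergent $\widetilde{s}$-tail---splits into three regimes according to the sign of $\nu+\ell-m+n$: positive yields a finite limit, producing $c_j(x)\mu^{d+n-j}$ with $j=m$; negative yields a polynomial blow-up $\mu^{-(\nu+\ell-m+n)}$ that combines with the prefactor to $\mu^{d-\nu-\ell}$, feeding $c_\ell''$; and the critical index $m = \nu+\ell+n$ yields $\log\mu$, producing the logarithmic term $\mu^{d-\nu-\ell}\log\mu$ in $c_\ell'$. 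Reindexing and summing over the pairs $(\ell,m)$ assembles the two claimed series, with continuity and boundedness of the coefficients inherited from the $\scrC^\infty_b$-structure of the underlying symbols.

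For the remainder I would iterate: $R_N$ still belongs to $\wtbfS^{d,\nu+N}_{1,0}$ and admits its own $\mu$-expansion, so no extra structure is needed. The pointwise bound $|R_N|\lesssim\mu^{d-\nu-N}\spk{\xi}^{\nu+N}$ combined with the splitting $\rz^n = \{|\xi|\le\mu\}\cup\{|\xi|\ge\mu\}$ gives $\int R_N(x,\xi;\mu)\,\dbar\xi = O(\mu^{d+n-N})$ modulo terms already extracted, hence arbitrarily small for $N$ large. The main obstacle is that the individual integrals $\int q_\ell(x,\xi)\,\dbar\xi$ diverge in general, so that the term-by-term integration is only formal; one must organise the rescaling so that the divergent contributions of different $(\ell,m)$-pairs combine consistently into the two \emph{genuine} asymptotic series, with $\log$-terms appearing exactly at the critical indices $m = \nu+\ell+n$. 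This combinatorial reorganisation is the content of the classical Grubb--Seeley bookkeeping argument (cf.\ \cite[\S1--2]{GrSe}), to which the present computation essentially reduces.
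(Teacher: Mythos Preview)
Your plan reverses the order of the two decompositions relative to the paper, and this creates a genuine gap. You apply Corollary~\ref{cor:mu-expansion} directly to $a$, obtaining $a=\sum_{\ell<N}q_\ell\,\mu^{d-\nu-\ell}+R_N$ with $q_\ell\in S^{\nu+\ell}(\rz^n)$ and $R_N=O(\mu^{d-\nu-N},S^{\nu+N}_{1,0})$, and then try to integrate term by term. But the $\xi$-orders $\nu+\ell$ and $\nu+N$ \emph{grow} with $\ell$ and $N$, so neither the individual terms nor the remainder are integrable, and your proposed rescaling does not repair this. Concretely: after $\xi=\mu\eta$ the integral $\int\chi(\mu\eta)q_{\ell,m}(x,\eta)\,\dbar\eta$ still diverges at $|\eta|\to\infty$ whenever $\nu+\ell-m>-n$ (your ``positive'' case), so the asserted finite limit does not exist; and the bound $|R_N|\lesssim\mu^{d-\nu-N}\spk{\xi}^{\nu+N}$ gives only $\int_{|\xi|\le\mu}|R_N|\,d\xi=O(\mu^{d+n})$, independent of $N$, so the remainder is never small. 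The claim that $R_N\in\wtbfS^{d,\nu+N}_{1,0}$ is also unfounded: the subtracted terms $q_\ell\,\mu^{d-\nu-\ell}$ are not in the calculus (for $m<0$ the symbol $\mu^m$ fails the $S^m$-estimates, since $\mu^m\not\lesssim|\xi,\mu|^m$), so no iteration is available.

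The paper avoids all of this by decomposing \emph{first} into the $(\xi,\mu)$-homogeneous components: $a=\chi\sum_{j<J}a_j+r$ with $a_j\in\wtbfS^{d-j,\nu-j}_{hom}$ and $r\in\wtbfS^{d-J,\nu-J}$. The remainder $r$ now has \emph{lower} $\xi$-order $\nu-J$, so after choosing $J$ with $\nu-J+N<-n$ the $\mu$-expansion of $r$ via Corollary~\ref{cor:mu-expansion} has integrable coefficients and remainder. For each $a_j$ the crucial step is to split $\int a_j\,d\xi$ at $|\xi|=\mu$ and to use the $(\xi,\mu)$-homogeneity of $a_j$ (not merely $\xi$-homogeneity of some $q_{\ell,m}$) to rescale the outer piece exactly: $\int_{|\xi|\ge\mu}a_j=\mu^{d-j+n}\int_{|\xi|\ge1}a_j(x,\xi;1)\,\dbar\xi$, convergent since $d<-n$. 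The $\mu$-expansion is then applied only on the \emph{bounded} region $1\le|\xi|\le\mu$, where term-by-term integration is legitimate and produces the powers and logarithms. Your appeal to ``Grubb--Seeley bookkeeping'' does not substitute for this: their argument is precisely the one just described, not a reorganisation of divergent integrals.
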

\begin{proof}
We follow closely the proof of \cite[Theorem 2.1]{GrSe}. 
Let $N$ be fixed. Choose, and fix, a $J\in\nz$ so large that 
\begin{align}\label{eq:order}
 \nu-J+N<-n
\end{align}
and write 
 $$a(x,\xi;\mu)=\chi(\xi)\sum_{j=0}^{J-1} a_j(x,\xi;\mu) + r(x,\xi;\mu),
    \qquad r\in \wtbfS^{d-J,\nu-J},$$
where $a_j(x,\xi;\mu)\in \wtbfS^{d-j,\nu-j}_{hom}$ and $\chi$ is a zero-excision function 
such that $1-\chi$ is supported in the unit-ball centered in the origin. 

By Corollary \ref{cor:mu-expansion} $($with $d,\nu$ replaced by $d-J,\nu-J)$ we have 
 $$r(x,\xi;\mu)=\sum_{\ell=0}^{N-1} q_{\ell}(x,\xi)\mu^{d-\nu-\ell}
    +O(\mu^{d-\nu-N},S^{\nu-J+N}_{1,0})$$
with $ q_{\ell}(x,\xi)\in S^{\nu-J+\ell}_{1,0}(\rz^n)$. Recalling \eqref{eq:order}, 
the associated kernel $k_r(x,y;\mu)$ satisfies 
 $$k_r(x,x;\mu)=\sum_{\ell=0}^{N-1} c^{\prime\prime}_{r,\ell}(x)
     \mu^{d-\nu-\ell}+O(\mu^{d-\nu-N}).$$
Now let $k_j(x,y;\mu)$ denote the kernel associated with $\chi(\xi)a_j(x,\xi;\mu)$. Decompose 
$k_j(x,x;\mu)=k_j^{(1)}(x,x;\mu)+k_j^{(2)}(x,x;\mu)$ with 
 $$k_j^{(1)}(x,x;\mu)=\int_{|\xi|\ge\mu}\chi(\xi)a_j(x,\xi;\mu)\,\dbar\xi.$$
Then, for every $\mu\ge 1$, using the homogeneity of $a_j$,  
 $$k_j^{(1)}(x,x;\mu)=\mu^{d-j+n}\int_{|\xi|\ge1}a_j(x,\xi;1)\,\dbar\xi=c_j(x)\mu^{d-j+n};$$
note that the integrand is bounded by $\spk{\xi}^{d-j}$, hence integrable since $d<-n$.  

Next choose $L$ with $L\ge N$ and $L>J-1-n-\nu$ $($i.e. $\nu-j+L>-n$ for every $j=0,\ldots,J-1)$. 
Apply Corollary \ref{cor:mu-expansion} $($with $d,\nu$ replaced by $d-j,\nu-j)$ to write 
\begin{align}\label{eq:kernel01}
\begin{split}
 \chi(\xi)a_j(x,\xi;\mu)&=\sum_{\ell=0}^{L-1} q_{j,\ell}(x,\xi)\mu^{d-\nu-\ell}+s_{j,L}(x,\xi;\mu),\\
 s_{j,L}(x,\xi;\mu)&=O(\mu^{d-\nu-L},S^{\nu-j+L}_{1,0});
\end{split}
\end{align}
by Proposition 5.19 $($more precisely, the last formula in its proof$)$ the symbols 
$q_{j,\ell}(x,\xi)\in S^{\nu-j+\ell}_{1,0}(\rz^n)$ are homogeneous of degree 
$\nu-j+\ell$ for $|\xi|\ge1$. Thus $s_{j,L}(x,\xi;\mu)$ is homogeneous of degree $d-j$ in $(\xi,\mu)$ 
for $|\xi|\ge 1$. We now write 
\begin{align*}
 k_j^{(2)}(x,x;\mu)&=k_j^{(2a)}(x,x;\mu)+k_j^{(2b)}(x,x;\mu)\\
 &=\int_{|\xi|\le 1}\chi(\xi)a_j(x,\xi;\mu)\,\dbar\xi + 
 \int_{1\le |\xi|\le\mu}a_j(x,\xi;\mu)\,\dbar\xi.
\end{align*}
By \eqref{eq:kernel01} we obtain immediately that 
 $$k_j^{(2a)}(x,x;\mu)=\sum_{\ell=0}^{L-1} c^{\prime\prime}_{j,\ell}(x)\mu^{d-\nu-\ell}
    +O(\mu^{d-\nu-L}).$$
By homogeneity for $|\xi|\ge1$ of the $q_{j,\ell}$ and by using polar-coordinates, 
 $$\int_{1\le |\xi|\le\mu}q_{j,\ell}(x,\xi)\,\dbar\xi=
     \begin{cases}
      c_{j,\ell}^\prime(x)(\mu^{\nu-j+\ell+n}-1) &: \nu-j+\ell+n\not=0,\\
      c_{j,\ell}^\prime(x)\log\mu &: \nu-j+\ell+n=0
     \end{cases}
 $$
By the second line in \eqref{eq:kernel01} and the homogeneity of $s_{j,L}$, 
\begin{align}\label{eq:kernel03}
 s_{j,L}(x,\xi;\mu)=|\xi|^{d-j}s_{j,L}\Big(x,\frac{\xi}{|\xi|};\frac{\mu}{|\xi|}\Big)
 =O(\mu^{d-\nu-L}|\xi|^{\nu-j+L}),\qquad |\xi|\ge 1.
\end{align}
If $s^h_{j,L}$ denotes the extension by homogeneity of $s_{j,L}$ from $|\xi|\ge 1$ to all 
$\xi\not=0$  $($defined by the second term in \eqref{eq:kernel03}$)$, then 
 $$s^h_{j,L}(x,\xi;\mu)=O(\mu^{d-\nu-L}|\xi|^{\nu-j+L}),\qquad \xi\not=0.$$
Then 
\begin{align*}
 \int_{1\le |\xi|\le\mu}s_{j,L}(x,\xi;\mu)\,\dbar\xi
 &=\int_{0\le |\xi|\le\mu}s^h_{j,L}(x,\xi;\mu)\,\dbar\xi
    -\int_{0\le|\xi|\le1}s^h_{j,L}(x,\xi;\mu)\,\dbar\xi\\
 &= c_{j,L}(x)\mu^{d-j+n}+O(\mu^{d-\nu-L}).
\end{align*}
This yields the expansion of $k_j^{(2b)}(x,x;\mu)$ and completes the proof. 
\end{proof}

\subsection{Application to the resolvent of a $\psi$do}
\label{sec:06.2}

Assume we are given two $\psi$do, $p(x,\xi)\in S^m(\rz^n)$ of 
positive integer order $m\in\nz$ and $q(x,\xi)\in S^\omega(\rz^n)$ with $\omega\in\rz$. 
Moreover, let 
 $$\Lambda=\{\mu e^{i\theta}\mid \mu\ge0,\; 0\le|\theta|\le \Theta\},\qquad 0<\Theta<\pi,$$
be a sector in the complex plane. Then, for every $\theta$,   
 $$a_\theta(x,\xi;\mu):=\mu^m-e^{-i\theta}p(x,\xi)\in \bfS^{m,m}.$$
Note that $e^{i\theta}a_\theta(x,\xi;r^{1/m})=re^{i\theta}-p(x,\xi)$. Now assume that $a_\theta$ 
is elliptic, uniformly with respect to $\theta$, i.e., 
 $$|(\mu^m-e^{-i\theta}p_0(x,\xi))^{-1}|\lesssim 1,\qquad |\xi,\mu|=1,$$
uniformly in $x\in\rz^n$ and $0\le|\theta|\le\Theta$. 
Using Theorem \ref{thm:param02}, there exists a $b_\theta(x,\xi;\mu)\in \bfS^{-m,m}$, 
depending uniformly on $\theta$, such that $a_\theta(x,D;\mu)$ is invertible for large $\mu$ with 
$a_\theta(x,D;\mu)^{-1}=b_\theta(x,D;\mu)$. 
We then find, for every positive integer $\ell$,   
 $$c_{\theta}(x,D;\mu):=q(x,D)\big(\mu^m e^{i\theta}-p(x,D)\big)^{-\ell}
    =e^{-i\theta \ell}q(x,D)b_\theta(x,D;\mu)^\ell.$$
Note that the $\ell$-fold Leibniz product of $b_\theta$ belongs to 
$\bfS^{-m\ell,m}=\wtbfS^{-m\ell,m}+ S^{-m\ell}$. 
Since $S^{-m\ell}\subset \wtbfS^{-m\ell,0}$, we find that 
$c_{\theta}=c_{\theta}^{(1)}+c_{\theta}^{(2)}$ with 
 $$c_{\theta}^{(1)}(x,\xi;\mu)\in \wtbfS^{\omega-m\ell,\omega+m},\qquad 
    c_{\theta}^{(2)}(x,\xi;\mu)\in \wtbfS^{\omega-m\ell,\omega},$$
with uniform dependence on $\theta$. If $\ell$ is so large that $\omega-m\ell<-n$, we can apply 
Theorem \ref{thm:trace-expansion-01} to both $c_{\theta}^{(1)}$ and $c_{\theta}^{(2)}$. 
This is the key to obtain the following: 

\begin{theorem}\label{thm:trace-expansion-main}
With the above notation and assumptions, let $k(x,y;\lambda)$ be the  
distributional kernel of $q(x,D)\big(\lambda-p(x,D)\big)^{-\ell}$. 
Then there exist $\scrC^\infty_b$-functions $c_j(x)$, $c_j^\prime(x)$, $c_j^{\prime\prime}(x)$, 
$j\in\nz_0$, such that 
\begin{equation}\label{eq:exp}
 k(x,x;\lambda)\sim \sum_{j=0}^{+\infty}c_j(x)\lambda^{\frac{n+\omega-j}{m}-\ell}
   +\sum_{j=0}^{+\infty}
    \big(c_j^\prime(x)\log\lambda+c_j^{\prime\prime}(x)\big)\lambda^{-\ell-\frac{j}{m}},
\end{equation}
uniformly for $\lambda\in \Lambda$ with $|\lambda|\lra+\infty$. 
Moreover, $c_j^\prime=c_j^{\prime\prime}\equiv0$ whenever $j$ is not an integer 
multiple of $m$. 
\end{theorem}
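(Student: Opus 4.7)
The plan is to apply Theorem~\ref{thm:trace-expansion-01} to each of the pieces $c_\theta^{(1)}\in\wtbfS^{\omega-m\ell,\omega+m}$ and $c_\theta^{(2)}\in\wtbfS^{\omega-m\ell,\omega}$ of $c_\theta = e^{-i\theta\ell}\,q\#b_\theta^{\#\ell}$ identified in the paragraph before the statement, and to re-read the resulting $\mu$-asymptotic expansions as $\lambda$-asymptotic expansions via $\lambda = e^{i\theta}\mu^m$. A preliminary remark that makes the whole argument clean is that $c_\theta$ is, modulo errors in $\scrC^\infty_{\mathrm{comp}}(\rpbar,S^{-\infty}(\rz^n))$, a function of $\lambda$ alone: from $\mu^m - e^{-i\theta}p = e^{-i\theta}(\lambda - p)$ one gets $b_\theta(x,D;\mu) = e^{i\theta}(\lambda - p(x,D))^{-1}$, hence $c_\theta(x,D;\mu) = q(x,D)(\lambda - p(x,D))^{-\ell}$, and this forces $\theta$-independence of the coefficients in the final $\lambda$-expansion. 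Uniformity in $\theta\in[-\Theta,\Theta]$ of the parametrix construction (Theorem~\ref{thm:param02}) is inherited by Corollary~\ref{cor:mu-expansion} and by the proof of Theorem~\ref{thm:trace-expansion-01}.

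For $\ell$ so large that $\omega - m\ell < -n$, the hypotheses of Theorem~\ref{thm:trace-expansion-01} hold for both pieces ($d - \nu = -m(\ell+1)\le 0$ and $-m\ell\le 0$). Applying the theorem and substituting $\mu = |\lambda|^{1/m}$ converts $\mu^{\omega-m\ell+n-j}$ into $\lambda^{(n+\omega-j)/m - \ell}$ and $\mu^{-m\ell - k}$ into $\lambda^{-\ell - k/m}$, with $\log|\lambda|$ differing from $\log\lambda$ by a bounded imaginary constant absorbed in the $c''_j$. The log/constant contributions from $c_\theta^{(1)}$, originally at $\mu^{-m(\ell+1) - k}$, land at indices $j = m + k \ge m$ of the $\lambda^{-\ell - j/m}$ series and are absorbed there. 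This produces \eqref{eq:exp} with some $\scrC^\infty_b$-coefficients $c_j, c'_j, c''_j$.

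The main obstacle is the vanishing $c'_j = c''_j \equiv 0$ for $m\nmid j$. I would handle this by re-deriving the logarithmic/constant part directly from the $\lambda$-picture: iterating the resolvent identity $(\lambda - p)^{-\#1} = \lambda^{-1} + \lambda^{-1}\#p\#(\lambda - p)^{-\#1}$ in the Leibniz algebra yields, modulo $\scrC^\infty_{\mathrm{comp}}(\rpbar, S^{-\infty}(\rz^n))$-errors, an expansion $(\lambda - p)^{-\#\ell}\sim \sum_{k\ge 0}b_k^{(\ell)}(x,\xi)\,\lambda^{-\ell - k}$ with $b_k^{(\ell)}\in S^{mk}(\rz^n)$, featuring only integer powers of $\lambda^{-1}$. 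Composing with $q$, the kernel of $c_\theta = q(x,D)(\lambda - p(x,D))^{-\ell}$ has a "descending" part supported at integer powers $\lambda^{-\ell - k}$, i.e., precisely at indices $j = mk$ of the second series in \eqref{eq:exp}. Matching this $\lambda^{-1}$-expansion against the one supplied by Theorem~\ref{thm:trace-expansion-01} applied to $c_\theta^{(2)}$ (which a priori provides terms $\lambda^{-\ell - k/m}$ for all $k\ge 0$) forces the vanishing of $c'_j$ and $c''_j$ for $j$ not a multiple of $m$, giving the final claim.
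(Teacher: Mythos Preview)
Your approach is essentially the paper's: apply Theorem~\ref{thm:trace-expansion-01} to the two pieces $c_\theta^{(1)},c_\theta^{(2)}$, convert the $\mu$-expansion to a $\lambda$-expansion, and handle the vanishing of $c_j',c_j''$ separately. Two points deserve comment.

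For $\theta$-independence, the paper writes $\log\mu=\log(\mu e^{i\theta})-i\theta$ and $\mu^a=(\mu e^{i\theta})^a e^{-ia\theta}$, obtains the expansion \eqref{eq:exp} with a priori $\theta$-dependent coefficients, and then invokes holomorphy of $k(x,x;\lambda)$ together with \cite[Lemma~2.3]{GrSe} to conclude the coefficients are constant in $\theta$. Your argument (``$c_\theta$ is a function of $\lambda$ alone, hence the coefficients are $\theta$-independent'') is the same idea, but be aware that uniqueness of coefficients in a log-polyhomogeneous expansion along rays in a sector is precisely the content of that lemma; it is not automatic and should be cited or proved.

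For the vanishing when $m\nmid j$, both you and the paper give a sketch: the paper defers to \cite[Section~2.2]{GrSe}, you use the resolvent identity. Your idea is sound but the ``matching'' step is imprecise as stated. The $c_j',c_j''$ in the proof of Theorem~\ref{thm:trace-expansion-01} arise from applying Corollary~\ref{cor:mu-expansion} to each \emph{homogeneous component} of $c_\theta$, and the relevant fact is that the $\mu$-expansion (not the $[\xi,\mu]$-expansion) of each such component has nonzero terms only at powers $\mu^{-m(\ell+k)}$. This is what the Neumann series $(\mu^m-e^{-i\theta}p_0)^{-1}=\sum_k e^{-i\theta(k+1)}p_0^k\mu^{-m(k+1)}$ gives at the principal level, and it propagates through the parametrix construction since $x$- and $\xi$-derivatives do not alter the $\mu$-structure. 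Phrasing it as a matching of two kernel expansions obscures this; the cleaner statement is that the $q_\ell$ of Corollary~\ref{cor:mu-expansion}, computed for each homogeneous component of $c_\theta$, vanish unless $m\mid\ell$.
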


\begin{proof}[Proof of Theorem $\ref{thm:trace-expansion-main}$]
Applying Theorem \ref{thm:trace-expansion-01} to both $c_{\theta}^{(1)}$ and $c_{\theta}^{(2)}$ 
one obtains an expansion 
\begin{equation*}
 k(x,x;\mu^m e^{i\theta})
 \sim \sum_{j=0}^{+\infty}\wt{c}_j(x,\theta)\mu^{n+\omega-j-m\ell}
   +\sum_{j=0}^{+\infty}  \big(\wt{c}_j^\prime(x,\theta)\log \mu
   +\wt{c}_j^{\prime\prime}(x,\theta)\big)\mu^{-\ell m-j},
\end{equation*}
for $\mu\to+\infty$, uniformly in $\theta$. Writing  
$\log\mu=\log(\mu e^{i\theta})-i\theta$, $\mu^a=(\mu e^{i\theta})^a (e^{-i\theta})^a$, 
and substituting $\mu=r^{1/m}$ yields the expansion \eqref{eq:exp}, but with coefficient functions 
depending on $\theta$. However, due to the holomorphy of the left-hand side $($for fixed $x)$, 
the coefficients must be constant in $\theta$ as shown in \cite[Lemma 2.3]{GrSe}. 

To see that the coefficients $c_j^\prime$ and $c_j^{\prime\prime}$ vanish whenever $j$ is not 
an integer multiple of $\ell$, one needs to repeat the considerations from \cite[Section 2.2]{GrSe} 
concerning the construction of the parametrix of $\mu^m-p(x,\xi)$.
\end{proof}

\section{Operators on manifolds}
\label{sec:07}

We shall show that the various symbol classes introduced so far lead to corresponding 
operator-classes on smooth compact manifolds. In particular, we shall show that the 
expansion at infinity and the concept of principal limit-symbol extend to the global setting. 

\subsection{Invariance under change of coordinates}
\label{sec:07.1}

Let $\kappa:\rz^n\to\rz^n$ be a smooth change of coordinates and assume that 
$\partial_j \kappa_k\in\scrC^\infty_b(\rz^n)$ for all $1\le j,k\le n$, and that 
$|\mathrm{det}\,\kappa^\prime|$ is uniformly bounded from above and below by
positive constants; here, $\kappa^\prime$ denotes the first derivative 
$($Jacobian matrix$)$ of $\kappa$.
For an operator $A:\scrS(\rz^n)\to \scrS(\rz^n)$ its push-forward $\kappa_*A$ is
defined by
 $$(\kappa_*A) u=[A(u\circ\kappa)]\circ\kappa^{-1},\qquad u\in\scrS(\rz^n).$$
Its pull-back is $\kappa^*A:=(\kappa^{-1})_*A$. If $A(\mu)$ is depending on a
parameter $\mu$, pull-back
and push-forward are defined in the same way, resulting in families
$\kappa^*A(\mu)$ and $\kappa_*A(\mu)$,
respectively. 
It is then well-known that the classes $S^d_{1,0}$ and $S^d$ are invariant under the change 
of coordinates $x=\kappa(y)$. 

\begin{theorem}
The classes $\wt{S}^{d,\nu}_{1,0}$, $\wt{S}^{d,\nu}$, $\wtbfS^{d,\nu}_{1,0}$,
and $\wtbfS^{d,\nu}$
are invariant under the change of coordinates $x=\kappa(y)$. In the classes of
poly-homogeneous symbols,
the homogeneous principal symbols satisfy the $($usual$)$ relation  
$$(\kappa_*a)_0(x,\xi;\mu)=a_0\big(\kappa^{-1}(x),\kappa^\prime(\kappa^{-1}(x))^{t}\xi;\mu\big),$$
where $\kappa^\prime(y)^{t}$ denotes the adjoint of the first derivative
$\kappa^\prime(y)$.
\end{theorem}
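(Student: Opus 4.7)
The plan is to derive all four invariance claims from a single Kuranishi-type amplitude calculation and then specialise. Writing $(\kappa_*A)u$ as an iterated integral and introducing the smooth matrix field $\tilde\Psi(x,z)$ determined by $\kappa^{-1}(x)-\kappa^{-1}(z)=\tilde\Psi(x,z)(x-z)$, set $M(x,z)=\tilde\Psi(x,z)^{-1}$, so that $M(x,x)=\kappa'(\kappa^{-1}(x))$. After the change of covariable $\eta=\tilde\Psi(x,z)^{t}\xi$ one recognises $\kappa_*A$ as an amplitude operator with amplitude
$$\tilde a(x,z,\eta;\mu)= a\bigl(\kappa^{-1}(x),M(x,z)^{t}\eta;\mu\bigr)\,J(x,z),$$
with $J$ smooth and bounded together with all derivatives. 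The hypothesis on $\kappa$ forces $M$ to be uniformly invertible with bounded derivatives, hence $\spk{M(x,z)^{t}\eta}\sim\spk{\eta}$ and $\spk{M(x,z)^{t}\eta,\mu}\sim\spk{\eta,\mu}$ uniformly in $(x,z)$; this single geometric fact is what lets the estimates defining each of the four symbol classes transfer from $\eta$ to $M^{t}\eta$.

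The standard amplitude-to-symbol reduction yields
$$(\kappa_*a)(x,\eta;\mu)\sim\sum_{\alpha}\frac{1}{\alpha!}\bigl(\partial_\eta^\alpha D_z^\alpha \tilde a\bigr)(x,x,\eta;\mu),$$
and each summand unfolds, by Leibniz and chain rule, into a finite combination of terms $h_{\alpha,\gamma}(x)\,(\partial_\xi^\gamma a)(\kappa^{-1}(x),\kappa'(\kappa^{-1}(x))^{t}\eta;\mu)$ with $|\gamma|=|\alpha|$ and $h_{\alpha,\gamma}\in\scrC^\infty_b(\rz^n)$. Such a term belongs to $\wt S^{d-|\alpha|,\nu-|\alpha|}_{1,0}$, so asymptotic summation in the $\wt S$-scale gives $\kappa_*a\in\wt S^{d,\nu}_{1,0}$. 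The poly-homogeneous statement for $\wt S^{d,\nu}$ follows term-wise; the $\alpha=0$ summand is already $a(\kappa^{-1}(x),\kappa'(\kappa^{-1}(x))^{t}\eta;\mu)$, whose homogeneous principal part yields at once the transformation rule for $a_0$ asserted in the theorem.

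For the remaining classes $\wtbfS^{d,\nu}_{1,0}$ and $\wtbfS^{d,\nu}$, the plan is to substitute the expansion $a(y,\xi;\mu)\equiv\sum_{k=0}^{K-1}a^\infty_{[\nu+k]}(y,\xi)[\xi,\mu]^{d-\nu-k}$ from Definition \ref{def:symbol-expansion} into $\tilde a$. Each summand then factors as $a^\infty_{[\nu+k]}(\kappa^{-1}(x),M(x,z)^{t}\eta)\,[M(x,z)^{t}\eta,\mu]^{d-\nu-k}$. The first factor, being the amplitude of the push-forward of an $x$-parameter-free H\"ormander symbol, reduces to an element of $S^{\nu+k}_{1,0}(\rz^n)$ by exactly the argument of the previous paragraph. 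The second factor lies in $\wtbfS^{d-\nu-k,0}_{1,0}$ uniformly in $x$ and admits its own expansion at infinity
$$[M(x,z)^{t}\eta,\mu]^{d-\nu-k}\equiv\sum_{\ell} p_{k,\ell}(x,\eta)\,[\eta,\mu]^{d-\nu-k-\ell},\qquad p_{k,\ell}\in S^\ell_{1,0}(\rz^n),$$
obtained by applying Proposition \ref{prop:classical} to this poly-homogeneous symbol (its homogeneous extension outside a compact set is $|M^{t}\eta,\mu|^{d-\nu-k}$). Multiplying, collecting like powers of $[\eta,\mu]$, and invoking Theorem \ref{thm:asympsumm} produces the required expansion of $\kappa_*a$, with limit-symbols $(\kappa_*a)^\infty_{[\nu+j]}\in S^{\nu+j}_{1,0}(\rz^n)$ built from the $a^\infty_{[\nu+k]}$ by the usual H\"ormander change-of-coordinates rule. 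The $\wtbfS^{d,\nu}$-case follows by applying the same procedure to each homogeneous component.

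The principal obstacle is the simultaneous bookkeeping of two independent asymptotic expansions: the $\alpha$-expansion of the amplitude reduction, which improves decay in $\spk{\eta,\mu}$, and the $k$-expansion at infinity, which improves decay in $\spk{\eta}$. Verifying that the resulting doubly-indexed remainders can be rearranged into a single expansion of the shape required by Definition \ref{def:symbol-expansion}, with all errors landing in the expected subclasses $\wt S^{d,\nu+N}_{1,0}$, is the only delicate point; modulo that, everything reduces to the uniform estimates already at hand.
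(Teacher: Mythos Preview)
Your treatment of $\wt S^{d,\nu}_{1,0}$ via the Kuranishi amplitude is essentially what Grubb's Theorem~2.1.21 does (which the paper simply cites for $\nu\le 0$); note, however, that the terms in the expansion of $\partial_\eta^\alpha D_z^\alpha\tilde a\big|_{z=x}$ are not just $h_{\alpha,\gamma}(x)\,(\partial_\xi^\gamma a)$ with $|\gamma|=|\alpha|$ but carry polynomial factors in $\eta$ of degree $\le|\alpha|/2$; the conclusion that each term lies in $\wt S^{d-\lceil|\alpha|/2\rceil,\nu-\lceil|\alpha|/2\rceil}_{1,0}$ survives. For $\nu>0$ the paper avoids re-running the amplitude computation altogether by the order-reduction trick $\kappa_*a=\kappa_*(a\#p)\#\kappa_*q$ with $p\in S^{-\nu}(\rz^n)$ Leibniz-invertible, so that $a\#p\in\wt S^{d-\nu,0}_{1,0}$ falls under the case already settled.

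The real difference is in the $\wtbfS$ part. Your plan substitutes the expansion into the amplitude $\tilde a$ and then tries to reduce each factor $a^\infty_{[\nu+k]}(\kappa^{-1}(x),M(x,z)^t\eta)$ and $[M(x,z)^t\eta,\mu]^{d-\nu-k}$ separately; but these are factors of an amplitude, not amplitudes of factors, and the second still carries the $z$-variable (your $p_{k,\ell}(x,\eta)$ silently drops it). This is exactly the ``double bookkeeping'' you flag as delicate, and it is genuinely awkward to carry out this way. The paper sidesteps it entirely by working at the operator level: since the pointwise product $a^\infty_{[\nu+j]}(x,\xi)\,[\xi,\mu]^{d-\nu-j}$ equals the Leibniz product (the second factor is $x$-independent) and $\kappa_*$ is multiplicative for $\#$, one has
\[
\kappa_*a=\sum_{j<N}\big(\kappa_*a^\infty_{[\nu+j]}\big)\#\big(\kappa_*[\xi,\mu]^{d-\nu-j}\big)+\kappa_*r_{a,N}.
\]
Now each piece lands in a class already known to be invariant: $\kappa_*a^\infty_{[\nu+j]}\in S^{\nu+j}_{1,0}(\rz^n)$, $\kappa_*r_{a,N}\in\wt S^{d,\nu+N}_{1,0}$, and $\kappa_*[\xi,\mu]^{d-\nu-j}\in S^{d-\nu-j}\subset\wtbfS^{d-\nu-j,0}_{1,0}$ by Proposition~\ref{prop:classical}. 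The required expansion of $\kappa_*a$ then follows from Theorem~\ref{thm:Leibniz product}, with no doubly-indexed remainders to rearrange.
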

\begin{proof}
In Theorem 2.1.21 of \cite{Grub} the invariance is shown for the classes $S^{d,\nu}_{1,0}$ 
and $S^{d,\nu}$. This includes the classes $\wt{S}^{d,\nu}_{1,0}$ and $\wt{S}^{d,\nu}$ for 
$\nu\le 0$. If $\nu>0$, we choose a symbol $p(x,\xi)\in S^{-\nu}(\rz^n)$ which has 
inverse $q(x,\xi)\in S^{\nu}(\rz^n)$ with respect to the Leibniz product.  
Given $a(x,\xi;\mu)\in\wt{S}^{d,\nu}_{1,0}$, we find  
 $$\kappa_*a=\kappa_*(a\#p)\#\kappa_*q \in\wt{S}^{d,\nu}_{1,0},$$
since $a\#p\in \wt{S}^{d-\nu,0}_{1,0}$. Analogously we argue for $\wt{S}^{d,\nu}$. 

Next let $a(x,\xi;\mu)\in\wtbfS^{d,\nu}_{1,0}$ be as in Definition \ref{def:symbol-expansion}. 
The invariance follows from the observation that the classes $S^{\nu+j}_{1,0}(\rz^n)$ 
and $\wt{S}^{d,\nu+N}_{1,0}$ are invariant, while 
$\kappa_*[\xi,\mu]^{d-\nu-j}\in S^{d-\nu-j}\subset \wtbfS^{d-\nu-j,0}_{1,0}$ 
has a complete expansion due to Proposition \ref{prop:classical}. This allows to find the 
complete expansion of $\kappa_*a(x,\xi;\mu)$. 
Using the formula for the asymp\-totic expansion of $\kappa_*a$, one sees that 
poly-homogeneous symbols remain poly-homogeneous. 	
\end{proof}

Let us have a closer look to the homogeneous principal symbol of $a\in\wtbfS^{d,\nu}$. 
For convenience of notation let us set
$p(x,\xi;\mu)=(\kappa_*a)_0(x,\xi;\mu)$
and $\calK(x)=\kappa^\prime(\kappa^{-1}(x))^{t}$. To see that $p$ belongs to
$\wtbfS^{d,\nu}_{hom}$ we write
 $$p(x,\xi;\mu)=|\xi,\mu|^d\, \wh{p}\Big(x,\frac{(\xi,\mu)}{|\xi,\mu|}\Big),$$
where, in polar-coordinates, 
$$\wh{p}(x,r,\phi)=p(x,r\phi,\sqrt{1-r^2})={a}\big(\kappa^{-1}(x),r\calK(x)\phi,\sqrt{1-r^2}\big).$$
Introducing 
\begin{align*}
n(x,r,\phi)^2&=\big|r\calK(x)\phi,\sqrt{1-r^2}\big|^2=1-r^2\big(1-|\calK(x)\phi|^2\big),\\
 s(x,r,\phi)&=r|\calK(x)\phi|/n(x,r,\phi),\\
 \theta(x,\phi)&=\calK(x)\phi/|\calK(x)\phi|,
\end{align*}
we find  
 $$\wh{p}(x,r,\phi)=n^d\, \wh{a}\big(\kappa^{-1}(x),s\theta,\sqrt{1-s^2}\big).$$
Noting that $n$ is smooth in $r$ up to $r=0$ and using the weigthed
Taylor-expansion of $\wh{a}$, one finds that
$\wh{p}$ admits a weighted Taylor-expansion with principal angular symbol 
\begin{align*}
 \wh{p}_{\spk{\nu}}(x,\phi)
 &=\lim_{r\to 0+}n^d(r/s)^{-\nu}s^{-\nu}\, \wh{a}\big(\kappa^{-1}(x),s\theta,\sqrt{1-s^2}\big)\\
 &=|\calK(x)\phi|^\nu\, a_{\spk{\nu}}(\kappa^{-1}(x),\theta).
\end{align*}
This results in the following observation: 

\begin{proposition}\label{prop:angular symbol-change}
Let $a\in \wtbfS^{d,\nu}$. The principal angular symbols of $a$ and $\kappa_*a$
satisfy the relation
$$(\kappa_*
a)_{\spk{\nu}}(x,\xi)=a_{\spk{\nu}}\big(\kappa^{-1}(x),\kappa^\prime(\kappa^{-1}(x))^{t}\xi\big).$$
\end{proposition}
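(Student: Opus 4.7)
The plan is to make rigorous the computation already laid out in the paragraph preceding the statement. By the preceding theorem the push-forward commutes with taking the homogeneous principal symbol, so it suffices to compute the principal angular symbol of $(\kappa_*a)_0(x,\xi;\mu) = a_0(\kappa^{-1}(x),\calK(x)\xi;\mu)$, where $\calK(x) = \kappa^\prime(\kappa^{-1}(x))^{t}$, and then observe that the principal angular symbol is a restriction of an object living purely on $\sz^{n-1}$ (resp.\ its $\nu$-homogeneous extension in $\xi$) determined by the leading weighted Taylor coefficient.

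First I would substitute $\xi=r\phi$, $\mu=\sqrt{1-r^2}$ with $\phi\in\sz^{n-1}$, so that the argument of $a_0$ becomes $(\kappa^{-1}(x),r\calK(x)\phi,\sqrt{1-r^2})$, a point of modulus $n(x,r,\phi):=|r\calK(x)\phi,\sqrt{1-r^2}|$. Factoring this modulus out via the $d$-homogeneity of $a_0$ and introducing the re-polarized quantities $s=r|\calK(x)\phi|/n$ and $\theta=\calK(x)\phi/|\calK(x)\phi|$ yields the representation
$$\wh{p}(x,r,\phi) = n^d\,\wh{a}_0\bigl(\kappa^{-1}(x),s\theta,\sqrt{1-s^2}\bigr)$$
that already appears in the text, where $p=(\kappa_*a)_0$. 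The key observation is that $n$ and $s/r$ extend smoothly to $r=0$, with $n(x,0,\phi)=1$ and $(s/r)(x,0,\phi)=|\calK(x)\phi|$.

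Next, inserting the weighted Taylor expansion of $\wh{a}_0$ at the north pole (Definition \ref{def:taylor}) on the right-hand side and isolating the leading term in powers of $r$ yields
$$\wh{p}(x,r,\phi) = r^\nu\,|\calK(x)\phi|^\nu\,\wh{a}_{0,\spk{\nu}}\bigl(\kappa^{-1}(x),\theta(x,\phi)\bigr) + O(r^{\nu+1}),$$
so that $\lim_{r\to 0+}r^{-\nu}\wh{p}(x,r,\phi) = |\calK(x)\phi|^\nu\,\wh{a}_{0,\spk{\nu}}(\kappa^{-1}(x),\theta(x,\phi))$. Restoring the $\nu$-homogeneous extension in $\xi$ via $\phi=\xi/|\xi|$ and recalling $|\calK(x)\phi|^\nu|\xi|^\nu = |\calK(x)\xi|^\nu$ gives precisely the stated formula.

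The only point that needs care is the verification that the substitution $(r,\phi)\mapsto(s\theta,\sqrt{1-s^2})$ preserves membership in $r^\nu\scrC^\infty_T(\whsz^n_+)$, i.e., that $\wh{p}$ has a full weighted Taylor expansion and not merely the correct leading term. This is a direct chain-rule computation relying on the smoothness of $n$ and $s/r$ at $r=0$ and on $s(x,0,\phi)=0$, and is essentially the same computation that underlies the invariance of $\wtbfS^{d,\nu}$ already established in the preceding theorem; so no new analytic input is required.
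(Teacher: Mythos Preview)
Your proposal is correct and follows essentially the same approach as the paper: the paper's proof is precisely the computation in the paragraph preceding the proposition, which you have reproduced and filled in with the appropriate limits and the final homogeneous extension step. Your additional remark about verifying that the substitution preserves membership in $r^\nu\scrC^\infty_T(\whsz^n_+)$ is a welcome clarification of a point the paper leaves implicit.
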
 

In other words, the principal angular symbol transforms as a function on the
cotangent-bundle of $\rz^n$.

\begin{remark}
In the above discussion we have focused on changes of coordinates defined
on $\rz^n$, satisfying certain growth conditions at infinity. 
This is the natural setting for symbols which are globally defined on $\rz^n$. 
Alternatively, we could consider arbitrary diffeomorphisms $\kappa:U\to V$ with 
arbitrary open subsets $U$, $V$ of $\rz^n$ and the push-forward of
$\psi$do of the form
$\phi \, a(x,D;\mu)\psi$ with $\phi,\psi\in\scrC^\infty_{\mathrm{comp}}(U)$. 
We would obtain a corresponding invariance property; the details are left to the reader.
\end{remark}

The invariance under changes of coordinates permits to define corresponding
classes for manifolds.

\begin{definition}\label{def:pseudo-manifold}
Let $M$ be a smooth closed manifold. With
$\wt{L}^{d,\nu}_{1,0}=\wt{L}^{d,\nu}_{1,0}(M;\rpbar)$ we denote the space
of all operator-families $A(\mu):\scrC^\infty(M)\to\scrC^\infty(M)$ with the
following property: Given an arbitrary chart $\kappa:\Omega\subset M\to
U\subset\rz^n$ and arbitrary functions $\phi,\psi\in\scrC^\infty_{\mathrm{comp}}(\Omega)$, 
the operator-family
$\kappa_*(\phi A(\mu)\psi)$ defined by 
$$u\mapsto \kappa_*(\phi
A(\mu)\psi)u=[\phi\,A(\mu)(\psi(u\circ\kappa))]\circ\kappa^{-1} ,
    \qquad u\in\scrS(\rz^n),\footnotemark$$
is\footnotetext{In this definition, smooth functions with compact support in
some open set are considered as functions on the whole ambient space after
extension by zero.} a $\psi$do with symbol from
$\wt{S}^{d,\nu}$.
Analogously, we define the spaces $\wt{L}^{d,\nu}=\wt{L}^{d,\nu}(M;\rpbar)$,
$\wtbfL^{d,\nu}_{1,0}=\wtbfL^{d,\nu}_{1,0}(M;\rpbar)$, and
$\wtbfL^{d,\nu}=\wtbfL^{d,\nu}(M;\rpbar)$.
\end{definition} 

In $\wtbfL^{d,\nu}$ both homogeneous principal symbol and principal angular symbol are well 
defined functions on $(T^*M\setminus0)\times\rpbar$ and $T^*M\setminus0$, 
respectively.
Let us mention that 
$\wt{L}^{d-\infty,\nu-\infty}_{1,0}
=\wt{L}^{d-\infty,\nu-\infty}=S^{d-\nu}_{1,0}(\rpbar,L^{-\infty}(M))$.

Proceeding as usual, one can show that
any of the four classes is closed under composition and, after fixing an
arbitrary Riemannian metric on $M$ which allows
the definition of a corresponding space $L^2(M)$ of square integrable functions,
under taking the formal adjoint:

\begin{theorem}
Composition of operator-families induces a map
$\wt{L}^{d_1,\nu_1}_{1,0}\times \wt{L}^{d_0,\nu_0}_{1,0}\to
\wt{L}^{d_0+d_1,\nu_0+\nu_1}_{1,0}$; taking the formal adjoint
induces a map
$\wt{L}^{d,\nu}_{1,0}\to \wt{L}^{d,\nu}_{1,0}$. Analogous results hold for the
three other classes introduced in Definition
$\ref{def:pseudo-manifold}$.  
\end{theorem}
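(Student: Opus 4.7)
The proof follows the standard scheme for pseudodifferential calculi on manifolds: localize via partitions of unity, reduce to the local calculus on $\rz^n$ via the coordinate invariance already established, and show that the non-local contributions are regularizing.

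The plan is to fix a finite atlas $\{(\Omega_k,\kappa_k)\}_{k=1}^K$ for $M$ and a subordinate partition of unity $\{\varphi_k\}$ together with cut-offs $\psi_k\in\scrC^\infty_{\mathrm{comp}}(\Omega_k)$ satisfying $\psi_k\equiv 1$ on $\mathrm{supp}\,\varphi_k$, and further auxiliary cut-offs $\chi_k$ with $\chi_k\equiv1$ on $\mathrm{supp}\,\psi_k$. For $A(\mu)\in\wt{L}^{d_1,\nu_1}_{1,0}$ and $B(\mu)\in\wt{L}^{d_0,\nu_0}_{1,0}$, I would decompose
\[
 A(\mu)B(\mu)=\sum_{k,\ell}\varphi_kA(\mu)\psi_k\cdot\varphi_\ell B(\mu)\psi_\ell + R(\mu),
\]
where $R(\mu)$ collects the off-diagonal contributions in which cut-offs have disjoint support. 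Each summand with $k=\ell$, or more generally with $\mathrm{supp}\,\psi_k\cap\mathrm{supp}\,\varphi_\ell\neq\emptyset$, can be handled inside a single chart: after pushing forward via $\kappa_k$ (or $\kappa_\ell$), one obtains the composition of two properly supported $\psi$do whose symbols lie in $\wt{S}^{d_1,\nu_1}_{1,0}$ and $\wt{S}^{d_0,\nu_0}_{1,0}$; Theorem \ref{thm:Leibniz product} (applied entry-wise via the Leibniz product calculus established in Section \ref{sec:03.3}) then provides a symbol in the correct class. Pulling back and using the established invariance under change of coordinates yields a contribution in $\wt{L}^{d_0+d_1,\nu_0+\nu_1}_{1,0}$.

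For the off-diagonal terms, where $\mathrm{supp}\,\psi_k\cap\mathrm{supp}\,\varphi_\ell=\emptyset$, the composition $\psi_k\cdot \varphi_\ell B(\mu)\psi_\ell$ factors through multiplication by two cut-off functions with disjoint supports; the resulting kernel is $\scrC^\infty$ (in the two manifold variables) with values in $S^{d-\nu}_{1,0}(\rpbar,\cz)$, so these terms belong to the residual class $\wt{L}^{d-\infty,\nu-\infty}_{1,0}=S^{d-\nu}_{1,0}(\rpbar,L^{-\infty}(M))$ for the appropriate $(d,\nu)$, hence are absorbed. The formal adjoint is treated in the same spirit: decompose $A(\mu)^*=\sum_{k,\ell}(\varphi_k A(\mu)\psi_k)^*\varphi_\ell$, apply Theorem \ref{thm:adjoint} inside each chart to $\varphi_k A(\mu)\psi_k$, and dispose of the cross terms as above.

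For the poly-homogeneous classes $\wt{L}^{d,\nu}$ and $\wtbfL^{d,\nu}$ the same argument applies, using that $\wt{S}^{d,\nu}$ and $\wtbfS^{d,\nu}$ are closed under the Leibniz product and taking adjoints (again Theorems \ref{thm:Leibniz product} and \ref{thm:adjoint}), and that each step preserves the poly-homogeneous structure; coordinate invariance of the homogeneous principal symbol was recorded in the preceding theorem and Proposition \ref{prop:angular symbol-change}. The main technical point I expect to be delicate is verifying that off-diagonal localizations do indeed land in the intersected residual class $\wt{L}^{d-\infty,\nu-\infty}_{1,0}$ uniformly in the parameter $\mu$, i.e., that kernel estimates yield $S^{d-\nu}_{1,0}(\rpbar,L^{-\infty}(M))$-behavior rather than only smoothness; once this is checked, the remaining steps are bookkeeping inside single charts and follow verbatim the Euclidean calculus developed in Sections \ref{sec:03.3} and \ref{sec:05}.
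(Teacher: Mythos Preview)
The paper does not give a proof of this theorem; it merely states it after the remark ``Proceeding as usual, one can show that\ldots''. Your proposal is precisely the ``usual'' argument the author has in mind: localize with a partition of unity, push forward to charts, invoke the Euclidean Leibniz-product and adjoint results, and absorb the off-diagonal pieces into the residual class $S^{d-\nu}_{1,0}(\rpbar,L^{-\infty}(M))$.

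One minor referencing correction: for the base class $\wt{L}^{d,\nu}_{1,0}$ the relevant local input is the Proposition in Section~\ref{sec:03.3} (closure of $\wt{S}^{d,\nu}_{1,0}$ under the Leibniz product), not Theorem~\ref{thm:Leibniz product}, which concerns the refined class $\wtbfS^{d,\nu}_{1,0}$; likewise Theorem~\ref{thm:adjoint} is stated for $\wtbfS^{d,\nu}_{1,0}$, and for $\wt{S}^{d,\nu}_{1,0}$ the adjoint closure is the (unstated but standard) analogue. For the classes $\wtbfL^{d,\nu}_{1,0}$ and $\wtbfL^{d,\nu}$ your citations of Theorems~\ref{thm:Leibniz product} and~\ref{thm:adjoint} are exactly right. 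Otherwise your sketch is correct and matches the intended argument.
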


For an alternative description of the operator-classes, let us choose a system of charts 
$\kappa_i:\Omega_i\to U_i$, $i=1,\ldots,m$, such that the $\Omega_i$ cover $M$; 
moreover let $\phi_i,\psi_i\in\scrC^\infty(\Omega_i)$ such that the $\phi_i$ are a partition 
of unity and $\psi_i\equiv1$ in a neighborhood of the support of $\phi_i$. 
Then $\wt{L}^{d,\nu}_{1,0}$ consists of all operators of the form 
 $$A(\mu)=\sum_{i=1}^m 
     \kappa_i^*\big((\phi_i\circ\kappa_i^{-1})\,a_i(x,D;\mu)\,(\psi_i\circ\kappa_i^{-1})\big)
     \mod \wt{L}^{d-\infty,\nu-\infty}_{1,0}$$
with $a_i\in \wt{S}^{d,\nu}_{1,0}$. The analogous statement holds for the other classes. 

\subsection{Complete expansion and limit operator}
\label{sec:07.2}

The extension of the concept of complete expansion and principal limit-symbol to manifolds 
requires some additional analysis. The key is to show that the symbol $[\xi,\mu]^\alpha$ involved 
in the definition of $\wtbfS^{d,\nu}$ can be replaced by other ones. 

It is convenient to use the notation $\bflambda^\alpha(\xi,\mu)=[\xi,\mu]^\alpha$, $\alpha\in\rz$. 
Then the expansion of a symbol $a(x,\xi;\mu)\in \wtbfS^{d,\nu}_{1,0}$ takes the form  
$$a=\sum_{j=0}^{N-1}a_{[\nu+j]}^\infty\#\bflambda^{d-\nu-j} \mod
\wt{S}^{d,\nu+N}_{1,0};$$
note that here the Leibniz product actually coincides with the point-wise product of the involved 
symbols. 

\begin{definition}\label{def:order-reducing-family}
A family of order-reducing symbols is a set
$\Lambda=\{\lambda^\alpha(x,\xi;\mu)\mid \alpha\in\rz\}$
of symbols $\lambda^\alpha\in S^\alpha$ which satisfy 
\begin{itemize}
 \item[$(1)$] $\lambda^0=1\mod S^{-1}$, 
\item[$(2)$] $\lambda^\alpha\#\lambda^{\beta}=\lambda^{\alpha+\beta}\mod
S^{\alpha+\beta-1}$
  for every $\alpha,\beta\in\rz$. 
\end{itemize} 
\end{definition}

Note that any $\lambda^\alpha$ in such a family is parameter-elliptic in
$S^{\alpha}$ and thus has a parametrix in $S^{-\alpha}$; this parametrix
coincides with $\lambda^{-\alpha}$ modulo $S^{-\alpha-1}$.

\begin{theorem}\label{thm:expansion}
Let $\Lambda$ be a family of order-reducing symbols as in Definition
$\ref{def:order-reducing-family}$. Then
for a symbol $a(x,\xi;\mu)\in\wt{S}^{d,\nu}_{1,0}$ the following are equivalent:\begin{itemize}
\item[$a)$] $a\in \wtbfS^{d,\nu}_{1,0}$ $($cf. Definition
$\ref{def:symbol-expansion})$.
 \item[$b)$] There exist  
$a^{\Lambda,\infty}_{[\nu+j]}(x,\xi)\in S^{\nu+j}_{1,0}(\rz^n)$
such that, for every $N\in\nz$, 
     $$a=\sum_{j=0}^{N-1}a^{\Lambda,\infty}_{[\nu+j]}\#\lambda^{d-\nu-j} 
         \mod  \wt{S}^{d,\nu+N}_{1,0}$$ 
 \end{itemize}
If $a^{\infty}_{[\nu]}(x,\xi)$ is the principal limit symbol of $a$ then 
$$a^{\Lambda,\infty}_{[\nu]}=a^{\infty}_{[\nu]}\#\lambda^{-(d-\nu)}_0(x,0,1),$$
where $\lambda^\alpha_0(x,\xi;\mu)$ denotes the homogeneous principal symbol of
$\lambda^\alpha$.
\end{theorem}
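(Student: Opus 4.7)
The plan is to exploit that each $\lambda^\alpha$ itself belongs to $\wtbfS^{\alpha,0}_{1,0}$, so that the canonical family $\{[\xi,\mu]^\alpha\}$ and the given $\Lambda=\{\lambda^\alpha\}$ are interchangeable modulo lower-order corrections, provided the leading coefficient of $\lambda^{d-\nu}$ is invertible. First I would record this invertibility. By Proposition \ref{prop:classical}, $\lambda^\alpha\in S^\alpha$ lies in $\wtbfS^{\alpha,0}_{1,0}$ with principal limit-symbol $(\lambda^\alpha)^{\infty}_{[0]}(x,\xi)=\lambda^\alpha_0(x,0;1)$; and properties $(1)$--$(2)$ of Definition \ref{def:order-reducing-family} force $\lambda^\alpha_0\,\lambda^{-\alpha}_0=1$ pointwise on the homogeneous level. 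Evaluated at $(\xi,\mu)=(0,1)$, both factors become $\xi$-independent, so their Leibniz product reduces to their pointwise product, yielding $\lambda^\alpha_0(x,0;1)\#\lambda^{-\alpha}_0(x,0;1)=1$ in $S^0_{1,0}(\rz^n)$.

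For the direction $(a)\Rightarrow(b)$ I would argue by a single-step recursion. Given the principal limit-symbol $a^\infty_{[\nu]}$ of $a$, set
\begin{equation*}
a^{\Lambda,\infty}_{[\nu]}:=a^{\infty}_{[\nu]}\#\lambda^{-(d-\nu)}_0(x,0;1)\in S^\nu_{1,0}(\rz^n).
\end{equation*}
By Example \ref{ex:without-parameter} this lies in $\wtbfS^{\nu,\nu}_{1,0}$, so Theorem \ref{thm:Leibniz product} places $a^{\Lambda,\infty}_{[\nu]}\#\lambda^{d-\nu}$ in $\wtbfS^{d,\nu}_{1,0}$ and gives its principal limit-symbol as $a^{\Lambda,\infty}_{[\nu]}\#\lambda^{d-\nu}_0(x,0;1)$, which by the preceding paragraph simplifies to $a^{\infty}_{[\nu]}$. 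Hence $\tilde a:=a-a^{\Lambda,\infty}_{[\nu]}\#\lambda^{d-\nu}$ lies in $\wtbfS^{d,\nu}_{1,0}$ with vanishing principal limit-symbol, and uniqueness in Definition \ref{def:symbol-expansion} upgrades this to $\tilde a\in\wtbfS^{d,\nu+1}_{1,0}$. Iterating the same move on $\tilde a$ with $\nu$ replaced by $\nu+1$ generates the full sequence $a^{\Lambda,\infty}_{[\nu+j]}\in S^{\nu+j}_{1,0}(\rz^n)$; a short induction on $N$ verifies the displayed identity in $(b)$ modulo $\wt{S}^{d,\nu+N}_{1,0}$, and the asserted formula for the leading coefficient is built into the construction.

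For the converse $(b)\Rightarrow(a)$ I would substitute, in the expansion in $(b)$, the $[\xi,\mu]$-expansion of each $\lambda^{d-\nu-j}$ coming from the first paragraph. Since $[\xi,\mu]^\alpha$ is $x$-independent, Leibniz product with it reduces to pointwise multiplication, so the double sum may be regrouped by total power of $[\xi,\mu]$, producing
\begin{equation*}
a^\infty_{[\nu+\ell]}=\sum_{j+k=\ell}a^{\Lambda,\infty}_{[\nu+j]}\#(\lambda^{d-\nu-j})^{\infty}_{[k]}\in S^{\nu+\ell}_{1,0}(\rz^n),
\end{equation*}
an expansion of precisely the form in Definition \ref{def:symbol-expansion}.

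The main technical obstacle, modest but crucial, is the passage from $\wt{S}^{d,\nu+1}_{1,0}$ to $\wtbfS^{d,\nu+1}_{1,0}$ for the recursive remainder $\tilde a$ — that cancellation of the principal limit-symbol automatically promotes the remainder into the finer class one level deeper. This is exactly what the multiplicativity of the limit-symbol in Theorem \ref{thm:Leibniz product}, combined with the uniqueness part of Definition \ref{def:symbol-expansion}, delivers. Everything else — the double-sum bookkeeping in the converse direction and a possible appeal to Theorem \ref{thm:asympsumm} to glue the construction into a single symbol — is routine within the calculus already established.
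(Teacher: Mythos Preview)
Your proposal is correct. The direction $(b)\Rightarrow(a)$ is handled essentially as in the paper: both you and the paper substitute the $[\xi,\mu]$-expansion of each $\lambda^{d-\nu-j}$ furnished by Proposition~\ref{prop:classical} and regroup, the remainder terms landing in $\wt{S}^{d,\nu+N}_{1,0}$ because $a^{\Lambda,\infty}_{[\nu+j]}\in\wt{S}^{\nu+j,\nu+j}_{1,0}$ and the Leibniz product respects the bi-grading.

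For $(a)\Rightarrow(b)$ your route differs from the paper's. The paper first reduces to $\nu=0$ by multiplying with $\spk{\xi}^{-\nu}$, and then performs an explicit iterative rewriting of the $\bflambda$-expansion: at each step it inserts the identity $\bflambda^{d-k}=(\bflambda^{d-k}\#\lambda^{-(d-k)})\#\lambda^{d-k}+r$ with $r\in S^{d-k-1}$, expands the factors via Proposition~\ref{prop:classical}, and thereby peels off one $\lambda^{d-k}$-term while pushing the rest into a shorter $\bflambda$-sum of lower order. You instead work directly at the level of the calculus: using Example~\ref{ex:without-parameter}, Proposition~\ref{prop:classical}, and the multiplicativity of the limit-symbol in Theorem~\ref{thm:Leibniz product}, you show that $\tilde a=a-a^{\Lambda,\infty}_{[\nu]}\#\lambda^{d-\nu}\in\wtbfS^{d,\nu}_{1,0}$ has vanishing principal limit-symbol, and then observe (correctly) that this forces $\tilde a\in\wtbfS^{d,\nu+1}_{1,0}$ since the full $[\xi,\mu]$-expansion of $\tilde a$ simply shifts by one step. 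This is cleaner and shorter, at the cost of relying on Theorem~\ref{thm:Leibniz product} as a black box; the paper's argument is more self-contained but computationally heavier. Both deliver the stated formula for $a^{\Lambda,\infty}_{[\nu]}$. The reference to Theorem~\ref{thm:asympsumm} at the end is not actually needed, since your recursion already produces the coefficients and the remainder estimates for every $N$ without any asymptotic summation.
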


Before coming to the proof, let us show that the coefficients in any expansion of 
Theorem \ref{thm:expansion}.b$)$ are uniquely determined: 
Suppose $a=0$ and that we already have verified that $a^{\Lambda,\infty}_{[\nu+j]}=0$ 
for $j=0,\ldots,N-1$. Then 
$a^{\Lambda,\infty}_{[\nu+N]}\#\lambda^{d-\nu-N}\in \wt{S}^{d,\nu+N+1}_{1,0}$. 
Composing from the right with $\lambda^{-(d-\nu-N)}$ one finds 
 $$a^{\Lambda,\infty}_{[\nu+N]}(x,\xi)=(a^{\Lambda,\infty}_{[\nu+N]}\#r_0)(x,\xi;\mu) + 
    r_1(x,\xi;\mu)$$
with some $r_0\in S^{-1}$ and $r_1\in\wt{S}^{\nu+N,\nu+N+1}_{1,0}$. The right-hand 
side decays as $1/\mu$ in any semi-norm of ${S}^{\nu+N+1}_{1,0}(\rz^n)$. Thus 
$a^{\Lambda,\infty}_{[\nu+N]}=0$. 

\begin{proof}[Proof of Theorem $\ref{thm:expansion}$]
First we argue that we may assume without loss of generality that $\nu=0$. 
To this end let $p_s(\xi):=\spk{\xi}^s$, $s\in\rz$. Then $p_{-\nu}\#a\in
\wt{S}^{d^\prime,0}_{1,0}$
for $d^\prime=d-\nu$.  

Given hypothesis a$)$, then $p_{-\nu}\#a\in \wtbfS^{d^\prime,0}_{1,0}$ and we
show the existence
of an expansion 
 $$p_{-\nu}\#a=\sum_{j=0}^{N-1}b^{\Lambda,\infty}_{[j]}\#\lambda^{d^\prime-j} 
         \mod  \wt{S}^{d^\prime,N}_{1,0}.$$
Multiplying from the left with $p_\nu$ we find the desired expansion for $a$
with
$a^{\Lambda,\infty}_{[\nu+j]}:=p_\nu\#b^{\Lambda,\infty}_{[j]}$. 
We argue similarly when starting out from hypothesis b$)$. 

Now let $\nu=0$; we show that b$)$ implies a$)$.  
By Proposition \ref{prop:classical}, $\lambda^{d-j}\in\wtbfS^{d-j,0}_{1,0}$ has an
expansion
$$\lambda^{d-j}=\sum_{\ell=0}^{N-1}b^{\infty}_{j,[\ell]}\#\bflambda^{d-j-\ell}         
   \mod  \wt{S}^{d-j,N}_{1,0};$$
in particular, $b^{\infty}_{j,[0]}(x)=\lambda^{d-j}(x,0;1)$. Therefore  
 $$\sum_{j=0}^{N-1}a^{\Lambda,\infty}_{[j]}\#\lambda^{d-j-\ell} 
    =\sum_{j=0}^{N-1}\sum_{\ell=0}^{N-1}
       a^{\Lambda,\infty}_{[j]}\#b^{\infty}_{j,[\ell]}\#\bflambda^{d-j-\ell}
       \mod  \wt{S}^{d,N}_{1,0},$$
since 
$a^{\Lambda,\infty}_{[j]}\#\wt{S}^{d-j,N}_{1,0}
\subset \wt{S}^{d,N+j}_{1,0}\subset \wt{S}^{d,N}_{1,0}$ for every $j$. If
$m:=j+\ell\ge N$,
 $$a^{\Lambda,\infty}_{[j]}\#b^{\infty}_{j,[\ell]}\#\bflambda^{d-j-\ell}\in 
    S^m_{1,0}(\rz^n)\# S^{d-m}\subset 
    \wt{S}^{m,m}_{1,0}\#\wt{S}^{d-m,0}_{1,0}\subset \wt{S}^{d,m}_{1,0}
    \subset \wt{S}^{d,N}_{1,0}.$$
We thus find 
     $$a=\sum_{k=0}^{N-1}a^{\infty}_{[k]}\#\bflambda^{d-k} 
         \mod  \wt{S}^{d,N}_{1,0},\qquad 
a^{\infty}_{[k]}:=\sum_{j+\ell=k}
a^{\Lambda,\infty}_{[j]}\#b^{\infty}_{j,[\ell]}
         \in S^k_{1,0}(\rz^n).$$
In particular, $a^{\infty}_{[0]}=a^{\Lambda,\infty}_{[0]}\#b^{\infty}_{0,[0]}
=a^{\Lambda,\infty}_{[0]}\#\lambda^d_0(x,0;1)$. 

Next we show that a$)$ implies b$)$ $($again with $\nu=0)$. We start out from
the expansion
     $$a=\sum_{j=0}^{N-1}a^{\infty,0}_{[j]}\#\bflambda^{d-j} 
         \mod  \wt{S}^{d,N}_{1,0};$$ 
the additional super-script $0$ is introduced for systematic reasons, since we
will now establish an iterative procedure to transform this expansion in an
expansion using the family $\Lambda$. Write
\begin{align}\label{eq:step1}
a^{\infty,0}_{[0]}\#\bflambda^{d}=a^{\infty,0}_{[0]}\#(\bflambda^{d}\#\lambda^{-d})\#\lambda^d
 +a^{\infty,0}_{[0]}\# r_0
\end{align}
with $r_0\in S^{d-1}$. By Proposition \ref{prop:classical} we have expansions 
\begin{align*}
 \bflambda^{d}\#\lambda^{-d}
&=\sum_{j=0}^{N-1}b^{\infty}_{[j]}\#\bflambda^{-j}  \mod  \wt{S}^{0,N}_{1,0},\\ r_0
 &=\sum_{j=0}^{N-2}r^{\infty}_{[j]}\#\bflambda^{d-1-j}  
 \mod  \wt{S}^{d-1,N-1}_{1,0}\subset \wt{S}^{d,N}_{1,0}. 
\end{align*}
Inserting this in the expansion \eqref{eq:step1} and using for $j\ge 1$
expansions
\begin{align*}
 \bflambda^{-j}\#\lambda^{d}
 &=\sum_{\ell=0}^{N-2}c^{\infty}_{j,[\ell]}\#\bflambda^{d-j-\ell}  
\mod
\wt{S}^{d-j,N-1}_{1,0}\subset\wt{S}^{d,N-1+j}_{1,0}\subset\wt{S}^{d,N}_{1,0},
\end{align*}
we find 
\begin{align*}
 \sum_{j=0}^{N-1}a^{\infty,0}_{[j]}\#\bflambda^{d-j}
  =&\; a^{\infty,0}_{[0]}\#b^\infty_{[0]}\#\lambda^{d}
   +\sum_{j=1}^{N-1}\sum_{\ell=0}^{N-2}
a^{\infty,0}_{[0]}\#b^\infty_{[j]}\#c^\infty_{j,[\ell]}\#\bflambda^{d-j-\ell}\\&
+\sum_{j=0}^{N-2}(a^{\infty,0}_{[j+1]}+a^{\infty,0}_{[0]}\#r^\infty_{[j]})\#\bflambda^{d-1-j}
\mod \wt{S}^{d,N}_{1,0}.
\end{align*}
The second term on the right-hand side equals 
\begin{align*}
   \sum_{j=0}^{N-2}\sum_{\ell=0}^{N-2}&
a^{\infty,0}_{[0]}\#b^\infty_{[j+1]}\#c^\infty_{j+1,[\ell]}\#\bflambda^{d-1-j-\ell}
\\
   &=\sum_{k=0}^{N-2}\Big(\sum_{j+\ell=k}^{N-2}
a^{\infty,0}_{[0]}\#b^\infty_{[j+1]}\#c^\infty_{j+1,[\ell]}\Big)\#\bflambda^{d-1-k}
   \mod \wt{S}^{d,N}_{1,0}.
\end{align*}
We conclude that 
\begin{align*}
 a=\sum_{j=0}^{N-1}a^{\infty,0}_{[j]}\#\bflambda^{d-j}
  = a^{\Lambda,\infty}_{[0]}\#\lambda^{d}
+\sum_{j=0}^{N-2} a^{\infty,1}_{[j+1]}\#\bflambda^{d-1-j} \mod
\wt{S}^{d,N}_{1,0}.
\end{align*}
with $a^{\Lambda,\infty}_{[0]}=a^{\infty,0}_{[0]}\#b^\infty_{[0]}$ and resulting
symbols
$a^{\infty,1}_{[j+1]}\in S^{j+1}_{1,0}(\rz^n)$. This finishes the
first step of the procedure.
In the second step we write 
 $$a^{\infty,1}_{[1]}\#\bflambda^{d-1}
     =a^{\infty,1}_{[1]}\#(\bflambda^{d-1}\#\lambda^{-(d-1)})\#\lambda^{d-1}+
       a^{\infty,1}_{[1]}\#r_1$$
with $r_1\in S^{d-2}$ and proceed as above to finally obtain 
\begin{align*}
 \sum_{j=0}^{N-2}a^{\infty,1}_{[j+1]}\#\bflambda^{d-1-j}
  = a^{\Lambda,\infty}_{[1]}\#\lambda^{d-1}
+\sum_{j=0}^{N-3} a^{\infty,2}_{[j+2]}\#\bflambda^{d-2-j} \mod
\wt{S}^{d,N}_{1,0}.
\end{align*}
with resulting $a^{\Lambda,\infty}_{[1]}$ and 
$a^{\infty,2}_{[j+2]}\in S^{j+2}_{1,0}(\rz^n)$, hence 
\begin{align*}
a= a^{\Lambda,\infty}_{[0]}\#\lambda^{d}
+a^{\Lambda,\infty}_{[1]}\#\lambda^{d-1}
+\sum_{j=0}^{N-3} a^{\infty,2}_{[j+2]}\#\bflambda^{d-2-j} \mod
\wt{S}^{d,N}_{1,0}.
\end{align*}
We iterate this procedure until the $N$-th step which consists in writing 
\begin{align*}
 \sum_{j=0}^{0}a^{\infty,N-1}_{[j+N-1]}\#\bflambda^{d-(N-1)-j}
  = a^{\infty,N-1}_{[N-1]}\#\bflambda^{d-(N-1)}
  = a^{\Lambda,\infty}_{[N-1]}\#\lambda^{d-(N-1)} \mod \wt{S}^{d,N}_{1,0}.
\end{align*}
resulting in 
\begin{align*}
a=\sum_{j=0}^{N-1} a^{\Lambda,\infty}_{[j]}\#\lambda^{d-j}\mod
\wt{S}^{d,N}_{1,0}
\end{align*}
as claimed in b$)$. The proof is complete. 
\end{proof}

The following lemma will be useful in discussing localizations of operator-families. 

\begin{lemma}\label{lem:locality}
Let $a(x,\xi;\mu)\in \wtbfS^{d,\nu}_{1,0}$ have the expansion 
\begin{align*}
 a=\sum_{j=0}^{N-1} a^{\Lambda,\infty}_{[\nu+j]}\#\lambda^{d-\nu-j}\mod\wt{S}^{d,N}_{1,0}
\end{align*}
with respect to some family of order-reducing symbols $\Lambda$. Let $K\subset\rz^n$ be 
a compact set and assume that $a\#\phi=a$ for every function 
$\phi\in\scrC^\infty_{\mathrm{comp}}(\rz^n)$ 
with $\phi\equiv 1$ in an open neighborhood of $K$. Then, for every such function $\phi$ and 
every $j\ge0$, 
\begin{align}\label{eq:coeff01}
 a^{\Lambda,\infty}_{[\nu+j]}\#\phi=a^{\Lambda,\infty}_{[\nu+j]}.
\end{align}
The analogous result for left-multiplication with $\phi$ holds also true $($and follows trivially from the uniqueness of the coefficient-symbols in the expansion$)$.  
\end{lemma}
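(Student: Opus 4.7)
The plan is to argue by induction on $m$, showing that $a^{\Lambda,\infty}_{[\nu+m]}\#\phi = a^{\Lambda,\infty}_{[\nu+m]}$ for every admissible $\phi$. The strategy is to derive an identity of the form
\begin{equation*}
a^{\Lambda,\infty}_{[\nu+m]} = a^{\Lambda,\infty}_{[\nu+m]}\#\phi + \sum_{j=0}^{m-1} a^{\Lambda,\infty}_{[\nu+j]} \# c^{(j)}_{m-j}
\end{equation*}
in which the auxiliary symbols $c^{(j)}_k\in S^k_{1,0}(\rz^n)$ come from $\Lambda$-expanding $\lambda^{d-\nu-j}\#\phi$, and then to collapse the correction sum via the inductive hypothesis coupled with a support argument.

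To produce the identity, first note that each $\lambda^{d-\nu-j}\#\phi$ belongs to $S^{d-\nu-j}\subset\wtbfS^{d-\nu-j,0}_{1,0}$ by Proposition \ref{prop:classical}, so Theorem \ref{thm:expansion} supplies $c^{(j)}_k\in S^k_{1,0}(\rz^n)$ with
\begin{equation*}
\lambda^{d-\nu-j}\#\phi = \sum_{k=0}^{M-1} c^{(j)}_k \# \lambda^{d-\nu-j-k} \mod \wt{S}^{d-\nu-j,M}_{1,0}.
\end{equation*}
The explicit formula at the end of Theorem \ref{thm:expansion}, together with the computation that the standard principal limit symbol of $\lambda^{d-\nu-j}\#\phi$ at $(\xi,\mu)=(0,1)$ equals $\lambda^{d-\nu-j}_0(x,0;1)\,\phi(x)$, yields $c^{(j)}_0=\phi$. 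The structural observation is that the Leibniz expansion $\lambda^{d-\nu-j}\#\phi \sim \sum_\alpha \frac{1}{\alpha!}(\partial^\alpha_\xi\lambda^{d-\nu-j})(D^\alpha_x\phi)$ displays all $|\alpha|\geq 1$ contributions as pointwise multiples of derivatives $D^\alpha_x\phi$, hence $x$-supported in $\mathrm{supp}\,\nabla\phi$; this localization is preserved when converting to $\Lambda$-form, so every $c^{(j)}_k$ with $k\geq 1$ is $x$-supported in $\mathrm{supp}\,\nabla\phi$. Substituting this expansion into $a\#\phi$, regrouping by total index $m=j+k$, absorbing tails into $\wt{S}^{d,\nu+N}_{1,0}$ for $M,N$ taken large enough, and applying the uniqueness of $\Lambda$-coefficients (proved just before Theorem \ref{thm:expansion}) to $a\#\phi=a$ yields the identity above.

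For $m=0$ the sum is empty, giving the base case directly. For the inductive step, assume the claim for every $j<m$ and pick $\phi'\in\scrC^\infty_{\mathrm{comp}}(\rz^n)$ with $\phi'\equiv 1$ in a neighborhood of $K$ and $\mathrm{supp}\,\phi'\subseteq\{\phi\equiv 1\}$; such $\phi'$ exists because $\{\phi\equiv 1\}$ is an open set containing a neighborhood of $K$. Then $\mathrm{supp}\,\phi'\cap\mathrm{supp}\,\nabla\phi=\emptyset$, and since $\phi'$ is $\xi$-independent the Leibniz product equals the pointwise product, so $\phi'\#c^{(j)}_{m-j}=\phi'\cdot c^{(j)}_{m-j}=0$. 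Inserting $a^{\Lambda,\infty}_{[\nu+j]}=a^{\Lambda,\infty}_{[\nu+j]}\#\phi'$ from the inductive hypothesis and using associativity of $\#$, each summand in the correction collapses as $a^{\Lambda,\infty}_{[\nu+j]}\#c^{(j)}_{m-j}=a^{\Lambda,\infty}_{[\nu+j]}\#\phi'\#c^{(j)}_{m-j}=0$, so the sum vanishes and $a^{\Lambda,\infty}_{[\nu+m]}\#\phi=a^{\Lambda,\infty}_{[\nu+m]}$.

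The main obstacle — exactly why the statement flags this as non-trivial while the left-multiplication analogue is immediate — is that one cannot simply commute $\phi$ past each $\lambda^{d-\nu-j}$ and discard the commutator: $[\lambda^{d-\nu-j},\phi]_\#$ gains only one order in the strong-parameter sense and does not fall into $\wt{S}^{d,\nu+N}_{1,0}$ for arbitrarily large $N$. The remedy, encoded in the plan above, is to retain the commutator data as the auxiliary coefficients $c^{(j)}_k$, exploit their $x$-localization in $\mathrm{supp}\,\nabla\phi$, and use the freedom to choose a strictly smaller cutoff $\phi'$ to kill them.
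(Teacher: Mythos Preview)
Your proof is correct and shares the paper's inductive architecture: uniqueness of the $\Lambda$-coefficients, induction on the level $m$, and the two-cutoff device of inserting a strictly smaller $\phi'$ via the inductive hypothesis. The tactical execution differs: you first $\Lambda$-expand $\lambda^{d-\nu-j}\#\phi=\sum_k c^{(j)}_k\#\lambda^{d-\nu-j-k}$ and then exploit that the $c^{(j)}_k$ with $k\ge1$ are $x$-supported in $\mathrm{supp}\,\nabla\phi$, while the paper bypasses this expansion entirely. At each step the paper simply writes $a^{\Lambda,\infty}_{[\nu+m]}\#\lambda^{d-\nu-m}\#\phi = a^{\Lambda,\infty}_{[\nu+m]}\#\phi\#\lambda^{d-\nu-m} - a^{\Lambda,\infty}_{[\nu+m]}\#[\lambda^{d-\nu-m},\phi]_\#$, observes that the commutator lies in $S^{\nu+m}(\rz^n)\#S^{d-\nu-m-1}\subset\wt{S}^{d,\nu+m+1}_{1,0}$ and hence already falls into the remainder, and handles the lower terms by pseudolocality: $\phi_0\#\lambda^{d-\nu-j}\#(1-\phi)\in S^{-\infty}$ since $\phi_0$ and $1-\phi$ have disjoint supports. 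The paper's route is more economical (no need to know anything about the $\Lambda$-coefficients of $\lambda\#\phi$ beyond the commutator dropping one order); yours packages the same information into the auxiliary $c^{(j)}_k$.

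One point in your write-up deserves an extra sentence: the assertion that ``localization is preserved when converting to $\Lambda$-form'' is not automatic. A clean justification is that each Leibniz term $(\partial^\alpha_\xi\lambda^{d-\nu-j})(D^\alpha_x\phi)$ with $|\alpha|\ge1$ is $x$-supported in $\mathrm{supp}\,\nabla\phi$, so by the trivial \emph{left}-multiplication analogue of the lemma its $\Lambda$-coefficients inherit the same support; meanwhile the Leibniz remainder of order $M$ lies in $S^{d-\nu-j-M}\subset\wt{S}^{d-\nu-j,M}_{1,0}$ and hence contributes nothing to the first $M$ coefficients $c^{(j)}_0,\ldots,c^{(j)}_{M-1}$.
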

\begin{proof}
We proceed by induction. Since 
 $$a-a^{\Lambda,\infty}_{[\nu]}\#\lambda^{d-\nu}\in \wt{S}^{d,\nu+1}_{1,0},$$
multiplication from the right with $\phi$ yields 
 $$a-a^{\Lambda,\infty}_{[\nu]}\#\phi\#\lambda^{d-\nu}
    +a^{\Lambda,\infty}_{[\nu]}\#[\lambda^{d-\nu},\phi]
    \in \wt{S}^{d,\nu+1}_{1,0},$$
where $[\cdot,\cdot]$ is the commutator $($with respect to $\#)$. Now the third term belongs to 
 $$S^\nu(\rz^n)\#S^{d-\nu-1}\subset
     \wt{S}^{\nu,\nu}_{1,0}\#\wt{S}^{d-\nu-1}_{1,0}\subset \wt{S}^{d-1,\nu}_{1,0}
     \subset \wt{S}^{d,\nu+1}_{1,0}.$$
Therefore 
 $$a-(a^{\Lambda,\infty}_{[\nu]}\#\phi)\#\lambda^{d-\nu} \in \wt{S}^{d,\nu+1}_{1,0}.$$
The uniqueness of the coefficients in the expansion then implies    
$a^{\Lambda,\infty}_{[\nu]}\#\phi=a^{\Lambda,\infty}_{[\nu]}$. 

Now suppose that \eqref{eq:coeff01} holds for $j=0,\ldots,N-1$. Given a function $\phi$ choose 
$\phi_0\in\scrC^{\infty}_0(\rz^n)$ such that $\phi_0\equiv1$ near $K$ and $\phi\equiv 1$ 
near the support of $\phi_0$. Then, by induction assumption, we have 
\begin{align*}
 a=a\#\phi
 =&\, \sum_{j=0}^{N-1} a^{\Lambda,\infty}_{[\nu+j]}\#\phi_0\#\lambda^{d-\nu-j}\#\phi\\
   & +a^{\Lambda,\infty}_{[\nu+N]}\#\phi\#\lambda^{d-\nu-N}
     +a^{\Lambda,\infty}_{[\nu+N]}\#[\lambda^{d-\nu-N},\phi]
     \mod \wt{S}^{d,\nu+N+1}_{1,0}.
\end{align*}
As above, the last term is shown to be in $\wt{S}^{d,\nu+N+1}_{1,0}$. Moreover, 
$\phi_0\#\lambda^{d-\nu-j}\#(1-\phi)$ belongs to $S^{-\infty}$. Using again the induction hypotheses 
we  derive  
\begin{align*}
 a=\sum_{j=0}^{N-1} a^{\Lambda,\infty}_{[\nu+j]}\#\lambda^{d-\nu-j}
 +(a^{\Lambda,\infty}_{[\nu+N]}\#\phi)\#\lambda^{d-\nu-N}
     \mod \wt{S}^{d,\nu+N+1}_{1,0}.
\end{align*}
Thus, by uniqueness of the coefficients, \eqref{eq:coeff01} holds $j=N$.
\end{proof}

Now lets turn to the global situation of operators on the manifold $M$. 
Let us fix some Riemannian metric $g$ on $M$. 

\begin{definition}
A family of order-reducing operators on $M$ is a set 
$\Lambda=\{\Lambda^\alpha(\mu)\mid\alpha\in\rz\}$ where 
$\Lambda^\alpha(\mu)\in L^\alpha$ has homogeneous principal symbol 
 $$\sigma^\alpha(\Lambda^\alpha)(v;\mu)=\big(|v|^2+\mu^2\big)^{\alpha/2}$$
and $\Lambda^0=1$ $(|v|$ denotes the modulus of a co-vector $v\in T^*M$ with 
respect to $g)$. 
\end{definition}

\begin{theorem}\label{thm:expansion-manifold}
Let $A(\mu)\in\wtbfL^{d,\nu}_{1,0}$. Then there exists uniquely determined operators 
$A^\infty_{[\nu+j]}\in L^\nu_{1,0}(M)$, $j\in\nz_0$, such that, for every $N\in\nz$, 
 $$A(\mu)=\sum_{j=1}^{N-1}A^\infty_{[\nu+j]}\Lambda^{d-\nu-j}(\mu)
    \mod \wt{L}^{d,\nu+N}_{1,0}.$$
The leading coefficient $A^\infty_{[\nu]}$ is called the limit-operator of $A(\mu)$. 
\end{theorem}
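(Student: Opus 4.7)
The strategy is to reduce to the local expansion Theorem \ref{thm:expansion} via a partition of unity, using a family of order-reducing symbols adapted to $\{\Lambda^\alpha(\mu)\}$, and to patch the resulting coefficient-symbols into global operators with the help of Lemma \ref{lem:locality}.

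Fix a finite atlas $\{\kappa_i:\Omega_i\to U_i\subset\rz^n\}$, a subordinate partition of unity $\{\phi_i\}$, and cutoffs $\psi_i\in\scrC^\infty_{\mathrm{comp}}(\Omega_i)$ equal to $1$ near $\mathrm{supp}\,\phi_i$. Pseudolocality gives $A(\mu)\equiv\sum_i\phi_i A(\mu)\psi_i$ modulo $\wt{L}^{d-\infty,\nu-\infty}_{1,0}$, and each summand transfers via Definition \ref{def:pseudo-manifold} to a compactly supported symbol $a_i:=\kappa_{i*}(\phi_i A(\mu)\psi_i)\in\wtbfS^{d,\nu}_{1,0}$. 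In each chart, the local symbol of $\Lambda^\alpha(\mu)$ on $U_i$ can be spliced outside a neighborhood of $\kappa_i(\mathrm{supp}\,\psi_i)$ with the canonical family $[\xi,\mu]^\alpha$ to produce a globally defined poly-homogeneous symbol $\lambda_i^\alpha\in S^\alpha$; the relations $\Lambda^0=1$ and $\Lambda^\alpha\Lambda^\beta=\Lambda^{\alpha+\beta}\ \mathrm{mod}\ L^{\alpha+\beta-1}$ (inherent in the definition of an order-reducing family) ensure that $\{\lambda_i^\alpha\}_\alpha$ satisfies Definition \ref{def:order-reducing-family}. Applying Theorem \ref{thm:expansion} to $a_i$ yields
$$a_i\equiv\sum_{j=0}^{N-1}a_{i,[\nu+j]}^{\Lambda,\infty}\#\lambda_i^{d-\nu-j}\ \mathrm{mod}\ \wt{S}^{d,\nu+N}_{1,0}$$
with $a_{i,[\nu+j]}^{\Lambda,\infty}\in S^{\nu+j}_{1,0}(\rz^n)$.

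By Lemma \ref{lem:locality}, each $a_{i,[\nu+j]}^{\Lambda,\infty}$ may be taken compactly supported in $U_i$, so its pull-back $\kappa_i^*(a_{i,[\nu+j]}^{\Lambda,\infty}(x,D))$ extends to a properly supported operator $A_{ij}^\infty\in L^{\nu+j}_{1,0}(M)$. Setting $A_{[\nu+j]}^\infty:=\sum_i A_{ij}^\infty$ and pulling the local expansions back to $M$, one sees that $\kappa_i^*(a_{i,[\nu+j]}^{\Lambda,\infty}\#\lambda_i^{d-\nu-j})$ differs from $A_{ij}^\infty\Lambda^{d-\nu-j}(\mu)$ only by disjoint-support cutoff commutators and by splicing remainders of strictly lower order, all absorbable into $\wt{L}^{d,\nu+N}_{1,0}$ after one additional round of the iterative construction. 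Uniqueness is inherited from the local case: if $A(\mu)\equiv 0$ and $A_{[\nu+\ell]}^\infty=0$ for $\ell<k$, then $A_{[\nu+k]}^\infty\Lambda^{d-\nu-k}(\mu)\in\wt{L}^{d,\nu+k+1}_{1,0}$; at a fixed point of $T^*M$ this product is bounded by $\mu^{d-\nu-k-1}$ as $\mu\to\infty$, while $\Lambda^{d-\nu-k}$ alone contributes $\sim\mu^{d-\nu-k}$, and the $\mu$-independence of $A_{[\nu+k]}^\infty$ forces its symbol to vanish.

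\textbf{Main obstacle.} The crux is the patching: the local coefficient-symbols $a_{i,[\nu+j]}^{\Lambda,\infty}$ are unique only within their symbol class modulo regularizing remainders, so their push-forwards to $M$ need not literally agree on chart overlaps. What rescues this is that a \emph{single} global family $\Lambda$ drives all local expansions, and Lemma \ref{lem:locality} ensures that multiplication by cutoffs does not destroy the leading coefficient, so the assembled operators $A_{[\nu+j]}^\infty$ are intrinsic. A subsidiary nuisance is that the chart symbol of $\Lambda^\alpha(\mu)$ is not literally an order-reducing symbol on all of $\rz^n$, forcing one to splice with a canonical family such as $[\xi,\mu]^\alpha$; the resulting splicing remainders must be tracked through the composition rules of Theorem \ref{thm:Leibniz product}, which is routine but bookkeeping-heavy.
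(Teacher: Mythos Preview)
Your proposal is correct and follows essentially the same approach as the paper: localize via a partition of unity, splice the chart symbol of $\Lambda^\alpha(\mu)$ with a canonical model to obtain an order-reducing family in the sense of Definition~\ref{def:order-reducing-family}, apply Theorem~\ref{thm:expansion}, use Lemma~\ref{lem:locality} to confine the coefficients, and pull back. The paper's uniqueness argument is likewise identical in spirit to yours, and your ``main obstacle'' discussion about patching is slightly more cautious than necessary---the paper simply sums the local contributions, with global well-definedness following a posteriori from uniqueness.
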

\begin{proof}
The proof of the uniqueness is analogous to the one given after Theorem \ref{thm:expansion}. 
Therefore we shall focus on the existence of the expansion. 

Let $\Omega_1,\ldots,\Omega_m$ be a covering of $M$ such that any union $\Omega_i\cup\Omega_j$ 
is contained in a chart(-domain) of $M$. Let $\phi_i\in\scrC^\infty_{\mathrm{comp}}(\Omega_i)$, 
$i=1,\ldots,M$, be a sub-ordinate partition of unity. Then $A(\mu)=\sum_{i,j}\phi_i\,A(\mu)\phi_j$. 
It suffices to show the existence of an expansion for each summand. 

Thus we may assume from the beginning that there exist a chart $\kappa:\Omega\to U$ and 
two functions $\phi,\psi\in\scrC^\infty_{\mathrm{comp}}(\Omega)$ such that 
$A(\mu)=\phi A(\mu)\psi$. 
Let $a(x,\xi;\mu)\in\wtbfS^{d,\nu}_{1,0}$ be the symbol of $\kappa_*A(\mu)$ and let 
$K$ be the union of the supports of $\phi\circ\kappa^{-1}$ and $\psi\circ\kappa^{-1}$, 
respectively. $K$ is a compact subset of $U$. 

Let $V$ be an open neighborhood of $K$ with compact closure contained in $U$. 
Take $\theta\in\scrC^\infty_{\mathrm{comp}}(U)$ with $\theta\equiv1$ on $V$ and let 
${\lambda}^\alpha(x,\xi;\mu)\in S^\alpha$ be the symbol of 
$\kappa_*\big((\theta\circ\kappa)\Lambda^\alpha(\mu)(\theta\circ\kappa)\big)$. Note that 
 $${\lambda}^\alpha(x,\xi;\mu)=\theta^2(x)\big(|\xi|_x^2+\mu^2\big)^{\alpha/2}\chi(\xi,\mu)+
     r^\alpha(x,\xi;\mu),$$
where $\chi$ is a zero-excision function and $r^\alpha\in S^{\alpha-1}$. 
Now define 
 $$\wt{\lambda}^\alpha(x,\xi;\mu)=\big(\theta(x)|\xi|_x^2+(1-\theta)(x)|\xi|^2+\mu^2\big)^{\alpha/2}
     \chi(\xi,\mu)+r^\alpha(x,\xi;\mu);$$
then $\wt\Lambda=\{\wt{\lambda}^\alpha\mid\alpha\in\rz\}$ is a family of order-reducing symbols 
in the sense of Definition \ref{def:order-reducing-family} and 
$\wt{\lambda}^\alpha(x,\xi,\mu)={\lambda}^\alpha(x,\xi,\mu)$ whenever $x\in V$. 

By Theorem \ref{thm:expansion} we have an expansion 
$a\sim\sum_j a^{\infty}_{[\nu+j]}\#\wt{\lambda}^{d-\nu-j}$. 
If $\theta_0\in\scrC^\infty_{\mathrm{comp}}(V)$ with $\theta_0\equiv1$ near $K$ then, 
by Lemma \ref{lem:locality},  
$\theta_0a^{\infty}_{[\nu+j]}=a^{\infty}_{[\nu+j]}\#\theta_0=a^{\infty}_{[\nu+j]}$. 
Thus, taking another $\theta_1\in\scrC^\infty_{\mathrm{comp}}(V)$ with $\theta_1\equiv1$ 
near the support of $\theta_0$, we find 
 $$a=\sum_{j=0}^{N-1} a^{\infty}_{[\nu+j]}\#\theta_0\wt{\lambda}^{d-\nu-j}\#\theta_1
     \mod \theta_0\,\wt{S}^{d,\nu+N}_{1,0}\#\theta_1.$$
Since $\theta_0\wt{\lambda}^{d-\nu-j}=\theta_0{\lambda}^{d-\nu-j}$ by construction, 
and applying the pull-back under $\kappa$, we find 
 $$A(\mu)=\sum_{j=1}^{N-1}A^\infty_{[\nu+j]}\,(\theta_0\circ\kappa)\,\Lambda^{d-\nu-j}(\mu)
    \,(\theta_1\circ\kappa)\mod \wt{L}^{d,\nu+N}_{1,0}$$
with $A^\infty_{[\nu+j]}=\kappa^*a^{\infty}_{[\nu+j]}(x,D)$. 
Finally, note that 
$(\theta_0\circ\kappa)\,\Lambda^{d-\nu-j}(\mu)\,(1-\theta_1)\circ\kappa\in L^{-\infty}$ due to 
the disjoint supports of $\theta_0$ and $1-\theta_1$ and that 
$A^\infty_{[\nu+j]}\,(\theta_0\circ\kappa)=A^\infty_{[\nu+j]}$. 
\end{proof}

\begin{example}
If $A(\mu)\in L^d$ then $A(\mu)\in\wtbfL^{d,0}$ as well; 
its limit-operator is the operator of multiplication 
with the function $\sigma(A)(x,0;1)$ $($the homogeneous principal symbol of $A(\mu)$ 
evaluated in $(\xi,\mu)=(0,1))$. 
\end{example} 

\begin{theorem}\label{multiplicativity}
The limit-operator behaves multiplicative under composition: If $A_j(\mu)\in\wtbfL^{d_j,\nu_j}_{1,0}$ 
have limit-operator $A^\infty_{j,[\nu_j]}$ then  
$A_0(\mu)A_1(\mu)\in\wtbfL^{d_0+d_1,\nu_0+\nu_1}_{1,0}$ has the limit-operator 
$A^\infty_{0,[\nu_0]}A^\infty_{1,[\nu_1]}$. 
\end{theorem}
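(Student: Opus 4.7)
The result follows by reducing to the local symbol-level multiplicativity of Theorem \ref{thm:Leibniz product} and invoking the uniqueness in Theorem \ref{thm:expansion-manifold}. I would fix an atlas $\{\kappa_i:\Omega_i\to U_i\}$ with subordinate partition of unity $\{\phi_i\}$ and cutoffs $\psi_i\in\scrC^\infty_{\mathrm{comp}}(\Omega_i)$ equal to $1$ near $\mathrm{supp}\,\phi_i$. Modulo regularizing remainders each $A_j$ decomposes as $\sum_i\phi_iA_j\psi_i$ with local symbols in $\wtbfS^{d_j,\nu_j}_{1,0}$. The product $A_0A_1$ then splits into a double sum; off-diagonal terms with disjoint cutoff supports are smoothing by pseudo-locality, while the remaining terms (after refining the atlas so that each overlapping pair of charts is contained in a common third chart) can be pulled back into a single coordinate patch, where Theorem \ref{thm:Leibniz product} places the symbol of the product in $\wtbfS^{d_0+d_1,\nu_0+\nu_1}_{1,0}$. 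Pushing forward one concludes that $A_0A_1\in\wtbfL^{d_0+d_1,\nu_0+\nu_1}_{1,0}$.

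By the uniqueness part of Theorem \ref{thm:expansion-manifold} it then suffices to identify the principal coefficient in the $\Lambda$-expansion of $A_0A_1$. Working in a chart and applying Theorem \ref{thm:Leibniz product} to the local symbols $a_0$, $a_1$ of the respective $A_j$, one obtains the local principal limit-symbol
\begin{equation*}
(a_0\#a_1)^\infty_{[\nu_0+\nu_1]}=a^\infty_{0,[\nu_0]}\#a^\infty_{1,[\nu_1]}.
\end{equation*}
The construction in the proof of Theorem \ref{thm:expansion-manifold} is precisely that the pull-back of such a local principal limit-symbol furnishes (the local contribution of) the limit-operator. Since each $a^\infty_{j,[\nu_j]}$ is the chart representation of $A^\infty_{j,[\nu_j]}$, the pull-back of the Leibniz product of these chart symbols reads as $A^\infty_{0,[\nu_0]}A^\infty_{1,[\nu_1]}$, giving the desired identification.

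The main obstacle is the reconciliation between local and global order-reducing families: the proof of Theorem \ref{thm:expansion-manifold} uses, in each chart, an auxiliary order-reducer $\wt\lambda^\alpha$ that only agrees with the symbol of the global $\Lambda^\alpha(\mu)$ on a prescribed compact subset of the chart. I would dispatch this by invoking Theorem \ref{thm:expansion} to transfer expansions between different order-reducing families without altering the principal coefficient, and by using Lemma \ref{lem:locality} to ensure that the identified principal-limit coefficient is insensitive to the particular chart cutoff chosen. Once this bookkeeping is in place, summation over the partition of unity produces the globally defined operator $A^\infty_{0,[\nu_0]}A^\infty_{1,[\nu_1]}\in L^{\nu_0+\nu_1}(M)$ as the leading coefficient, completing the proof.
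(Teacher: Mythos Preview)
Your approach is valid in principle but takes a genuinely different route from the paper. The paper avoids localization entirely by first establishing the characterization
\[
A^\infty_{[\nu]}=\lim_{\mu\to+\infty}A(\mu)\Lambda^{\nu-d}(\mu)\qquad\text{in }L^{\nu+1}_{1,0}(M),
\]
which follows immediately from the $N=1$ case of the global expansion. Multiplicativity then reduces to showing that $A_0(\mu)A_1(\mu)\Lambda^{\nu_0+\nu_1-d_0-d_1}(\mu)$ converges to the product $A^\infty_{0,[\nu_0]}A^\infty_{1,[\nu_1]}$, which is handled by commuting $\Lambda$-powers past the factors (using that $\mu^{-\alpha}\Lambda^\alpha(\mu)\to1$, a consequence of Lemma~\ref{lem:mu-expansion}). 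This is short and entirely global. Your approach instead pushes everything down to charts and invokes the symbol-level multiplicativity of Theorem~\ref{thm:Leibniz product}; the price is the bookkeeping you correctly anticipate---matching local and global order-reducers via Theorem~\ref{thm:expansion} and Lemma~\ref{lem:locality}, and reassembling the pieces. Two points to tighten: first, Theorem~\ref{thm:expansion} does in general alter the principal coefficient by the factor $\lambda^{-(d-\nu)}_0(x,0,1)$, so you must use that the global $\Lambda$ and the chart-local $\wt\lambda$ both have principal symbol equal to $1$ at $(\xi,\mu)=(0,1)$; second, identifying the limit-operator of $\phi_iA_j\psi_i$ with $\phi_iA^\infty_{j,[\nu_j]}\psi_i$ requires a commutator argument (moving $\psi_i$ past $\Lambda^{d-\nu-j}$) rather than an appeal to multiplicativity, to avoid circularity. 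With those in place your argument goes through; the paper's limit characterization simply sidesteps all of it.
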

\begin{proof}
In a first step, let $A(\mu)\in\wtbfL^{d,\nu}_{1,0}$ have limit-operator $A^\infty_{[\nu]}$. Then 
  $$A^\infty_{[\nu]}=\lim_{\mu\to+\infty}A(\mu)\Lambda^{\nu-d}(\mu)
     \qquad\text{(convergence in $L^{\nu+1}_{1,0}(M)$)}.$$ 
In fact, using the expansion with $N=1$, 
\begin{align}\label{eq:mult1}
 A(\mu)\Lambda^{\nu-d}(\mu)=A^\infty_{[\nu]}+A^\infty_{[\nu]}R(\mu)
    \mod \wt{L}^{\nu,\nu+1}_{1,0}
\end{align}
with an $R(\mu)\in L^{-1}\subset S^{-1}_{1,0}(\rpbar,L^0_{1,0}(M))$. Then  
$\wt{L}^{\nu,\nu+1}_{1,0} \subset S^{-1}_{1,0}(\rpbar,L^{\nu+1}_{1,0}(M))$ yields the claim. 
Also one sees that $A(\mu)\Lambda^{\nu-d}(\mu)$ is bounded as a function of $\mu$ with values in 
$L^\nu(M)$. 

Since $A_0(\mu)A_1(\mu)\in\wtbfL^{d_0+d_1,\nu_0+\nu_1}_{1,0}$, it suffices to show that 
$A_0(\mu)A_1(\mu)\Lambda^{\nu_0+\nu_1-d_0-d_1}(\mu)$ converges to 
$A^\infty_{0,[\nu_0]}A^\infty_{1,[\nu_1]}$ in $L^m_{1,0}(M)$ for some $m\ge\nu_0+\nu_1+1$. 
Reasoning as before, we see that 
 \begin{align*}
  A_0(\mu)A_1(\mu)\Lambda^{\nu_0+\nu_1-d_0-d_1}(\mu)
  &\equiv A_0(\mu)A_1(\mu)\Lambda^{\nu_1-d_1}(\mu)\Lambda^{\nu_0-d_0}(\mu) \\
  &\equiv A_0(\mu)\Lambda^{\nu_0-d_0}(\mu)
     \Lambda^{d_0-\nu_0}(\mu)A_1(\mu)\Lambda^{\nu_1-d_1}(\mu)\Lambda^{\nu_0-d_0}(\mu) 
 \end{align*}
modulo terms belonging to $S^{-1}_{1,0}(\rpbar,L^{\nu_0+\nu_1}_{1,0}(M))$. 
It remains to show that 
 $$\Lambda^{d_0-\nu_0}(\mu)[A_1(\mu)\Lambda^{\nu_1-d_1}]\Lambda^{\nu_0-d_0}(\mu)
    \xrightarrow{\mu\to+\infty} A^\infty_{1,[\nu_1]} $$
in $L^{m}_{1,0}(M)$ for some $m\ge\nu_1+1$. 
Using the analogue of \eqref{eq:mult1} for $A_1(\mu)$ this is readily seen to be equivalent to 
\begin{equation}\label{eq:conv}
 \Lambda^{d_0-\nu_0}(\mu)A^\infty_{1,[\nu_1]}\Lambda^{\nu_0-d_0}(\mu)
 \xrightarrow{\mu\to+\infty} A^\infty_{1,[\nu_1]}.
\end{equation}
However, from Lemma \ref{lem:mu-expansion} it follows that 
$\mu^{-\alpha}\Lambda^{\alpha}(\mu)$ is bounded in $L^{\alpha_+}_{1,0}(M)$ and, 
for $\mu\to+\infty$,  converges to $1$ in $L^{\alpha_++1}_{1,0}(M)$ for every $\alpha$. 
Therefore, \eqref{eq:conv} holds true with convergence in $L^{\nu_1+|d_0-\nu_0|+1}_{1,0}(M)$. 
\end{proof}

\subsection{Extension to vector-bundles}
\label{sec:07.3}

Given smooth vector-bundles $E_j$, $j=0,1$, on $M$ of dimension $n_0$ and $n_1$, respectively,  
the above definitions and results extend in a straight-forward way to operator-families acting as maps 
$\Gamma(M,E_0)\to\Gamma(M,E_1)$ between the spaces of smooth sections of $E_0$ and $E_1$, 
respectively. The definition of the spaces $\wt{L}^{d,\nu}_{1,0}(E_0,E_1)$, $\wt{L}^{d,\nu}(E_0,E_1)$, 
$\wtbfL^{d,\nu}_{1,0}(E_0,E_1)$, and $\wtbfL^{d,\nu}(E_0,E_1)$ uses local trivializations of the 
vector-bundles and $(n_1\times n_0)$-matrices $a(x,\xi;\mu)=\big(a_{jk}(x,\xi;\mu)\big)$ where 
the symbols $a_{jk}$ are from the corresponding symbol-classes $\wt{S}^{d,\nu}_{1,0}$, etc. 
We leave the details to the reader. 

As above, given a bundle $E$, a family of order-reducing operators is a set $\Lambda_E$ of operators 
$\Lambda^\alpha_E(\mu)\in L^{\alpha}(E,E)$, $\alpha\in\rz$, which have (scalar-valued) 
principal symbol $\lambda^\alpha_0(x,\xi;\mu)=(|\xi|_x^2+\mu^2)^{\alpha/2}$ and such that 
$\Lambda^0_E(\mu)$ is the identity operator. Then we obtain: 

\begin{theorem}\label{thm:expansion-bundles}
Let $A(\mu)\in\wtbfL^{d,\nu}_{1,0}(E_0,E_1)$. Then there exists uniquely determined operators 
$A^\infty_{[\nu+j]}\in L^\nu_{1,0}(E_0,E_1)$, $j\in\nz_0$, such that, for every $N\in\nz$, 
 $$A(\mu)=\sum_{j=1}^{N-1}A^\infty_{[\nu+j]}\Lambda_{E_0}^{d-\nu-j}(\mu)
    \mod \wt{L}^{d,\nu+N}_{1,0}(E_0,E_1).$$
The leading coefficient $A^\infty_{[\nu]}$ is called the limit-operator of $A(\mu)$; 
it behaves multiplicatively under composition. 
\end{theorem}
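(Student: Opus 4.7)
The plan is to mimic the proofs of Theorems \ref{thm:expansion-manifold} and \ref{multiplicativity} in the vector-bundle setting, reducing to the scalar case via local trivializations together with entry-wise arguments on matrix-valued symbols. For uniqueness, I would argue exactly as indicated after Theorem \ref{thm:expansion-manifold}: supposing that $\sum_{j=0}^{N-1} A^\infty_{[\nu+j]}\Lambda^{d-\nu-j}_{E_0}(\mu)\in\wt{L}^{d,\nu+N}_{1,0}(E_0,E_1)$ with $A^\infty_{[\nu+j]}=0$ for $j<k$, I compose on the right with a parametrix $P(\mu)\in L^{\nu+k-d}(E_0,E_0)$ of $\Lambda^{d-\nu-k}_{E_0}(\mu)$; this expresses $A^\infty_{[\nu+k]}$ as a family which, in any semi-norm of $L^{\nu+k+1}_{1,0}(E_0,E_1)$, decays like a negative power of $\mu$ as $\mu\to+\infty$, forcing $A^\infty_{[\nu+k]}=0$.

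For existence, I would pick a finite cover of $M$ by open sets $\Omega_i$ over which both $E_0$ and $E_1$ trivialize, together with charts $\kappa_i$, arranged so that any union $\Omega_i\cup\Omega_j$ is contained in a single chart-domain. Taking a subordinate partition of unity $\phi_i$ and writing $A(\mu)=\sum_{i,j}\phi_i A(\mu)\phi_j$, it suffices to establish an expansion for each summand. After pushing forward and using the trivializations, each summand is given by an $n_1\times n_0$-matrix $a=(a_{pq})$ with $a_{pq}\in\wtbfS^{d,\nu}_{1,0}$, while $\Lambda^\alpha_{E_0}$ is represented by an $n_0\times n_0$-matrix whose homogeneous principal symbol is $(|\xi|_x^2+\mu^2)^{\alpha/2}\,\mathrm{Id}_{n_0}$. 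Adapting the construction from the proof of Theorem \ref{thm:expansion-manifold}, I can modify this matrix away from the relevant compact set so as to produce a globally defined matrix-valued family of order-reducing symbols with scalar diagonal principal symbol, and then apply Theorem \ref{thm:expansion} entry-wise, invoking an entry-wise version of Lemma \ref{lem:locality} to secure the locality of the coefficients. Pulling back under $\kappa_i$ and summing over $i,j$ yields the desired global expansion with $A^\infty_{[\nu+j]}\in L^{\nu+j}_{1,0}(E_0,E_1)$.

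For multiplicativity, the argument of Theorem \ref{multiplicativity} transposes verbatim: one shows that $A^\infty_{[\nu]}=\lim_{\mu\to+\infty} A(\mu)\Lambda^{\nu-d}_{E_0}(\mu)$ with convergence in $L^{\nu+1}_{1,0}(E_0,E_1)$, and then the multiplicativity of the composition reduces to the convergence $\Lambda^{d_0-\nu_0}_{E_1}(\mu)\, A^\infty_{1,[\nu_1]}\,\Lambda^{\nu_0-d_0}_{E_0}(\mu) \to A^\infty_{1,[\nu_1]}$. The latter in turn follows from the fact that $\mu^{-\alpha}\Lambda^\alpha_E(\mu)\to\mathrm{Id}_E$ in $L^{\alpha_++1}_{1,0}(E,E)$, which is just an entry-wise application of Lemma \ref{lem:mu-expansion}. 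The main obstacle is really only the bookkeeping: one must track the matrix structure carefully through the scalar arguments, keeping in mind that $\Lambda^\alpha_{E_0}$ is scalar only in its principal part, so the off-diagonal lower-order entries give rise to additional contributions that must be absorbed into the subsequent coefficients $A^\infty_{[\nu+j]}$ for $j\ge 1$ and into the remainders controlled via Theorem \ref{thm:expansion}.
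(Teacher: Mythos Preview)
Your proposal is correct and follows exactly the route the paper indicates: the paper does not give a separate proof of this theorem but simply remarks that the scalar constructions ``extend in a straight-forward way'' to vector-bundles via local trivializations and matrix-valued symbols, leaving the details to the reader; your sketch supplies precisely those details by rerunning the arguments of Theorems~\ref{thm:expansion-manifold} and~\ref{multiplicativity} entry-wise, with the correct bookkeeping for the non-scalar lower-order part of $\Lambda^\alpha_{E_0}$.
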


\subsection{Symbolic structure and ellipticity in $\wtbfL^{d,\nu}(E_0,E_1)$}
\label{sec:07.4}

With any $A(\mu)\in\wtbfL^{d,\nu}(E_0,E_1)$ we associate:   
\begin{itemize}
 \item[$(1)$] the homogeneous principal symbol 
   $$\sigma(A)\in \wtbfS^{d,\nu}_{hom}((T^*M\setminus0)\times\rpbar;E_0,E_1)$$ 
  $($a homomorphism acting between the pull-backs to $(T^*M\setminus0)\times\rpbar$ 
  of the bundles $E_0$ and $E_1$, respectively), 
 \item[$(2)$] the principal angular symbol 
   $$\wh{\sigma}(A)\in S^{d,\nu}_{hom}(T^*M\setminus0;E_0,E_1)$$
  $($a homomorphism acting between the pull-backs to $T^*M\setminus0$ of the bundles 
  $E_0$ and $E_1$, respectively).  
\item[$(3)$] the principal limit-operator $A^\infty_{[\nu]}\in L^\nu(M;E_0,E_1)$.  
\end{itemize}
Recall the compatibility relation 
\begin{equation}\label{eq:compatible}
 \wh{\sigma}(A)=\sigma(A^\infty_{[\nu]}), 
\end{equation}
i.e., the principal angular symbol coincides with the homogeneous principal symbol 
of the limit-operator.

\begin{proposition}\label{prop:rough-parametrix}
Let $A(\mu)\in\wtbfL^{d,\nu}(E_0,E_1)$ and assume that both 
homogeneous principal symbol and principal angular symbol are
invertible on their domains. Then there exists a $($rough$)$ parametrix
$B(\mu)\in\wtbfL^{-d,-\nu}(E_1,E_0)$, i.e., 
 $$A(\mu)B(\mu)-1\in \wtbfL^{0-\infty,0-\infty}(E_1,E_1),\qquad 
     B(\mu)A(\mu)-1\in\wtbfL^{0-\infty,0-\infty}(E_0,E_0).$$  
\end{proposition}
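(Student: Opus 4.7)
The plan is the standard three-step parametrix construction, adapted to the new calculus: first invert the homogeneous principal symbol inside $\wtbfS^{-d,-\nu}_{hom}$, then lift to a first approximation $B_0(\mu)\in\wtbfL^{-d,-\nu}(E_1,E_0)$ by a partition-of-unity argument, and finally correct $B_0(\mu)$ by asymptotic summation of a Neumann series.

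For the first step, let $a=\sigma(A)$ and let $\hat a(x,r,\phi)$ denote its angular representative in polar coordinates on $\whsz^n_+$, regarded in local trivializations as a matrix-valued function in $\scrC^\infty_b\bigl(M,r^\nu\scrC^\infty_T(\whsz^n_+)\bigr)$. By the compatibility relation \eqref{eq:compatible} the leading weighted Taylor coefficient $\hat a_{\spk{\nu}}(x,\phi)$ equals $\wh\sigma(A)$ and is therefore fibrewise invertible; together with the fibrewise invertibility of $a$ on all of $(T^*M\setminus0)\times\rpbar$, this gives pointwise invertibility of $\hat a$ on $\whsz^n_+$. To see that the inverse lies in $r^{-\nu}\scrC^\infty_T(\whsz^n_+)$, I would factor $\hat a=r^\nu\hat a_{\spk{\nu}}(I+r\hat b)$ near $r=0$ with $\hat b\in\scrC^\infty_T$ and invert $I+r\hat b$ by a Neumann series in the weighted seminorms $(r\partial_r)^k\partial_\phi^\alpha$; away from $r=0$ smooth dependence together with the hypothesis $|a^{-1}|\lesssim|\xi|^{-\nu}|\xi,\mu|^{\nu-d}$ suffices. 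A smooth cut-and-paste then gives $a^{-1}\in\wtbfS^{-d,-\nu}_{hom}(E_1,E_0)$ with principal angular symbol $\wh\sigma(A)^{-1}$. This is a bundle-valued extension of Lemma \ref{lem:inv} to arbitrary integer weight $\nu\in\gz$.

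For the second and third steps, I choose a finite atlas $\{\kappa_i\colon\Omega_i\to U_i\}$ trivializing both $E_0$ and $E_1$, a subordinate partition of unity $\{\phi_i\}$, and cut-offs $\psi_i\equiv 1$ near the support of $\phi_i$, and set
\[
 B_0(\mu):=\sum_i \kappa_i^*\bigl((\phi_i\circ\kappa_i^{-1})\,b_i(x,D;\mu)\,(\psi_i\circ\kappa_i^{-1})\bigr),
\]
where each $b_i\in\wtbfS^{-d,-\nu}$ has homogeneous principal symbol $\kappa_{i*}a^{-1}$. Coordinate invariance (Section \ref{sec:07.1}) and the bundle extension (Section \ref{sec:07.3}) put $B_0(\mu)$ into $\wtbfL^{-d,-\nu}(E_1,E_0)$ with $\sigma(B_0)=\sigma(A)^{-1}$, and multiplicativity of the principal symbol under composition yields $R_0(\mu):=1-A(\mu)B_0(\mu)\in\wtbfL^{-1,-1}(E_1,E_1)$. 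The $j$-fold composition satisfies $R_0(\mu)^{\circ j}\in\wtbfL^{-j,-j}(E_1,E_1)$, so the manifold/bundle version of the asymptotic summation Theorem \ref{thm:asympsumm} produces $S(\mu)\in\wtbfL^{0,0}(E_1,E_1)$ with $S(\mu)\sim\sum_{j\ge 0}R_0(\mu)^{\circ j}$ and $(1-R_0(\mu))S(\mu)\equiv 1\pmod{\wtbfL^{0-\infty,0-\infty}}$. Hence $B_R(\mu):=B_0(\mu)S(\mu)\in\wtbfL^{-d,-\nu}(E_1,E_0)$ is a right parametrix; an analogous construction on the left produces a parametrix $B_L(\mu)$, and the standard identity $B_L\equiv B_L(AB_R)\equiv(B_LA)B_R\equiv B_R$ modulo $\wtbfL^{0-\infty,0-\infty}$ shows that one may take $B(\mu):=B_R(\mu)$.

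The only delicate point is Step 1. Lemma \ref{lem:inv} as stated handles scalar, $x$-independent functions in $r^\nu\scrC^\infty_T+\scrC^\infty$ with \emph{positive} $\nu$, whereas here I need a bundle-valued variant with arbitrary integer weight $\nu\in\gz$ and with the $\scrC^\infty_b$-uniformity in $x$ from Definition \ref{def:angular} preserved throughout. The expected obstacle is bookkeeping: verifying by induction on the total order of mixed derivatives $(r\partial_r)^k\partial_\phi^\alpha\partial_x^\beta$ that the Neumann series inverting $I+r\hat b$ remains uniformly bounded in the required seminorms, using that $r\partial_r$ preserves the $r$-weights and that each $\partial_\phi$ or $\partial_x$ falls on a factor still inside the Taylor class. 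Once Step 1 is settled, everything else is a direct translation to bundles and manifolds of the local parametrix construction from Theorem \ref{thm:param_exp}, with the Neumann iteration replacing the more refined arguments based on Proposition \ref{prop:1+r} (which would be needed only for the sharper remainder in $\scrC^\infty_{\mathrm{comp}}(\rpbar,L^{-\infty})$).
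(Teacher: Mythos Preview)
Your proof is correct and follows essentially the same route as the paper: invert the homogeneous principal symbol inside $\wtbfS^{-d,-\nu}_{hom}$, then run the standard Neumann-series parametrix construction. The paper's argument is in fact just the one-sentence remark following the proposition, pointing to the observation after Definition~\ref{def:taylor} that $\wh a\in r^\nu\scrC^\infty_T(\whsz^n_+)$ is invertible with inverse in $r^{-\nu}\scrC^\infty_T(\whsz^n_+)$ precisely when $\wh a$ is pointwise nonzero and $\wh a_{\spk{\nu}}$ is nonzero on $\sz^{n-1}$; your factorisation $\hat a=r^\nu\hat a_{\spk{\nu}}(I+r\hat b)$ is exactly how one proves that remark. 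One small correction: the relevant reference for Step~1 is this observation after Definition~\ref{def:taylor}, not Lemma~\ref{lem:inv}, which treats the different class $r^\nu\scrC^\infty_T+\scrC^\infty$ with \emph{positive} $\nu$ and is not what is needed here.
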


This result follows from the fact that the invertibility of a homogeneous principal symbol 
belonging to $\wtbfS^{d,\nu}_{hom}((T^*M\setminus0)\times\rpbar;E_0,E_1)$ together with the 
invertibility of its angular symbol implies that its inverse belongs to the class 
$\wtbfS^{-d,-\nu}_{hom}((T^*M\setminus0)\times\rpbar;E_1,E_0)$, 
cf. the local situation mentioned after Definition \ref{def:taylor}. 

\begin{definition}\label{def:ellipticity}
We call $A(\mu)\in\wtbfL^{d,\nu}(E_0,E_1)$ elliptic if its homogeneous principal symbol is invertible 
on its domain and its principal limit-operator is 
invertible as a map $H^s(M,E_0)\to H^{s-\nu}(M,E_1)$ for some 
$s$\footnote{or, equivalently, there exists a $\psi$do $L^{-\nu}_{1,0}(M;E_1,E_0)$ which is the 
inverse.}.
\end{definition}

\begin{theorem}\label{thm:parametrix}
Let $A(\mu)\in\wtbfL^{d,\nu}(E_0,E_1)$ be elliptic. 
Then there exists a parametrix
$B(\mu)\in\wtbfL^{-d,-\nu}(E_1,E_0)$ and a $\mu_0\ge0$ such that 
 $$A(\mu)^{-1}=B(\mu),\qquad\mu\ge\mu_0.$$
\end{theorem}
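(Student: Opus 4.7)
The plan is to mirror the local construction of Theorem \ref{thm:param_exp} in the global, vector-bundle setting, using the tools developed in Sections \ref{sec:07.2}--\ref{sec:07.4}. First I will observe that, by the ellipticity assumption and the compatibility relation \eqref{eq:compatible}, both the homogeneous principal symbol and the principal angular symbol $\wh{\sigma}(A)=\sigma(A^\infty_{[\nu]})$ are invertible; Proposition \ref{prop:rough-parametrix} will then yield a rough right-parametrix $C(\mu)\in\wtbfL^{-d,-\nu}(E_1,E_0)$ with remainder $R(\mu):=1-A(\mu)C(\mu)\in\wtbfL^{0-\infty,0-\infty}(E_1,E_1)$. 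The symbolic description recalled at the start of the proof of Lemma \ref{prop:1+r} will ensure that the limit-operator $R^\infty_{[0]}$ is globally smoothing, i.e.\ $R^\infty_{[0]}\in L^{-\infty}(M;E_1,E_1)$.

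Next I would kill this limit-operator. Writing $(A^\infty_{[\nu]})^{-1}\in L^{-\nu}(M;E_1,E_0)$ for the inverse guaranteed by the ellipticity hypothesis, I set
$$C'(\mu):=C(\mu)+(A^\infty_{[\nu]})^{-1}R^\infty_{[0]}.$$
The correction is smoothing and $\mu$-independent, hence lies in $\wtbfL^{0-\infty,0-\infty}(E_1,E_0)$, so $C'\in\wtbfL^{-d,-\nu}(E_1,E_0)$. Multiplicativity of the limit-operator under composition (Theorem \ref{multiplicativity}) gives that the new remainder $R'(\mu):=1-A(\mu)C'(\mu)\in\wtbfL^{0-\infty,0-\infty}(E_1,E_1)$ has vanishing limit-operator. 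The expansion of $R'$ from Theorem \ref{thm:expansion-bundles}, whose leading term now vanishes, combined with the norm-decay of each $\Lambda_{E_1}^{-j}(\mu)$ for $j\ge 1$, will force $R'(\mu)\to 0$ in $\scrL(L^2(M,E_1))$, so that $1-R'(\mu)$ is invertible by Neumann series for $\mu$ beyond some $\mu_0\ge0$.

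Finally, I would invoke a global version of Lemma \ref{prop:1+r} to produce $B^\infty(\mu)\in\wtbfL^{0-\infty,0-\infty}(E_1,E_1)$ with $(1-R'(\mu))^{-1}=1-B^\infty(\mu)$ for $\mu\ge\mu_0$. Then $B_R(\mu):=C'(\mu)(1-B^\infty(\mu))\in\wtbfL^{-d,-\nu}(E_1,E_0)$ is a right-inverse of $A(\mu)$ for $\mu\ge\mu_0$. The symmetric argument starting from a rough left-parametrix produces a left-inverse $B_L(\mu)$, and the identity $B_L=B_L A B_R=B_R$ shows they coincide for large $\mu$; setting $B:=B_R$ will give the claim.

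The hard part will be the globalization of Lemma \ref{prop:1+r}, namely producing $B^\infty$ inside the class $\wtbfL^{0-\infty,0-\infty}(E_1,E_1)$ rather than merely as an operator on Sobolev spaces. The local proof combined the spectral invariance of $\{p(x,D)\mid p\in S^0_{1,0}(\rz^n)\}$ in $\scrL(H^s(\rz^n))$ with careful bookkeeping of the limit-symbol expansion. Its manifold counterpart --- spectral invariance of $L^0_{1,0}(M;E_1,E_1)$ in $\scrL(H^s(M,E_1))$ --- is classical, and the expansion from Theorem \ref{thm:expansion-bundles} replaces the local one. The remaining work is to transport the inductive argument of Step 1 of Lemma \ref{prop:1+r} to $M$ via a partition of unity subordinate to the charts of Definition \ref{def:pseudo-manifold}, which I expect to be routine but requires some care in tracking the expansion coefficients.
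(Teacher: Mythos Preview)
Your overall strategy matches the paper's proof exactly: rough parametrix from Proposition~\ref{prop:rough-parametrix}, a correction to kill the limit-operator of the remainder, a global analogue of Lemma~\ref{prop:1+r} to invert $1-R'$, and then the symmetric left-sided argument. However, there is a genuine error in your correction step.

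You set $C'(\mu)=C(\mu)+(A^\infty_{[\nu]})^{-1}R^\infty_{[0]}$ and argue that, being $\mu$-independent and smoothing, the correction lies in $\wtbfL^{0-\infty,0-\infty}(E_1,E_0)$ and therefore $C'\in\wtbfL^{-d,-\nu}(E_1,E_0)$. The first assertion is correct, but the implication fails whenever $d>\nu$. A nonzero $\mu$-independent smoothing symbol $p(x,\xi)$ satisfies $|p(x,\xi)|\lesssim\spk{\xi}^{-N}$ for all $N$, placing it in every $\wt S^{-N,-N}_{1,0}$; but membership in $\wt S^{-d,-\nu}_{1,0}$ would require $|p(x,\xi)|\lesssim\spk{\xi}^{-\nu}\spk{\xi,\mu}^{-(d-\nu)}$, and for $d>\nu$ the right-hand side tends to $0$ as $\mu\to\infty$ while the left-hand side is constant in $\mu$. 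Hence $C'\notin\wtbfL^{-d,-\nu}$ in general, and the new remainder $R'=1-AC'$ lands only in $\wtbfL^{d-\infty,\nu-\infty}$, which again differs from $\wtbfL^{0-\infty,0-\infty}$ unless $d=\nu$.

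The paper repairs this by inserting the factor $\Lambda^{-(d-\nu)}(\mu)$: it takes
\[
B_1(\mu)=B_0(\mu)+(A^\infty_{[\nu]})^{-1}R^\infty_{0,[0]}\,\Lambda^{-(d-\nu)}(\mu).
\]
The extra factor supplies precisely the missing $\mu$-decay, so the correction lies in $\wtbfL^{-d-\infty,-\nu-\infty}(E_1,E_0)\subset\wtbfL^{-d,-\nu}(E_1,E_0)$; and since the limit-operator of $\Lambda^{-(d-\nu)}$ equals $1$ (cf.\ the example following Theorem~\ref{thm:expansion-manifold}), the multiplicativity argument still annihilates the limit-operator of the new remainder. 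With this single modification your proof is correct and coincides with the paper's.
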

\begin{proof}
By Proposition \ref{prop:rough-parametrix}, there exists a rough parametric 
$B_0(\mu)\in\wtbfL^{-d,-\nu}(E_1,E_0)$ such that  
$A(\mu)B_0(\mu)=1-R_0(\mu)$ with $R_0(\mu)\in \wtbfL^{0-\infty,0-\infty}(E_1,E_1)$. 

Now let $B_1(\mu):=B_0(\mu)+ (A^\infty_{[\nu]})^{-1}R^\infty_{0,[-\infty]}\Lambda^{-(d-\nu)}$. Then 
 $$B_1(\mu)-B_0(\mu)\in \wtbfL^{-d-\infty,-\nu-\infty}(E_1,E_0);$$ 
hence $B_1(\mu)$ is also a rough parametrix of $A(\mu)$, i.e.,  
$A(\mu)B_1(\mu)=1-R_1(\mu)$ with $R_1(\mu)\in \wtbfL^{0-\infty,0-\infty}(E_1,E_1)$. 
Moreover, $R_1(\mu)$ has vanishing limit-operator, since  
 $$R^\infty_{1,[-\infty]}
    =1-A^\infty_{[\nu]}\big(B^\infty_{0,[-\nu]}+(A^\infty_{[\nu]})^{-1}R^\infty_{0,[-\infty]}\big)
    =1-(1-R^\infty_{0,[-\infty]})-R^\infty_{0,[-\infty]}=0.$$
Then, arguing as in Proposition \ref{prop:1+r}, there exists an 
$S_1(\mu)\in \wtbfL^{0-\infty,0-\infty}(E_1,E_1)$ with vanishing limit-operator such that 
$(1-R_1(\mu))(1-S_1(\mu))=0$ for sufficiently large $\mu$. Thus, $B(\mu):=B_1(\mu)(1-S_1(\mu))$ 
is a parametrix which yields a right-inverse of $A(\mu)$ for large $\mu$. 
Since we can construct in the same way a left-inverse of $A(\mu)$ for large $\mu$, the claim follows. 
\end{proof}

\subsection{Operators with finite regularity number}
\label{sec:07.5}

In analogy to Section \ref{sec:05.4} we introduce the class 
 $$\mathbf{L}^{d,\nu}(E_0,E_1)=\wtbfL^{d,\nu}(E_0,E_1)+L^{d}(E_0,E_1),\qquad \nu\in\gz.$$
If $\nu$ is positive, the homogeneous principal symbol extends to a bundle homomorphism on 
$(T^*M\times\rpbar)\setminus 0$ and ellipticity means invertibility of this extended symbol. 
Then Theorem \ref{thm:param02} generalizes in the obvious way to the global setting. 

\subsection{Resolvent trace expansion}
\label{sec:07.6}

Let us return to the resolvent-trace expansion of Grubb-Seeley. 
Let $\Lambda=\{re^{i\theta}\mid 0\le\theta\le\Theta\}$ and let $A\in L^m(M;E,E)$, $m\in\nz$, 
be a $\psi$do such that $\lambda-\sigma(A)$ is invertible on 
$(T^*M\times\Lambda)\setminus 0$.  Moreover, let $Q\in L^\omega(M;E,E)$. 
Theorem \ref{thm:trace-expansion-main} together with integration over $M$ yield the following: 

\begin{theorem}\label{thm:trace-expansion-main-02}
With the above notation and assumptions and $\ell\in\nz$ such that 
$\omega-m\ell<-\mathrm{dim}\,M$, 
there exist numbers $c_j,c_j^\prime,c_j^{\prime\prime}$, $j\in\nz_0$, such that 
\begin{equation}\label{exp}
 \mathrm{Tr}\, Q(\lambda-A)^{-\ell}\sim \sum_{j=0}^{+\infty}c_j
   \lambda^{\frac{n+\omega-j}{m}-\ell}
   +\sum_{j=0}^{+\infty}
    \big(c_j^\prime\log\lambda+c_j^{\prime\prime}\big)\lambda^{-\ell-\frac{j}{m}},
\end{equation}
uniformly for $\lambda\in \Lambda$ with $|\lambda|\lra+\infty$. 
Moreover, $c_j^\prime=c_j^{\prime\prime}=0$ whenever $j$ is not an integer 
multiple of $m$. 
\end{theorem}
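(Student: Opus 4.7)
My plan is to globalize the proof of Theorem \ref{thm:trace-expansion-main}: build a resolvent parametrix of $A$ in the manifold $\mathbf{L}^{d,\nu}$-calculus, localize via a partition of unity, invoke the pointwise diagonal expansion of Theorem \ref{thm:trace-expansion-01} in charts, and integrate against the Riemannian volume. First, for $\lambda=\mu^m e^{i\theta}$ with $\mu\geq 0$, $|\theta|\leq\Theta$, I would set $A_\theta(\mu):=\mu^m-e^{-i\theta}A\in\mathbf{L}^{m,m}(M;E,E)$. The invertibility hypothesis on $\lambda-\sigma(A)$ on $(T^*M\times\Lambda)\setminus 0$ together with Corollary \ref{cor:inv} makes $A_\theta(\mu)$ elliptic in $\mathbf{L}^{m,m}(M;E,E)$, uniformly in $\theta$. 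The manifold version of Theorem \ref{thm:param02} then yields a parametrix $B_\theta(\mu)\in\mathbf{L}^{-m,m}(M;E,E)$ with $B_\theta(\mu)=A_\theta(\mu)^{-1}$ for $\mu\geq\mu_0$, uniformly in $\theta$, and so $Q(\lambda-A)^{-\ell}=e^{-i\theta\ell}\,Q\,B_\theta(\mu)^\ell$. Composition rules in $\mathbf{L}^{\cdot,\cdot}$ together with the inclusion $L^\omega\subset\wtbfL^{\omega,0}$ then produce a decomposition
\[ Q\,B_\theta(\mu)^\ell = C^{(1)}_\theta(\mu)+C^{(2)}_\theta(\mu), \qquad C^{(1)}_\theta\in\wtbfL^{\omega-m\ell,\,\omega+m},\quad C^{(2)}_\theta\in\wtbfL^{\omega-m\ell,\,\omega}, \]
uniformly in $\theta$, mirroring the one used just before Theorem \ref{thm:trace-expansion-main}.

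Next I would localize. I choose a finite open cover $\{\Omega_i\}$ of $M$ by chart domains on which $E$ is trivial, with charts $\kappa_i:\Omega_i\to U_i\subset\rz^n$, $n=\mathrm{dim}\,M$, and a subordinate partition of unity of the form $\{\phi_i^2\}$ with $\phi_i\in\scrC^\infty_{\mathrm{comp}}(\Omega_i)$. Since only the diagonal of the Schwartz kernel contributes to the trace, one has $\mathrm{Tr}\,Q(\lambda-A)^{-\ell}=e^{-i\theta\ell}\sum_i\mathrm{Tr}\bigl(\phi_i(C^{(1)}_\theta(\mu)+C^{(2)}_\theta(\mu))\phi_i\bigr)$. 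In each chart $\kappa_{i,*}(\phi_i\,C^{(k)}_\theta(\mu)\,\phi_i)$ is a matrix-valued $\psi$do on $\rz^n$ whose symbol lies in $\wtbfS^{\omega-m\ell,\,\omega+m}$ (resp.\ $\wtbfS^{\omega-m\ell,\,\omega}$), uniformly in $\theta$. The hypothesis $\omega-m\ell<-n$ together with $(\omega-m\ell)-(\omega+m)=-m(\ell+1)\leq 0$ and $(\omega-m\ell)-\omega=-m\ell\leq 0$ place both symbols in the scope of Theorem \ref{thm:trace-expansion-01}, which delivers a pointwise diagonal expansion of the kernel with uniformly bounded $\scrC^\infty_b$-coefficients in $x$ and $\theta$.

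Multiplying by $\phi_i^2$, taking the fibre trace, summing in $i$, and integrating against $dv_M$---permissible term-by-term by compactness of $M$ and the uniform $\scrC^\infty_b$-bounds---will produce
\[ \mathrm{Tr}\,Q(\mu^m e^{i\theta}-A)^{-\ell}\sim\sum_{j\geq 0}\tilde c_j(\theta)\mu^{n+\omega-j-m\ell} + \sum_{j\geq 0}\bigl(\tilde c_j'(\theta)\log\mu+\tilde c_j''(\theta)\bigr)\mu^{-m\ell-j}, \]
uniformly in $\theta$. Substituting $\mu=|\lambda|^{1/m}$ and using $\log\mu=\tfrac{1}{m}\log\lambda-\tfrac{i\theta}{m}$, $\mu^a=\lambda^{a/m}e^{-i\theta a/m}$, will give an expansion of the required form \eqref{exp} with $\theta$-dependent coefficients. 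Holomorphy of $\lambda\mapsto\mathrm{Tr}\,Q(\lambda-A)^{-\ell}$ on a neighborhood of $\Lambda$ for large $|\lambda|$, combined with \cite[Lemma~2.3]{GrSe}, then forces these coefficients to be $\theta$-independent constants, while the vanishing of $c_j'$ and $c_j''$ for $j$ not a multiple of $m$ is inherited from the corresponding statement in Theorem \ref{thm:trace-expansion-main}.

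The main technical point is the bookkeeping in the first paragraph: one has to verify that the global parametrix $B_\theta(\mu)$ really does live in $\mathbf{L}^{-m,m}(M;E,E)$ with uniform $\theta$-dependence and that $Q\,B_\theta(\mu)^\ell$ admits the stated two-term decomposition with the correct pair of regularity numbers, so that after localization Theorem \ref{thm:trace-expansion-01} applies in each chart. Once this structural input is in place, the remaining ingredients---partition of unity, term-by-term integration, and the Grubb--Seeley holomorphy trick for removing $\theta$-dependence---are routine.
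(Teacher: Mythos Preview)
Your proposal is correct and follows essentially the same route as the paper, which simply states that Theorem~\ref{thm:trace-expansion-main} together with integration over $M$ gives the result; you have spelled out the localization and globalization steps that this one-line argument entails. One minor slip: the inclusion you cite should read $Q\in L^\omega(M;E,E)\subset\wtbfL^{\omega,\omega}$ (Example~\ref{ex:without-parameter}, since $Q$ is $\mu$-independent), together with the parameter-dependent inclusion $L^{-m\ell}\subset\wtbfL^{-m\ell,0}$ (Proposition~\ref{prop:classical})---combining these with $B_\theta(\mu)^\ell\in\mathbf{L}^{-m\ell,m}=\wtbfL^{-m\ell,m}+L^{-m\ell}$ is what actually produces your decomposition into $\wtbfL^{\omega-m\ell,\omega+m}$ and $\wtbfL^{\omega-m\ell,\omega}$ pieces, exactly as in Section~\ref{sec:06.2}.
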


\subsection{Pseudodifferential operators of Toeplitz type}
\label{sec:07.7}

Let us conclude with an application to so-called $\psi$do of Toeplitz type, cf. \cite{Seil12,Seil15}. 

To this end, for $j=0,1$, let $E_j$ be a vector-bundle over $M$ and $P_j\in L^0(M;E_j,E_j)$ 
be idempotent, i.e., $P_j^2=P_j$. The $P_j$ define closed subspaces 
 $$H^s(M,E_j;P_j):=P_j\big(H^s(M,E_j)\big)\subseteq H^s(M,E_j)$$
in the scale of $L_2$-Sobolev spaces $H^s$. 
Given $A(\mu)\in\wtbfL^{d,0}(E_0,E_1)$, consider 
\begin{equation}\label{eq:toeplitz01}
 \bfA(\mu):= P_1A(\mu)P_0\in \wtbfL^{d,0}(E_0,E_1). 
\end{equation} 
We are interested in the invertibilty of 
\begin{equation}\label{eq:toeplitz02}
 \bfA(\mu)\colon H^{s}(M,E_0;P_0)\lra H^{s-d}(M,E_1;P_1).
\end{equation} 
Consider $P_j$ as an element in $\wtbfL^{0,0}(E_j,E_j)$. 
Since $P_j$ is idempotent, so is the homogeneous principal symbol $\sigma(P_j)$ as 
morphism of the pull-back of $E_j$ to $(T^*M\setminus0)\times\rpbar$, hence defines a 
sub-bundle denoted by $E_j(P_j)$.   

\begin{theorem}\label{thm:toeplitz01}
Let notations be as above. Assume that 
\begin{itemize}
 \item[i$)$] $\sigma(\mathbf{A}):E_0(P_0)\to E_1(P_1)$ is invertible, 
 \item[ii$)$] $P_1A^\infty_{[0]}P_0:H^s(M,E_0;P_0)\to H^{s}(M,E_1;P_1)$ 
  is invertible for some $s$.
\end{itemize} 
Then there exists a $B(\mu)\in\wtbfL^{-d,0}(E_1,E_0)$ such that, for 
$\mathbf{B}(\mu):= P_0B(\mu)P_1$,  
\begin{align}\label{eq:toep01}
 \mathbf{B}(\mu)\mathbf{A}(\mu)=P_0,\qquad \mathbf{A}(\mu)\mathbf{B}(\mu)=P_1
\end{align}
for sufficiently large values of $\mu$. In particular, the map 
\eqref{eq:toeplitz02} is an isomorphism for every choice of $s$ and $\mu$ large. 
\end{theorem}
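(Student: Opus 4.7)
The plan is to reduce the Toeplitz invertibility to the ambient parametrix theorem (Theorem~\ref{thm:parametrix}) by extending $\mathbf{A}(\mu)$ to an operator elliptic on the full bundles $E_0, E_1$ and then sandwiching its ambient inverse with the projections. Concretely, I would construct an auxiliary operator $R(\mu)\in\wtbfL^{d,0}(E_0,E_1)$ such that the homogeneous principal symbol of $(1-P_1)R(\mu)(1-P_0)$ is invertible as a bundle morphism $E_0(1-P_0)\to E_1(1-P_1)$ and its limit-operator $(1-P_1)R^\infty_{[0]}(1-P_0)$ is invertible from $H^s(M,E_0;1-P_0)$ to $H^{s-d}(M,E_1;1-P_1)$. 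I then set
$$\widetilde{\mathbf{A}}(\mu):=P_1 A(\mu) P_0+(1-P_1)R(\mu)(1-P_0)\in\wtbfL^{d,0}(E_0,E_1).$$

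With respect to the splittings $E_j=E_j(P_j)\oplus E_j(1-P_j)$, both the homogeneous principal symbol and the principal limit-operator of $\widetilde{\mathbf{A}}(\mu)$ become block-diagonal: the ``$P$-block'' is invertible by assumptions (i) and (ii), the ``$(1-P)$-block'' by construction of $R$. Hence $\widetilde{\mathbf{A}}(\mu)$ is elliptic in the sense of Definition~\ref{def:ellipticity}, and Theorem~\ref{thm:parametrix} yields a $\widetilde{B}(\mu)\in\wtbfL^{-d,0}(E_1,E_0)$ with $\widetilde{\mathbf{A}}(\mu)\widetilde{B}(\mu)=1=\widetilde{B}(\mu)\widetilde{\mathbf{A}}(\mu)$ for $\mu$ sufficiently large. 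I then take $B(\mu):=\widetilde{B}(\mu)$ and $\mathbf{B}(\mu):=P_0\widetilde{B}(\mu)P_1$.

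The Toeplitz identities \eqref{eq:toep01} are then extracted from the elementary relations
$$P_1\widetilde{\mathbf{A}}(\mu)=\mathbf{A}(\mu)=\widetilde{\mathbf{A}}(\mu)P_0,$$
which are immediate consequences of $P_j^2=P_j$ and $P_j(1-P_j)=0$. For instance,
$$\mathbf{A}(\mu)\mathbf{B}(\mu)=\mathbf{A}(\mu)P_0\widetilde{B}(\mu)P_1=\mathbf{A}(\mu)\widetilde{B}(\mu)P_1=P_1\widetilde{\mathbf{A}}(\mu)\widetilde{B}(\mu)P_1=P_1,$$
and symmetrically $\mathbf{B}(\mu)\mathbf{A}(\mu)=P_0\widetilde{B}(\mu)\widetilde{\mathbf{A}}(\mu)P_0=P_0$. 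The asserted isomorphism property of \eqref{eq:toeplitz02} then follows because $P_j$ restricts to the identity on $H^s(M,E_j;P_j)$.

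The main technical hurdle is the construction of the filling operator $R(\mu)$: one needs a $\wtbfL^{d,0}$-elliptic operator realising a prescribed isomorphism between the two complementary sub-bundles. Following the scheme developed in \cite{Seil12,Seil15}, this may require a rank-balancing argument using auxiliary trivial bundle summands when $E_0(1-P_0)$ and $E_1(1-P_1)$ have different fibre dimensions, together with a compatibility check at the limit-operator level so that assumption (ii) combines with the invertibility on the complement into an invertible full limit-operator. Once $R(\mu)$ is in place, the remainder of the argument is a mechanical reduction to the ambient parametrix theorem.
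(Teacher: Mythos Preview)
Your outline is sound and the algebraic verification of \eqref{eq:toep01} from the block-diagonal extension $\widetilde{\mathbf{A}}(\mu)$ is correct (one small slip: the limit-operator of $(1-P_1)R(\mu)(1-P_0)$ lies in $L^0$, so its target should be $H^s$, not $H^{s-d}$). However, the route you take differs from the paper's. The paper does not attempt to build a filling $R(\mu)$ at all; instead it conjugates by parameter-dependent order reductions $S_j(\mu)\in L^{jd-s}(E_j,E_j)$ to reduce to an order-zero Toeplitz problem $P_1'(\mu)A'(\mu)P_0'(\mu)$, and then feeds this directly into the abstract Toeplitz machinery of \cite{Seil15} (Theorem~1 there), having verified that $\wtbfL^{0,0}$ together with the ideal $\{A\in\wtbfL^{0-\infty,0-\infty}:A^\infty_{[0]}=0\}$ satisfies the axioms of that framework. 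This is more economical because hypotheses (i) and (ii) translate verbatim into the ellipticity hypotheses of the abstract theorem, and nothing further needs to be constructed.

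Your filling approach is the classical alternative, but note that the ``technical hurdle'' you flag is exactly the content of the cited references: the existence of a Toeplitz-elliptic complement $(1-P_1)R(\mu)(1-P_0)$ with invertible limit-operator generally requires stabilisation by trivial summands and an index-correction by a smoothing perturbation, precisely the steps carried out abstractly in \cite{Seil12,Seil15}. So while your argument is logically valid once $R(\mu)$ is in hand, producing it is not lighter than simply invoking the abstract result, and in effect both proofs lean on the same external input. The paper's version has the advantage of not having to reprove any part of that input in the present setting.
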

\begin{proof}
For $j=0,1$ let $S_j(\mu)\in L^{jd-s}(E_j,E_j)$ be invertible for every $\mu\ge0$ with 
$S_j(\mu)^{-1}\in L^{s-dj}(E_j,E_j)$. Then 
\begin{align*} 
 P_j^\prime(\mu)&:=S_j(\mu)^{-1}P_jS_j(\mu)\in L^0(E_j,E_j),\\ 
 A^\prime(\mu)&:=S_1(\mu)^{-1}A(\mu)S_0(\mu)\in \wtbfL^{0,0}(E_0,E_1).
\end{align*}  
Note that the $P_j^\prime(\mu)$ are $($parameter-dependent$)$ idempotents. 

If $\bfA^\prime(\mu)=P_1^\prime(\mu) A^\prime(\mu)P_0^\prime(\mu)$ has a parametrix 
$\mathbf{B}^\prime(\mu)= P_0^\prime(\mu) B^\prime(\mu)P_1^\prime(\mu)$ with 
$B^\prime(\mu)\in\wtbfL^{0,0}(E_1,E_0)$ $($i.e., the analog of \eqref{eq:toep01} is true$)$, 
then $B(\mu):=S_0(\mu)B^\prime(\mu)S^{-1}_1(\mu)$ yields the desired parametrix 
$\mathbf{B}(\mu)$. 
However, this result follows from the general theory of abstract pseusodifferential operators 
and associated Toeplitz operators developed in \cite{Seil12,Seil15}. 
In fact, in the notation of \cite[Section 3.1]{Seil15} let  $\Lambda=\rpbar$, let 
 $$G=\{g=(M,E)\mid \text{$E$ vector-bundle over $M$}\}$$
be the set of all admissible weights and let $H^0(g)=L^2(M,E)$ for $g=(M,E)$. 
Moreover, for $g_0=(M,E_0)$, $g_1=(M,E_1)$, and $\mathfrak{g}=(g_0,g_1)$ let 
$L^0(\mathfrak{g})=\wtbfL^{0,0}(M;E_0,E_1)$ and  
 $$L^{-\infty}(\mathfrak{g})
    =\big\{A(\mu)\in \wtbfL^{0-\infty,0-\infty}(M;E_0,E_1)\mid A^\infty_{[0]}=0\big\}.$$
Now we can apply in \cite[Theorem 1, Section 3.2]{Seil15}, 
noting that i), ii) give the required hypotheses. 
\end{proof}

As a particular case we can take $\mathbf{A}(\mu)=P_1(\mu^d-A)P_0$ with a $\psi$do 
$A\in L^d(M;E,E)$, $d\in\nz$, and two idempotents $P_0,P_1\in L^0(M;E,E)$. Note that 
$A(\mu)=\mu^d-A$ considered as an element of $\wtbfL^{d,0}$ has limit-operator 
$A_{[0]}\equiv 1$, hence condition ii) in Theorem \ref{thm:toeplitz01} reduces to the 
requirement that $P_1:H^s(M,E;P_0)\to H^{s}(M,E;P_1)$ isomorphically for some $s$. 

\section{Appendix: Calculus for symbols with expansion at infinity} 

Let us provide the detailed proofs of Theorems \ref{thm:Leibniz product} and 
\ref{thm:adjoint}. They are based on the concept of oscillatory integrals in the spirit 
of \cite{Kuma}, but extended to Fr\`{e}chet space valued amplitude functions. 
For an account on this concept see \cite{GSS}.

Let $E$ be a Fr\'echet space whose topology is described by a system of semi-norms 
$p_n$, $n\in\nz$. A smooth function $q=q(y,\eta):\rz^m\times\rz^m\to E$ is called an 
amplitude function with values in $E$, 
provided there exist sequences $(m_n)$ and $(\tau_n)$ such that 
 $$p_n\big(D^\gamma_\eta D^\delta_y q(y,\eta)\big)\lesssim \spk{y}^{\tau_n}\spk{\eta}^{m_n}$$
for all $n$ and for all orders of derivatives. The space of such amplitude functions is denoted by 
$\calA(\rz^m\times\rz^m,E)$. We shall frequently make use of the following simple observation$:$

\begin{lemma}
Let $E_0,E_1$ and $E$ be Fr\'echet spaces and let $(\!(\cdot,\cdot)\!)$ be a bilinear continuous 
map from $E_1\times E_0$ to $E$. If $q_j(y,\eta)$ are amplitude functions with values in 
$E_j$, $j=0,1$, then $q(y,\eta):=(\!(q_1(y,\eta),q_0(y,\eta))\!)$ is an amplitude function 
with values in $E$. 
\end{lemma}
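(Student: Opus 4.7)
The plan is to derive the amplitude function estimates for $q$ from those for $q_0,q_1$ by combining three ingredients: (i) smoothness of $q$, which is immediate from the smoothness of $q_0,q_1$ and the fact that a continuous bilinear map on Fr\'echet spaces is smooth (as a polynomial map of degree two), (ii) the bilinear Leibniz rule, which expresses the derivatives of $q$ as a finite linear combination of $(\!(D^{\gamma'}_\eta D^{\delta'}_y q_1,D^{\gamma-\gamma'}_\eta D^{\delta-\delta'}_y q_0)\!)$, and (iii) the standard fact that, for a continuous bilinear map $(\!(\cdot,\cdot)\!)\colon E_1\times E_0\to E$ between Fr\'echet spaces, every continuous seminorm $p$ on $E$ admits continuous seminorms $p^{(1)}$ on $E_1$ and $p^{(0)}$ on $E_0$ with
\[
 p\big((\!(x_1,x_0)\!)\big)\le p^{(1)}(x_1)\,p^{(0)}(x_0),\qquad x_j\in E_j.
\]

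Given a seminorm $p_n$ on $E$, I first invoke (iii) to obtain seminorms $p^{(1)}_{n_1}$, $p^{(0)}_{n_0}$ controlling it as above. By the amplitude function estimates for $q_1$ there exist $m_1,\tau_1$ such that
\[
 p^{(1)}_{n_1}\big(D^{\gamma'}_\eta D^{\delta'}_y q_1(y,\eta)\big)\lesssim \spk{y}^{\tau_1}\spk{\eta}^{m_1}
\]
for all orders of derivatives, and analogously $m_0,\tau_0$ control the $E_0$-valued seminorms of the derivatives of $q_0$. Applying (ii) and (iii) termwise then yields
\[
 p_n\big(D^\gamma_\eta D^\delta_y q(y,\eta)\big)\lesssim \spk{y}^{\tau_0+\tau_1}\spk{\eta}^{m_0+m_1},
\]
which are precisely the estimates required of an amplitude function.

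The main (minor) obstacle is justifying ingredient (iii): a priori the joint continuity of a bilinear map on a product of Fr\'echet spaces only gives a neighborhood-basis condition, but one passes to a seminorm inequality by observing that $(\!(\cdot,\cdot)\!)$ is continuous at the origin, so some set of the form $\{x_1\mid p^{(1)}(x_1)\le 1\}\times\{x_0\mid p^{(0)}(x_0)\le 1\}$ is mapped into $\{z\mid p_n(z)\le 1\}$; the bilinear scaling then upgrades this to the multiplicative estimate above. Once this is in place, smoothness is routine, and the combinatorial expansion of $D^\gamma_\eta D^\delta_y q$ involves only finitely many terms, so the resulting bound is uniform in the chosen orders of derivatives.
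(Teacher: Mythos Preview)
Your proof is correct. The paper itself does not give a proof of this lemma; it introduces it as a ``simple observation'' and leaves the verification to the reader. Your argument supplies exactly the details one would expect: the bilinear Leibniz rule for derivatives of $q$, together with the standard fact that a jointly continuous bilinear map between Fr\'echet spaces is dominated, for each target seminorm, by a product of seminorms on the factors. The justification you give for this last point (continuity at the origin plus bilinear rescaling) is the standard one and is sound. Since the paper offers no proof to compare against, there is nothing further to add.
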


If $\chi(y,\eta)$ denotes a cut-off function with $\chi(0,0)=1$, the so-called oscillatory integral 
 $$\mathrm{Os}-\iint e^{-iy\eta}q(y,\eta)\,dy\dbar\eta:=
   \lim_{\eps\to0} \iint_{\rz^n\times\rz^n} e^{-iy\eta}\chi(\eps y,\eps\eta)q(y,\eta)\,dy\dbar\eta$$
exists and is independent of the choice of $\chi$. Note that for a continuous, $E$-valued function 
$f$ with compact support, $\iint f(y,\eta)\,dy d\eta$ is the unique element $e\in E$ such that 
$\spk{e^\prime,e}=\iint \spk{e^\prime,f(y,\eta)}\,dy d\eta$ for every functional 
$e^\prime\in E^\prime$. For simplicity of notation we shall simply write $\iint$ rather 
than $\mathrm{Os}-\iint$. 

\begin{proposition}\label{prop:amplitude}
Let $a\in\wtbfS^{d,\nu}_{1,0}$. Then 
 $$q(y,\eta):=\Big( (x,\xi,\mu)\mapsto a(x+y,\xi+\eta;\mu)\Big),\qquad y,\eta\in\rz^n,$$
defines an amplitude function $q\in \calA(\rz^n\times\rz^n,\wtbfS^{d,\nu}_{1,0})$. 
The principal-limit symbol is  
 $$q_{[\nu]}(y,\eta)=
    \Big( (x,\xi)\mapsto a_{[\nu]}^\infty(x+y,\xi+\eta)\Big),\qquad y,\eta\in\rz^n,$$
$($it is an amplitude function with values in $S^\nu_{1,0}(\rz^n))$. 

Analogous results hold true for $q_1(\eta):=((x,\xi,\mu)\mapsto a(x,\xi+\eta;\mu))$ and 
$q_2(y):=((x,\xi,\mu)\mapsto a(x+y,\xi;\mu))$. 
\end{proposition}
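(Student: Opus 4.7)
I would verify the two defining properties of $q\in\calA(\rz^n\times\rz^n,\wtbfS^{d,\nu}_{1,0})$: first, that $q(y,\eta)\in\wtbfS^{d,\nu}_{1,0}$ for each $(y,\eta)$ with principal limit-symbol the asserted shift of $a^\infty_{[\nu]}$, and second, that all Fr\'echet seminorms (together with those of $(y,\eta)$-derivatives) grow only polynomially in $\spk{y}$ and $\spk{\eta}$.

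Starting from the expansion
$$a(x,\xi;\mu)-\sum_{j=0}^{N-1}a^\infty_{[\nu+j]}(x,\xi)\,[\xi,\mu]^{d-\nu-j}\in\wt{S}^{d,\nu+N}_{1,0},$$
I would substitute $(x,\xi)\mapsto(x+y,\xi+\eta)$; by the Peetre inequality $\spk{\xi+\eta}\le\sqrt{2}\,\spk{\xi}\spk{\eta}$ the shifted remainder lies in $\wt{S}^{d,\nu+N}_{1,0}$ and the shifted coefficients lie in $S^{\nu+j}_{1,0}(\rz^n)$, both with seminorms bounded by polynomials in $(\spk{y},\spk{\eta})$. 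What remains is to re-express $[\xi+\eta,\mu]^{d-\nu-j}$ as an expansion in $[\xi,\mu]^{d-\nu-k}$. For fixed $\eta$ the symbol $(\xi,\mu)\mapsto[\xi+\eta,\mu]^{d-\nu-j}$ belongs to $S^{d-\nu-j}\subset\wtbfS^{d-\nu-j,0}_{1,0}$ by Proposition \ref{prop:classical} (its homogeneous components, obtained from binomial expansion of $|\xi+\eta,\mu|^{d-\nu-j}$, depend polynomially on $\eta$), and tracking this dependence through the constructive proof of Proposition \ref{prop:classical} yields, for every $M\in\nz$,
$$[\xi+\eta,\mu]^{d-\nu-j}\equiv\sum_{k=0}^{M-1}p_{j,k}(\xi,\eta)\,[\xi,\mu]^{d-\nu-j-k}\mod\wt{S}^{d-\nu-j,M}_{1,0},$$
with $p_{j,k}(\,\cdot\,,\eta)\in S^k_{1,0}(\rz^n)$ depending polynomially on $\eta$ and $p_{0,0}\equiv 1$.

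Substituting this into the shifted expansion of $a$, using the product rules $S^{\nu+j}_{1,0}\cdot S^k_{1,0}\subseteq S^{\nu+j+k}_{1,0}$ and $S^{\nu+j}_{1,0}\cdot\wt{S}^{d-\nu-j,M}_{1,0}\subseteq\wt{S}^{d,\nu+j+M}_{1,0}$, and reindexing by the new order $k$, produces
$$q(y,\eta)(x,\xi;\mu)\equiv\sum_{k=0}^{N-1}q^\infty_{[\nu+k]}(y,\eta)(x,\xi)\,[\xi,\mu]^{d-\nu-k}\mod\wt{S}^{d,\nu+N}_{1,0},$$
in which the leading coefficient is obtained exclusively from the $(j,k)=(0,0)$ contribution and equals $a^\infty_{[\nu]}(x+y,\xi+\eta)$. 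This proves both $q(y,\eta)\in\wtbfS^{d,\nu}_{1,0}$ and the stated formula for $q_{[\nu]}$.

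The amplitude-function estimates then follow because every Fr\'echet seminorm on $\wtbfS^{d,\nu}_{1,0}$ factors through either a seminorm of a remainder in $\wt{S}^{d,\nu+N}_{1,0}$ or of a coefficient in $S^{\nu+j}_{1,0}(\rz^n)$, and each piece is built from shifts of $a$, $a^\infty_{[\nu+j]}$, and the polynomials $p_{j,k}$; all of these inherit via the Peetre inequality a bound $\spk{y}^{M_1}\spk{\eta}^{M_2}$ times a seminorm-dependent constant. Differentiation in $(y,\eta)$ just differentiates $a$ in $(x,\xi)$, which preserves the class (with reduced orders), so the same polynomial estimates persist for $D^\gamma_\eta D^\delta_y q$. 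The statements for $q_1$ and $q_2$ follow by specialization. The main obstacle is the bookkeeping in the double re-expansion: one must verify that, after collecting contributions by powers of $[\xi,\mu]$, the resulting coefficients $q^\infty_{[\nu+k]}(y,\eta)$ are genuine amplitude functions with values in $S^{\nu+k}_{1,0}(\rz^n)$ and that the truncation errors aggregate correctly in $\wt{S}^{d,\nu+N}_{1,0}$ with the required polynomial growth in $(y,\eta)$.
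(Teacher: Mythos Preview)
Your proposal is correct and follows essentially the same route as the paper: shift the expansion of $a$, observe via Peetre that the shifted remainder and shifted coefficients are amplitude functions with values in $\wt{S}^{d,\nu+N}_{1,0}$ and $S^{\nu+j}_{1,0}(\rz^n)$ respectively, re-expand $[\xi+\eta,\mu]^{d-\nu-j}$ as an element of $\wtbfS^{d-\nu-j,0}_{1,0}$ via Proposition~\ref{prop:classical}, and collect terms. The paper differs only in organization: it isolates the fact that $(\xi,\mu)\mapsto[\xi+\eta,\mu]^{d-\nu-j}$ is an amplitude function with values in $S^{d-\nu-j}$ as a separate lemma (proved by Taylor expansion in $\eta$ rather than the binomial expansion you suggest), and it first treats the un-structured case $a\in\wt{S}^{d,\nu}_{1,0}$ as a stand-alone step before handling the full $\wtbfS^{d,\nu}_{1,0}$ case.
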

\begin{proof}
\textbf{Step 1:} Suppose first that $a\in\wt{S}^{d,\nu}_{1,0}$ only. We show that $q$ is an 
amplitude function with values in $\wt{S}^{d,\nu}_{1,0}$. 

Recall that the topology of $\wt{S}^{d,\nu}_{1,0}$ is defined by the semi-norms 
 $$p_N(a):=\max_{|\alpha|+|\beta|+j\le N}\sup_{x,\xi,\mu}
   |D^\alpha_\xi D^\beta _x D^j_\mu a(x,\xi;\mu)|\spk{\xi}^{|\alpha|-\nu}\spk{\xi,\mu}^{\nu-d+j}.$$
If $|\alpha|+|\beta|+j\le N$ and $\gamma,\delta\in\nz_0^n$ are arbitrary, then 
\begin{align*}
|D^\alpha_\xi D^\beta_x& D^j_\mu D^\gamma_\eta D^\delta_y {a}(x+y,\xi+\eta;\mu)|
\le C_{\gamma,\delta,N}\spk{\xi+\eta,\mu}^{d-\nu-j}\spk{\xi+\eta}^{\nu-|\alpha|-|\gamma|}\\
&\le C_{\gamma,\delta,N}\spk{\xi,\mu}^{d-\nu-j}\spk{\eta}^{|d-\nu-j|}
\spk{\xi}^{\nu-|\alpha|}\spk{\eta}^{|\nu-|\alpha||}\\
&\le C_{\gamma,\delta,N}\spk{\xi,\mu}^{d-\nu-j}\spk{\xi}^{\nu-|\alpha|}\spk{\eta}^{m_N}
\end{align*}
with $m_N=\max\{|d-\nu-j|+|\nu-|\alpha||\mid |\alpha|+j\le N\}$. This shows  
 $$p_N\big(D^\gamma_\eta D^\delta_y q(y,\eta)\big)\lesssim \spk{\eta}^{m_N}.$$

\textbf{Step 2:} Suppose $a\in\wtbfS^{d,\nu}_{1,0}$. Then 
\begin{align*} 
 a(x+y,\xi+\eta;\mu)=&\sum_{j=0}^{N-1}a_{[\nu+j]}^\infty(x+y,\xi+\eta)[\xi+\eta,\mu]^{d-\nu-j}+\\
   &+r_{a,N}(x+y,\xi+\eta;\mu).
\end{align*}
According to Step 1, $r_{a,N}(x+y,\xi+\eta;\mu)$ defines an amplitude function with values in 
$\wt{S}^{d,\nu+N}_{1,0}$. In the same way one sees that $a_{[\nu+j]}^\infty(x+y,\xi+\eta)$ 
defines an amplitude function with values in ${S}^{\nu+j}_{1,0}(\rz^n)$. 
By the following Lemma \ref{lem:amplitude}, $[\xi+\eta,\mu]^{d-\nu-j}$ defines an amplitude 
with values in $S^{d-\nu-j}$ hence, due to Proposition \ref{prop:classical}, 
with values in $\wtbfS^{d-\nu-j,0}_{1,0}$. Thus we can write, for every $M$,  
 $$[\xi+\eta;\mu]^{d-\nu-j}=\sum_{\ell=0}^{M-1}
   p_{j,[d-\nu-j+\ell]}^\infty(\eta,\xi)[\xi,\mu]^{d-\nu-j-\ell}+r_{j,M}(\eta,\xi;\mu),$$
where $p_{j,[d-\nu-j+\ell]}^\infty(\eta,\xi)$ defines an amplitude function with values in 
$S^{d-\nu-j+\ell}_{1,0}(\rz^n)$ and $r_{j,M}(\eta,\xi;\mu)$ an amplitude function 
with values in $\wt{S}^{d-\nu-j,M}$. Note that $p_{j,[d-\nu-j]}^\infty(\eta,\xi)\equiv 1$. 
Inserting these expansions above and re-arranging terms, we find an expansion 
 $$a(x+y,\xi+\eta;\mu)=\sum_{j=0}^{N-1}\wt{a}_{[\nu+j]}^\infty(y,\eta;x,\xi)[\xi,\mu]^{d-\nu-j}
   +R_{a,N}(y,\eta;x,\xi,\mu)$$
where $\wt{a}_{[\nu+j]}^\infty(y,\eta;x,\xi)$ and $R_{a,N}(y,\eta;x,\xi,\mu)$ define 
amplitude functions with values in $\wt{S}^{d-\nu+j}_{1,0}(\rz^n)$ and 
$\wt{S}_{1,0}^{d-\nu,N}$, respectively. 
Note that $\wt{a}_{[\nu]}^\infty(y,\eta;x,\xi)=a_{[\nu]}^\infty(x+y,\xi+\eta)$. 
Altogether, this shows the claims for $q$. 

$q_1$ and $q_2$ are handled in the same way. 
\end{proof}

\begin{lemma}\label{lem:amplitude}
Let $a(\xi;\mu)\in S^d$. Then 
 $$q(\eta)=\Big( (\xi,\mu)\mapsto a(\xi+\eta;\mu)\Big),\qquad \eta\in\rz^n,$$ 
defines an amplitude function with values in $S^d$.  
\end{lemma}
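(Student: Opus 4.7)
The plan is to establish $q(\eta)\in S^d$ for every $\eta$ together with polynomial bounds on the $S^d$-semi-norms in $\spk{\eta}$; the amplitude-function property will then follow because $D^\gamma_\eta q(\eta)$ is the $q$-construction applied to $\partial^\gamma_\xi a\in S^{d-|\gamma|}\subset S^d$. The $S^d_{1,0}$-estimates are immediate from Peetre's inequality $\spk{\xi+\eta,\mu}^s\lesssim\spk{\xi,\mu}^s\spk{\eta}^{|s|}$, which gives
\[
 |D^\alpha_\xi D^j_\mu a(\xi+\eta;\mu)|\lesssim\spk{\xi+\eta,\mu}^{d-|\alpha|-j}\lesssim\spk{\xi,\mu}^{d-|\alpha|-j}\spk{\eta}^{|d-|\alpha|-j|},
\]
just as in Step~1 of the proof of Proposition~\ref{prop:amplitude}.

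For the homogeneous components, I would take $a_\ell\in S^{d-\ell}_{hom}$ to be the homogeneous components of $a$ and set
\[
 b_k(\eta;\xi,\mu):=\sum_{\ell+|\alpha|=k}\frac{\eta^\alpha}{\alpha!}(\partial^\alpha_\xi a_\ell)(\xi;\mu).
\]
Since $\partial^\alpha_\xi a_\ell\in S^{d-\ell-|\alpha|}_{hom}=S^{d-k}_{hom}$, $b_k(\eta;\cdot)\in S^{d-k}_{hom}$ depends polynomially on $\eta$ of degree at most $k$, so every spherical semi-norm of $b_k(\eta;\cdot)$ is bounded by $C_{j,k}\spk{\eta}^k$.

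The main work is to verify that, for a fixed zero-excision function $\chi(\xi,\mu)$,
\[
 \rho_N(\eta;\xi,\mu):=a(\xi+\eta;\mu)-\chi(\xi,\mu)\sum_{k=0}^{N-1}b_k(\eta;\xi,\mu)\in S^{d-N}_{1,0}
\]
with polynomial $\eta$-growth of semi-norms. Writing $a=\chi\sum_{\ell<N}a_\ell+r_{a,N}$ with $r_{a,N}\in S^{d-N}_{1,0}$ and setting $T_\ell(\eta;\xi,\mu):=\sum_{|\alpha|<N-\ell}\tfrac{\eta^\alpha}{\alpha!}(\partial^\alpha_\xi a_\ell)(\xi;\mu)$, I split $\rho_N$ into
\[
 r_{a,N}(\xi+\eta;\mu)+\chi(\xi,\mu)\sum_{\ell<N}\bigl[a_\ell(\xi+\eta;\mu)-T_\ell(\eta;\xi,\mu)\bigr]+\bigl[\chi(\xi+\eta,\mu)-\chi(\xi,\mu)\bigr]\sum_{\ell<N}a_\ell(\xi+\eta;\mu).
\]
The first term is controlled by Peetre applied to $r_{a,N}$; the third by the fact that $\chi(\xi+\eta,\mu)-\chi(\xi,\mu)$ is compactly supported in $(\xi,\mu)$ within a set of diameter $O(\spk{\eta})$, making the product Schwartz in $(\xi,\mu)$ with polynomial $\eta$-growth.

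The hard step is the Taylor-remainder term in the middle. The integral representation
\[
 a_\ell(\xi+\eta;\mu)-T_\ell(\eta;\xi,\mu)=\sum_{|\alpha|=N-\ell}\frac{(N-\ell)\eta^\alpha}{\alpha!}\int_0^1(1-t)^{N-\ell-1}(\partial^\alpha_\xi a_\ell)(\xi+t\eta;\mu)\,dt
\]
is meaningful only where $(\xi+t\eta,\mu)$ stays away from the singularity of $a_\ell$ for all $t\in[0,1]$. To handle this I would introduce an auxiliary $\eta$-dependent cutoff $\psi_\eta(\xi,\mu)=\psi(|\xi,\mu|/(2\spk{\eta}))$ with $\psi\equiv0$ on $[0,1]$ and $\psi\equiv1$ on $[2,\infty)$. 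On $\{\psi_\eta=1\}$ one has $|\xi+t\eta,\mu|\sim|\xi,\mu|\gtrsim\spk{\eta}$, and the $(d-\ell-|\alpha|)$-homogeneity of $\partial^\alpha_\xi a_\ell$ combined with Peetre yields the desired $S^{d-N}_{1,0}$-bound with $\spk{\eta}^{N-\ell+|d-N|}$-growth; on $\{\psi_\eta<1\}$, where $|\xi,\mu|\lesssim\spk{\eta}$, both $\chi(\xi,\mu)a_\ell(\xi+\eta;\mu)$ and $\chi(\xi,\mu)T_\ell(\eta;\xi,\mu)$ are compactly supported and Schwartz in $(\xi,\mu)$ with polynomial $\eta$-growth. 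All derivatives of $\psi_\eta$ carry compensating factors $\spk{\eta}^{-|\alpha|-j}$, so the bookkeeping closes and the claim follows.
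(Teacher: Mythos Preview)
Your identification of the homogeneous components $b_k(\eta;\cdot)$ and the $S^d_{1,0}$-estimates via Peetre are exactly as in the paper. The difference lies in the remainder analysis, and there your decomposition introduces a genuine problem.

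By splitting $\rho_N$ into the three displayed pieces you create \emph{artificial singularities}: the second term $\chi(\xi,\mu)\sum_\ell[a_\ell(\xi+\eta;\mu)-T_\ell]$ and the third term $[\chi(\xi+\eta,\mu)-\chi(\xi,\mu)]\sum_\ell a_\ell(\xi+\eta;\mu)$ are \emph{each} singular at $(\xi+\eta,\mu)=0$ whenever $\chi(\xi,\mu)\neq0$ (the singularities cancel in the sum, as they must, since $\rho_N$ is smooth). Your claim that on $\{\psi_\eta<1\}$ the function $\chi(\xi,\mu)a_\ell(\xi+\eta;\mu)$ is ``Schwartz in $(\xi,\mu)$'' is false: the singular point $(\xi,\mu)=(-\eta,0)$ lies in this region (since $|-\eta,0|=|\eta|\lesssim\spk{\eta}$) and $\chi(-\eta,0)\neq0$ for large $|\eta|$. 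The same objection applies to your treatment of the third term. So the bookkeeping does \emph{not} close as written.

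The paper avoids all of this by reversing the order of the two expansions: first Taylor-expand the \emph{full} symbol $a$ (which is smooth everywhere) in $\eta$,
\[
 a(\xi+\eta;\mu)=\sum_{|\alpha|<N}\frac{\eta^\alpha}{\alpha!}\,\partial^\alpha_\xi a(\xi;\mu)+r_N(\eta;\xi,\mu),
\]
where the integral remainder involves $(\partial^\sigma_\xi a)(\xi+\theta\eta;\mu)$ with $|\sigma|=N$; since $a\in S^d_{1,0}$ is smooth, Peetre gives $r_N\in S^{d-N}_{1,0}$ with polynomial $\eta$-growth immediately, with no singularity to negotiate. \emph{Then} one inserts the poly-homogeneous expansion $\partial^\alpha_\xi a=\chi\sum_{j<N-|\alpha|}\partial^\alpha_\xi a_j+s_{\alpha,N}$ into the finite Taylor polynomial, which again produces only smooth remainders $s_{\alpha,N}\in S^{d-N}_{1,0}$. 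Rearranging yields exactly your $b_k$. This order of operations is the missing idea; it makes the auxiliary cutoff $\psi_\eta$ and the whole case analysis unnecessary.
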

\begin{proof}
By Taylor expansion, 
$$a(\xi+\eta;\mu)=\sum_{|\alpha|=0}^{N-1}
   \frac{1}{\alpha!}\partial^\alpha_\xi a(\xi;\mu)\eta^\alpha+r_N(\eta,\xi;\mu)$$
with 
 $$r_N(\eta,\xi;\mu)=N\sum_{|\sigma|=N}\frac{\eta^\sigma}{\sigma!}
   \int_0^1(1-\theta)^{N-1}(\partial^\sigma_\xi a)(\xi+\theta\eta;\mu)\,d\theta.$$
Denoting by $r_{N,\sigma}(\eta,\xi;\mu)$ the integral term, we have 
\begin{align*}
 |D^\gamma_\eta D^\alpha_\xi D^j_\mu r_{N,\sigma}(\eta,\xi;\mu)|
 &\lesssim \int_0^1\spk{\xi+\theta\eta,\mu}^{d-N-|\alpha|-|\gamma|}\,d\theta
 \lesssim \spk{\xi,\mu}^{d-N-|\alpha|}\spk{\eta}^{|d-N-|\alpha||}.
\end{align*} 
We conclude that $r_N(\eta,\xi;\mu)$ defines an amplitude function with values in $S^{d-N}_{1,0}$.  
Write  
 $$\partial^\alpha_\xi a(\xi;\mu)
   =\chi(\xi,\mu)\sum_{j=0}^{N-1-|\alpha|}\partial^\alpha_\xi a_{j}(\xi;\mu)+
    s_{\alpha,N}(\xi;\mu)$$
with a zero-excision function $\chi$, $a_j\in S^{d-j}_{hom}$, and $s_{\alpha,N}\in S^{d-N}_{1,0}$. 
We find that 
$$a(\xi+\eta;\mu)=\chi(\xi,\mu)\sum_{|\alpha|+j=0}^{N-1}
   \frac{1}{\alpha!}\partial^\alpha_\xi a_{j}(\xi;\mu)\eta^\alpha+R_N(\eta,\xi;\mu)$$
with an amplitude function $R_N(\eta,\xi;\mu)$ taking values in $S^{d-N}_{1,0}$. Therefore 
 $$[q(\eta)_{k}](\xi;\mu)=\sum_{|\alpha|+j=k}\frac{1}{\alpha!}\partial^\alpha_\xi  
   a_{j}(\xi;\mu)\eta^\alpha,\qquad k\in\nz_0,$$
which are obviously amplitude functions with values in $S^{d-k}_{hom}$. 
According to the definition of the topology of $S^d$ this shows the claim.  
\end{proof}

\begin{theorem}
The Leibniz product induces continuous maps 
 $$(a_1,a_0)\mapsto a_1\#a_0:\wtbfS^{d_1,\nu_1}_{1,0}\times \wtbfS^{d_0,\nu_0}_{1,0}\lra 
     \wtbfS^{d_0+d_1,\nu_0+\nu_1}_{1,0}$$
and the limit-symbol behaves multiplicatively$:$ 
$(a_1\#a_0)_{[\nu_0+\nu_1]}^\infty=a_{1,[\nu_1]}^{\infty}\# a_{0,[\nu_0]}^{\infty}$. 
Moreover, for every $N\in\nz_0$, 
\begin{equation}\label{eq:leibniz-expansion}
  a_1\#a_0\equiv\sum_{|\alpha|=0}^{N-1}\frac{1}{\alpha!}(\partial^\alpha_\xi a_1)(D^\alpha_x a_0)
    \mod \wtbfS^{d_0+d_1-N,\nu_0+\nu_1-N}_{1,0}.
\end{equation}
\end{theorem}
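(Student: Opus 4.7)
The plan is to realize the Leibniz product as an oscillatory integral of an amplitude function with values in $\wtbfS^{d_0+d_1,\nu_0+\nu_1}_{1,0}$, and then extract the asymptotic expansion via Taylor's formula in the covariable. After a sign-change of the dual variable,
\[
(a_1\#a_0)(x,\xi;\mu)=\iint e^{-iy\eta}a_1(x,\xi-\eta;\mu)\,a_0(x+y,\xi;\mu)\,dy\,\dbar\eta.
\]
By Proposition \ref{prop:amplitude} (applied to the shifts $\xi\mapsto\xi-\eta$ and $x\mapsto x+y$ respectively), each factor is an amplitude function in $(y,\eta)$ with values in $\wtbfS^{d_1,\nu_1}_{1,0}$ and $\wtbfS^{d_0,\nu_0}_{1,0}$. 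Part i) of the Lemma preceding Theorem \ref{thm:asympsumm} makes pointwise multiplication a continuous bilinear map $\wtbfS^{d_1,\nu_1}_{1,0}\times \wtbfS^{d_0,\nu_0}_{1,0}\to\wtbfS^{d_0+d_1,\nu_0+\nu_1}_{1,0}$, so the product is itself an amplitude function with values in the target space. Since the oscillatory integral of a Fr\'echet-valued amplitude function lies in that Fr\'echet space and depends continuously on the amplitude, it follows at once that $a_1\#a_0\in\wtbfS^{d_0+d_1,\nu_0+\nu_1}_{1,0}$ and that the map is continuous.

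For the multiplicativity of the limit-symbol, the continuous linear functor $a\mapsto a^\infty_{[\nu]}:\wtbfS^{d,\nu}_{1,0}\to S^\nu_{1,0}(\rz^n)$ commutes with oscillatory integration. Applying it to the amplitude $a_1(x,\xi-\eta;\mu)\,a_0(x+y,\xi;\mu)$ and invoking the multiplicativity of the limit-symbol for pointwise products from part i) of the same Lemma, together with the explicit description of the limit-symbols of the shifted amplitudes given in Proposition \ref{prop:amplitude}, yields
\[
\bigl(a_1\#a_0\bigr)^\infty_{[\nu_0+\nu_1]}(x,\xi)=\iint e^{-iy\eta}a^\infty_{1,[\nu_1]}(x,\xi-\eta)\,a^\infty_{0,[\nu_0]}(x+y,\xi)\,dy\,\dbar\eta=a^\infty_{1,[\nu_1]}\#a^\infty_{0,[\nu_0]}.
\]

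The asymptotic expansion is obtained by Taylor expansion in $\eta$ around $\eta=0$:
\[
a_1(x,\xi-\eta;\mu)=\sum_{|\alpha|<N}\frac{(-\eta)^\alpha}{\alpha!}(\partial^\alpha_\xi a_1)(x,\xi;\mu)+R_N(x,\xi,\eta;\mu),
\]
with the usual integral remainder. Each term with $|\alpha|<N$ contributes, after turning the factor $\eta^\alpha e^{-iy\eta}$ into $D^\alpha_y e^{-iy\eta}$ and integrating by parts, the expected summand $\frac{1}{\alpha!}(\partial^\alpha_\xi a_1)(D^\alpha_x a_0)$; the signs from $(-\eta)^\alpha$ and from integrating $D^\alpha_y$ onto $a_0(x+y,\xi;\mu)$ cancel. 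For the remainder term, the derivative rule ii) of the Lemma preceding Theorem \ref{thm:asympsumm} gives $\partial^\alpha_\xi a_1\in\wtbfS^{d_1-N,\nu_1-N}_{1,0}$ for $|\alpha|=N$, and a repetition of the argument of Proposition \ref{prop:amplitude} shows that the shifted symbol $(\partial^\alpha_\xi a_1)(x,\xi-\theta\eta;\mu)$ defines, uniformly in $\theta\in[0,1]$, an amplitude function with values in $\wtbfS^{d_1-N,\nu_1-N}_{1,0}$; integrating in $\theta$ preserves this property. Multiplying by $a_0(x+y,\xi;\mu)$ and taking the oscillatory integral produces a symbol in $\wtbfS^{d_0+d_1-N,\nu_0+\nu_1-N}_{1,0}$, which establishes \eqref{eq:leibniz-expansion}.

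The principal obstacle is the last step: verifying that the Taylor remainder, once the integral in $\theta$ and the oscillatory integral in $(y,\eta)$ are carried out, really lands in $\wtbfS^{d_0+d_1-N,\nu_0+\nu_1-N}_{1,0}$ with the correct (limit) expansion. This is not a simple tracking of H\"ormander seminorms, because one needs to retain the expansion at infinity through the shift $\xi\mapsto\xi-\theta\eta$; the argument must pass through a parameter-dependent version of Proposition \ref{prop:amplitude} showing that both the expansion coefficients and the remainder in the definition of $\wtbfS^{d,\nu}_{1,0}$ survive this shift uniformly in $\theta$, and that amplitude-function-valued $\theta$-integration commutes with the formation of the limit-symbol.
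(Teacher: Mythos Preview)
Your proposal is correct and follows essentially the same route as the paper: represent $a_1\#a_0$ as an oscillatory integral of a Fr\'echet-valued amplitude, use Proposition~\ref{prop:amplitude} (in its $q_1$, $q_2$ variants) together with the bilinear multiplication lemma to see that the integrand takes values in $\wtbfS^{d_0+d_1,\nu_0+\nu_1}_{1,0}$, pass the continuous linear map $a\mapsto a^\infty_{[\nu]}$ through the oscillatory integral for multiplicativity, and handle the remainder via the Taylor formula. Your concern in the final paragraph is overstated: the paper disposes of the remainder in one sentence (``similarly as before, one can show that the integrand is an amplitude function with values in $\wtbfS^{d_0+d_1-N,\nu_0+\nu_1-N}_{1,0}$, depending continuously on $\theta$''), which is exactly the parameter-uniform repetition of Proposition~\ref{prop:amplitude} you outline, so no additional machinery is needed.
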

\begin{proof}
Recall that 
 $$(a_1\#a_0)(x,\xi;\mu)=\iint e^{-iy\eta}\underbrace{a_1(x,\xi+\eta;\mu)a_0(x+y,\xi;\mu)}_{
     =:p(y,\eta;x,\xi,\lambda)}\,dy\dbar\eta.$$
By Proposition \ref{prop:amplitude}, $p$ is an amplitude function with values in 
$\wtbfS^{d_0+d_1,\nu_0+\nu_1}_{1,0}$, hence the oscillatory integral converges 
in this space. 

Since the map $a\mapsto a_{[\nu]}^\infty:\wtbfS^{d,\nu}_{1,0}\to S^{\nu}_{1,0}$ 
is linear and continuous, we find that 
\begin{align*}
 (a_1\#a_0)_{[\nu_0+\nu_1]}^\infty(x,\xi)
 =&\iint e^{-iy\eta}p(y,\eta;x,\xi;\mu)_{[\nu_0+\nu_1]}^\infty\,dy\dbar\eta\\
 =&\iint e^{-iy\eta}a_{1,[\nu_1]}^{\infty}(x,\xi+\eta)
   a_{0,[\nu_0]}^{\infty}(x+y,\xi)\,dy\dbar\eta\\
 =&(a_{0,[\nu_0]}^{\infty}\#a_{1,[\nu_1]}^{\infty})(x,\xi).
\end{align*}
Concerning the expansion \eqref{eq:leibniz-expansion} recall that the difference of $a_1\#a_0$ 
and the sum in \eqref{eq:leibniz-expansion} is given by 
\begin{align}\label{eq:leibniz-remainder}
\begin{split} 
 r_N(x,\xi;\mu)=&N\sum_{|\sigma|=N}\int_0^1\frac{(1-\theta)^{N-1}}{\sigma!}\times\\
 &\times\iint e^{-iy\eta}\partial^{\sigma}_\xi a_1(x,\xi+\theta\eta;\mu)D^{\sigma}_x 
 a_0(y+x,\xi;\mu)\,dy\dbar\eta\,d\theta.
\end{split}
\end{align}
Similarly as before, one can show that the integrand in \eqref{eq:leibniz-remainder} 
is an amplitude function with values in $\wtbfS^{d_0+d_1-N,\nu_0+\nu_1-N}_{1,0}$, 
depending continuously on $\theta$. This yields the claim. 
\end{proof}

The proof of Theorem \ref{thm:adjoint} is analogous, using  
 $$a^{(*)}(x,\xi;\mu)=\iint e^{-iy\eta}\overline{a(x+y,\xi+\eta;\mu)}\,dy\dbar\eta,$$
and a similar formula for the remainder in \eqref{eq:leibniz-expansion2}.  	

\bibliographystyle{amsalpha}

\begin{thebibliography}{99}

\bibitem{Bout1}
L.\ Boutet de Monvel. 
\emph{Boundary problems for pseudo-differential operators}. 
Acta Math. \textbf{126} (1971), no. 1-2, 11-51. 

\bibitem{GSS}
J.B.\ Gil, B.-W.\ Schulze, J.\ Seiler.
Cone pseudodifferential operators in the edge symbolic calculus.  
\emph{Osaka J. Math.} \textbf{37} (2000), no. 1, 221-260.

\bibitem{Grub}
G.\ Grubb. 
\emph{Functional Calculus of Pseudo-differential Boundary Problems} (2nd edition). 
Birkh\"auser, Basel, 1996. 

\bibitem{Grub01} 
G.\ Grubb. 
{A weakly polyhomogeneous calculus for pseudodifferential boundary problems}. 
\emph{J. Funct. Anal.} \textbf{184} (2001), no. 1, 19-76. 

\bibitem{Grub18}
G.\ Grubb. 
Regularity in $L_p$ Sobolev spaces of solutions to fractional heat equations. 
\emph{J. Funct. Anal.} \textbf{274} (2018), no. 9, 2634-2660. 

\bibitem{GrSe}
G.~Grubb, R.T.~Seeley.
Weakly parametric pseudodifferential operators and {A}tiyah-{P}atodi-{S}inger boundary problems. 
\emph{Invent. Math.} \textbf{121} (1995), no. 3, 481-529. 

\bibitem{Kuma}
H.\ Kumano-go.
\textit{Pseudo-Differential operators}.
MIT Press, Cambridge (MA), 1981.


\bibitem{SaSt}
A.\ Savin, B.\ Sternin. 
Boundary value problems on manifolds with fibered boundary. 
\emph{Math. Nachr.} \textbf{278} (2005), no. 11, 1297-1317.

\bibitem{ScSc1}
E. Schrohe, B.-W. Schulze.
{Boundary value problems in {B}outet de {M}onvel's calculus for
manifolds with conical singularities I}.
In M. Demuth, E. Schrohe, B.-W. Schulze, editors,
\emph{Pseudo-Differential Operators and Mathematical Physics},
Math. Topics, Vol. 5: Advances in Part. Diff. Equ.,
Akademie Verlag, Berlin, 1994.

\bibitem{Schu91}
B.-W.\ Schulze.
\emph{Pseudo-differential Operators on Manifolds with Singularities}. 
Studies in Math.\ and its Appl.\ \textbf{24}, 
North-Holland Publishing Co., 1991. 

\bibitem{Schulze-Wiley}
B.-W. Schulze.
\emph{Boundary value problems and singular pseudo-differential operators}. 
Pure and Applied Mathematics (New York),
John Wiley \& Sons, Chichester, 1998. 

\bibitem{Schu37}
B.-W.\ Schulze.
An algebra of boundary value problems not requiring {Shapiro-Lopatinskij} conditions. 
\emph{J. Funct. Anal.} \textbf{179} (2001), no. 2, 374-408. 

\bibitem{SSS98}
B.-W.\ Schulze, V.\ Shatalov, B.\ Sternin. 
On general boundary value problems for elliptic equations. 
\emph{Sb. Math.} \textbf{189} (1998), no. 9-10, 1573-1586.




\bibitem{Seil12} 
J.\ Seiler. 
Ellipticity in pseudodifferential algebras of Toeplitz type.
\emph{J. Funct. Anal.} \textbf{263} (2012), no. 5, 1408-1434.  
 
\bibitem{Seil15}  
J.\ Seiler. 
Parameter-dependent pseudodifferential operators of Toeplitz type. 
\emph{Ann. Mat. Pura Appl.} \textbf{194} (2015), no. 1, 145-165.

\end{thebibliography}

\end{document}